\newtheorem{thm}{Theorem}
\newtheorem{prop}[thm]{Proposition}
\newtheorem{lemma}[thm]{Lemma}
\newtheorem{claim}{Claim}
\newtheorem{defi}[thm]{Definition}
\newtheorem{rmk}[thm]{Remark}
\newtheorem{cor}[thm]{Corollary}
\newtheorem{xpl}[thm]{Example}
\newenvironment{pf}[1][Proof.]{\noindent \emph{#1.}}{}
\newenvironment{enui}{\begin{enumerate}[(i)]}{\end{enumerate}}
\newenvironment{enua}{\begin{enumerate}[(a)]}{\end{enumerate}}
\newcommand\const{\equiv}
\newcommand\op{{\operatorname{op}}}
\newcommand\disj{\coprod}
\newcommand\im{{\operatorname{im}}}
\newcommand\Aut{{\operatorname{Aut}}}
\newcommand\id{{\operatorname{id}}}
\newcommand\nn{{\nonumber}}
\newcommand\wt[1]{{\widetilde{#1}}}
\newcommand{\BAR}[1]{{\overline{#1}}}
\newcommand\al{{\alpha}}
\newcommand\be{\beta}
\newcommand\ga{\gamma}
\newcommand\de{\delta}
\newcommand\eps{\varepsilon}
\newcommand\Om{\Omega}
\newcommand\om{\omega}
\newcommand\lam{\lambda}
\newcommand\Si{\Sigma}
\newcommand\ze{\zeta}
\renewcommand\phi{\varphi}
\newcommand\na{\nabla}
\newcommand\U{\mathcal{U}}
\newcommand\V{\mathcal{V}}
\newcommand{\N}{\mathbb{N}}
\newcommand{\Z}{\mathbb{Z}}
\newcommand{\R}{\mathbb{R}}
\newcommand\C{\mathbb C}
\newcommand\D{\mathbb{D}}
\newcommand\g{\mathfrak g}
\newcommand\A{\mathcal A}
\newcommand\Lie{\operatorname{Lie}}
\newcommand\sub{\subseteq}
\newcommand\x{\times}
\newcommand\wo{\setminus}
\newcommand\one{\mathbf{1}}
\newcommand\iso{\cong}
\newcommand\dd{\partial}
\newcommand\lan{\langle}
\newcommand\ran{\rangle}
\newcommand\loc{{\operatorname{loc}}}
\newcommand\M{\mathcal{M}}
\newcommand\MM{\wt{\mathcal{M}}}
\newcommand\ME{\mathcal{M}_{<\infty}}
\newcommand\barev{\BAR{\operatorname{ev}}}
\newcommand\W{{\bf W}}
\newcommand\WWW{\wt{\mathcal{W}_0}}
\newcommand\WWP{\mathcal{W}}
\newcommand\WWWP{\wt{\mathcal{W}}}
\newcommand\z{{\bf z}}
\newcommand\Isom{\operatorname{Isom}}
\newcommand\wrt{w.r.t.~}
\newcommand\Emin{\operatorname{E}_{\min}}
\newcommand\hhat[1]{\widehat{#1}}
\newcommand\noi{\noindent}
\newcommand\new{{\operatorname{new}}}
\newcommand\del{{\partial}}
\newcommand\Qk{{\operatorname{Q}\!\kappa}}
\newcommand\E{E}
\newcommand\PSL{\operatorname{PSL}(2,\C)}
\newcommand\TR{{\mathcal{T}_{\R^2}}}
\newcommand\CP{\mathbb{C}\!\operatorname{P}}
\newcommand\UU{\mathcal{U}}
\newcommand\La{\Delta}
\newcommand\ad{\operatorname{ad}}
\title{A Quantum Kirwan Map, II: Bubbling}
\author{Fabian Ziltener (Korea Institute for Advanced Study)}
\begin{document}

{\bf This article has been merged with arXiv:0905.4047. The new article is:\\

A Quantum Kirwan Map: Bubbling and Fredholm Theory for Symplectic Vortices over the Plane, arXiv:1209.5866}\\

\begin{abstract} Consider a Hamiltonian action of a compact connected Lie group $G$ on an aspherical symplectic manifold $(M,\om)$. Under suitable assumptions, counting gauge equivalence classes of (symplectic) vortices on the plane $\R^2$ conjecturally gives rise to a quantum deformation $\Qk_G$ of the Kirwan map. 

This is the second of a series of articles, whose goal is to define $\Qk_G$ rigorously. The main result is that every sequence of vortices with uniformly bounded energies has a subsequence that converges to a genus 0 stable map of vortices on $\R^2$ and holomorphic spheres in the symplectic quotient. 

Potentially, the map $\Qk_G$ can be used to compute the quantum cohomology of many symplectic quotients. Conjecturally it also gives rise to quantum generalizations of non-abelian localization and abelianization. 
\end{abstract} 

\maketitle
\tableofcontents

\section{Main result}\label{sec:main}
Let $(M,\om)$ be a symplectic manifold and $G$ a compact connected Lie group with Lie algebra $\g$. We fix a Hamiltonian action of $G$ on $M$ and an (equivariant) moment map $\mu:M\to\g^*$. Throughout this article, we make the following standing assumption:\\

\noindent {\bf Hypothesis (H):} \textit{$G$ acts freely on $\mu^{-1}(0)$ and the moment map $\mu$ is proper.}\\

Then the symplectic quotient $\BAR M:=\mu^{-1}(0)/G$ is well-defined, smooth and closed (i.e., compact and without boundary). Based on ideas by D.~A.~Salamon, in \cite{ZiFredholm} I conjectured that under suitable assumptions there exists an algebra homomorphism $\Qk_G$ from the equivariant cohomology of $M$, tensored with the equivariant Novikov ring, to the quantum cohomology of $\BAR M$.

The idea of proof of the conjecture is to define $\Qk_G$ by counting symplectic vortices over $\R^2$. Once established, this should allow to compute the quantum cohomology of many symplectic quotients (e.g.~those arising from suitable linear torus actions on a symplectic vector space). Based on the map $\Qk_G$, one can formulate quantum versions of non-abelian localization and abelianization, see \cite{WZ}. 

The present article is the second of a series of papers, whose goal is to define $\Qk_G$ rigorously. The main result is that every sequence of vortices with uniformly bounded energy has a subsequence that converges to a new kind of stable map, consisting of vortices on $\R^2$ and holomorphic spheres in the symplectic quotient. 

To explain this, we recall the symplectic vortex equations: Let $J$ be an $\om$-compatible $G$-invariant almost complex structure on $M$, $\lan\cdot,\cdot\ran_\g$ an invariant inner product on $\g$, and $(\Si,\om_\Si,j)$ a (smooth) real surface equipped with an area form and a compatible complex structure. For every principal bundle $P$ over $\Si$ we denote by $\A(P)$ the affine space of connections on $P$, and by $C^\infty_G(P,M)$ the set of smooth equivariant maps from $P$ to $M$. We denote
\begin{eqnarray}\nn\WWWP(\Si)&:=\big\{w:=(P,A,u)\,\big|&P\textrm{ principal }G\textrm{-bundle over }\Si,\\
\nn&&A\in\A(P),\,u\in C^\infty_G(P,M)\big\}.\end{eqnarray}
The \emph{symplectic vortex equations} are the equations 
\begin{eqnarray}\label{eq:BAR dd J A u}\bar\dd_{J,A}(u)&=&0,\\
\label{eq:F A mu}F_A+(\mu\circ u)\om_\Si&=&0
\end{eqnarray}
for a triple $(P,A,u)\in\WWWP(\Si)$. Here for a point $x\in M$ we denote by $L_x:\g\to T_xM$ the infinitesimal action at $x$. By $\bar\dd_{J,A}(u)$ we mean the complex anti-linear part of $d_Au:=du+L_uA$, which we think of as a one-form on $\Si$ with values in the complex vector bundle $(u^*TM)/G\to\Si$. We view the curvature $F_A$ of $A$ as a two-form on $\Si$ with values in the adjoint bundle $\g_P:=(P\x\g)/G\to\Si$. Finally, identifying $\g^*$ with $\g$ via $\lan\cdot,\cdot\ran_\g$, we view $\mu\circ u$ as a section of $\g_P$. The vortex equations (\ref{eq:BAR dd J A u},\ref{eq:F A mu}) were discovered by K.~Cieliebak, A.~R.~Gaio and D.~A.~Salamon \cite{CGS}, and independently by I.~Mundet i Riera \cite{MuPhD,MuHam}. 

Two elements $w,w'\in\WWWP(\Si)$ are called \emph{equivalent} iff there exists an isomorphism $\Phi:P'\to P$ of principal $G$-bundles which descends to the identity on $\Si$, and satisfies
\begin{equation}\nn\Phi^*(A,u):=(A\circ d\Phi,u\circ\Phi)=(A',u').\end{equation}
In this case we write $w\sim w'$. We define
\begin{equation}\label{eq:WW}\WWP(\Si):=\WWWP(\Si)/\sim.\end{equation}
The equations (\ref{eq:BAR dd J A u},\ref{eq:F A mu}) are invariant under equivalence. A \emph{(symplectic) vortex (on $\Si$)} is by definition an equivalence class $W\in\WWP(\Si)$, such that every representative of $W$ satisfies (\ref{eq:BAR dd J A u},\ref{eq:F A mu}). We define the \emph{energy density} of a class $W\in\WWP(\Si)$ to be 
\begin{equation}\label{eq:e W}e_W:=\frac12\big(|d_Au|^2+|F_A|^2+|\mu\circ u|^2\big),\end{equation}
where $w:=(P,A,u)$ is any representative of $W$. (Here the norms are induced by the Riemannian metrics $\om_\Si(\cdot,j\cdot)$ on $\Si$ and $\om(\cdot,J\cdot)$ on $M$, and by $\lan\cdot,\cdot\ran_\g$. This definition does not depend on the choice of $w$.) Vortices are absolute minimizers of the \emph{(Yang-Mills-Higgs) energy functional}
\[E:\WWP(\Si)\to[0,\infty],\quad E(W):=\int_\Si e_W\om_\Si\]
in a given second equivariant homology class. (See \cite{CGS}. Here we assume that $\Si$ is closed, and vortices in the given class exist.) Consider $\Si:=\R^2$, equipped with the standard area form $\om_{\R^2}:=\om_0$ and complex structure $j:=i$. We define 
\begin{equation}\label{eq:M}\MM:=\big\{(P,A,u)\in\WWWP(\R^2)\,\big|\,(\ref{eq:BAR dd J A u},\ref{eq:F A mu})\big\},\quad\M:=\MM/\sim.
\end{equation}
Assume that $(M,\om)$ is \emph{(symplectically) aspherical}, i.e.,
\[\int_{S^2}u^*\om=0,\quad\forall u\in C^\infty(S^2,M).\]
Then heuristically, the main result of this article provides a compactification for the space of all classes in $\M$ with fixed finite energy $E>0$. There are three sources of non-compactness of this space: Consider a sequence $W_\nu\in\M$, $\nu\in\N$, of classes of energy $E$. In the limit $\nu\to\infty$, the following scenarios (and combinations) may happen:

1.~The energy densitiy of $W_\nu$ blows up at some point in $\R^2$.

2.~There exists a number $r>0$ and a sequence of points $z_\nu\in\R^2$ that converges to $\infty$, such that the energy density of $W_\nu$ on the ball $B_r(z_\nu)$ is bounded above and below by some positive constants. 

3.~The energy densities converge to 0, i.e., the energy is spread out more and more.

In case 1, by rescaling $W_\nu$ around the bubbling point, in the limit $\nu\to\infty$, we obtain a non-constant $J$-holomorphic map from $\R^2$ to $M$. Using removal of singularity, this is excluded by the asphericity condition. In case 2, we pull $W_\nu$ back by the translation $z\mapsto z+z_\nu$, and in the limit $\nu\to\infty$, obtain a vortex on $\R^2$. Finally, in case 3, we ``zoom out'' more and more. In the limit $\nu\to\infty$ and after removing the singularity at $\infty$, we obtain a pseudo-holomorphic map from $S^2$ to the symplectic quotient $\BAR M=\mu^{-1}(0)/G$.

Hence the limit object is a stable map, consisting of vortices on $\R^2$ and pseudo-holomorphic spheres in $\BAR M$ (and marked points). This notion and convergence against a stable map are made precise in Section \ref{sec:stable}. 

Here an important difference to Gromov-convergence for pseudo-holomorphic maps is the following: Although the vortex equations are invariant under under all orientation preserving isometries of $\Si$, only translations on $\R^2$ are allowed as reparametrizations used to obtain a vortex on $\R^2$ in the limit. Hence we disregard some symmetries of the equations. The reasons are that otherwise the reparametrization group would not act with finite isotropy on the set of simple stable maps, and that there is no suitable evaluation map on the set of vortices which is invariant under rotation. (See Remarks \ref{rmk:Isom +} and \ref{rmk:conv rot} below.) 

In order to state the main result, we also need the following. We call the quadruple $(M,\om,\mu,J)$ \emph{(equivariantly) convex at $\infty$} iff there exists a proper $G$-invariant function $f\in C^\infty(M,[0,\infty))$ and a constant $C\in[0,\infty)$ such that 
\[\om(\na_v\na f(x),Jv)-\om(\na_{Jv}\na f(x),v)\geq0,\quad df(x)JL_x\mu(x)\geq0,\]
for every $x\in f^{-1}([C,\infty))$ and $0\neq v\in T_xM$. Here $\na$ denotes the Levi-Civita connection of the metric $\om(\cdot,J\cdot)$.

We define the \emph{image} of a class $W\in\WWP(\Si)$ to be the set of orbits of $u(P)$, where $(P,A,u)$ is any representative of $W$. This is a subset of $M/G$. We endow $M/G$ with the quotient topology. We are now able to formulate the main result. 
\begin{thm}[Bubbling]\label{thm:bubb}  Assume that hypothesis (H) is satisfied, $(M,\om)$ is aspherical, and $(M,\om,\mu,J)$ is convex at $\infty$. Let $k\in\N_0:=\{0,1,\ldots\}$, and for $\nu\in\N$ let $W_\nu\in\M$ be a vortex and $z_1^\nu,\ldots,z_k^\nu\in\R^2$ be points. Suppose that the closure of the image of each $W_\nu$ is compact, and
\begin{eqnarray}\nn&\E(W_\nu)>0,\,\forall\nu\in\N,\quad\sup_{\nu\in\N}\E(W_\nu)<\infty,&\\
\label{eq:limsup z i nu z j nu}&\limsup_{\nu\to\infty}|z_i^\nu-z_j^\nu|>0,\quad\textrm{if }i\neq j.&
\end{eqnarray}
Then there exists a subsequence of $\big(W_\nu,z_0^\nu:=\infty,z_1^\nu,\ldots,z_k^\nu\big)$ that converges to some genus 0 stable map of vortices on $\R^2$ and pseudo-holomorphic spheres in $\BAR M$ with $k+1$ marked points.
\end{thm}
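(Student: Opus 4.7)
The overall plan is to adapt Gromov's compactness scheme for $J$-holomorphic maps to the vortex setting, organized around iterated soft rescaling together with an energy quantization principle: under hypothesis (H) and asphericity, there is a constant $\hbar>0$ such that every non-trivial finite-energy vortex on $\R^2$ and every non-constant $J$-holomorphic sphere in $\BAR M$ has energy at least $\hbar$. Combined with $\sup_\nu\E(W_\nu)<\infty$, this bounds the number of bubble components and forces the extracted bubble tree to be finite.

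The first technical step is to obtain uniform a priori control. Convexity at $\infty$ together with a maximum principle applied to $f\circ u_\nu$, using the formula for $\Delta(f\circ u)$ derived from \ver{eq:BAR dd J A u}--\ver{eq:F A mu} (whose two non-negative contributions are exactly the hypotheses on $f$ and $\mu$), shows that the closure of the image of $W_\nu$ lies in a common compact subset of $M/G$. On this compact region I would prove an $\eps$-regularity lemma of Heinz type: whenever $\int_{B_r(z)}e_{W_\nu}\om_0<\eps$, one obtains a uniform pointwise bound on $e_{W_\nu}$ on $B_{r/2}(z)$, and in a local Coulomb gauge uniform $C^k_\loc$ bounds on $(A_\nu,u_\nu)$ follow from elliptic bootstrapping applied to the vortex system. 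This yields smooth local limits wherever the energy density stays bounded.

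The second step extracts the bubble tree by soft rescaling, treating the three scenarios listed before the theorem. At the base scale, $\eps$-regularity and gauge fixing produce a $C^\infty_\loc$ limit on $\R^2$ away from finitely many blow-up points. At each finite blow-up point, rescaling by $\lam_\nu\to 0$ would produce a non-constant finite-energy $J$-holomorphic map $\R^2\to M$; by removal of singularities this extends to a sphere, contradicting asphericity, so no interior blow-up occurs. At sequences $z^\nu\to\infty$ at which the energy density stays bounded below at base scale, translating by $z^\nu$ and passing to a subsequence yields a new vortex component in $\M$. At sequences along which the characteristic scale diverges (energy density decays while energy on balls of growing radius stays bounded below), rescaling by $\lam_\nu\to\infty$ makes $F_{A_\nu}$ and $\mu\circ u_\nu$ vanish in the limit, so the rescaled limit satisfies $d_A u=0$ and $\mu\circ u=0$ and descends via $\mu^{-1}(0)/G$ to a finite-energy $J$-holomorphic sphere in $\BAR M$. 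Energy quantization ensures termination after finitely many iterations, and the marked points $z_i^\nu$ are assigned to whichever limit component they come to rest on, with \ver{eq:limsup z i nu z j nu} preventing collisions.

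The main obstacle, which I expect to absorb most of the technical work, is a \emph{no-neck-energy} lemma: the total energy of $W_\nu$ must converge to the sum of the energies of the limit components, \ie no energy is lost in the annular necks between successive rescaling scales. For $J$-holomorphic maps this is handled by the standard isoperimetric/long-cylinder argument. In the vortex setting one needs an analogous isoperimetric inequality for the Yang-Mills-Higgs energy together with asymptotic analysis of finite-energy vortices on long cylinders---exponential decay of $d_A u$, $F_A$, and $\mu\circ u$ away from the ends, showing that after gauge fixing the map enters a tubular neighborhood of $\mu^{-1}(0)$ and converges to a constant $G$-orbit there. This is what converts the combinatorial bubble tree into a genuine stable map in the sense of Section \ref{sec:stable} and is, I believe, the technical heart of the proof.
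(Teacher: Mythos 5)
Your analytic skeleton coincides with the paper's argument: uniform compactness of the images from equivariant convexity at $\infty$, a Heinz-type a priori bound on the energy density with elliptic bootstrapping in local gauges, hard rescaling at interior blow-up points ruled out by asphericity, soft rescaling producing either a vortex on $\R^2$ or a $\bar J$-holomorphic sphere in $\BAR M$ at each unresolved bubbling point, the quantization constant $\Emin>0$ bounding the size of the tree, and a no-neck-energy statement proved via an isoperimetric inequality for the invariant symplectic action on long annuli (this is exactly Proposition \ref{prop:en conc} together with Theorem \ref{thm:isoperi}; the polynomial decay rate obtained there suffices, no exponential decay is needed). One small correction: positivity of the minimal vortex energy is \emph{not} a consequence of (H) and asphericity alone; in the paper it comes from convexity at $\infty$ through Proposition \ref{prop:bounded} and Lemma \ref{le:a priori} (Corollary \ref{cor:quant}).

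The genuine gap is your treatment of the marked points. Hypothesis (\ref{eq:limsup z i nu z j nu}) only keeps the marked points away from \emph{each other} at the original scale; it does not prevent a marked point, after applying the reparametrizations $(\phi_\al^\nu)^{-1}$, from converging to a nodal point $z_{\al\be}$ of the tree already constructed, to the limit position of another marked point on that component, or from disappearing into the neck between two adjacent components (its rescaled positions then converge to the two matching nodal points from either side, and to special points on every other component). In all of these situations there is no component on which the marked point ``comes to rest'' at a point distinct from the special points, so the object you would write down violates the distinctness condition (\ref{defi:st dist}) of Definition \ref{defi:st} (hence is not stable), and condition \ref{defi:conv}(\ref{defi:conv z}) of Definition \ref{defi:conv} cannot be verified. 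The paper handles this by a separate induction over $k$: for each additional marked point it distinguishes three cases and, in the latter two, inserts a \emph{ghost} component --- a constant sphere or an energy-zero vortex carrying the new marked point --- attached at the relevant nodal point, with new reparametrizations built from Lemmas \ref{le:phi nu} and \ref{le:middle}; one must then re-check all convergence conditions for the enlarged tree, which again uses the no-neck-energy estimates and Proposition \ref{prop:en conc} to identify the constant value of the ghost with the matching evaluation. This bookkeeping is a substantial part of the proof (roughly half of Section \ref{sec:proof:thm:bubb}) and is not covered by ``assign each marked point to whichever limit component it comes to rest on.''
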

(The reasons for the additional marked point $z_0^\nu=\infty$ are explained in Remarks \ref{rmk:al 0 map} and \ref{rmk:al 0 conv} below.) The relevance of Theorem \ref{thm:bubb} is the following. There is an evaluation map from the set 
\begin{equation}\label{eq:ME}\ME:=\big\{W\in\M\,\big|\,\BAR{\textrm{image}(W)}\textrm{ compact, }E(W)<\infty\big\}.
\end{equation}
to the product of $\BAR M$ and the Borel construction for the action of $G$ on $M$. (See the forth-coming article \cite{ZiConsEv}.) The structure constants of the quantum Kirwan map $\Qk_G$ will be defined by pulling back cohomology classes via this evaluation map and integrating them over the space of vortices representing a fixed second equivariant homology class. 

To make this rigorous, one has to pass to some finite-dimensional approximation of the Borel construction and show that the evaluation map is a pseudo-cycle. The proof of this will rely on Theorem \ref{thm:bubb}. 

Secondly, Theorem \ref{thm:bubb} will also be used to prove that $\Qk_G$ is a ring homomorphism. (See the argument outlined in \cite{ZiFredholm}.) 

The proof of the theorem combines Gromov compactness for pseudo-holomorphic maps with Uhlenbeck compactness. It relies on work \cite{CGMS,GS} by K.~Cieliebak, R.~Gaio, I.~Mundet i Riera, and D.~A.~Salamon. The idea is the following. In order to capture all the energy, we ``zoom out rapidly'', i.e., rescale the vortices so much that the energies of the rescaled vortices are concentrated near the origin in $\R^2$. Now we ``zoom back in'' in such a way that we capture the first bubble, which may either be a vortex on $\R^2$ or a sphere in $\BAR M$. In the first case we are done. In the second case we ``zoom in'' further, to obtain a finite number of vortices and spheres that are attached to the first bubble. Iterating this procedure, we construct the whole stable map. 

The proof involves generalizations of results for pseudo-holomorphic maps to vortices: a bound on the energy density of a vortex, quantization of energy, compactness with bounded derivatives, and hard and soft rescaling. The proof that the bubbles connect and no energy is lost between them, uses an isoperimetric inequality for the invariant symplectic action functional, proved in \cite{ZiA}, based on a version of the inequality by R.~Gaio and D.~A.~Salamon \cite{GS}. 

Another crucial point is that when ``zooming out'', no energy is lost locally in $\R^2$ in the limit. This relies on an upper bound of the ``moment-map component'' of a vortex, due to R.~Gaio and D.~A.~Salamon. 
\subsection*{Related work and remarks}\label{subsec:rmks relation}
Assume that $\Si$ is closed, (H) holds, and $M$ is symplectically aspherical and equivariantly convex at $\infty$. In this case, in \cite[Theorem 3.4]{CGMS}, K.~Cieliebak et al.~proved compactness of the space of vortices with energy bounded above by a fixed constant. Assume that $M$ and $\Si$ are closed. Then in \cite[Theorem 4.4.2]{MuPhD} I.~Mundet i Riera compactified the space of bounded energy vortices with fixed complex structure on $\Si$. Assuming also that $G:=S^1$, this was extended by I.~Mundet i Riera and G.~Tian in \cite[Theorem 1.4]{MT} to the situation of varying complex structure. This work is based on a version of Gromov-compactness for continuous almost complex structure, proved by S.~Ivashkovich and V.~Shevchishin in \cite{IS}. 

In \cite[Theorem 1.8]{Ott} A.~Ott compactified the space of bounded energy vortices in a different way, for a general Lie group, and closed $M$ and $\Si$, the latter with fixed complex structure. He used the approach to Gromov-compactness by D.~McDuff and D.~A.~Salamon in \cite{MS}. In the case in which $\Si$ is an infinite cylinder, equipped with the standard area form and complex structure, the compactification was carried out by U.~Frauenfelder in \cite[Theorem 4.12]{FrPhD}.

In \cite{GS} R.~Gaio and D.~A.~Salamon investigated the vortex equations with area form $C\om_\Si$ in the limit $C\to\infty$. Here $\Si$ is a closed surface equipped with a fixed area form $\om_\Si$. They proved that three types of objects may bubble off: a holomorphic sphere in $\BAR M$, vortices on $\R^2$, and holomorphic spheres in $M$. (See the proof of \cite[Theorem A]{GS}.)

In some earlier work (e.g.~\cite{CGS} and \cite{ZiPhD}), the principal $P$ was fixed and the vortex equations where seen as equations for a pair $(A,u)$ rather than a triple $(P,A,u)$. (However, in \cite{MT} I.~Mundet i Riera and G.~Tian took the viewpoint of the present article.) The motivation for making $P$ part of the data is twofold: 

When formulating convergence for a sequence of vortices on $\R^2$ against a stable map, one has to pull back the vortices by translations of $\R^2$. (See Section \ref{subsec:conv}.) If the principal bundle is fixed and vortices are defined as pairs $(A,u)$ solving (\ref{eq:BAR dd J A u},\ref{eq:F A mu}), then there is no natural such pullback. However, there \emph{is} a natural pullback if the principal is made part of the data for a vortex. (This is true for an arbitrary surface $\Si$.) 

Another motivation is the following: If the area form or the complex structure on the surface $\Si$ vary, then in the limit we may obtain a surface $\Si'$ with singularities. It does not make sense to consider $P$ as a bundle over $\Si'$. One way of solving this problem is by decomposing $\Si'$ into smooth surfaces, and constructing smooth principal bundles over these surfaces. Hence the principal should be viewed as a varying object. 

Once $P$ is made part of the data, it is natural to consider \emph{equivalence classes} of triples $(P,A,u)$ rather than the triples themselves, since all important quantities, like energy density and energy, are invariant (or equivariant) under equivalence. Viewing the equivalence classes as the fundamental objects matches the physical viewpoint that the ``gauge field'', i.e., the connection $A$, is physically relevant only ``up to gauge''. 
\subsection*{Organization}
This article is organized as follows. In Section \ref{sec:stable} we define the notion of a stable map of vortices on $\R^2$ and pseudo-holomorphic spheres in $\BAR M$ and convergence against such a stable map. 

The main result of Section \ref{sec:comp} (Proposition \ref{prop:cpt mod}) is that given a sequence of rescaled vortices with uniformly bounded energies, there exists a subsequence that converges modulo bubbling at finitely many points. The proof is based on compactness for rescaled vortices on the punctured plane with uniformly bounded energy densities (Proposition \ref{prop:cpt bdd}). It also uses the fact that at each bubbling point at least the energy $\Emin$ is lost, where $\Emin>0$ is the minimal energy of a vortex on $\R^2$ or pseudo-holomorphic sphere in $\BAR M$. This is the content of Proposition \ref{prop:quant en loss}, which is proved here by a hard rescaling argument, using Proposition \ref{prop:cpt bdd} and Hofer's lemma. We also state and prove Lemma \ref{le:conv e}, which says that the energy densities of a convergent sequence of rescaled vortices converge to the density of the limit. This is used in the proof of Proposition \ref{prop:cpt mod}.

The main result of Section \ref{sec:soft} is Proposition \ref{prop:soft}, which tells how to find the next bubble in the bubbling tree, at a bubbling point of a given sequence of rescaled vortices. A crucial ingredient in its proof is Proposition \ref{prop:en conc} (proven in the same section). This result states that the energy of a vortex on an annulus is concentrated near the ends, provided that it is small enough.

Based on Sections \ref{sec:comp} and \ref{sec:soft}, the main result, Theorem \ref{thm:bubb}, is proven in Section \ref{sec:proof:thm:bubb}. In the appendix we recollect results on vortices, the invariant symplectic action, Uhlenbeck compactness, compactness for $\bar\del_J$, pseudo-holomorphic maps into $\BAR M$ and other auxiliary results, used in the proof of Theorem \ref{thm:bubb}. 
\subsection*{Acknowledgments}
This article arose from my Ph.D.-thesis. I would like to thank my adviser, Dietmar A.~Salamon, for the inspiring topic. I highly profited from his mathematical insight. I am very much indebted to Chris Woodward for his interest in my work, for sharing his ideas with me, and for his continuous encouragement. It was he who coined the term ``quantum Kirwan map''. I would also like to thank Urs Frauenfelder, Kai Cieliebak, Eduardo Gonzalez, and Andreas Ott for stimulating discussions. 
\section{Stable maps of vortices over the plane and holomorphic spheres in the symplectic quotient}\label{sec:stable}
\subsection{Stable maps}\label{subsec:stable}
Let $M,\om,G,\g,\lan\cdot,\cdot\ran_\g,\mu,J$ be as in Section \ref{sec:main}. Our standing hypothesis (H) implies that the symplectic quotient 
\[\big(\BAR M=\mu^{-1}(0)/G,\BAR\om\big)\]
is well-defined and closed. The structure $J$ induces a $\BAR\om$-compatible almost complex structure on $\BAR M$ as follows. For every $x\in M$ we denote by $L_x:\g\to T_xM$ the infinitesimal action at $x$. We define the \emph{horizontal distribution} $H\sub T\mu^{-1}(0)$ by
\[H_x:=\ker d\mu(x)\cap\im L_x^\perp,\quad\forall x\in\mu^{-1}(0).\]
Here $\perp$ denotes the orthogonal complement with respect to the metric $\om(\cdot,J\cdot)$ on $M$. We denote by $\pi:\mu^{-1}(0)\to \BAR M:=\mu^{-1}(0)/G$ the canonical projection. We define $\bar J$ to be the unique endomorphism of $T\BAR M$ such that
\begin{equation}\label{eq:bar J}\bar J\,d\pi= d\pi J\textrm{ on }H.
\end{equation}
We identify $\R^2\cup\{\infty\}$ with $S^2$. The (Connectedness) condition in the definition of a stable map below will involve evaluation of a map $S^2\to\BAR M$ at a given point in $S^2$ and of a vortex at the point $\infty\in S^2$. In order to make sense of the latter, we need the following. We denote by $Gx$ the orbit of a point $x\in M$. Let $P$ be a smooth principal $G$-bundle over $\R^2$ and $u\in C^\infty_G(P,M)$ a map. We define
\begin{equation}\nn\bar u:\R^2\to M/G,\quad\bar u(z):=Gu(p),\end{equation}
where $p\in P$ is an arbitrary point in the fiber over $z$. For $W\in\WWP$ we define 
\begin{equation}\label{eq:BAR u W}\bar u_W:=\bar u,\end{equation}
where $w=(P,A,u)$ is any representative of $W$. This is well-defined, i.e., does not depend on the choice of $w$. Recall the definition (\ref{eq:ME}) of $\ME$. 
\begin{prop}[Continuity at $\infty$]\label{prop:bar u} If $W\in\ME$ then the map $\bar u_W:\R^2\to M/G$ extends continuously to a map $f:S^2\to M/G$, such that $f(\infty)\in\BAR M=\mu^{-1}(0)/G$. 
\end{prop}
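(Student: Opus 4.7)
The plan is to prove two things in sequence: (i) $|\mu\circ u|(p)\to 0$ as the base point $\pi(p)\to\infty$, which forces every cluster orbit of $\bar u_W$ at infinity to lie in $\BAR M=\mu^{-1}(0)/G$; and (ii) this cluster orbit is unique, giving the continuous extension. Together these yield a map $f:S^2\to M/G$ with $f(\infty)\in\BAR M$.

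For step (i) the key input is the mean-value inequality for vortices with pre-compact image, collected in the appendix: because the closure of the image of $W$ is compact in $M/G$, the map $u$ takes values in a fixed compact set $K\sub M$, and on $K$ one has a uniform pointwise bound of the form $e_W(z)\leq c\int_{B_1(z)}e_W\om_0$ for all $z$ with $|z|$ sufficiently large. Since $\E(W)<\infty$, the right-hand side tends to $0$ as $|z|\to\infty$, so $e_W(z)\to 0$. Using $\tfrac12|\mu\circ u|^2\leq e_W$, this yields $|\mu(u(p))|\to 0$ for every $p\in P$ with $\pi(p)\to\infty$. Since $\BAR M$ is closed in $M/G$ and $Gu(p)$ lies in the compactum $K/G$, every cluster point of $\bar u_W(z_\nu)$ along a sequence $z_\nu\to\infty$ belongs to $\BAR M$.

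For step (ii), I would control the variation of $\bar u_W$ on ``annular shells at infinity''. Given two sequences $z_\nu,z'_\nu\to\infty$, pass to polar coordinates $\R^2\wo\{0\}\iso\R\x S^1$ and, using Fubini together with the fact that the tail energy $\int_{|z|\geq r}e_W\om_0\to 0$, select ``good radii'' $\rho_\nu\to\infty$ along which the circular integral $\int_{\partial B_{\rho_\nu}}|d_Au|^2$ tends to $0$. The resulting loops $\bar u_W\big|_{\partial B_{\rho_\nu}}$ in $M/G$ are then uniformly short, and by step (i) lie in an arbitrarily small neighborhood of $\BAR M$ in $M/G$, where the symplectic quotient chart makes sense. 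Applying the isoperimetric inequality for the invariant symplectic action from \cite{ZiA} to the annular region bounded by $\partial B_{\rho_\nu}$ and $\partial B_{\rho_{\nu+1}}$ bounds the distance in $\BAR M$ between the corresponding loops by a constant multiple of the enclosed tail energy, which tends to $0$. Combined with the short-loop property, this shows that the family of loops collapses onto a single point $\bar x_\infty\in\BAR M$, and the bound on $e_W$ within each annulus forces $\bar u_W(z)\to\bar x_\infty$ for every sequence $z\to\infty$.

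The main obstacle I expect is step (ii), specifically converting the isoperimetric bound on the invariant symplectic action — a quantity intrinsic to the vortex equations — into a geometric distance bound in $\BAR M$. This requires the careful choice of radii $\rho_\nu$ so that the boundary loops are genuinely short (via the average $\int_{r_\nu}^{2r_\nu}\int_{\partial B_\rho}|d_Au|^2\,d\rho$ combined with smallness of the tail energy) and working in an equivariant tubular chart around $\mu^{-1}(0)$ in which the invariant action of a short loop controls its diameter in $M/G$. Once this dictionary is in place, the Cauchy property of $\bar u_W(z_\nu)$ follows by assembling the ingredients from the appendix.
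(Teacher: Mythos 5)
Your step (i) is fine: Lemma \ref{le:a priori} plus $|\mu\circ u|^2\le 2e_W$ and the vanishing of the tail energy indeed force every cluster value of $\bar u_W$ at $\infty$ into $\BAR M$, and this is exactly what is needed for $f(\infty)\in\BAR M$. Note that the paper disposes of the whole proposition by quoting estimate (\ref{eq:sup z z' bar d}) of Proposition \ref{prop:en conc} with $R=\infty$ (or \cite[Proposition 11.1]{GS}), so your step (ii) is in effect an attempt to re-prove that estimate; that is legitimate, but as written it does not deliver it.

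The gap is at the decisive point of step (ii). The isoperimetric inequality (Theorem \ref{thm:isoperi}) bounds the invariant action of a \emph{single} loop by its $L^2$-data; it does not, by itself, ``bound the distance in $\BAR M$ between the corresponding loops by a constant multiple of the enclosed tail energy''. Moreover, mere smallness (as opposed to summability) of the annular energies is not enough: with only $E\big(w,A(\rho,\infty)\big)\to0$ and short good loops, the loops can still drift indefinitely, because finite total energy does not control the radial length $\int|d_Au|\,dr$ (compare $|d_Au|\sim(r\log r)^{-1}$, which has finite energy but infinite radial length); this is the same reason removal of singularities requires a quantitative decay rate rather than finiteness of energy. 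The mechanism that closes this gap — and is the actual content of the proof of Proposition \ref{prop:en conc} — is to combine the isoperimetric inequality with the energy--action identity (Proposition \ref{prop:en act}), which your sketch never invokes: together they give the differential inequality (\ref{eq:d ds E}) for $E(s):=E\big(w,A(re^s,Re^{-s})\big)$ and hence the power-law decay (\ref{eq:E w A ar}); Lemma \ref{le:a priori} then upgrades this to the pointwise bound $|d_Au|(z)\lesssim|z|^{-2+\eps/2}\sqrt{E(w)}$, and integrating the twisted length along radial and angular paths (Remark \ref{rmk:BAR d BAR x 0 BAR x 1}) yields the uniform oscillation bound (\ref{eq:sup z z' bar d}), which is what gives the Cauchy property at $\infty$. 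Your proposed fix via a tubular chart in which ``the action of a short loop controls its diameter'' addresses the wrong quantity: the diameter of each individual loop is not the obstruction, the drift of these loops across infinitely many annuli is, and that requires the summable (geometric) decay produced by the action--energy argument.
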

\begin{proof}[Proof of Proposition \ref{prop:bar u}] This follows from the estimate (\ref{eq:sup z z' bar d}) with $R=\infty$ in Proposition \ref{prop:en conc} below. (Alternatively, one can use \cite[Proposition 11.1]{GS}.)
\end{proof}
\begin{defi}\label{defi:barev} We define the \emph{evaluation map}
\[\barev:\big(C^0(S^2,M/G)\x S^2\big)\disj(\ME\x\{\infty\})\to M/G\]
as follows. For $(\bar u,z)\in C^0(S^2,M/G)\x S^2$ we define
\begin{equation}\label{eq:barev}
\barev_z(\bar u):=\barev(\bar u,z):=\bar u(z). 
\end{equation}
Furthermore, for $W\in\ME$ we define
\begin{equation}\label{eq:barev w}\barev_\infty(W):=f(\infty),
\end{equation}
where $f$ is as in Proposition \ref{prop:bar u}.
\end{defi}
\begin{defi}\label{defi:st} For every $k\in\N_0=\{0,1,\ldots\}$ a \emph{(genus 0) stable map of vortices on $\R^2$ and pseudo-holomorphic spheres in $\BAR M$ with $k+1$ marked points} is a tuple
\begin{equation}\label{eq:W z}(\W,\z):=\big(V,\BAR T,E,(W_\alpha)_{\alpha\in V},(\bar u_\alpha)_{\al\in \BAR T},(z_{\alpha\beta})_{\alpha E\beta},(\alpha_i,z_i)_{i=0,\ldots,k}\big),\end{equation}
where $V$ and $\BAR T$ are finite sets, $E$ is a tree relation on $T:=V\disj \BAR T$, $W_\al\in\ME$ (for $\al\in V$), $\bar u_\al:S^2\to\BAR M=\mu^{-1}(0)/G$ is a $\bar J$-holomorphic map (for $\al\in \BAR T$), $z_{\al\beta}\in S^2$ is a point for each adjacent pair $\al E\beta$, $\al_i\in T$ is a vertex and $z_i\in S^2$ is a point, for $i=0,\ldots,k$, such that the following conditions hold.

\begin{enui} 
\item \label{defi:st dist}{\bf (Special points)} 
  \begin{itemize}
  \item If $\al_0\in V$ then $z_0=\infty$. 
\item Fix $\al\in T$. Then the points $z_{\al\beta}$ with $\beta\in T$ such that $\al E\beta$ and the points $z_i$ with $i=0,\ldots,k$ such that $\al_i=\al$, are all distinct. 
\item If $\al\in V$ and $\be\in T$ are such that $\al E\be$ then $z_{\al\be}=\infty$. 
\end{itemize}

\item\label{defi:st conn}{\bf (Connectedness)} Let $\al,\beta\in T$ be such that $\al E\beta$. Then 
\[\barev_{z_{\al\beta}}(W_\al)=\barev_{z_{\beta\al}}(W_\beta).\]
Here $\barev$ is defined as in (\ref{eq:barev}) and (\ref{eq:barev w}) and we set $W_\al:=\bar u_\al$ if $\al\in \BAR T$. 
\item\label{defi:st st}{\bf(Stability)} If $\al\in V$ is such that $\E(W_\al)=0$ then there exists $i\in\{1,\ldots,k\}$ such that $\al_i=\al$. Furthermore, if $\al\in\BAR T$ is such that $E(\bar u_\al)=0$ then 
\[\#\{\beta\in T\,|\,\al E\beta\}+\#\{i\in\{0,\ldots,k\}\,|\,\al_i=\al\}\geq3.\]
\end{enui}
\end{defi}

\begin{figure}
  \centering
\leavevmode\epsfbox{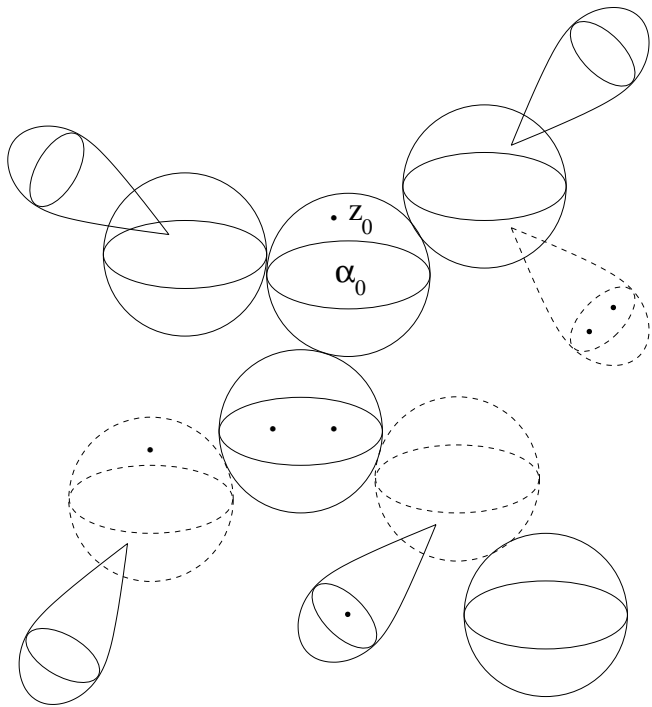}
\caption{Stable map. The ``raindrops'' correspond to vortices on $\R^2$ and the spheres to pseudo-holomorphic spheres in $\BAR M$. The seven dots are marked points. The dashed objects are ``ghosts'', i.e., they carry no energy.}
  \label{fig:stable map} % magnification factor = 57 %
\end{figure}
This definition is modelled on the notion of a genus 0 stable map of pseudo-holomorphic spheres, as introduced by Kontsevich in \cite{Kontsevich}. (For an exhaustive exposition of those stable maps see the book by D.~McDuff and D.~A.~Salamon \cite{MS}.)\\

\noindent{\bf Remarks.} It follows from condition (\ref{defi:st dist}) that if $\al\in V$ then there exists at most one $\beta\in T$ such that $\al E\beta$. This means that every vortex is a \emph{leaf} of the tree $T$. Furthermore, if $\al_0\in V$ then it follows that $T=V$ consists only of $\al_0$. It follows that if $T$ has at least two elements, then $\al_0\in\BAR T$, and hence $\BAR T\neq\emptyset$. Furthermore, if $\al\in V$ and $\be\in T$ are such that $\al E\be$ then $\be\in\BAR T$. This means that two vortices cannot be adjacent. $\Box$
\begin{rmk}\label{rmk:z i}\rm If $1\leq i\leq k$ is such that $\al_i\in V$ then $z_i\neq\infty$. This follows from condition (\ref{defi:st dist}). $\Box$
\end{rmk}

We fix a stable map $(\W,\z)$ as in Definition \ref{defi:st} and $\al\in T$. We define the \emph{set of nodal points at $\al$} to be 
\begin{equation}\label{eq:Z al}Z_\al:=\{z_{\al\beta}\,|\, \beta\in T,\al E\beta\}\sub S^2,\end{equation}
the set of \emph{marked points on $\al$} to be 
\[\big\{z_i\,|\,\al_i=\al,i\in\{0,\ldots,k\}\big\},\] 
and the \emph{set $Y_\al$ of special points} to be the union of $Z_\al$ and the set of marked points at $\al$. The stability condition (\ref{defi:st st}) says that if $\al\in V$ is such that $\E(W_\al)=0$ then $\al$ carries at least one marked point on $\R^2$. (It also carries a special point at $\infty$.) Furthermore, if $\al\in\BAR T$ is such that $\bar u_\al$ is a constant map, then $\al$ carries at least three special points.

The stability condition ensures that the action of a natural reparametrization group on the set of \emph{simple} stable maps of a given type is free. (See Proposition \ref{prop:simple} below.) This will be needed in order to show that the evaluation map on the set of non-trivial vortices (with marked points) is a pseudo-cycle. \\

\noindent{\bf Examples.} 
The easiest example of a stable map consists of the tree with one vertex $T=V=\{\al_0\}$, a vortex $W\in \ME$, the marked point $z_0:=\infty$ and a finite number of distinct points $z_i\in \R^2$, $i=1,\ldots,k$, where $k\geq1$ if $\E(W)=0$. 

As another example we set $V:=\emptyset$. Then a stable map in the new sense is a genus 0 stable map of $\bar J$-holomorphic spheres in $\BAR M$. $\Box$ 

\begin{xpl}\rm\label{xpl:stable} We set $k:=0$, choose an integer $\ell\in\N_0$, and define 
\[V:=\{1,\ldots,\ell\},\quad\BAR T:=\{0\},\quad E:=\big\{(0,1),\ldots,(0,\ell),(1,0),\ldots,(\ell,0)\big\},\]
\[\al_0:=0,\quad z_{i0}:=\infty,\,\forall i=1,\ldots,\ell.\] 
Let $z_0,z_{0i}\in S^2$, $i=1,\ldots,\ell$ be distinct points, $W_i\in\ME$ be such that $\E(W_i)>0$, for $i=1,\ldots,\ell$, and $\bar u_0$ a $\bar J$-holomorphic sphere. If $\ell\leq 1$ then assume that $\bar u_0$ is nonconstant. Then the tuple
\[(\W,\z):=\big(V,\BAR T,E,(W_i)_{i\in\{1,\ldots,\ell\}},\bar u_0,(z_{ij})_{i Ej},(0,z_0)\big)\]
is a stable map. (See Figure \ref{fig:example stable}.) $\Box$
\end{xpl}
\begin{figure}
  \centering
\leavevmode\epsfbox{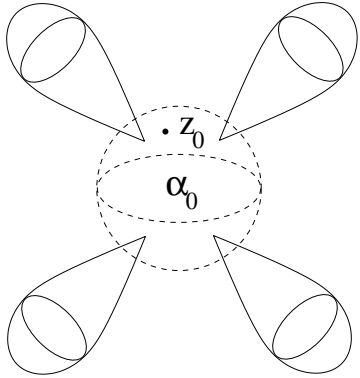}
\caption{This is the stable map described in Example \ref{xpl:stable} with $\ell:=4$.}
  \label{fig:example stable} % magnification factor = 57 %
\end{figure}
\begin{rmk}\rm\label{rmk:al 0 map} In the previous example with $\ell=2$ stability of the component $\al:=0\in\BAR T$ uses the ``additional'' marked point $z_0$. 
This example (with $\ell=2$) will be used in the argument showing that the quantum Kirwan map is a ring homomorphism. This is one reason for having the extra marked point. (Another one is explained in Remark \ref{rmk:al 0 conv} below.) $\Box$
\end{rmk}
\begin{xpl}\label{xpl:stable map} \rm Let $(M,\om,J,G):=(\R^2,\om_0,i,S^1)$, equip $\g:=\Lie(S^1)=i\R$ with the standard inner product, and consider the action of $S^1\sub\C$ on $\R^2=\C$ by multiplication of complex numbers. We define a moment map $\mu:\R^2\to\g$ for this action by $\mu(z):=\frac i2(1-|z|^2)$. In this setting, stable maps are classified in terms of their combinatorial structure $(V,\BAR T,E)$, the location of the special points, and for each $\al\in V$, a point in some symmetric product of $\R^2$. Each such point corresponds to a vortex on $\R^2$. (See the forth-coming article \cite{ZiExStable}.) $\Box$
\end{xpl}
For the definition of the quantum Kirwan map one needs to show that a certain natural evaluation map on the space of vortices on $\R^2$ (see \cite{ZiConsEv}) is a pseudo-cycle. This will rely on the fact that its omega limit set has codimension at least two. In order to show this, one needs to cut down the dimensions of the ``boundary strata'' by dividing by the actions of suitable ``reparametrization groups''. We define these groups as follows.

We fix two finite sets $\BAR T,V$ and a tree relation $E$ on the disjoint union $T:=\BAR T\disj V$ such that every element of $V$ is a leaf. We define the \emph{reparametrization group} $G_T$ as follows. We define $\Aut(T):=\Aut\big(\BAR T,V,E\big)$ to be the subgroup of all automorphisms $f$ of the tree $(T,E)$, satisfying $f(\BAR T)=\BAR T$ and $f(V)=V$. 

We denote by $\PSL$ the group of M\"obius transformations, i.e., biholomorphic maps on $S^2\iso\CP^1$, and by $\TR$ the group of translations of the plane $\R^2$. We define $\Aut_\al:=\TR$ if $\al\in V$, and $\Aut_\al:=\PSL$ if $\al\in\BAR T$. We denote by $\Aut_T$ the set of collections $(\phi_\al)_{\al\in T}$, such that $\phi_\al\in\Aut_\al$, for every $\al\in T$. The group $\Aut(T)$ acts on $\Aut_T$ by 
\[f\cdot(\phi_\al)_{\al\in T}:=(\phi_{f^{-1}(\al)})_{\al\in T}.\]
\begin{defi}\label{defi:G T} We define $G_T:=G_{\BAR T,V,E}$ to be the semi-direct product of $\Aut(T)$ and $\Aut_T$ induced by this action. 
\end{defi}
The group $\PSL$ acts on the set of $\bar J$-holomorphic maps $S^2\to\BAR M$ by 
\[\phi^*f:=f\circ\phi.\] 
Furthermore, the group $\TR$ acts on the set $\ME$ by 
\begin{equation}\label{eq:phi []}\phi^*[P,A,u]:=\big[\phi^*P,\Phi^*(A,u)\big],\end{equation}
where $\Phi:\phi^*P\to P$ is defined by $\Phi(z,p):=p$, and $[P,A,u]$ denotes the equivalence class of $(P,A,u)$. By the \emph{combinatorial type} of a stable map $(\W,\z)$ as in (\ref{eq:W z}) we mean the tuple $T:=(V,\BAR T,E)$. We denote by
\[\M(T):=\M(\BAR T,V,E)\]
the set of all \emph{stable maps of (combinatorial) type $T$}. $G_T$ acts on $\M(T)$ as follows. For every $(f,(\phi_\al))\in G_T$ and $(\W,\z)\in\M(T)$ we define
\begin{eqnarray}\nn&W'_\al:=\phi_{f(\al)}^*W_{f(\al)},\,\forall\al\in T,\quad z'_{\al\be}:=\phi_{f(\al)}^{-1}(z_{f(\al)f(\be)}),\,\forall\al E\be,&\\
\nn&\al'_i:=f(\al_i),\,z'_i:=\phi_{\al'_i}^{-1}(z_{\al'_i}),\,i=0,\ldots,k.&\end{eqnarray}
(Here we set $W_\al:=\bar u_\al$ if $\al\in\BAR T$. Furthermore, for $\phi\in\TR$ we set $\phi(\infty):=\infty$.) 
\begin{defi}\label{defi:action G T} We define 
\[(f,(\phi_\al))^*(\W,\z):=\big(V,\BAR T,E,(W'_\al)_{\al\in T},(z'_{\al\be})_{\al E\be},(\al'_i,z'_i)_{i=0,\ldots,k}\big).\]
\end{defi}
This defines an action of $G_T$ on $\M(T)$. Let now $(M,J)$ be an almost complex manifold. Recall that a $J$-holomorphic map $u:S^2\to M$ is called \emph{multiply covered} iff there exists a holomorphic map $\phi:S^2\to S^2$ of degree at least two, and a $J$-holomorphic map $v:S^2\to M$, such that $u=v\circ\phi$. Otherwise, $u$ is called \emph{simple}. 

Returning to the setting of the current section, let $\bar u\in C^\infty(S^2,\BAR M)$ be a $\bar J$-holomorphic map. We call a stable map $(\W,\z)$ \emph{simple} iff the following conditions hold: For every $\al\in\BAR T$ the $\bar J$-holomorphic map $\bar u_\al$ is constant or simple. Furthermore, if $\al,\be\in V$ are such that $\al\neq\be$ and $\E(W_\al)\neq0$, and $\phi\in\TR$, then $\phi^*W_\al\neq W_\be$. Moreover, if $\al,\be\in\BAR T$ are such that $\al\neq\be$ and $\bar u_\al$ is nonconstant, and $\phi\in\PSL$, then $\phi^*\bar u_\al=\bar u_\al\circ\phi\neq\bar u_\be$. We denote by
\[\M^*(T):=\M^*(\BAR T,V,E)\sub\M(T)\]
the subset of all \emph{simple stable maps}. The action of $G_T$ on $\M(T)$ leaves $\M^*(T)$ invariant. 
\begin{prop}\label{prop:simple} The action of $G_T$ on $\M^*(T)$ is free. 
\end{prop}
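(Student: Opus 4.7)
Suppose $(f,(\phi_\al))\in G_T$ fixes a simple stable map $(\W,\z)\in\M^*(T)$; the aim is to show $f=\id_T$ and $\phi_\al=\id$ for every $\al\in T$. The approach mirrors the freeness proof for simple stable maps of pseudo-holomorphic spheres, adapted to incorporate the vortex components. For each non-ghost component, simplicity pins it: if $\al\in V$ with $\E(W_\al)>0$, the identity $\phi_{f(\al)}^*W_{f(\al)}=W_\al$ together with the simplicity clause ``no two distinct energetic vortices are $\TR$-equivalent'' forces $f(\al)=\al$, and the resulting translation symmetry $\phi_\al^*W_\al=W_\al$ would render $e_{W_\al}$ periodic on $\R^2$; summing $\int_{\R^2}e_{W_\al}\om_0$ over the lattice of translates of a fundamental strip against $\E(W_\al)<\infty$ forces $e_{W_\al}\equiv0$, contradicting $\E(W_\al)>0$. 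Analogously, for $\al\in\BAR T$ with $\bar u_\al$ nonconstant, simplicity gives $f(\al)=\al$, and then $\phi_\al^*\bar u_\al=\bar u_\al$ combined with the density of the injective set of a simple $\bar J$-holomorphic sphere forces $\phi_\al=\id$.

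The fixing condition immediately gives $f(\al_i)=\al_i$ and $\phi_{\al_i}(z_i)=z_i$ for every marked index $i$. For a ghost vortex $\al\in V$ with $\E(W_\al)=0$, stability produces $i\ge 1$ with $\al_i=\al$, and Remark~\ref{rmk:z i} ensures $z_i\in\R^2$ is finite, so the translation $\phi_\al\in\TR$ fixes a finite point and equals $\id$. After these two steps every vertex in $V$ and every non-ghost or marked vertex in $\BAR T$ is $f$-fixed with trivial reparametrization. The only possibly non-fixed vertices are ghost spheres in $\BAR T$ without marked points.

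Let $F\sub T$ be the subtree of $f$-fixed vertices; suppose for contradiction $F\neq T$ and let $C$ be a connected component of $T\setminus F$. Each $\al\in C$ is then a constant ghost in $\BAR T$ without marked points, so stability gives $T$-degree at least $3$ with every special point being a node, and every non-$C$ neighbor of $\al$ lies in $F$. Summing $T$-degrees over the $n$ vertices of $C$ (which has $n-1$ internal edges) yields at least $n+2\ge 3$ boundary edges from $C$ to $F$, so some $\al\in C$ has at least two $F$-neighbors. Since $f|_F=\id$, the neighbor sets satisfy $N(f(\al))\cap F=f(N(\al)\cap F)=N(\al)\cap F$, so $\al$ and $f(\al)$ are distinct vertices of $T$ sharing at least two common neighbors, violating uniqueness of paths in a tree. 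Hence $F=T$, i.e., $f=\id_T$. Once $f=\id$, the relations $\phi_\al(z_{\al\gamma})=z_{\al\gamma}$ and $\phi_\al(z_i)=z_i$ hold on each of the at-least-three distinct special points on every remaining $\al\in\BAR T$ guaranteed by stability, so the Möbius transformation $\phi_\al$ is forced to equal $\id$. The main obstacle is the combinatorial control of how $f$ could permute the ghost subtree, handled here by the $n{+}2$ boundary-edge inequality together with the tree-uniqueness argument; once this is in hand, the translation/Möbius rigidity steps close the proof.
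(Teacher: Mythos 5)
Your proof is correct and takes essentially the same route the paper indicates: the stability condition, freeness of the translation action on finite-energy vortices (your periodicity-of-$e_{W_\al}$ argument is exactly the proof of Lemma \ref{le:trans}), and somewhere-injectivity of simple $\bar J$-holomorphic spheres, together with the elementary tree combinatorics the paper leaves implicit. Your degree-counting argument ruling out a nontrivial permutation of the unmarked ghost spheres is a valid way of making that ``elementary argument'' precise.
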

\begin{proof}[Proof of Proposition \ref{prop:simple}] This follows from an elementary argument, using the stability condition (\ref{defi:st st}), the freeness of the action of $\TR$ on $\ME$ (see Lemma \ref{le:trans} in the appendix), and the fact that every simple holomorphic sphere is somewhere injective (see \cite[Proposition 2.5.1]{MS}).
\end{proof}
Heuristically, this result implies that the quotient 
\[\M^*(T)/G_T\]
is canonically a smooth finite dimensional manifold. This will be important for the pseudo-cycle property of the evaluation map defined on the set of vortices on $\R^2$.

\begin{rmk}\label{rmk:Isom +}\rm The action of $\TR$ on $\ME$ extends to an action of the group $\Isom^+(\R^2)$ of orientation preserving isometries of $\R^2$. Hence one may be tempted to adjust the definition of the reparametrization group $G_T$ and its action on $\M^*(T)$ accordingly. However, for the purpose of defining the quantum Kirwan map, this is not possible. The reason is that in general there is no evaluation map on $\ME$ that is invariant under the action of $\Isom^+(\R^2)$. This is a crucial difference between vortices and pseudo-holomorphic curves. Note also that the action of $\Isom^+(\R^2)$ on the set of vortices of positive energy is not always free. (For an example see \cite{ZiExStable}.) See also the Remark \ref{rmk:conv rot}. $\Box$
\end{rmk}
\subsection{Convergence against a stable map}\label{subsec:conv}
In order to define convergence, we need the following notation. Let $\al\in T$ and $i=0,\ldots,k$. We define $z_{\al,i}\in S^2$ as follows. If $\al=\al_i$ then we set 
\begin{equation}
  \label{eq:z al i}z_{\al,i}:=z_i.
\end{equation}
Otherwise let $\be\in \BAR T$ be the unique vertex such that the chain of vertices of $T$ running from $\al$ to $\al_i$ is given by $(\al,\be,\ldots,\al_i)$. ($\be=\al_i$ is also allowed.) We define 
\begin{equation}
\label{eq:z al i }z_{\al,i}:=z_{\al\be}.
\end{equation}
We define 
\begin{equation}\label{eq:M *}M^*:=\big\{x\in M\,|\,\textrm{if }g\in G:\,gx=x\Rightarrow g=\one\big\}.\end{equation}
Note that $\mu^{-1}(0)\sub M^*$ by our standing hypothesis (H). Recall the definitions (\ref{eq:BAR u W},\ref{eq:Z al},\ref{eq:phi []}) of $\bar u_W,Z_\al$ and the action of $\TR$ on $\ME$. Let $k\geq0$, for $\nu\in\N$ let $W_\nu\in\ME$ be a vortex and $z^\nu_1,\ldots,z_k^\nu\in \R^2$ be points, and let 
\[(\W,\z):=\big(V,\BAR T,E,(W_\alpha)_{\alpha\in T},(z_{\alpha\beta})_{\alpha E\beta},(\alpha_i,z_i)_{i=0,\ldots,k}\big)\]
be a stable map. Here we use the notation $W_\al:=\bar u_\al$ if $\al\in\BAR T$. For a $\bar J$-holomorphic map $f:S^2\to\BAR M$ we denote its energy by 
\[E(f)=\int_{S^2}f^*\BAR\om.\]
Let $\Si$ be a compact smooth surface (possibly with boundary). Recall the definition (\ref{eq:WW}) of $\WWP(\Si)$. We define the \emph{$C^\infty$-topology $\tau_\Si$} on this set as follows: We fix a smooth principal $G$-bundle $P$ over $\Si$ and a $C^\infty$-open subset $\UU\sub\A(P)\x C^\infty_G(P,M)$. (This means that $\UU$ is $C^k$-open for some $k\in\N_0$.) We define
\[\BAR\UU:=\big\{[P,A,u]\,\big|\,(A,u)\in\UU\big\}.\]
We define 
\begin{equation}\label{eq:tau Si}\tau_\Si:=\big\{\BAR\UU\,\big|\,P,\,\UU\textrm{ as above}\big\}.
\end{equation}
Let $\Si$ be a smooth surface, $W=[P,A,u]\in\WWP(\Si)$, and $\Om\sub\Si$ an open subset with compact closure and smooth boundary. We define the \emph{restriction $W|\BAR\Om$} to be the equivalence class of the pullback of $(P,A,u)$ under the inclusion map $\BAR\Om\to\Si$. 
\begin{defi}[Convergence]\label{defi:conv} The sequence $(W_\nu,z_0^\nu:=\infty,z_1^\nu,\ldots,z_k^\nu)$ is said to converge to $(\W,\z)$ as $\nu\to\infty$ iff the limit $E:=\lim_{\nu\to\infty} E(W_\nu)$ exists,
\begin{equation}
  \label{eq:E V bar T}E=\sum_{\al\in T}E(W_\al),
\end{equation}
and there exist M\"obius transformations $\phi_\al^\nu:S^2\to S^2$, for $\al\in T:=V\disj\BAR T$, $\nu\in\N$, such that the following conditions hold.

\begin{enui}\item \label{defi:conv phi z} 
\begin{itemize}
\item If $\al\in V$ then $\phi_\al^\nu$ is a translation on $\R^2$. 
\item For every $\al\in \BAR T$ we have $\phi_\al^\nu(z_{\al,0})=\infty$, where $z_{\al,0}$ is defined as in (\ref{eq:z al i}), (\ref{eq:z al i }). 
\item Let $\al\in \BAR T$ and $\psi_\al$ be a M\"obius transformation such that $\psi_\al(\infty)=z_{\al,0}$. Then the derivatives $(\phi_\al^\nu\circ\psi_\al)'(z)$ converge to $\infty$, for every $z\in\R^2=\C$. 
\end{itemize}
\item \label{defi:conv al be} If $\al,\be\in T$ are such that $\al E\beta$ then $(\phi_\al^\nu)^{-1}\circ\phi_\beta^\nu\to z_{\al\beta}$, uniformly on compact subsets of $S^2\wo\{z_{\beta\al}\}$.
\item \label{defi:conv w} 
\begin{itemize}
\item Let $\al\in V$ and $\Om\sub R^2$ be an open subset with compact closure and smooth boundary. Then the restriction of $(\phi_\al^\nu)^*W_\nu$ to $\BAR\Om$ converges to $W_\al$ with respect to the topology $\tau_{\BAR\Om}$ (as defined in (\ref{eq:tau Si})).
\item Fix $\al\in \BAR T$. Let $Q$ be a compact subset of $S^2\wo (Z_\al\cup\{z_{\al,0}\})$. For $\nu$ large enough, we have
\[\bar u_\al^\nu:=\bar u_{W_\nu}\circ\phi_\al^\nu(Q)\sub M^*/G,\]
and $\BAR{u_\al^\nu}$ converges to  $\bar u_\al$ in $C^1$ on $Q$. (Here $\bar u_{W_\nu}$ is defined as in (\ref{eq:BAR u W}).) 
\end{itemize}
\item\label{defi:conv z} We have $(\phi_{\al_i}^\nu)^{-1}(z_i^\nu)\to z_i$ for every $i=1,\ldots,k$. 
\end{enui}
\end{defi}
(See Figure \ref{fig:convergence}.)
\begin{figure} 
  \centering
\leavevmode\epsfbox{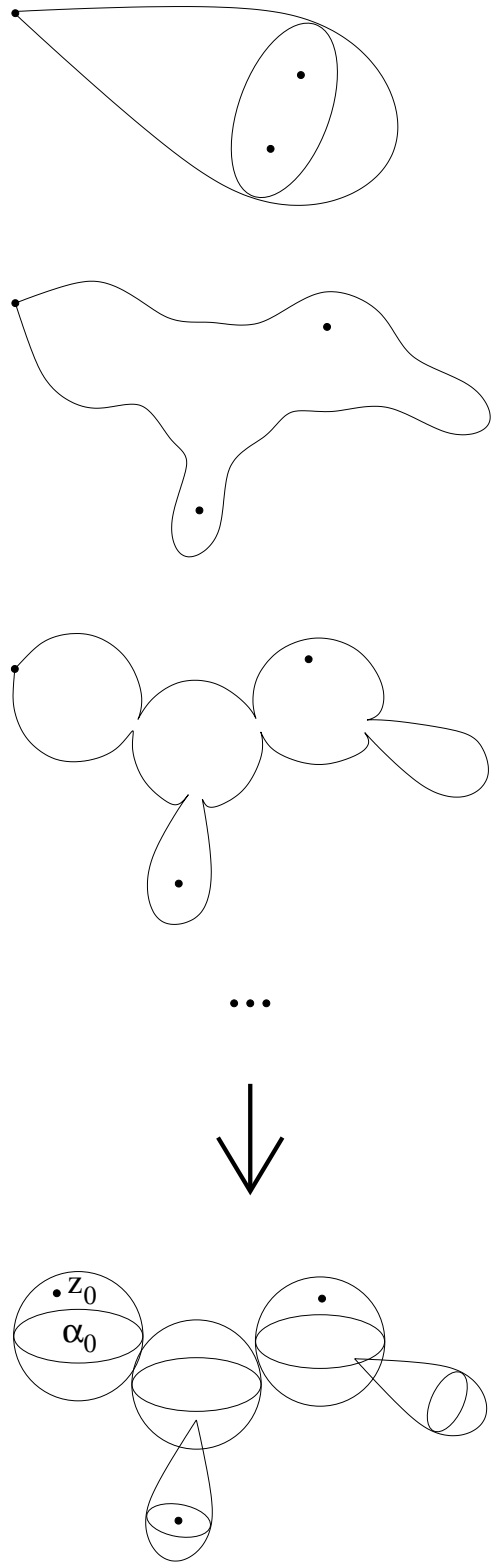}
\caption{Convergence of a sequence of vortices on $\R^2$ against a stable map.}
  \label{fig:convergence} % magnification factor = 45 %
\end{figure}
This definition is based on the notion of convergence of a sequence of pseudo-holomorphic spheres to a genus 0 stable map of pseudo-holomorphic spheres. (For that notion see for example \cite{MS}).\\

\noindent{\bf Remark.} The last part of condition (\ref{defi:conv phi z}) and the second part of condition (\ref{defi:conv w}) capture the idea of catching a pseudo-holomorphic sphere in $\BAR M$ by ``zooming out'': Fix $\al\in\BAR T$, and consider the case $z_{\al,0}=\infty$. Then there exist $\lam_\al^\nu\in\C\wo \{0\}$ and $z_\al^\nu\in\C$ such that $\phi_\al^\nu(z)=\lam_\al^\nu z+ z_\al^\nu$. 

It follows from a direct calculation that $(\phi_\al^\nu)^*W_\nu$ is a vortex with respect to the area form $\om_\Si=|\lam_\al^\nu|^2\om_0$, where $\om_0$ denotes the standard area form on $\R^2$. 

The last part of condition (\ref{defi:conv phi z}) means that $\lam_\al^\nu\to\infty$, for $\nu\to\infty$. Hence in the limit $\nu\to\infty$ we obtain the equations 
\[\bar\del_{J,A}(u)=0,\quad\mu\circ u=0.\]
These correspond to the $\bar J$-Cauchy-Riemann equations for a map from $\R^2=\C$ to $\BAR M$. (See Proposition \ref{prop:bar del J}.) The second part of (\ref{defi:conv w}) imposes that the sequence of rescaled vortices converges (in a suitable sense) to the $\bar J$-holomorpic sphere $\bar u_\al$. $\Box$\\

\noindent{\bf Remark.} The ``energy-conservation'' condition (\ref{eq:E V bar T}) has the important consequence that the stable map $(\W,\z)$ represents the same equivariant homology class as the vortex $W_\nu$, for $\nu$ large enough. (See \cite{ZiConsEv}.) $\Box$ 
\begin{rmk}\label{rmk:al 0 conv}\rm One purpose of the additional marked point $(\al_0,z_0)$ is to be able to formulate the second part of condition (\ref{defi:conv w}). (Another one is explained in Remark \ref{rmk:al 0 map} above.) For $\al\in\BAR T$ and $\nu\in\N$ the map $G u_\nu\circ\phi_\al^\nu$ is only defined on the subsets $(\phi_\al^\nu)^{-1}(\R^2)\sub S^2$. Since by condition (\ref{defi:conv phi z}) we have $\phi_\al^\nu(z_{\al,0})=\infty$, the composition $\bar u_{W_\nu}\circ\phi_\al^\nu:Q\to M/G$ is well-defined for each compact subset $Q\sub S^2\wo (Z_\al\cup\{z_{\al,0}\})$. Hence the the second part of condition (\ref{defi:conv w}) makes sense. $\Box$
\end{rmk}
\noindent{\bf Example.} Let $M,\om$ etc.~be as in Example \ref{xpl:stable map}. Then a sequence $W_\nu\in\ME$ converges to a stable map if and only if the total degree of $W_\nu$ equals the sum of the degrees of the vortex components of the stable map, and for each $\al\in V$, up to translations, the point in the symmetric product of $\R^2$ corresponding to $W_\nu$, converges to the point corresponding to the vortex $W_\al$. (See \cite{ZiExStable}.) $\Box$ 
\begin{rmk}\label{rmk:conv rot}\rm One conceptual difficulty in defining the notion of convergence is the following. (Compare also to Remark \ref{rmk:Isom +}.) Consider the group $\Isom^+(\Si)$ of orientation preserving isometries of $\Si$ (with respect to the metric $\om_\Si(\cdot,j\cdot)$). (This coincides with the group of diffeomorphisms of $\Si$ that preserve the pair $(\om_\Si,j)$.) This group acts on $\WWP(\Si)$ (defined as in (\ref{eq:WW})), as in (\ref{eq:phi []}). The set $\ME$ of finite energy vortices is invariant under this action. 

Hence naively, in the definition of convergence one would allow $\phi_\al^\nu$ to be an orientation preserving isometry of $\R^2$, rather than just a translation. The problem is that with this less restrictive condition, there is no evaluation map on the set of stable maps, that is continuous with respect to convergence. (Such a map is needed for the definition of the quantum Kirwan map.)

Note here that we cannot define evaluation of a vortex $W$ at some point $z\in\Si$ by choosing a representative of $W$ and evaluating it at some point in the fiber over $z$, since this depends on the choices. Instead, evaluation of $W$ at $z$ yields a point in the Borel construction for the action of $G$ on $M$. (See \cite{ZiConsEv}.) $\Box$
\end{rmk}
\section{Compactness modulo bubbling for rescaled vortices}\label{sec:comp}
In this section we consider a sequence of rescaled vortices on $\R^2$ with image in a fixed compact subset of $M/G$ and uniformly bounded energies. We assume that $(M,\om)$ is aspherical. The main result, Proposition \ref{prop:cpt mod} below, is that there exists a subsequence that away from finitely many bubbling points, converges to either a rescaled vortex on $\R^2$ or a $\bar J$-holomorphic sphere in $\BAR M$. This is a crucial ingredient of the proof of Theorem \ref{thm:bubb}.

In order to explain the result, let $M,\om,G,\g,\lan\cdot,\cdot\ran_\g,\mu,J,\Si,\om_\Si,j$ be as in Section \ref{sec:main}. Recall the definition (\ref{eq:e W}) of the energy density $e^{\om_\Si,j}_W:=e_W$ of a class $W\in\WWP(\Si)$.
\begin{rmk}\rm\label{rmk:trafo e vort} This density has the following transformation property: Let $\Si'$ be another real surface, and $\phi:\Si'\to\Si$ a smooth immersion. We define the pullback $\phi^*W$ as in (\ref{eq:phi []}). Then a straight-forward calculation shows that 
\begin{equation}\label{eq:e phi *}e^{\phi^*(\om_\Si,j)}_{\phi^*W}=e^{\om_\Si,j}_W\circ\phi,
\end{equation} 
and $W$ is a vortex with respect to $(\om_\Si,j)$ if and only if $\phi^*W$ is a vortex with respect to $\phi^*(\om_\Si,j)$. $\Box$
\end{rmk}
\begin{rmk}\rm\label{rmk:e vort} If $W$ is a vortex (with respect to $(\om_\Si,j)$) then 
\begin{equation}\label{eq:e vort}e^{\om_\Si,j}_W=|\dd_{J,A}u|^2+|\mu\circ u|^2,
\end{equation}
where $\dd_{J,A}u$ is the complex linear part of $d_Au$, viewed as a one-form on $\Si$ with values in $(u^*TM)/G\to\Si$. This follows from the vortex equations 
(\ref{eq:BAR dd J A u},\ref{eq:F A mu}). $\Box$ 
\end{rmk}
Let $R\in[0,\infty]$ and $W\in\WWP(\Si)$. Consider first the case $0<R<\infty$. Then we define the \emph{$R$-energy density of $W$} to be
\begin{equation}\label{eq:e R W R}e^R_W:=R^2e^{R^2\om_\Si,j}_W.\end{equation}
This means that 
\begin{equation}\label{eq:e R W 1 2}e^R_W=\frac12\big(|d_Au|_{\om_\Si}^2+R^{-2}|F_A|_{\om_\Si}^2+R^2|\mu\circ u|^2\big),
\end{equation}
where the subscript ``$\om_\Si$'' means that the norms are taken with respect to the metric $\om_\Si(\cdot,j\cdot)$. 

If $R=0$ or $\infty$ then we define 
\begin{equation}\nn e^R_W:=\frac12|d_Au|_{\om_\Si}^2.
\end{equation}

We define the \emph{$R$-energy of $W$} on a measurable subset $X\sub\Si$ to be
\begin{equation}\nn E^R(W,X):=\int_Xe^R_W\om_\Si\in [0,\infty].\end{equation}
The density and the energy have the following rescaling property: Consider the case $(\Si,\om_\Si,j)=(\R^2,\om_0,i)$, where $\om_0$ denotes the standard area form on $\R^2$. Assume that $0<R<\infty$, and consider the map $\phi:\R^2\to\R^2$ defined by $\phi(z):=Rz$. Then equality (\ref{eq:e phi *}) implies that
\[e^R_{\phi^*W}=R^2e^{\om_0,i}_W\circ\phi.\]
\noindent{\bf Remark.} The factor $R^2$ in the definition (\ref{eq:e R W R}) is important for the subsequent analysis (bubbling, convergence with bounded energy density etc.). However, the density $e^{R^2\om_\Si}_W$ is more intrinsic. (Compare to (\ref{eq:e phi *}).) $\Box$ 

The \emph{(symplectic) $R$-vortex equations} are the equations (\ref{eq:BAR dd J A u},\ref{eq:F A mu}) with $\om_\Si$ replaced by $R^2\om_\Si$, i.e., the equations 
\begin{equation}\label{eq:vort P R}\bar \dd_{J,A}(u)=0,\quad F_A+R^2(\mu\circ u)\om_\Si=0.\end{equation}
In the case $R=\infty$ we interpret the second equation in (\ref{eq:vort P R}) as
\[\mu\circ u=0.\]
\noindent{\bf Remark.} Consider the case $(\Si,\om_\Si,j)=(\R^2,\om_0,i)$ and $0<R<\infty$, and the map $\phi:\R^2\to\R^2$ given by $\phi(z):=Rz$. It follows from Remark \ref{rmk:trafo e vort} that a class $W\in\WWP(\R^2)$ is a vortex if and only if $\phi^*W$ is an $R$-vortex. $\Box$\\

\noindent{\bf Remark.} The rescaled energy density has the following important property. Let $R_\nu\in(0,\infty)$ be a sequence that converges to some $R_0\in[0,\infty]$, and for $\nu\in\N_0$ let $W_\nu$ be an $R_\nu$-vortex. If $W_\nu$ converges to $W_0$ in a suitable sense then
\begin{equation}\nn e^{R_\nu}_{W_\nu}\to e^{R^0}_{W_0}.\end{equation} 
(See Lemma \ref{le:conv e} below.) 
In the proof of Theorem \ref{thm:bubb}, this will be used in order to show that locally on $\R^2$ no energy is lost in the limit $\nu\to\infty$. $\Box$ 

We define the \emph{minimal energy $\Emin$} as follows. Recall the definition (\ref{eq:M}) of $\M$, and that we denote the energy of a $\bar J$-holomorphic map $f:S^2\to\BAR M$ by $E(f)=\int_{S^2}f^*\BAR\om$. We define
\begin{equation}\label{eq:E V}E_V:=\inf\big(\big\{E(W)\,\big|\,W\in\M:\,\BAR{\textrm{image}(W)}\textrm{ compact}\big\}\cap(0,\infty)\big),
\end{equation}
\[\BAR E:=\inf\big(\big\{E(f)\,\big|\,f\in C^\infty(S^2,\BAR M):\,\bar\dd_{\bar J}(f)=0\big\}\cap(0,\infty)\big),\]
\begin{equation}\label{eq:Emin}\Emin:=\min\{E_V,\BAR E\}.\end{equation}
Here we used the convention that $\inf\emptyset=\infty$. Assume that $M$ is equivariantly convex at $\infty$. Then Corollary \ref{cor:quant} below implies that $E_V>0$. Furthermore, our standing assumption (H) implies that $\BAR M$ is closed. It follows that $\BAR E>0$ (see for example \cite[Proposition 4.1.4]{MS}). Hence the number $\Emin$ is positive. 

The results of this and the next section are formulated for connections and maps of Sobolev regularity. This is a natural setup for the relevant analysis. Furthermore, we restrict our attention to the trivial bundle $\Si\x G$. (Since every smooth bundle over $\R^2$ is trivial, this suffices for the proof of the main result.) 

We fix $p>2$ and naturally identify the affine space of connections on $\Si\x G$ of local Sobolev class $W^{1,p}_\loc$ with the space of one-forms on $\Si$ with values in $\g$, of class $W^{1,p}_\loc$. Furthermore, we identify the space of $G$-equivariant maps from $\Si\x G$ to $M$ of class $W^{1,p}_\loc$ with $W^{1,p}_\loc(\Si,M)$. Finally, we identify the gauge group (i.e.,~group of gauge transformations) on $\Si\x G$ of class $W^{2,p}_\loc$ with $W^{2,p}_\loc(\Si,G)$. We denote
\[\WWW(\Si):=\Om^1(\Si,\g)\x C^\infty(\Si,M),\]
\[\WWW^p(\Si):=\big\{W^{1,p}_\loc\textrm{-one-form on }\Si\textrm{ with values in }\g\big\}\x W^{1,p}_\loc(\Si,M).\]
We call a solution $(A,u)\in\WWW^p(\Si)$ of the equations (\ref{eq:vort P R}) an \emph{$R$-vortex} over $\Si$. (It will be clear from the notation whether the term ``$R$-vortex'' refers to such a pair $(A,u)$ or to an equivalence class $W$ of triples $(P,A,u)$.) The gauge group $W^{2,p}_\loc(\Si,G)$ acts on $\WWW^p(\Si)$ by
\[g^*(A,u):=\big(\ad_{g^{-1}}A+g^{-1}dg,g^{-1}u\big),\]
where $\ad_{g_0}:\g\to\g$ denotes the adjoint action of an element $g_0\in G$. Let $w\in\WWW^p(\Si),R\in[0,\infty]$, and $X\sub\Si$ be a measurable subset. We denote by $[w]$ the gauge equivalence class of $w$, and denote
\[e^R_w:=e^R_{[w]},\quad E^R(w,X):=E^R([w],X)\textrm{ etc.}\]
For $r>0$ we denote by $B_r\sub\R^2$ the open ball of radius $r$, around 0.
\begin{prop}[Compactness modulo bubbling]\label{prop:cpt mod} Assume that $(M,\om)$ is aspherical. Let $R_\nu\in(0,\infty)$ be a sequence that converges to some $R_0\in (0,\infty]$, $r_\nu\in(0,\infty)$ a sequence that converges to $\infty$, and for every $\nu\in\N$ let $w_\nu=(A_\nu,u_\nu)\in\WWW^p(B_{r_\nu})$ be an $R_\nu$-vortex (with respect to $(\om_0,i)$). Assume that there exists a compact subset $K\sub M$ such that $u_\nu(B_{r_\nu})\sub K$, for every $\nu$. Suppose also that 
\[\sup_\nu E^{R_\nu}(w_\nu,B_{r_\nu})<\infty.\] 
Then there exist a finite subset $Z\sub\R^2$ and an $R_0$-vortex $w_0:=(A_0,u_0)\in\WWW(\R^2\wo Z)$, and passing to some subsequence, there exist gauge transformations $g_\nu\in W^{2,p}_\loc(\R^2\wo Z,G)$, such that the following conditions hold. 
\begin{enui}\item\label{prop:cpt mod:<} If $R_0<\infty$ then $Z=\emptyset$ and the sequence $g_\nu^*(A_\nu,u_\nu)$ converges to $w_0$ in $C^\infty$ on every compact subset of $\R^2$. 
\item\label{prop:cpt mod:=} If $R_0=\infty$ then on every compact subset of $\R^2\wo Z$, the sequence $g_\nu^*A_\nu$ converges to $A_0$ in $C^0$, and the sequence $g_\nu^{-1}u_\nu$ converges to $u_0$ in $C^1$.
\item\label{prop:cpt mod lim nu E eps} Fix a point $z\in Z$ and a number $\eps_0>0$ so small that $B_{\eps_0}(z)\cap Z=\{z\}$. Then for every $0<\eps<\eps_0$ the limit 
\[E_z(\eps):=\lim_{\nu\to\infty}E^{R_\nu}(w_\nu,B_\eps(z))\] 
exists and 
\[E_z(\eps)\geq\Emin.\] 
Furthermore, the function $(0,\eps_0)\ni\eps\mapsto E_z(\eps)\in[\Emin,\infty)$ is continuous.
\end{enui}
\end{prop}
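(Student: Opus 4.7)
\emph{Proof proposal.} The plan is to first locate a finite bubbling set $Z \sub \R^2$, then apply Proposition \ref{prop:cpt bdd} on its complement to extract a limit $R_0$-vortex, rule out $Z \neq \emptyset$ when $R_0 < \infty$ via the asphericity hypothesis, and deduce (\ref{prop:cpt mod lim nu E eps}) from Lemma \ref{le:conv e}. Define
\[Z := \big\{z \in \R^2 \;\big|\; \limsup_{\nu \to \infty} E^{R_\nu}(w_\nu, B_\eps(z)) \geq \Emin \text{ for every } \eps > 0\big\}.\]
If $z_1,\ldots,z_N$ are distinct points of $Z$, choose $\eps > 0$ so small that the balls $B_\eps(z_i)$ are pairwise disjoint and contained in $B_{r_\nu}$ for large $\nu$; a diagonal subsequence together with summing yields $N \Emin \leq \sup_\nu E^{R_\nu}(w_\nu, B_{r_\nu}) < \infty$, so $Z$ is finite (using $\Emin > 0$).

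For every compact $Q \sub \R^2 \wo Z$, compactness of $Q$ and the definition of $Z$ produce $\eps > 0$ and $\nu_0$ such that $E^{R_\nu}(w_\nu, B_{2\eps}(z)) < \Emin$ for all $z \in Q$ and $\nu \geq \nu_0$. An $\eps$-regularity (mean-value) inequality for rescaled vortices, with constants uniform in $R_\nu$, then converts this into a uniform pointwise bound on $e^{R_\nu}_{w_\nu}|_Q$. I apply Proposition \ref{prop:cpt bdd} on $Q$ and exhaust $\R^2 \wo Z$ by nested compacta; a diagonal subsequence produces gauge transformations $g_\nu \in W^{2,p}_\loc(\R^2 \wo Z, G)$ and an $R_0$-vortex $w_0 = (A_0, u_0) \in \WWW(\R^2 \wo Z)$ realizing the convergence required in (\ref{prop:cpt mod:<}) and (\ref{prop:cpt mod:=}).

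To see that $Z = \emptyset$ when $R_0 < \infty$, suppose $z \in Z$. By Hofer's lemma applied to $e^{R_\nu}_{w_\nu}$ near $z$, I can find $z_\nu \to z$, $c_\nu := e^{R_\nu}_{w_\nu}(z_\nu) \to \infty$, and radii $\rho_\nu \to 0$ with $\rho_\nu \sqrt{c_\nu} \to \infty$ and $e^{R_\nu}_{w_\nu} \leq 4 c_\nu$ on $B_{\rho_\nu}(z_\nu)$. Setting $\lam_\nu := 1/\sqrt{c_\nu}$ and pulling back by $w \mapsto z_\nu + \lam_\nu w$, the vortex rescaling rule yields $(R_\nu \lam_\nu)$-vortices on balls of radius $\rho_\nu \sqrt{c_\nu} \to \infty$, with normalized density $\leq 4$ and total energy bounded below by a positive constant (captured from the bubble). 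Since $R_\nu \lam_\nu = R_\nu/\sqrt{c_\nu} \to 0$, the limiting equations degenerate to $F_A = 0$ and $\bar\dd_{J,A}(u) = 0$; after Uhlenbeck gauge-fixing the flat limit connection and invoking Proposition \ref{prop:cpt bdd} in its $R \to 0$ form, the limit is a nonconstant finite-energy $J$-holomorphic map $\R^2 \to M$. Removal of singularities extends it to a nonconstant $J$-holomorphic sphere in $M$, contradicting asphericity of $(M,\om)$.

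For (\ref{prop:cpt mod lim nu E eps}), Lemma \ref{le:conv e} gives $e^{R_\nu}_{w_\nu} \to e^{R_0}_{w_0}$ on compact subsets of $\R^2 \wo Z$, so for $0 < \delta < \eps < \eps_0$
\[E^{R_\nu}\big(w_\nu, B_\eps(z) \wo B_\delta(z)\big) \longrightarrow E^{R_0}\big(w_0, B_\eps(z) \wo B_\delta(z)\big).\]
Fatou and the uniform energy bound give $E^{R_0}(w_0, B_{\eps_0}(z) \wo \{z\}) < \infty$; letting $\delta \to 0$ then shows the limit $E_z(\eps)$ exists, that $m_z := E_z(\eps) - E^{R_0}(w_0, B_\eps(z) \wo \{z\})$ is independent of $\eps$ and $\geq \Emin$ (directly from $z \in Z$), and that $\eps \mapsto E_z(\eps)$ is continuous. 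The main obstacle I foresee is Step 3: one must coordinate Uhlenbeck gauge-fixing with the hard rescaling so that the rescaled limit genuinely collapses to a pseudo-holomorphic map $\R^2 \to M$ of positive finite $\om$-energy, and not to a degenerate object for which removal of singularities or the asphericity contradiction fails.
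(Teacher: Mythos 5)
The crux of your argument is the passage from ``$E^{R_\nu}(w_\nu,B_{2\eps}(z))<\Emin$ for all $z\in Q$ and large $\nu$'' to a uniform bound on $e^{R_\nu}_{w_\nu}$ over $Q$, and as written this step has a genuine gap. The mean-value inequality you invoke is Lemma \ref{le:a priori}, transported to $R$-vortices by the rescaling $z\mapsto Rz$; this does give $e^{R}_{w}(z)\le\frac{8}{\pi\eps^2}E^{R}(w,B_\eps(z))$ with constants independent of $R$, but only under the hypothesis $E^{R}(w,B_\eps(z))\le E_0$, where $E_0=E_0(K)$ is the threshold of that lemma --- not $\Emin$. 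There is no reason why $\Emin\le E_0$: one has $\Emin=\min\{E_V,\BAR E\}$, and $\BAR E$, the minimal energy of a $\bar J$-sphere in $\BAR M$, bears no relation to $E_0(K)$. If $\Emin>E_0$, your small-energy hypothesis does not place you under the threshold of the mean-value inequality, and the uniform density bound on compact subsets of $\R^2\wo Z$ --- on which conditions (\ref{prop:cpt mod:<}) and (\ref{prop:cpt mod:=}) rest --- is unproven. The statement you actually need (density blow-up at a point forces concentration of at least $\Emin$ there) is precisely Proposition \ref{prop:quant en loss}(\ref{prop:hard limsup}), which the paper proves by a hard-rescaling argument that, when $R_0=\infty$, produces either a positive-energy vortex on $\R^2$ (energy $\ge E_V$) or, via Remark \ref{rmk:bar J}, a nonconstant $\bar J$-sphere in $\BAR M$ (energy $\ge\BAR E$). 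You carry out a hard rescaling only in the case $R_0<\infty$; the case $R_0=\infty$, the only one in which $Z$ can be nonempty, is exactly where this bridging argument is missing. Redefining $Z$ with threshold $\min\{E_0,\Emin\}$ would legitimize the $\eps$-regularity step, but then the bound $E_z(\eps)\ge\Emin$ in (\ref{prop:cpt mod lim nu E eps}) no longer follows ``directly from $z\in Z$'' and again requires the quantization argument.

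Two smaller points. In (\ref{prop:cpt mod lim nu E eps}), the existence of $\lim_{\nu}E^{R_\nu}(w_\nu,B_\eps(z))$ does not follow from the convergence of annulus energies alone: the ball energies $E^{R_\nu}(w_\nu,B_\delta(z))$ may oscillate for every fixed $\delta$ (for instance an amount of energy alternating between two values and concentrated in balls of radius $1/\nu$), so you must pass to a further diagonal subsequence along which $\lim_\nu E^{R_\nu}(w_\nu,B_{\delta_i}(z))$ exists for some $\delta_i\to0$; this is allowed by the statement but has to be said. Also, for the extraction of the $g_\nu$ you should apply Proposition \ref{prop:cpt bdd} once, with $\Om_\nu:=B_{r_\nu}\wo Z$, rather than on each compact set separately followed by a diagonal argument: gauge transformations obtained on different compacta need not agree where they overlap, and the patching is exactly what Proposition \ref{prop:cpt bdd} already supplies. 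Apart from these issues your architecture (finite bubbling set, Proposition \ref{prop:cpt bdd} off $Z$, asphericity plus hard rescaling when $R_0<\infty$, Lemma \ref{le:conv e} for (\ref{prop:cpt mod lim nu E eps})) coincides with the paper's; the paper organizes the search for bubbling points as a terminating induction driven by Proposition \ref{prop:quant en loss}, which is the ingredient your write-up elides.
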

\noindent{\bf Remark.} Convergence in conditions (\ref{prop:cpt mod:<},\ref{prop:cpt mod:=}) should be understood as convergence of the subsequence labelled by those indices $\nu$ for which $B_{r_\nu}$ contains the given compact set. $\Box$ 

This proposition will be proved on page \pageref{prop:cpt mod proof}. The strategy of the proof is the following. Assume that the energy densities $e^{R_\nu}_{w_\nu}$ are uniformly bounded on every compact subset of $\R^2$. Then the statement of Proposition \ref{prop:cpt mod} with $Z=\emptyset$ follows from an argument involving Uhlenbeck compactness, an estimate for $\bar\del_J$, elliptic bootstrapping (for statement (\ref{prop:cpt mod:<})), and a patching argument. 

If the densities are not uniformly bounded then we rescale the maps $w_\nu$ by zooming in near a bubbling point $z_0$ in a ``hard way'', to obtain a positive energy $\wt R_0$-vortex in the limit, with $\wt R_0\in\{0,1,\infty\}$. If $R_0<\infty$ then $\wt R_0=0$, and we obtain a $J$-holomorphic sphere in $M$. This contradicts symplectic asphericity, and thus this case is impossible. 

If $R_0=\infty$ then either $\wt R_0=1$ or $\wt R_0=\infty$, and hence either a vortex on $\R^2$ or a pseudo-holomorphic sphere in $\BAR M$ bubbles off. Therefore, at least the energy $\Emin$ is lost at $z_0$. Our assumption that the energies of $w_\nu$ are uniformly bounded implies that there can only be finitely many bubbling points. On the complement of these points a subsequence of $w_\nu$ converges modulo gauge. 

The bubbling part of this argument is captured by Proposition \ref{prop:quant en loss} below, whereas the convergence part is the content of the following result. 
\begin{prop}[Compactness with bounded energy densities]\label{prop:cpt bdd} Let $Z\sub\R^2$ be a finite subset, $R_\nu\geq 0$ be a sequence of numbers that converges to some $R_0\in [0,\infty]$, $\Om_1\sub\Om_2\sub\ldots\sub \R^2\wo Z$ open subsets such that $\bigcup_\nu\Om_\nu=\R^2\wo Z$, and for $\nu\in\N$ let $w_\nu=(u_\nu,A_\nu)\in\WWW^p(\Om_\nu)$ be an $R_\nu$-vortex. Assume that there exists a compact subset $K\sub M$ such that for $\nu$ large enough
\begin{equation}
  \label{eq:u nu Om K}u_\nu(\Om_\nu)\sub K. 
\end{equation}
Suppose also that for every compact subset $Q\sub \R^2\wo Z$, we have 
\begin{equation}
    \label{eq:sup e}
\sup\big\{\Vert e_{w_\nu}^{R_\nu}\Vert_{L^{\infty}(Q)}\,\big|\,\nu\in\N:\,Q\sub\Om_\nu\big\}<\infty.
  \end{equation}
Then there exists an $R_0$-vortex $w_0:=(A_0,u_0)\in\WWW(\R^2\wo Z)$, and passing to some subsequence, there exist gauge transformations $g_\nu\in W^{2,p}_\loc(\R^2\wo Z,G)$, such that the following conditions are satisfied.
\begin{enui}
\item\label{prop:cpt bdd < infty} If $R_0<\infty$ then $g_\nu^*w_\nu$ converges to $w_0$ in $C^\infty$ on every compact subset of $\R^2\wo Z$. 
\item\label{prop:cpt bdd = infty} If $R_0=\infty$ then on every compact subset of $\R^2\wo Z$, $g_\nu^*A_\nu$ converges to $A_0$ in $C^0$, and $g_\nu^{-1}u_\nu$ converges to $u_0$ in $C^1$. 
\end{enui}
\end{prop}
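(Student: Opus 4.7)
The plan is a local-to-global argument: fix an exhaustion of $\R^2\wo Z$ by compact subsets $Q_1\sub Q_2\sub\cdots$, prove the conclusion on each $Q_j$ (after gauge) and diagonalize. On a single compact $Q\sub\R^2\wo Z$ the strategy is Uhlenbeck-type gauge fixing for $A_\nu$, Arzel\`a--Ascoli for $u_\nu$, and elliptic bootstrapping (respectively the appendix's $\bar\dd_J$-compactness result) for higher regularity. Hypothesis (\ref{eq:sup e}) applied via (\ref{eq:e R W 1 2}) yields the pointwise bounds $|d_{A_\nu}u_\nu|\leq C$, $R_\nu^{-1}|F_{A_\nu}|\leq C$ and $R_\nu|\mu\circ u_\nu|\leq C$ on $Q$, uniformly in $\nu$.

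When $R_0<\infty$ the curvatures are uniformly bounded in $L^\infty(Q)$. On balls $B\sub Q$, Uhlenbeck's local gauge-fixing theorem supplies $W^{2,p}$ gauge transformations $g_\nu^B$ such that $(g_\nu^B)^*A_\nu$ is in Coulomb gauge and bounded in $W^{1,p}(B)$; since $p>2$, Rellich yields $C^0$-convergence of a subsequence. Using $d_{A'_\nu}u'_\nu=du'_\nu+L_{u'_\nu}A'_\nu$ with $(A'_\nu,u'_\nu):=((g_\nu^B)^*A_\nu,(g_\nu^B)^{-1}u_\nu)$, together with $u_\nu(\Om_\nu)\sub K$, gives uniform Lipschitz bounds on $u'_\nu$, so Arzel\`a--Ascoli produces a $C^0$-limit of the maps. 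The Coulomb-gauged system $\bar\dd_{J,A'_\nu}u'_\nu=0$, $d^*A'_\nu=0$, $F_{A'_\nu}+R_\nu^2(\mu\circ u'_\nu)\om_0=0$ is elliptic in $(A,u)$, and standard interior bootstrapping upgrades convergence to $C^\infty_\loc$ on $Q$, proving (\ref{prop:cpt bdd < infty}).

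When $R_0=\infty$ the estimate $|F_{A_\nu}|\leq CR_\nu$ blows up, so Coulomb gauge fixing fails directly. However $|\mu\circ u_\nu|\leq C/R_\nu\to 0$, so eventually $u_\nu$ takes values in an arbitrarily small tubular neighbourhood of $\mu^{-1}(0)\cap K$, on which $G$ acts freely by hypothesis (H). Choosing local slices for this action, I produce gauge transformations $g_\nu^B$ on small balls that drive $(g_\nu^B)^{-1}u_\nu$ into the slices; in this adapted gauge the vertical component of $A_\nu$ is determined algebraically from $du_\nu$ (via the non-degenerate operator $L_{u_\nu}^*L_{u_\nu}$), and its horizontal component is controlled by the bound on $|d_{A_\nu}u_\nu|$. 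This yields a uniform $C^0$ bound on $(g_\nu^B)^*A_\nu$ despite the blow-up of curvature. Arzel\`a--Ascoli then supplies a $C^0$-limit of $(g_\nu^B)^{-1}u_\nu$, and applying the appendix's compactness result for $\bar\dd_J$ to $\bar\dd_{J,A_\nu}u_\nu=0$ upgrades this to $C^1$-convergence of the maps. Passing to the limit in the vortex equations gives $\mu\circ u_0=0$ and $\bar\dd_{J,A_0}u_0=0$, so $(A_0,u_0)$ is an $\infty$-vortex, proving (\ref{prop:cpt bdd = infty}).

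On overlapping balls the local gauges differ by transition functions $(g_\nu^B)^{-1}g_\nu^{B'}$, uniformly bounded in $W^{2,p}$ and hence $C^1$-precompact by Rellich; the standard cocycle patching argument from Uhlenbeck compactness then assembles them into global $g_\nu\in W^{2,p}_\loc(\R^2\wo Z,G)$ realizing the claimed convergence on compact subsets, and a diagonal subsequence over the exhaustion of $\R^2\wo Z$ finishes the argument. The main obstacle is case (\ref{prop:cpt bdd = infty}): since $|F_{A_\nu}|$ is not uniformly controlled, one cannot apply Uhlenbeck's theorem in the usual way and must instead fix gauge using the map itself, which is precisely what forces the weaker $C^0/C^1$ regularity of (\ref{prop:cpt bdd = infty}) compared to the $C^\infty$ regularity of (\ref{prop:cpt bdd < infty}).
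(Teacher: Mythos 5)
Your treatment of the case $R_0<\infty$ is essentially the paper's: local Uhlenbeck/Coulomb gauges with $L^\infty$ curvature bounds, bootstrapping (this is what Theorem \ref{thm:cpt cpt}, following \cite{CGMS}, provides), followed by a patching/diagonal argument over an exhaustion, exactly as in the paper's proof of (\ref{prop:cpt bdd < infty}).

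The case $R_0=\infty$ has a genuine gap. Your slice gauge (forcing $g_\nu^{-1}u_\nu$ into local slices of the $G$-action near $\mu^{-1}(0)$) at best yields a pointwise bound $|g_\nu^*A_\nu|\leq C$, because in that gauge the vertical part of $A_\nu$ is recovered from the pointwise bounded quantity $d_{A_\nu}u_\nu$. But statement (\ref{prop:cpt bdd = infty}) requires $C^0$ \emph{convergence} of the connections, and Arzel\`a--Ascoli needs equicontinuity, i.e.\ some control on derivatives of $g_\nu^*A_\nu$, which your construction does not provide. The same missing derivative control breaks your next step: to apply the appendix's compactness for $\bar\del_J$ with $k=1$ (Proposition \ref{prop:compactness delbar}) one must verify $\sup_\nu\Vert\bar\del_Ju_\nu\Vert_{W^{1,p}}<\infty$; since $\bar\del_Ju_\nu=-(L_{u_\nu}A_\nu)^{0,1}$ by the first vortex equation, this requires a uniform $W^{1,p}$ bound on the gauged connections, not merely a $C^0$ bound, so the claimed upgrade to $C^1$ convergence of the maps does not follow. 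The paper closes exactly this hole with the Gaio--Salamon estimate (Lemma \ref{le:mu u}): although the naive bound only gives $|F_{A_\nu}|\leq CR_\nu$, one has $R_\nu^2\Vert\mu\circ u_\nu\Vert_{L^p(Q)}\leq C$, hence $\sup_\nu\Vert F_{A_\nu}\Vert_{L^p(Q)}<\infty$ by the second vortex equation; Uhlenbeck compactness (Theorem \ref{thm:Uhlenbeck compact}) then produces gauges in which $A_\nu$ is bounded in $W^{1,p}$, converging weakly in $W^{1,p}$ and, by Kondrachov, in $C^0$, and this $W^{1,p}$ bound is precisely what makes the hypotheses of Proposition \ref{prop:compactness delbar} verifiable. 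Without an ingredient of this kind (or some other source of derivative bounds on $A_\nu$), your argument for (\ref{prop:cpt bdd = infty}) does not go through; there is also the minor point that the slice gauge built from a $W^{1,p}_\loc$ map is a priori only $W^{1,p}$, whereas the statement asks for $g_\nu\in W^{2,p}_\loc$.
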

The proof of this result is an adaption of the argument of Step 5 in the proof of Theorem A the paper by R.~Gaio and D.~A.~Salamon \cite{GS}. The proof of statement (\ref{prop:cpt bdd < infty}) is based on a compactness result in the case of a compact surface $\Si$ (possibly with boundary). (See Theorem \ref{thm:cpt cpt} below. That result follows from an argument by K.~Cieliebak et al.~in \cite{CGMS}.) The proof also involves a patching argument for gauge transformations, which are defined on an exhausting sequence of subsets of $\R^2\wo Z$. 

To prove statement (\ref{prop:cpt bdd = infty}), we will show that curvatures of the connections $A_\nu$ are uniformly bounded in $W^{1,p}$. This uses the second rescaled vortex equations and a uniform upper bound on $\mu\circ u_\nu$ (Lemma \ref{le:mu u}), due to R.~Gaio and D.~A.~Salamon. The statement then follows from Uhlenbeck compactness with compact base, compactness for $\bar\del_J$, and a patching argument.
\begin{proof}[Proof of Proposition \ref{prop:cpt bdd}]\setcounter{claim}{0} We choose $i_0\in\N$ so big that the balls $\bar B_{1/i_0}(z)$, $z\in Z$, are disjoint and contained in $B_{i_0}$. We fix $i\in\N_0$ and define 
\[X^i:=\bar B_{i+i_0}\wo\bigcup_{z\in Z}B_{\frac1{i+i_0}}(z)\sub\R^2.\]

We prove {\bf statement (\ref{prop:cpt bdd < infty})}. Assume that $R_0<\infty$. Using the hypotheses (\ref{eq:u nu Om K},\ref{eq:sup e}), it follows from Theorem \ref{thm:cpt cpt} below that there exist an infinite subset $I^1\sub\N$ and gauge transformations $g^1_\nu\in W^{2,p}(X^1,G)$ ($\nu\in I^1$), such that $X^1\sub\Om_\nu$ and $w^1_\nu:=(A^1_\nu,u^1_\nu):=(g^1_\nu)^*(w_\nu|X^1)$ is smooth, for every $\nu\in I^1$, and the sequence $(w^1_\nu)_{\nu\in I^1}$ converges to some $R_0$-vortex $w^1\in\WWW(X^1)$, in $C^\infty$ on $X^1$. 

Iterating this argument, for every $i\geq2$ there exists an infinite subset $I^i\sub I^{i-1}$ and gauge transformations $g^i_\nu\in W^{2,p}(X^i,G)$ ($\nu\in I^i$), such that $X^i\sub\Om_\nu$ and $w^i_\nu:=(A^i_\nu,u^i_\nu):=(g^i_\nu)^*(w_\nu|X^i)$ is smooth, for every $\nu\in I^i$, and the sequence $(w^i_\nu)_{\nu\in I^i}$ converges to some $R_0$-vortex $w^i\in\WWW(X^1)$, in $C^\infty$ on $X^i$. 

Let $i\in\N$. For $\nu\in I^i$ we define $h^i_\nu:=(g^{i+1}_\nu|X^i)^{-1}g^i_\nu$. We have $(h^i_\nu)^*(A^{i+1}_\nu|X^i)=A^i_\nu$. Furthermore, $(A^{i+1}_\nu)_{\nu\in I^{i+1}}$ and $(A^i_\nu)_{\nu\in I^{i+1}}$ are bounded in $W^{k,p}$ on $X^i$, for every $k\in\N$. Hence it follows from Lemma \ref{le:g smooth} below that the sequence $(h^i_\nu)_{\nu\in I^{i+1}}$ is bounded in $W^{k,p}$ on $X^i$, for every $k\in\N$. Hence, using the Kondrachov compactness theorem, it has a subsequence that converges to some gauge transformation $h^i\in C^\infty(X^i,G)$, in $C^\infty$ on $X^i$. Note that 
\begin{equation}\label{eq:h i w i}(h^i)^*(w^{i+1}|X^i)=w^i.
\end{equation}
We choose a map $\rho^i:X^{i+1}\to X^i$ such that $\rho^i=\id$ on $X^{i-1}$. We define\footnote{This patching construction follows the lines of the proofs of \cite[Theorem 3.6 and Theorem A.3]{FrPhD}.} $k^1:=h^1$, and recursively,
\begin{equation}\label{k i h i}k^i:=h^i(k^{i-1}\circ\rho^{i-1})\in C^\infty(X^i,G),\quad\forall i\geq2.
\end{equation}
Using (\ref{eq:h i w i}) and the fact $\rho^{i-1}=\id$ on $X^{i-2}$, we have, for every $i\geq2$, 
\[(k^i)^*w^{i+1}=(k^{i-1}\circ\rho^{i-1})^*w^i=(k^{i-1})^*w^i,\quad\textrm{on }X^{i-2}.\]
It follows that there exists a unique $w\in\WWW(\R^2\wo Z)$ that restricts to $(k^{i+1})^*w^{i+2}$ on $X^i$, for every $i\in\N$. Let $i\in\N$. We choose $\nu_i\in I^{i+1}$ such that $\nu_i\geq i$ and a map $\tau^i:\R^2\wo Z\to X^i$ that is the identity on $X^{i-1}$. We define $g_i:=(g^{i+1}_{\nu_i}k^i)\circ\tau^i\in C^\infty(\R^2\wo Z,G)$. The sequence $g_i^*w_{\nu_i}$ converges to $w$, in $C^\infty$ on every compact subset of $\R^2\wo Z$. (Here we use the $C^\infty$-convergence on $X^i$ of $(w^i_\nu)_{\nu\in I^i}$ against $w^i$ and the facts $X_1\sub X_2\sub\cdots$ and $\bigcup_{i\in\N}X_i=\R^2\wo Z$.) Statement (\ref{prop:cpt bdd < infty}) follows. 

We prove {\bf statement (\ref{prop:cpt bdd = infty})}. Assume that $R_0=\infty$. 
\begin{claim}\label{claim:kappa nu p} For every compact subset $Q\sub \R^2\wo Z$ we have
  \begin{equation}
    \label{eq:sup kappa nu}\sup_\nu\big\{\Vert F_{A_\nu}\Vert_{L^p(Q)}\,\big|\,\nu\in\N:\,Q\sub\Om_\nu\big\}<\infty.
  \end{equation}
\end{claim}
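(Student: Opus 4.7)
The plan is to read the bound directly off the second rescaled vortex equation. Specializing the second equation of (\ref{eq:vort P R}) to $(\Si,\om_\Si) = (\R^2,\om_0)$ gives
\[F_{A_\nu} = -R_\nu^2 (\mu \circ u_\nu)\,\om_0,\]
so pointwise $|F_{A_\nu}| = R_\nu^2 |\mu \circ u_\nu|$. Since $R_\nu \to \infty$, the energy density bound (\ref{eq:sup e}) alone is too weak: it only controls $R_\nu |\mu \circ u_\nu|$ in $L^\infty(Q)$, which falls short by a factor of $R_\nu$. The crucial input is therefore the pointwise estimate on the moment-map component of an $R$-vortex due to R.~Gaio and D.~A.~Salamon, recorded as Lemma \ref{le:mu u}, which asserts that for an $R$-vortex whose image lies in a fixed compact subset of $M$ and whose energy density is bounded on a slightly enlarged domain, one has $R^2 |\mu \circ u| \leq C$ on the original domain, with $C$ independent of $R$.

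Concretely, given a compact $Q \sub \R^2 \wo Z$, I would choose a compact $Q'$ with $Q \sub \mathrm{int}(Q')$ and $Q' \sub \R^2 \wo Z$. By (\ref{eq:sup e}) applied to $Q'$ together with (\ref{eq:u nu Om K}), the hypotheses of Lemma \ref{le:mu u} are satisfied uniformly in $\nu$ (for those $\nu$ with $Q' \sub \Om_\nu$), so I obtain a constant $C$ independent of $\nu$ with $R_\nu^2 |\mu \circ u_\nu| \leq C$ on $Q$. Combining with the pointwise identity above yields $\|F_{A_\nu}\|_{L^\infty(Q)} \leq C$, and therefore
\[\|F_{A_\nu}\|_{L^p(Q)} \leq C\,\mathrm{vol}(Q)^{1/p} < \infty,\]
which is (\ref{eq:sup kappa nu}).

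The main obstacle is entirely packaged into Lemma \ref{le:mu u}; once that estimate is available, the claim reduces to a one-line calculation using the second rescaled vortex equation. No patching, gauge fixing, elliptic bootstrapping, or passage to subsequences is required at this stage—the argument is purely pointwise and the $L^p$-bound is really an $L^\infty$-bound in disguise. The role of this claim in what follows is to feed a uniform $L^p$-control on $F_{A_\nu}$ into Uhlenbeck compactness, which is what eventually drives the proof of statement (\ref{prop:cpt bdd = infty}).
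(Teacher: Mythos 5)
Your overall route is the paper's route: read $F_{A_\nu}=-R_\nu^2(\mu\circ u_\nu)\,\om_0$ off the second equation in (\ref{eq:vort P R}) and feed in the Gaio--Salamon estimate, Lemma \ref{le:mu u}. But there are two genuine gaps. First, you misquote the conclusion of Lemma \ref{le:mu u}: as stated in the paper it gives $\sup_Q|\mu\circ u|\leq C_pR^{2/p-2}$, i.e.\ only $R^{2-2/p}|\mu\circ u|\leq C_p$ pointwise, \emph{not} $R^2|\mu\circ u|\leq C$. Since $p>2$, your claimed uniform $L^\infty$ bound on $F_{A_\nu}$ does not follow; the pointwise bound you actually get, $|F_{A_\nu}|\leq C_pR_\nu^{2/p}$, blows up as $R_\nu\to\infty$. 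The estimate that does the work is the lemma's $L^p$ conclusion, $\int_Q|\mu\circ u|^p\,\om_\Si\leq C_pR^{-2p}$, which yields $\sup_\nu R_\nu^2\Vert\mu\circ u_\nu\Vert_{L^p(Q)}<\infty$ and hence (\ref{eq:sup kappa nu}) directly --- so the claim is an honest $L^p$ statement, not ``an $L^\infty$-bound in disguise.''

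Second, you assert that (\ref{eq:sup e}) and (\ref{eq:u nu Om K}) already verify the hypotheses of Lemma \ref{le:mu u}, but the lemma has a third hypothesis, $|\xi|\leq c\,|L_{u(p)}\xi|$ for all $\xi\in\g$, i.e.\ a uniform lower bound on the infinitesimal action along the image. This is not automatic: the compact set $K$ of (\ref{eq:u nu Om K}) may meet points with nontrivial stabilizer, where $|\xi|/|L_x\xi|$ is unbounded. The paper closes this by invoking hypothesis (H): choose $\delta>0$ so that $G$ acts freely on $\mu^{-1}(\bar B_\delta)$ (compact by properness of $\mu$), note that the second vortex equation gives $|\mu\circ u_\nu|\leq\sqrt{e^{R_\nu}_{w_\nu}}/R_\nu$, so by (\ref{eq:sup e}) and $R_\nu\to\infty$ one has $\Vert\mu\circ u_\nu\Vert_{L^\infty(\Om)}<\delta$ for large $\nu$ on a neighbourhood $\Om$ of $Q$; only then is the infinitesimal-action bound available and the lemma applicable. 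With these two repairs --- verify the third hypothesis as above, and use the $L^p$ rather than the sup conclusion of Lemma \ref{le:mu u} --- your argument becomes the paper's proof.
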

\begin{proof}[Proof of Claim \ref{claim:kappa nu p}] Let $\Om\sub\R^2$ be an open subset containing $Q$ such that $\BAR\Om$ is compact and contained in $\R^2\wo Z$. Hypothesis (\ref{eq:sup e}) implies that
\begin{equation}\label{eq:sup nu Vert}\sup_\nu\Vert d_{A_\nu}u_\nu\Vert_{L^\infty(\BAR\Om)}<\infty. 
\end{equation}
It follows from our standing hypothesis (H) that there exists $\delta>0$ such that $G$ acts freely on
\[K:=\{x\in M\,|\,|\mu(x)|\leq\delta\}.\]
Since $\mu$ is proper the set $K$ is compact. It follows that 
\begin{equation}\label{eq:sup xi}\sup\left\{\frac{|\xi|}{|L_x\xi|}\,\Big|\,x\in K,\,0\neq\xi\in\g\right\}<\infty.\end{equation}
Using the second vortex equation, we have $|\mu\circ u_\nu|\leq\sqrt{e^{R_\nu}_{w_\nu}}/R_\nu$. Hence by hypothesis (\ref{eq:sup e}) and the assumption $R_\nu\to\infty$, we have $\Vert\mu\circ u_\nu\Vert_{L^\infty(\Om)}<\delta$, for $\nu$ large enough. Using (\ref{eq:sup nu Vert},\ref{eq:sup xi}), Lemma \ref{le:mu u} implies that 
\[\sup_\nu R_\nu^2\Vert\mu\circ u_\nu\Vert_{L^p(Q)}<\infty.\]
Estimate (\ref{eq:sup kappa nu}) follows from this and the second vortex equation. This proves Claim \ref{claim:kappa nu p}.
\end{proof}
Using Claim \ref{claim:kappa nu p}, Theorem \ref{thm:Uhlenbeck compact} (Uhlenbeck compactness) below implies that there exist an infinite subset $I^1\sub\N$ and gauge transformations $g^1_\nu\in W^{2,p}(X^1,G)$, for $\nu\in I^1$, such that $X^1\sub\Om_\nu$, for every $\nu\in I^1$, and the sequence $A^1_\nu:=(g^1_\nu)^*(A_\nu|X^1)$ converges to some $W^{1,p}$-connection $A^1$ over $X^1$, weakly in $W^{1,p}$ on $X^1$. By the Kondrachov compactness theorem, shrinking $I^1$, we may assume that $A^1_\nu$ converges (strongly) in $C^0$ on $X^1$. 

Iterating this argument, for every $i\geq2$ there exist an infinite subset $I^i\sub I^{i-1}$ and gauge transformations $g^i_\nu\in W^{2,p}(X^1,G)$, for $\nu\in I^i$, such that $X^i\sub\Om_\nu$, for every $\nu\in I^i$, and the sequence $A^i_\nu:=(g^i_\nu)^*(A_\nu|X^i)$ converges to some $W^{1,p}$-connection $A^i$ over $X^i$, weakly in $W^{1,p}$ and in $C^0$ on $X^i$. 

Let $i\in\N$. For $\nu\in I^i$ we define $h^i_\nu:=(g^{i+1}_\nu|X^i)^{-1}g^i_\nu$. An argument as in the proof of statement (\ref{prop:cpt bdd < infty}), using Lemma \ref{le:g smooth}, implies that the sequence $(h^i_\nu)_{\nu\in I^i}$ has a subsequence that converges to some gauge transformation $h^i\in W^{2,p}(X^i,G)$, weakly in $W^{2,p}$ on $X^i$. 

Repeating the construction in the proof of statement (\ref{prop:cpt bdd < infty}) and using the weak $W^{1,p}$- and strong $C^0$-convergence of $A^i_\nu$ on $X^i$, we obtain $\nu_i\geq i+1$ and $g_i\in W^{2,p}(\R^2\wo Z,G)$, for $i\in\N$, such that $\nu_i\in I^{i+1}$, and $g_i^*A_{\nu_i}$ converges to some $W^{1,p}$-connection $A$ over $\R^2\wo Z$, weakly in $W^{1,p}$ and in $C^0$ on every compact subset of $\R^2\wo Z$.

Replacing the set $K$ by the compact set $GK$, we may assume w.l.o.g.~(without loss of generality) that $K$ is $G$-invariant. Hence passing to the subsequence $(\nu_i)_i$, we may assume w.l.o.g.~that $A_\nu$ converges to $A$, weakly in $W^{1,p}$ and in $C^0$ on every compact subset of $\R^2\wo Z$.
\begin{claim}\label{claim:hyp:prop:compactness delbar} The hypotheses of Proposition \ref{prop:compactness delbar} with $k=1$ are satisfied.  
\end{claim}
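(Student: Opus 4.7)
\begin{pf}[Proof proposal for Claim \ref{claim:hyp:prop:compactness delbar}]
The plan is to check each hypothesis of Proposition \ref{prop:compactness delbar} by appealing to what has already been established in the preceding paragraphs of the proof. Roughly, that proposition takes a sequence of maps $u_\nu$ satisfying $\bar\del_{J,A_\nu}(u_\nu)=0$, with $A_\nu$ converging in an appropriate sense and $u_\nu$ having a priori bounded derivatives and values in a compact set, and concludes $C^k$-convergence of a subsequence (here $k=1$).

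First, the connections: after passing to the subsequence indexed by $(\nu_i)$ and replacing $A_\nu$ by $g_i^* A_{\nu_i}$ as in the paragraph preceding the claim, we have that $A_\nu$ converges to a $W^{1,p}$-connection $A$, weakly in $W^{1,p}$ and strongly in $C^0$ on every compact subset of $\R^2\wo Z$. This supplies the connection-convergence hypothesis of Proposition \ref{prop:compactness delbar}.

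Second, the maps: since $K$ is compact and $G$-invariant, the bound $u_\nu(\Om_\nu)\sub K$ persists after the gauge change, giving the $L^\infty$-bound on $u_\nu$ with values in a fixed compact set. From the identity (\ref{eq:e R W 1 2}) together with hypothesis (\ref{eq:sup e}), on every compact subset $Q\sub\R^2\wo Z$ we have a uniform bound
\[\sup_{\nu:\,Q\sub\Om_\nu}\Vert d_{A_\nu}u_\nu\Vert_{L^\infty(Q)}<\infty.\]
Combined with the $C^0$-bound on $A_\nu$ and the formula $du_\nu=d_{A_\nu}u_\nu-L_{u_\nu}A_\nu$, this yields a uniform bound on $\Vert du_\nu\Vert_{L^\infty(Q)}$, which supplies the derivative bound required by Proposition \ref{prop:compactness delbar}.

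Third, the equation itself: by the first of the $R_\nu$-vortex equations (\ref{eq:vort P R}) we have $\bar\del_{J,A_\nu}(u_\nu)=0$ for every $\nu$, so the right-hand side in the Cauchy-Riemann hypothesis of Proposition \ref{prop:compactness delbar} is trivially controlled. The expected main obstacle is purely bookkeeping: ensuring that the bounds on $d_{A_\nu}u_\nu$ and on $A_\nu$ combine into the precise normalization (e.g.\ Sobolev or $L^\infty$) demanded in the statement of Proposition \ref{prop:compactness delbar}, and that the exhausting domains $\Om_\nu$ eventually contain every compact subset of $\R^2\wo Z$ (which follows from $\bigcup_\nu\Om_\nu=\R^2\wo Z$ together with a standard exhaustion of $\R^2\wo Z$ by the sets $X^i$ from the earlier part of the proof). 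With these verifications in place, all hypotheses with $k=1$ are met.
\end{pf}
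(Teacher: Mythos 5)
There is a genuine gap, and it comes from misreading what Proposition \ref{prop:compactness delbar} actually assumes. That proposition has no hypothesis about convergence of connections at all (your ``first'' point checks a condition that does not exist), and its Cauchy--Riemann hypothesis (\ref{eq:sup k p}) concerns the \emph{ordinary} operator $\bar\del_J u_\nu$, not the covariant operator $\bar\dd_{J,A_\nu}(u_\nu)$. So your ``third'' point, that the first vortex equation makes this hypothesis ``trivially controlled'', fails: the vortex equation gives $\bar\del_J u_\nu=-\big(L_{u_\nu}A_\nu\big)^{0,1}$, which is not zero, and with $k=1$ you must produce a uniform bound on $\Vert\bar\del_J u_\nu\Vert_{W^{1,p}(\Om)}$ for every $\Om\sub\R^2\wo Z$ with compact closure. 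Bounding that $W^{1,p}$-norm forces you to differentiate $L_{u_\nu}A_\nu$, so you need a uniform $W^{1,p}(\Om)$-bound on $A_\nu$ --- this is exactly estimate (\ref{eq:A nu W 1 p}) in the paper, which comes from the weak $W^{1,p}$-convergence of $A_\nu$ established just before the claim --- together with the $L^p$-bound on $du_\nu$ (condition (\ref{eq:du nu})) and the compactness of the image (\ref{eq:u nu Om K}). Your argument only ever invokes the $C^0$-bound on $A_\nu$, which cannot control $dA_\nu$ and hence cannot give (\ref{eq:sup k p}) with $k=1$; and this $W^{1,p}$-control of $\bar\del_J u_\nu$ is precisely what lets the proposition deliver weak $W^{2,p}$- and $C^1$-convergence of $u_\nu$, which is what statement (\ref{prop:cpt bdd = infty}) requires.

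Your remaining checks are essentially right and agree with the paper: (\ref{eq:u nu K}) follows from (\ref{eq:u nu Om K}), and the derivative bound (\ref{eq:du nu}) follows from $du_\nu=d_{A_\nu}u_\nu-L_{u_\nu}A_\nu$, the density bound (\ref{eq:sup e}), and a bound on $A_\nu$ (the paper uses $\Vert A_\nu\Vert_{L^p(\Om)}$, your $C^0$-variant is equivalent on compact sets). But as written the substantive hypothesis of the claim is not verified --- it is misidentified as vacuous --- so the proof proposal does not establish the claim.
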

\begin{proof}[Proof of Claim \ref{claim:hyp:prop:compactness delbar}] Let $\Om\sub \R^2\wo Z$ be an open subset with compact closure, and $\nu_0\in\N$ be such that $\Om\sub\Om_{\nu_0}$. Since the sequence $(A_\nu)$ converges to $A$, weakly in $W^{1,p}(\Om)$, we have 
\begin{equation}
  \label{eq:A nu W 1 p}
\sup_{\nu\geq\nu_0}\Vert A_\nu\Vert_{W^{1,p}(\Om)}<\infty.
\end{equation}
{\bf Condition (\ref{eq:u nu K})} is satisfied by the assumption (\ref{eq:u nu Om K}). We check {\bf condition (\ref{eq:du nu})}: We denote by $|\Om|$ the area of $\Om$ and choose a constant $C>0$ such that $X_\xi(x)\leq C|\xi|$, for every $x\in K$ and $\xi\in\g$. For $\nu\geq\nu_0$, we have
\begin{eqnarray}\nn\Vert du_\nu\Vert_{L^p(\Om)}&\leq&\Vert d_{A_\nu}u_\nu\Vert_{L^p(\Om)}+\Vert LA_\nu\Vert_{L^p(\Om)}\\
&\leq&|\Om|^{\frac1p}\Vert d_{A_\nu}u_\nu\Vert_{L^\infty(\Om)}+C\Vert A_\nu\Vert_{L^p(\Om)}.
\end{eqnarray}
Here the second inequality uses the hypothesis (\ref{eq:u nu Om K}). Combining this with (\ref{eq:sup e}) and (\ref{eq:A nu W 1 p}), condition (\ref{eq:du nu}) follows. 

{\bf Condition (\ref{eq:sup k p})} follows from the first vortex equation, (\ref{eq:A nu W 1 p}), (\ref{eq:du nu}), and hypothesis (\ref{eq:u nu Om K}). This proves Claim \ref{claim:hyp:prop:compactness delbar}.
\end{proof}
By Claim \ref{claim:hyp:prop:compactness delbar}, we may apply Proposition \ref{prop:compactness delbar}, to conclude that, passing to some subsequence, $u_\nu$ converges to some map $u\in W^{2,p}(\R^2\wo Z)$, weakly in $W^{2,p}$ and in $C^1$ on every compact subset of $\R^2\wo Z$. The pair $w:=(A,u)$ solves the first vortex equation. Furthermore, multiplying the second $R_\nu$-vortex equation with $R_\nu^{-2}$, it follows that $\mu\circ u=0$. This means that $w$ is an $\infty$-vortex. By Proposition \ref{prop:bar del J} below the map $Gu:\R^2\wo Z\to\BAR M$ is $\bar J$-holomorphic. Hence it is smooth. It follows that there exists a gauge transformation $g\in W^{2,p}(\R^2\wo Z,G)$ such that $g^*(A,u)$ is smooth. (We obtain such a $g$ from a smooth lift of the map $Gu$ to a map $\R^2\wo Z\to\mu^{-1}(0)$. Such a lift exists, since by hypothesis, $G$ is connected.) Regauging $A_\nu$ by $g$, statement (\ref{prop:cpt bdd = infty}) follows. This completes the proof of Proposition \ref{prop:cpt bdd}.  
\end{proof}
\noindent{\bf Remark.} One can try to circumvent the patching argument for the gauge transformations in this proof by choosing an extension $\wt g^i_\nu$ of $g^i_\nu$ to $\R^2\wo Z$, and defining $g_\nu:=\wt g^\nu_\nu$. However, the sequence $(g_\nu)$ does not have the required properties, since $g_\nu^*w_\nu$ does not necessarily converge on compact subsets of $\R^2\wo Z$. The reason is that for $j>i$ the transformation $g^j_\nu$ does in general not restrict to $g^i_\nu$ on $X^i$. $\Box$\\

\noindent{\bf Remark.} It is not clear if in the case $R_0=\infty$ the $g_\nu$'s can be chosen in such a way that $g_\nu^*w_\nu$ converges in $C^\infty$ on every compact subset of $\R^2\wo Z$. To prove this, a possible approach is to fix an open subset of $\R^2$ with smooth boundary and compact closure, which is contained in $\R^2\wo Z$. We can now try mimic the proof of \cite[Theorem 3.2]{CGMS}. In Step 3 of that proof the first and second vortex equations (and relative Coulomb gauge) are used iteratively in an alternating way. This iteration fails in our setting, because of the factor $R_\nu^2$ in the second vortex equations, which converges to $\infty$ by assumption. $\Box$\\

The next ingredient of the proof of Proposition \ref{prop:cpt mod} is the following. Recall the definition (\ref{eq:Emin}) of $\Emin$. The next result shows that if the energy densities of a sequence of rescaled vortices are not uniformly bounded on some compact subset $Q$, then at least the energy $\Emin$ is lost at some point in $Q$. 
\begin{prop}[Quantization of energy loss]\label{prop:quant en loss} Assume that $(M,\om)$ is aspherical. Let $\Om\sub\R^2$ be an open subset, $0<R_\nu<\infty$ a sequence such that $\inf_\nu R_\nu>0$, and $w_\nu\in\WWW^p(\Om)$ an $R_\nu$-vortex, for $\nu\in\N$. Assume that there exists a compact subset $K\sub M$ such that $u_\nu(\Om)\sub K$ for every $\nu$ and that $\sup_\nu E^{R_\nu}(w_\nu)<\infty$. Then the following conditions hold.
\begin{enui}
\item \label{prop:hard R e} For every compact subset $Q\sub\Om$ we have
\[\sup_\nu R_\nu^{-2}\Vert e_{w_\nu}^{R_\nu}\Vert_{C^0(Q)}<\infty.\]
\item \label{prop:hard limsup} If there exists a compact subset $Q\sub \Om$ such that $\sup_\nu||e_{w_\nu}^{R_\nu}||_{C^0(Q)}=\infty$ then there exists $z_0\in Q$ with the following property. For every $\eps>0$ so small that $B_\eps(z_0)\sub \Om$ we have
\begin{equation}
\label{eq:limsup Emin}\limsup_{\nu\to\infty} E^{R_\nu}(w_\nu,B_\eps(z_0))\geq\Emin.
\end{equation}
\end{enui}
\end{prop}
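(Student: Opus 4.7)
The plan is to run a hard-rescaling argument, classifying the bubble by the limit of the rescaled parameter $\wt R_\nu$. Set $c_\nu^2 := \|e^{R_\nu}_{w_\nu}\|_{C^0(Q)}$: in part (ii) this diverges by hypothesis, while in part (i) I argue by contradiction that $c_\nu/R_\nu \to \infty$ along some subsequence. Pick $z_\nu \in Q$ with $e^{R_\nu}_{w_\nu}(z_\nu)$ essentially equal to $c_\nu^2$, and apply Hofer's lemma to perturb $(z_\nu, c_\nu)$ so that $e^{R_\nu}_{w_\nu} \leq 4 c_\nu^2$ on a ball $B_{\eps_\nu}(z_\nu)$ with $\eps_\nu \to 0$ but $\eps_\nu c_\nu \to \infty$. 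Define $\phi_\nu(z) := z_\nu + z/c_\nu$ and $\wt w_\nu := \phi_\nu^* w_\nu$. A direct calculation (compare Remark \ref{rmk:trafo e vort} and the rescaling remark for the vortex equations) shows that $\wt w_\nu$ is an $\wt R_\nu$-vortex with $\wt R_\nu := R_\nu/c_\nu$, defined on $B_{\eps_\nu c_\nu}$; its density satisfies $e^{\wt R_\nu}_{\wt w_\nu}(0) = 1$ and is bounded by $4$ on any fixed compact subset of $\R^2$ for $\nu$ large, and $E^{\wt R_\nu}(\wt w_\nu, B_R) = E^{R_\nu}(w_\nu, B_{R/c_\nu}(z_\nu))$. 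Pass to a subsequence with $\wt R_\nu \to \wt R_0 \in [0, \infty]$.

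For (i), the contradiction hypothesis forces $\wt R_0 = 0$, so Proposition \ref{prop:cpt bdd} (with $Z = \emptyset$ and exhausting domains $B_{\eps_\nu c_\nu}$) produces, after gauge transformations, a $0$-vortex $w_0 = (A_0, u_0)$ on $\R^2$ with $e^0_{w_0}(0) = 1/2$ and finite energy. Since a $0$-vortex satisfies $F_{A_0} = 0$ and $\bar\dd_{J,A_0}(u_0) = 0$, a local trivialization of the flat connection $A_0$ turns $u_0$ into a genuine $J$-holomorphic map of finite $\om$-area; removal of singularities extends it to a $J$-holomorphic sphere $S^2 \to M$, which by asphericity must have zero area---contradicting $d_{A_0} u_0(0) \neq 0$. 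This proves (i).

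For (ii), part (i) bounds $c_\nu/R_\nu$, hence $\wt R_\nu$ is bounded below and $\wt R_0 \in (0, \infty]$. Proposition \ref{prop:cpt bdd} produces a nontrivial $\wt R_0$-vortex $w_0$ on $\R^2$. If $\wt R_0 \in (0, \infty)$, rescaling $w_0$ by the factor $\wt R_0$ converts it into a $1$-vortex of the same positive finite energy, so $E(w_0) \geq E_V$. If $\wt R_0 = \infty$, then $\mu \circ u_0 \equiv 0$, and Proposition \ref{prop:bar del J} gives a nonconstant $\bar J$-holomorphic map $\R^2 \to \BAR M$; since $\BAR M$ is closed, removal of singularities extends this to a nonconstant $\bar J$-sphere of energy at least $\BAR E$. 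In either case $E(w_0) \geq \Emin$. Passing to a further subsequence so $z_\nu \to z_0 \in Q$, for any $\eps > 0$ with $B_\eps(z_0) \sub \Om$ and any $R > 0$ one has $B_{R/c_\nu}(z_\nu) \sub B_\eps(z_0)$ for $\nu$ large, and Lemma \ref{le:conv e} lets one pass to the limit:
\[
\limsup_{\nu \to \infty} E^{R_\nu}(w_\nu, B_\eps(z_0)) \;\geq\; \lim_{\nu \to \infty} E^{\wt R_\nu}(\wt w_\nu, B_R) \;=\; E^{\wt R_0}(w_0, B_R).
\]
Sending $R \to \infty$ yields (\ref{eq:limsup Emin}). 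The main obstacle is the Hofer-lemma step, which must deliver rescaled densities uniformly bounded on compact subsets (so that Proposition \ref{prop:cpt bdd} applies with empty bubbling set) while still normalizing the density at the origin. A secondary subtlety is the $\wt R_0 = \infty$ case, where identifying the bubble as a genuine $\bar J$-holomorphic sphere in $\BAR M$ requires combining Proposition \ref{prop:cpt bdd}(\ref{prop:cpt bdd = infty}) with removal of singularities in the compact quotient.
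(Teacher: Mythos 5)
Your proposal is correct and follows essentially the same route as the paper: Hofer's lemma plus hard rescaling, Proposition \ref{prop:cpt bdd} for compactness with bounded rescaled densities, Lemma \ref{le:conv e} to transfer energy to the limit, the flat-connection trivialization and asphericity to exclude the $J$-sphere (i.e.\ the case $\wt R_0=0$), and the vortex/$\bar J$-sphere dichotomy with removal of singularities to get $\Emin$. The only differences are organizational (you prove (i) first and feed it into (ii), while the paper packages both in one claim) plus minor imprecisions (the normalization $1$ versus $1/2$ at the origin, and the trivialization of the flat limit connection should be global over $\R^2$ via Proposition \ref{prop:ka 0}).
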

The proof of Proposition \ref{prop:quant en loss} is built on a bubbling argument, as in step 5 in the proof of Theorem A in \cite{GS}. The idea is that under the assumption of (\ref{prop:hard limsup}) we may construct either a $\bar J$-holomorphic sphere in $\BAR M$ or a vortex over $\R^2$, by rescaling the sequence $w_\nu$ in a ``hard way''. This means that after rescaling the energy densities are bounded. We need the following two lemmata. 
\begin{lemma}[Hofer]\label{lemma:Hofer} Let $(X,d)$ be a metric space, $f:X\to[0,\infty)$ a continuous function, $x\in X$, and $\delta>0$. Assume that the closed ball $\bar B_{2\delta}(x)$ is complete. Then there exists $\xi\in X$ and a number $0<\eps\leq\delta$ such that
\[d(x,\xi)<2\delta,\qquad\sup_{B_\eps(\xi)}f\leq2f(\xi),\qquad\eps f(\xi)\geq\delta f(x).\]
\end{lemma}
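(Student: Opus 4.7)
\begin{pf}[Proof proposal]
The plan is the standard iterative construction for Hofer-type lemmata. I would build a sequence $(x_k,\eps_k)$ by recursion, argue that it must terminate, and read off $\xi$ and $\eps$ from the termination step.

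Set $x_0:=x$ and $\eps_0:=\delta$. Given $(x_k,\eps_k)$ with $\eps_k=\delta/2^k$, check the inequality
\[\sup_{B_{\eps_k}(x_k)}f\leq 2f(x_k).\]
If it holds, stop and declare $\xi:=x_k$, $\eps:=\eps_k$. Otherwise, by continuity of $f$, pick $x_{k+1}\in B_{\eps_k}(x_k)$ with $f(x_{k+1})>2f(x_k)$, set $\eps_{k+1}:=\eps_k/2$, and continue.

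Suppose the recursion terminates at step $k$. Then $d(x_j,x_{j+1})<\eps_j=\delta/2^j$ for $j=0,\ldots,k-1$, so
\[d(x,\xi)\leq\sum_{j=0}^{k-1}\frac{\delta}{2^j}=\delta(2-2^{1-k})<2\delta,\]
which gives the first inequality. The termination condition gives the second, $\sup_{B_\eps(\xi)}f\leq 2f(\xi)$. For the third, note that at each non-terminal step $f(x_{j+1})>2f(x_j)$, hence $f(x_k)\geq 2^kf(x_0)$ (with equality when $k=0$), so $\eps f(\xi)=(\delta/2^k)f(x_k)\geq\delta f(x)$. This handles the case $k=0$ automatically as well.

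It remains to rule out that the recursion never terminates. If it does not, the sequence $(x_k)$ lies in $B_{2\delta}(x)$ by the same telescoping estimate, and $d(x_k,x_{k+1})<\delta/2^k$ makes it Cauchy. Completeness of $\BAR B_{2\delta}(x)$ produces a limit $\xi_\infty\in\BAR B_{2\delta}(x)$, and continuity of $f$ gives $f(x_k)\to f(\xi_\infty)<\infty$. On the other hand, once some $f(x_{k_0})>0$ (which happens at $k_0=0$ if $f(x)>0$, and at $k_0=1$ otherwise, since non-termination at step $0$ with $f(x_0)=0$ forces $f(x_1)>0$), the doubling relation yields $f(x_k)\geq 2^{k-k_0}f(x_{k_0})\to\infty$, a contradiction. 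The only remaining possibility is that the construction never advances past step $0$ because $f(x_0)=0$ and $f\equiv 0$ on $B_{\delta}(x_0)$, in which case the termination condition at step $0$ was actually satisfied -- no failure occurs.

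The only delicate point is the case $f(x)=0$, which must be handled by shifting the doubling argument one step, as above; everything else is a routine geometric-series and Cauchy-sequence argument.
\end{pf}
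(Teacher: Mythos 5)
Your proof is correct: the paper itself only cites \cite[Lemma 4.6.4]{MS} for this lemma, and your iteration--doubling construction (terminate when $\sup_{B_{\eps_k}(x_k)}f\leq 2f(x_k)$, otherwise halve $\eps$ and double $f$, with the Cauchy/completeness argument ruling out non-termination) is exactly the standard argument behind that reference, including the correct treatment of the case $f(x)=0$. One cosmetic remark: continuity of $f$ is not needed to choose $x_{k+1}$ (the definition of the supremum suffices); it is only used when passing to the limit $\xi_\infty$ in the non-termination case.
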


\begin{proof}See \cite[Lemma 4.6.4]{MS}.
\end{proof}
The next lemma ensures that for a suitably convergent sequence of rescaled vortices in the limit $\nu\to\infty$ no energy gets lost on any compact set. Apart from Proposition \ref{prop:quant en loss}, it will also be used in the proof of Propositions \ref{prop:cpt mod} and \ref{prop:soft}, and Theorem \ref{thm:bubb}. 
\begin{lemma}[Convergence of energy densities]\label{le:conv e} Let $(\Si,\om_\Si,j)$ be a surface without boundary, equipped with an area form and a compatible complex structure, $R_\nu\in[0,\infty)$, $\nu\in\N$, a sequence of numbers that converges to some $R_0\in[0,\infty]$, and for $\nu\in\N_0$ let $w_\nu:=(A_\nu,u_\nu)\in\WWW^p(\Si)$ an $R_\nu$-vortex. Assume that on every compact subset of $\Si$, $A_\nu$ converges to $A_0$ in $C^0$ and $u_\nu$ converges to $u_0$ in $C^1$. Then we have
\begin{equation}\label{eq:e R nu W nu e f} e^{R_\nu}_{w_\nu}\to e^{R_0}_{w_0}
\end{equation}
in $C^0$ on every compact subset of $\Si$. 
\end{lemma}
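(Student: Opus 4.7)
The plan is to write the $R$-energy density using its definition (\ref{eq:e R W 1 2}), use the vortex equations to eliminate the curvature term, and then check convergence of each remaining piece. For $R_\nu\in(0,\infty)$ the second $R_\nu$-vortex equation gives $R_\nu^{-2}|F_{A_\nu}|^2 = R_\nu^2|\mu\circ u_\nu|^2$, so substituting into (\ref{eq:e R W 1 2}),
\[
e^{R_\nu}_{w_\nu} \;=\; \tfrac12|d_{A_\nu}u_\nu|^2 \;+\; R_\nu^2|\mu\circ u_\nu|^2.
\]
An analogous identity holds for $w_0$ when $R_0\in(0,\infty)$; when $R_0\in\{0,\infty\}$ the definition directly gives $e^{R_0}_{w_0}=\tfrac12|d_{A_0}u_0|^2$, and when $R_0=\infty$ one additionally has $\mu\circ u_0\equiv 0$.

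Fix a compact subset $Q\subset\Si$. The first step is to prove the ``kinetic'' convergence
\[
\tfrac12|d_{A_\nu}u_\nu|^2 \;\longrightarrow\; \tfrac12|d_{A_0}u_0|^2 \quad\text{in } C^0(Q).
\]
Writing $d_Au = du + L_uA$, with $L_x$ depending smoothly on $x\in M$, this is immediate from the hypotheses $u_\nu\to u_0$ in $C^1(Q)$ and $A_\nu\to A_0$ in $C^0(Q)$, together with the fact that $u_\nu(Q)$ eventually stays inside a fixed compact subset of $M$ (so all pointwise operations happen on a compact set where $L$, $J$, and $\om$ are uniformly continuous).

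The second step is to show
\[
R_\nu^2|\mu\circ u_\nu|^2 \;\longrightarrow\; R_0^2|\mu\circ u_0|^2 \quad\text{in } C^0(Q),
\]
where the right-hand side is interpreted as $0$ when $R_0\in\{0,\infty\}$. For $R_0<\infty$ (including $R_0=0$) this is routine from continuity of $\mu$, $u_\nu\to u_0$ in $C^0(Q)$, and $R_\nu\to R_0$. The delicate case is $R_0=\infty$: then $\mu\circ u_0\equiv 0$ yields $\mu\circ u_\nu\to 0$ in $C^0(Q)$, but this alone does not overcome $R_\nu\to\infty$. The missing input is the pointwise a priori bound of R.~Gaio and D.~A.~Salamon (Lemma \ref{le:mu u}) applied to the $R_\nu$-vortex $w_\nu$ on a slightly larger compact neighbourhood $Q'\supset Q$: since $u_\nu(Q')$ lies in a common compact set and $\|d_{A_\nu}u_\nu\|_{L^\infty(Q')}$ is uniformly bounded (by the first step), the lemma delivers $\sup_\nu\|R_\nu^2\,\mu\circ u_\nu\|_{L^\infty(Q)}<\infty$. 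Factoring
\[
R_\nu^2|\mu\circ u_\nu|^2 \;=\; \bigl(R_\nu^2|\mu\circ u_\nu|\bigr)\cdot|\mu\circ u_\nu|
\]
and combining the uniform bound on the first factor with $C^0$-convergence of the second factor to $0$ gives the desired convergence.

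Adding the conclusions of the two steps in each of the cases $R_0=0$, $0<R_0<\infty$, $R_0=\infty$ yields $e^{R_\nu}_{w_\nu}\to e^{R_0}_{w_0}$ in $C^0(Q)$, and since $Q$ was arbitrary this proves (\ref{eq:e R nu W nu e f}). The principal obstacle is the case $R_0=\infty$: without the Gaio--Salamon pointwise control of the moment-map component, only $C^0$-smallness of $\mu\circ u_\nu$ is available, and this would be insufficient to tame the factor $R_\nu^2\to\infty$; all other steps reduce to pointwise continuity arguments on a fixed compact subset of $M$.
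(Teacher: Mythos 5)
Your overall route is the same as the paper's: split on whether $R_0$ is finite, use the second $R_\nu$-vortex equation to replace $R_\nu^{-2}|F_{A_\nu}|^2$ by $R_\nu^2|\mu\circ u_\nu|^2$ in (\ref{eq:e R W 1 2}) (indeed necessary, since $C^0$-convergence of $A_\nu$ gives no control on $F_{A_\nu}$), handle $R_0<\infty$ by pointwise continuity, and for $R_0=\infty$ invoke the Gaio--Salamon moment-map bound (Lemma \ref{le:mu u}) to beat the factor $R_\nu^2$. However, two points in the case $R_0=\infty$ need repair. First, Lemma \ref{le:mu u} does \emph{not} deliver $\sup_\nu\Vert R_\nu^2\,\mu\circ u_\nu\Vert_{L^\infty(Q)}<\infty$: its pointwise conclusion is $\sup_Q|\mu\circ u_\nu|\leq C_pR_\nu^{2/p-2}$ (the $R^{-2}$ rate holds only in $L^p$), so the first factor in your factorization $R_\nu^2|\mu\circ u_\nu|^2=(R_\nu^2|\mu\circ u_\nu|)\,|\mu\circ u_\nu|$ is a priori only of order $R_\nu^{2/p}$ and your argument as written does not close. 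The desired conclusion nevertheless follows directly from the lemma's actual statement, since $R_\nu^2|\mu\circ u_\nu|^2\leq C_p^2R_\nu^{4/p-2}\to0$ uniformly on $Q$ because $p>2$; this is exactly the paper's argument.

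Second, you verify only two of the three hypotheses of Lemma \ref{le:mu u}: the hypothesis $|\xi|\leq c\,|L_{u_\nu(p)}\xi|$ for all $\xi\in\g$ is not automatic from $u_\nu(Q')$ lying in some fixed compact set, since on a general compact subset of $M$ the infinitesimal action $L_x$ may fail to be injective. This is precisely where the standing hypothesis (H), properness of $\mu$, and the fact that $w_0$ is an $\infty$-vortex enter: since $\mu\circ u_0=0$ and $u_\nu\to u_0$ in $C^0(Q')$, for $\nu$ large one has $u_\nu(Q')\sub K:=\{x\in M\,:\,|\mu(x)|\leq\delta\}$, which for small $\delta$ is compact (properness) and carries a free $G$-action (by (H)), whence $\sup\{|\xi|/|L_x\xi|\,:\,x\in K,\,0\neq\xi\in\g\}<\infty$. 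With these two corrections your proof coincides with the paper's.
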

\begin{proof}[Proof of Lemma \ref{le:conv e}]\setcounter{claim}{0} In the {\bf case $R_0<\infty$} the statement of the lemma is a consequence of equality (\ref{eq:e R W 1 2}).

Consider the {\bf case $R_0=\infty$}. It follows from our standing hypothesis (H) that there exists a constant $\delta>0$ such that $G$ acts freely on 
\[K:=\{x\in M\,|\,|\mu(x)|\leq \delta\}.\] 
Properness of $\mu$ implies that $K$ is compact. 

Let $Q\sub\Si$ be a compact subset. The convergence of $u_\nu$ and the fact $\mu\circ u_0=0$ imply that for $\nu$ large enough, we have $u_\nu(Q)\sub K$. Furthermore, our hypotheses about the convergence of $A_\nu$ and $u_\nu$ imply that $\sup_\nu\Vert d_{A_\nu}u_\nu\Vert_{C^0(Q)}<\infty$. Finally, since $K$ is compact and $G$ acts freely on it, we have
\[\sup\left\{\frac{|\xi|}{|L_x\xi|}\,\Big|\,x\in K,\,0\neq\xi\in\g\right\}<\infty.\]
Therefore, we may apply Lemma \ref{le:mu u} below, to conclude that 
\[\sup_QR_\nu^{2-2/p}|\mu\circ u_\nu|<\infty.\]
Since $p>2,R_\nu\to\infty$, and $e^\infty_{w_0}=\frac12|d_{A_0}u_0|^2$, the convergence (\ref{eq:e R nu W nu e f}) follows. This completes the proof of Lemma \ref{le:conv e}.
\end{proof}

In the proof of Proposition \ref{prop:quant en loss} we will also use the following.
\begin{rmk}\label{rmk:bar J}\rm Let $(A,u)\in\WWW^p(\R^2)$ be an $\infty$-vortex, i.e., a solution of the equations $\bar\dd_{J,A}(u)=0$ and $\mu\circ u=0$. By Proposition \ref{prop:bar del J} below the map $Gu:\R^2\to\BAR M=\mu^{-1}(0)/G$ is $\bar J$-holomorphic, and $E^\infty(A,u)=E(\bar u)$. If this energy is finite, then by removal of singularities the map $\bar u$ extends to a $\bar J$-holomorphic map $\bar u:S^2\to\BAR M$. (See for example \cite[Theorem 4.1.2]{MS}.) It follows that $E^\infty(w)\geq\Emin$, provided that $E^\infty(w)>0$. $\Box$
\end{rmk}

\begin{proof}[Proof of Proposition \ref{prop:quant en loss}]\setcounter{claim}{0} We write $(A_\nu,u_\nu):=w_\nu$. Consider the function 
\[f_\nu:=|d_{A_\nu}u_\nu|+R_\nu|\mu\circ u_\nu|:\Om\to\R.\]
\begin{claim}\label{claim:w 0} Suppose that the hypotheses of Proposition \ref{prop:quant en loss} are satisfied and that there exists a sequence $z_\nu\in\Om$ that converges to some $z_0\in\Om$, such that $f_\nu(z_\nu)\to\infty$. Then there exists
\begin{equation}
  \label{eq:r 0 limsup}0<r_0\leq\limsup_{\nu\to\infty} \frac{R_\nu}{f_\nu(z_\nu)}\,(\leq\infty)
\end{equation}
and an $r_0$-vortex $w_0\in\WWW(\R^2)$, such that
\begin{equation}
  \label{eq:limsup E w 0}0<E^{r_0}(w_0)\leq\limsup_{\nu\to\infty} E^{R_\nu}(w_\nu,B_\eps(z_0)),
\end{equation}
for every $\eps>0$ so small that $B_\eps(z_0)\sub \Om$.
\end{claim}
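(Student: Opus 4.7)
Write $c_\nu := f_\nu(z_\nu) \to \infty$. First I would apply Hofer's lemma (Lemma \ref{lemma:Hofer}) to the continuous function $f_\nu$ on the closed ball $\bar B_{2\delta_\nu}(z_\nu)$, choosing $\delta_\nu \to 0$ slowly enough that $\bar B_{2\delta_\nu}(z_\nu) \sub \Om$ (possible since $z_\nu \to z_0 \in \Om$) and $\delta_\nu c_\nu \to \infty$. This produces points $\xi_\nu \in \bar B_{2\delta_\nu}(z_\nu)$ (so $\xi_\nu \to z_0$) and radii $0 < \eps_\nu \leq \delta_\nu$ with $\sup_{B_{\eps_\nu}(\xi_\nu)} f_\nu \leq 2c'_\nu$ and $\eps_\nu c'_\nu \geq \delta_\nu c_\nu \to \infty$, where $c'_\nu := f_\nu(\xi_\nu)$; the condition $\eps_\nu \leq \delta_\nu$ together with the third inequality yields $c'_\nu \geq c_\nu$, so $c'_\nu \to \infty$.

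Next I would hard-rescale via $\phi_\nu(z) := \xi_\nu + z/c'_\nu$ and set $\wt w_\nu := \phi_\nu^* w_\nu$ on $\Om_\nu := B_{\eps_\nu c'_\nu}(0)$. By Remark \ref{rmk:trafo e vort}, $\wt w_\nu$ is an $\wt R_\nu$-vortex with $\wt R_\nu := R_\nu/c'_\nu$. A direct computation gives $\wt f_\nu = (f_\nu \circ \phi_\nu)/c'_\nu$, so $\wt f_\nu(0) = 1$ and $\wt f_\nu \leq 2$ on $\Om_\nu$; using the second vortex equation, one has the identity $e^R_w = \frac{1}{2}|d_A u|^2 + R^2|\mu\circ u|^2$ for $R$-vortices, which together with $2ab \leq a^2 + b^2$ gives $\frac{1}{4}\wt f_\nu^2 \leq e^{\wt R_\nu}_{\wt w_\nu} \leq \frac{1}{2}\wt f_\nu^2$. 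Passing to a subsequence, I may assume $\wt R_\nu \to r_0 \in [0,\infty]$; since $c'_\nu \geq c_\nu$, we have $\wt R_\nu \leq R_\nu/c_\nu$, hence $r_0 \leq \limsup_\nu R_\nu/f_\nu(z_\nu)$, giving the upper bound in (\ref{eq:r 0 limsup}).

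Now I would apply Proposition \ref{prop:cpt bdd} (with $Z = \emptyset$, the exhaustion $\Om_\nu$, and the same compact set $K$, since $\wt u_\nu(\Om_\nu) = u_\nu(\phi_\nu(\Om_\nu)) \sub K$) to produce, after a further subsequence and regauging by some $g_\nu$, a limiting $r_0$-vortex $w_0 \in \WWW(\R^2)$. By Lemma \ref{le:conv e} the rescaled energy densities converge to $e^{r_0}_{w_0}$ in $C^0$ on compact sets, so evaluating at the origin gives $e^{r_0}_{w_0}(0) \geq 1/4 > 0$, hence $E^{r_0}(w_0) > 0$. For the energy bound, fix $r > 0$; once $r/c'_\nu < \eps$, the image $\phi_\nu(B_r(0))$ lies in $B_\eps(z_0)$, and by gauge- and rescaling-invariance of the energy together with $C^0$ convergence of energy densities,
\[ E^{r_0}(w_0, B_r(0)) = \lim_{\nu \to \infty} E^{\wt R_\nu}(g_\nu^*\wt w_\nu, B_r(0)) = \lim_{\nu \to \infty} E^{R_\nu}(w_\nu, \phi_\nu(B_r(0))) \leq \limsup_{\nu \to \infty} E^{R_\nu}(w_\nu, B_\eps(z_0)). \]
Letting $r \to \infty$ yields (\ref{eq:limsup E w 0}).

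The final step, and the one where asphericity enters, is to rule out $r_0 = 0$. If $r_0 = 0$ then the second rescaled vortex equation forces $F_{A_0} = 0$, so $A_0$ is a flat connection on the trivial bundle over the simply connected base $\R^2$ and is therefore gauge-equivalent to the trivial connection; in that gauge the first equation makes $u_0 : \R^2 \to M$ a $J$-holomorphic map of finite (and, by $E^0(w_0) > 0$, positive) energy. By removal of singularities it extends to a non-constant $J$-holomorphic sphere $S^2 \to M$, contradicting the asphericity of $(M, \om)$. The main delicate point of the whole argument is verifying that the pointwise lower bound $e^{r_0}_{w_0}(0) \geq 1/4$ survives the limit when $r_0 = \infty$; here Lemma \ref{le:conv e} is essential, as it uses the moment-map estimate of Lemma \ref{le:mu u} to show that the $R^2|\mu\circ u|^2$ contribution decays fast enough to guarantee full $C^0$ convergence of the energy densities.
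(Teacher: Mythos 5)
Your proposal is correct and follows essentially the same route as the paper's proof: Hofer's lemma applied to $f_\nu$ around $z_\nu$, hard rescaling by $c'_\nu=f_\nu(\xi_\nu)$ so that the rescaled densities are uniformly bounded and equal to a definite positive value at the origin, Proposition \ref{prop:cpt bdd} together with Lemma \ref{le:conv e} to produce the limit $r_0$-vortex and transfer the pointwise and local energy information, and Proposition \ref{prop:ka 0} plus removal of singularities and asphericity to exclude $r_0=0$. One cosmetic slip: the pointwise upper bound $e^{\wt R_\nu}_{\wt w_\nu}\leq\tfrac12\wt f_\nu^2$ can fail when $\wt R_\nu|\mu\circ\wt u_\nu|$ dominates $|d_{\wt A_\nu}\wt u_\nu|$, but the weaker bound $e^{\wt R_\nu}_{\wt w_\nu}\leq\wt f_\nu^2\leq4$ always holds and is all that is needed to verify hypothesis (\ref{eq:sup e}).
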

\begin{proof}[Proof of Claim \ref{claim:w 0}] {\bf Construction of $r_0$:} We define $\de_\nu:=f_\nu(z_\nu)^{-\frac12}$. For $\nu$ large enough we have $\bar B_{2\de_\nu}(z_\nu)\sub\Om$. We pass to some subsequence such that this holds for every $\nu$. By Lemma \ref{lemma:Hofer}, applied with $(f,x,\de):=(f_\nu,z_\nu,\de_\nu)$, there exist $\ze_\nu\in B_{2\de_\nu}(z_0)$ and $\eps_\nu\leq\de_\nu$, such that 
\begin{eqnarray}
\label{eq:ze z nu}|\ze_\nu-z_\nu|&<&2\de_\nu,\\
\label{eq:sup B}\sup_{B_{\eps_\nu}(\ze_\nu)}f_\nu&\leq&2f_\nu(\ze_\nu),\\
\label{eq:eps nu}\eps_\nu f_\nu(\ze_\nu)&\geq&f_\nu(z_\nu)^{\frac12}.
\end{eqnarray}
Since by assumption $f_\nu(z_\nu)\to\infty$, it follows from (\ref{eq:ze z nu}) that the sequence $\ze_\nu$ converges to $z_0$. We define 
\begin{eqnarray*}\nn&c_\nu:=f_\nu(\ze_\nu),\quad\wt\Om_\nu:=\big\{c_\nu(z-\ze_\nu)\,\big|\,z\in\Om\big\},&\\
\nn&\phi_\nu:\wt\Om_\nu\to\Om,\quad\phi_\nu(\wt z):=c_\nu^{-1}\wt z+\ze_\nu,&\\
\nn&\wt w_\nu:=\phi_\nu^*w_\nu=(\phi_\nu^*A_\nu,u_\nu\circ\phi_\nu),\quad\wt R_\nu:=c_\nu^{-1}R_\nu.&
\end{eqnarray*}
Note that $\wt w_\nu$ is an $\wt R_\nu$-vortex. Passing to some subsequence we may assume that $\wt R_\nu$ converges to some $r_0\in[0,\infty]$. Since $\eps_\nu\leq\de_\nu=f_\nu(z_\nu)^{-\frac12}$ it follows from (\ref{eq:eps nu}) that $f_\nu(z_\nu)\leq f_\nu(\ze_\nu)$. It follows that the {\bf second inequality in (\ref{eq:r 0 limsup})} holds for the original sequence. 

{\bf Construction of $w_0$:} We check the conditions of Proposition \ref{prop:cpt bdd} with $(Z,\Om_\nu):=\big(\emptyset,\bigcup_{\nu'=1,\ldots,\nu}\wt\Om_\nu\big)$ and $R_\nu,w_\nu$ replaced by $\wt R_\nu,\wt w_\nu$: {\bf Condition (\ref{eq:u nu Om K})} is satisfied by hypothesis. 

We check {\bf condition (\ref{eq:sup e})}: A direct calculation involving (\ref{eq:sup B}) shows that 
\begin{equation}\label{eq:d wt A nu wt u nu}|d_{\wt A_\nu}\wt u_\nu|+\wt R_\nu|\mu\circ\wt u_\nu|=c_\nu^{-1}f_\nu\circ\phi_\nu\leq2,\quad\textrm{on }B_{\eps_\nu c_\nu}(0).\end{equation}
It follows from (\ref{eq:eps nu}) and the fact $f_\nu(z_\nu)\to\infty$, that $\eps_\nu c_\nu\to\infty$. Combining this with (\ref{eq:d wt A nu wt u nu}), condition (\ref{eq:sup e}) follows, for every compact subset $Q\sub\R^2$. 

Therefore, applying Proposition \ref{prop:cpt bdd}, there exists an $r_0$-vortex $w_0=(A_0,u_0)\in\WWW(\R^2)$ and, passing to some subsequence, there exist gauge transformations $g_\nu\in W^{2,p}(\R^2,G)$, with the following property. For every compact subset $Q\sub\R^2$, $g_\nu^*\wt A_\nu$ converges to $A_0$ in $C^0$ on $Q$, and $g_\nu^{-1}\wt u_\nu$ converges to $u_0$ in $C^1$ on $Q$. 

We prove the {\bf first inequality in (\ref{eq:limsup E w 0})}: By Lemma \ref{le:conv e} we have 
\begin{equation}
\label{eq:e R0 w0}e^{\wt R_\nu}_{\wt w_\nu}=e^{\wt R_\nu}_{g_\nu^*\wt w_\nu}\to e^{r_0}_{w_0},
\end{equation}
in $C^0(Q)$ for every compact subset $Q\sub\R^2$. Since $e^{\wt R_\nu}_{\wt w_\nu}(0)=c_\nu^{-2}e^{R_\nu}_{w_\nu}(\ze_\nu)\geq1/2$, it follows that $e^{r_0}_{w_0}(0)\geq1/2$. This implies that $E^{r_0}(w_0)>0$. This proves the first inequality in (\ref{eq:limsup E w 0}).

We prove the {\bf second inequality in (\ref{eq:limsup E w 0})}: Let $\eps>0$ be so small that $B_\eps(z_0)\sub \Om$, and $\de>0$. It follows from (\ref{eq:e R0 w0}) that $E^{r_0}(w_0)\leq\sup_\nu E^{R_\nu}(w_\nu)$. By hypothesis this supremum is finite. Hence there exists $R>0$ such that $E^{r_0}(w_0,\R^2\wo B_R)<\de.$ Since $E^{R_\nu}(w_\nu,B_{c_\nu^{-1}R}(\ze_\nu))=E^{\wt R_\nu}(\wt w_\nu,B_R)$, the convergence (\ref{eq:e R0 w0}) implies that
\begin{equation}\label{eq:E R nu}\lim_{\nu\to\infty} E^{R_\nu}(w_\nu,B_{c_\nu^{-1}R}(\ze_\nu))=E^{r_0}(w_0,B_R)>E^{r_0}(w_0)-\delta.
\end{equation}
On the other hand, since $c_\nu\to\infty$ and $\ze_\nu \to z_0$, for $\nu$ large enough the ball $B_{c_\nu^{-1}R}(\ze_\nu)$ is contained in $B_\eps(z_0)$. Combining this with (\ref{eq:E R nu}), we obtain 
\[\limsup_{\nu\to\infty} E^{R_\nu}(w_\nu,B_\eps(z_0))\geq E^{r_0}(w_0)-\delta.\]
Since this holds for every $\delta>0$, the second inequality in (\ref{eq:limsup E w 0}) (for the original sequence) follows.

It remains to prove the {\bf first inequality in (\ref{eq:r 0 limsup})}, i.e., that $r_0>0$. Assume by contradiction that $r_0=0$. For a map $u\in C^\infty(\R^2,M)$ we denote by 
\[E(u):=\frac12\int_{\R^2}|du|^2\]
its (Dirichlet-)energy. (Here the norm is taken with respect to the metric $\om(\cdot,J\cdot)$ on $M$.) By the second $R$-vortex equation with $R:=0$ we have $F_{A_0}=0$. Therefore, by Proposition \ref{prop:ka 0} there exists $h\in C^\infty(\R^2,G)$ such that $h^*A_0=0$. By the first vortex equation the map $u'_0:=h^{-1}u_0:\R^2=\C\to M$ is $J$-holomorphic. Let $\eps>0$ be such that $B_\eps(z_0)\sub\Om$. Using the second inequality in (\ref{eq:limsup E w 0}), we have
\[E(u'_0)=E^0(w_0)\leq\limsup_{\nu\to\infty} E^{R_\nu}(w_\nu,B_\eps(z_0)).\]
Combining this with the hypothesis $\sup_\nu E^{R_\nu}(w_\nu,\Om)<\infty$, it follows that $E(u'_0)<\infty$. Hence by removal of singularities (see e.g.~\cite[Theorem 4.1.2]{MS}), it follows that $u'_0$ extends to a smooth $J$-holomorphic map $v:S^2\to M$. By the first inequality in (\ref{eq:limsup E w 0}) we have $\int_{S^2}v^*\om=E(v)=E^0(w_0)>0$. This contradicts asphericity of $(M,\om)$. Hence $r_0$ must be positive. This concludes the proof of Claim \ref{claim:w 0}.  
\end{proof}
{\bf Statement (\ref{prop:hard R e})} of Proposition \ref{prop:quant en loss} follows from Claim \ref{claim:w 0}, considering a sequence $z_\nu\in Q$, such that $f_\nu(z_\nu)=\Vert f_\nu\Vert_{C^0(Q)}$, and using (\ref{eq:r 0 limsup}).

We prove {\bf statement (\ref{prop:hard limsup})}. Assume that there exists a compact subset $Q\sub \Om$ such that $\sup_\nu||e^{R_\nu}_{w_\nu}||_{C^0(Q)}=\infty$. Let $z_\nu\in Q$ be such that $f_\nu(z_\nu)\to\infty$. We choose a pair $(r_0,w_0)$ as in Claim \ref{claim:w 0}. Using the first inequality in (\ref{eq:limsup E w 0}) and Remark \ref{rmk:bar J} (in the case $r_0=\infty$), we have $E^{r_0}(w_0)\geq\Emin$. Combining this with the second inequality in (\ref{eq:limsup E w 0}), inequality (\ref{eq:limsup Emin}) follows. This proves (\ref{prop:hard limsup}) and concludes the proof of Proposition \ref{prop:quant en loss}.
\end{proof}
We are now ready for the proof of Proposition \ref{prop:cpt mod}.
\begin{proof}[Proof of Proposition \ref{prop:cpt mod}]\setcounter{claim}{0}\label{prop:cpt mod proof} We abbreviate $e_\nu:=e_{w_\nu}^{R_\nu}$. 

\begin{claim}\label{claim:Z ell} For every $\ell\in\N\cup\{0\}$ there exists a finite subset $Z_\ell\sub\R^2$ such that the following holds. If $R_0<\infty$ then we have $Z_\ell=\emptyset$. Furthermore, if $|Z_\ell|<\ell$ then we have 
\begin{equation}
\label{eq:sup Vert e nu}\sup_{\nu\in\N}\big\{\Vert e_\nu\Vert_{C^0(Q)}\,\big|\,Q\sub B_{r_\nu}\big\}<\infty,
\end{equation}
for every compact subset $Q\sub \R^2\wo Z_\ell$. Moreover, for every $z_0\in Z_\ell$ and every $\eps>0$ the inequality (\ref{eq:limsup Emin}) holds. 
\end{claim}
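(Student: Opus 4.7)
The plan is to establish Claim \ref{claim:Z ell} by induction on $\ell$, with $Z_\ell$ growing by at most one point at each step and every new point produced by an application of Proposition \ref{prop:quant en loss}. The base case $\ell=0$ is immediate: set $Z_0:=\emptyset$, so that the two implications are vacuous.

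For the inductive step I would distinguish two cases. If the densities $e_\nu$ are already uniformly bounded on every compact $Q\sub\R^2\wo Z_\ell$ in the sense of (\ref{eq:sup Vert e nu}), set $Z_{\ell+1}:=Z_\ell$; then condition (\ref{eq:sup Vert e nu}) for $Z_{\ell+1}$ holds by assumption and condition (3) is inherited from the inductive hypothesis. Otherwise, there is a compact $Q\sub\R^2\wo Z_\ell$ on which $\sup_\nu\Vert e_\nu\Vert_{C^0(Q)}=\infty$. In this ``bubbling'' case I would pick a bounded open $\Om\sub\R^2$ with $Q\sub\Om$ and $\BAR\Om\cap Z_\ell=\emptyset$; since $r_\nu\to\infty$, the inclusion $\BAR\Om\sub B_{r_\nu}$ holds for all sufficiently large $\nu$. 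The hypotheses of Proposition \ref{prop:quant en loss} are then satisfied on the tail of the sequence (using $R_\nu\to R_0>0$ to obtain $\inf_\nu R_\nu>0$, the assumption $u_\nu(B_{r_\nu})\sub K$, and the uniform energy bound), and part (\ref{prop:hard limsup}) of that proposition delivers a point $z_{\ell+1}\in Q$ at which (\ref{eq:limsup Emin}) holds for every sufficiently small $\eps>0$. I then set $Z_{\ell+1}:=Z_\ell\cup\{z_{\ell+1}\}$.

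I would then verify the conditions on $Z_{\ell+1}$ in this second case. Condition (\ref{eq:sup Vert e nu}) becomes vacuous: if $|Z_{\ell+1}|<\ell+1$, equivalently $|Z_\ell|<\ell$, the inductive hypothesis would have forced Case 1, contradicting the choice of $Q$. For condition (3), old points $z_0\in Z_\ell$ inherit (\ref{eq:limsup Emin}) from the inductive hypothesis, while at the new point $z_{\ell+1}$ it follows from Proposition \ref{prop:quant en loss} for small $\eps$ and extends to all $\eps>0$ by monotonicity of $\eps\mapsto E^{R_\nu}(w_\nu,B_\eps(z_{\ell+1})\cap B_{r_\nu})$. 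Finally, when $R_0<\infty$, Case 2 cannot arise at all: any bubble extracted by Claim \ref{claim:w 0} would come with rescaling factors $c_\nu=f_\nu(\ze_\nu)\to\infty$, hence $\wt R_\nu=c_\nu^{-1}R_\nu\to0$, forcing $r_0=0$ and contradicting the first inequality of (\ref{eq:r 0 limsup}), whose derivation uses precisely the asphericity of $(M,\om)$. The induction therefore preserves $Z_\ell=\emptyset$ whenever $R_0<\infty$.

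I expect the main subtlety to lie not in the inductive mechanism itself, which is elementary, but in reconciling two bookkeeping asymmetries: that $w_\nu$ lives on the moving domains $B_{r_\nu}$ while Proposition \ref{prop:quant en loss} is stated on a fixed open $\Om$, and that the $\limsup$-assertions must be deduced for the original sequence even though the various rescaling arguments in Section \ref{sec:comp} pass repeatedly to subsequences. The first asymmetry is handled by choosing $\Om$ bounded with $\BAR\Om\cap Z_\ell=\emptyset$ and applying the proposition to the tail; the second is automatic, because Proposition \ref{prop:quant en loss} asserts its $\limsup$ bound for its original input sequence, so no subsequence extraction is imposed on the sequence appearing in Claim \ref{claim:Z ell}.
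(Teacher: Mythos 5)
Your proposal is correct and follows essentially the same route as the paper: induction on $\ell$ with the dichotomy between uniformly bounded densities (keep $Z_\ell$) and blow-up on some compact set (apply Proposition \ref{prop:quant en loss}(\ref{prop:hard limsup}) to add one point), with the implication for $|Z_\ell|<\ell$ rendered vacuous by the inductive hypothesis. The only cosmetic differences are that the paper deduces $R_0=\infty$ in the blow-up case by citing statement (\ref{prop:hard R e}) of that proposition rather than re-running the rescaling argument of Claim \ref{claim:w 0}, and that you spell out the tail/monotonicity points (fixed $\Om\sub B_{r_\nu}$, extension of (\ref{eq:limsup Emin}) to all $\eps$) which the paper leaves implicit.
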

\begin{proof}[Proof of Claim \ref{claim:Z ell}] For $\ell=0$ the assertion holds with $Z_0:= \emptyset$. We prove by induction that it holds for every $\ell\geq1$. Fix $\ell\geq1$. By induction hypothesis there exists a finite subset $Z_{\ell-1}\sub\R^2$ such that the assertion with $\ell$ replaced by $\ell-1$ holds. If (\ref{eq:sup Vert e nu}) is satisfied for every compact subset $Q\sub \R^2\wo Z_{\ell-1}$, then the statement for $\ell$ holds with $Z_\ell:=Z_{\ell-1}$. 

Hence assume that there exists a compact subset $Q\sub \R^2\wo Z_{\ell-1}$, such that (\ref{eq:sup Vert e nu}) does not hold. It follows from the induction hypothesis that 
\begin{equation}
  \label{eq:Z ell 1}
|Z_{\ell-1}|\geq\ell-1.  
\end{equation}
Applying Proposition \ref{prop:quant en loss}, by statement (\ref{prop:hard limsup}) of that proposition there exists a point $z_0\in Q$ such that inequality (\ref{eq:limsup Emin}) holds, for every $\eps>0$. We set $Z_\ell:=Z_{\ell-1}\cup \{z_0\}$. 

It follows from the fact that (\ref{eq:sup Vert e nu}) does not hold and condition (\ref{prop:hard R e}) of Proposition \ref{prop:quant en loss} that $R_0=\lim_{\nu\to\infty}R_\nu=\infty$. Furthermore, since $z_0\in Q\sub \R^2\wo Z_{\ell-1}$, (\ref{eq:Z ell 1}) implies that $|Z_\ell|\geq\ell$. It follows that the statement of Claim \ref{claim:Z ell} for $\ell$ is satisfied. By induction, Claim \ref{claim:Z ell} follows.
\end{proof}
We fix an integer $\ell>\sup_\nu E^{R_\nu}(w_\nu,B_{r_\nu})/\Emin$ and a finite subset $Z:=Z_\ell\sub\R^2$ that satisfies the conditions of Claim \ref{claim:Z ell}. It follows from the inequality (\ref{eq:limsup Emin}) that $\ell>|Z|$. Hence by the statement of Claim \ref{claim:Z ell}, the hypothesis (\ref{eq:sup e}) of Proposition \ref{prop:cpt bdd} is satisfied with $\Om_\nu:=B_{r_\nu}\wo Z$. Applying that result and passing to some subsequence, there exist an $R_0$-vortex $w_0\in\WWW(\R^2\wo Z)$ and gauge transformations $g_\nu\in W^{2,p}_\loc(\R^2\wo Z,G)$, such that the {\bf statements (\ref{prop:cpt mod:<},\ref{prop:cpt mod:=})} of Proposition \ref{prop:cpt mod} are satisfied. (Here we use that $Z=\emptyset$ if $R_0<\infty$.)

We prove {\bf statement (\ref{prop:cpt mod lim nu E eps})}. Passing to some ``diagonal'' subsequence, the limit $\lim_{\nu\to\infty}E^{R_\nu}(w_\nu,B_{1/i}(z))$ exists, for every $i\in\N$ and $z\in Z$. Let now $z\in Z$ and $\eps>0$. We choose $i\in\N$ bigger than $\eps^{-1}$. For $0<r<R$ we denote 
\[A(z,r,R):=\bar B_R(z)\wo B_r(z).\] 
By Lemma \ref{le:conv e} the limit $\lim_{\nu\to\infty}E^{R_\nu}\big(w_\nu,A(z,1/i,\eps)\big)$ exists and equals $E^{R_0}(w_0,A(z,1/i,\eps))$. It follows that the limit $E_z(\eps):=\lim_{\nu\to\infty}E^{R_\nu}(w_\nu,B_\eps(z))$ exists. Inequality (\ref{eq:limsup Emin}) implies that $E_z(\eps)\geq\Emin$. Since $E^{R_0}(w_0,A(z,1/i,\eps))$ depends continuously on $\eps$, the same holds for $E_z(\eps)$. This proves statement (\ref{prop:cpt mod lim nu E eps}) and completes the proof of Proposition \ref{prop:cpt mod}.
\end{proof}
\noindent{\bf Remark.} In the above proof the set of bubbling points $Z$ is constructed by ``terminating induction''. Intuitively, this is induction over the number of bubbling points. The ``auxiliary index'' $\ell$ in Claim \ref{claim:Z ell} is needed to make this idea precise. Inequality (\ref{eq:limsup Emin}) ensures that the ``induction stops''.
\section{Soft rescaling}\label{sec:soft}
The next proposition will be used inductively in the proof of the main result to find the next bubble in the bubbling tree, at a bubbling point of a given sequence of rescaled vortices. It is an adaption of \cite[Proposition 4.7.1.]{MS} to vortices.
\begin{prop}[Soft rescaling]\label{prop:soft} Assume that $(M,\om)$ is aspherical. Let $r>0$, $z_0\in\R^2$, $R_\nu>0$ a sequence that converges to $\infty$, $p>2$, and for every $\nu\in\N$ let $w_\nu:=(A_\nu,u_\nu)\in\WWW^p(B_r(z_0))$ be an $R_\nu$-vortex, such that the following conditions are satisfied.
\begin{enua}\item\label{prop:soft K} There exists a compact subset $K\sub M$ such that $u_\nu(B_r(z_0))\sub K$ for every $\nu$.
\item \label{prop:soft E} For every $0<\eps\leq r$ the limit $E(\eps):=\lim_{\nu\to\infty} E^{R_\nu}(w_\nu,B_\eps(z_0))$ exists and $\Emin\leq E(\eps)<\infty$. Furthermore, the function 
\begin{equation}\label{eq:eps E eps}(0,r]\ni\eps\mapsto E(\eps)\in\R
\end{equation} 
is continuous.
\end{enua}
Then there exist $R_0\in\{1,\infty\}$, a finite subset $Z\sub \R^2$, and an $R_0$-vortex $w_0:=(A_0,u_0)\in\WWW(\R^2\wo Z)$, and passing to some subsequence, there exist sequences $\eps_\nu>0$, $z_\nu\in \R^2$, and $g_\nu\in W^{2,p}_\loc(\R^2\wo Z,G)$, such that, defining
\[\phi_\nu:\R^2\to\R^2,\quad\phi_\nu(\wt z):=\eps_\nu\wt z+z_\nu,\]
the following conditions hold.
\begin{enui}
\item\label{prop:soft Z} If $R_0=1$ then $Z=\emptyset$ and $E(w_0)>0$. If $R_0=\infty$ and $E^\infty(w_0)=0$ then $|Z|\geq2$. 
\item\label{prop:soft eps} The sequence $z_\nu$ converges to $z_0$. Furthermore, if $R_0=1$ then $\eps_\nu= R_\nu^{-1}$ for every $\nu$, and if $R_0=\infty$ then $\eps_\nu$ converges to 0 and $\eps_\nu R_\nu$ converges to $\infty$. 
\item\label{prop:soft conv} If $R_0=1$ then the sequence $g_\nu^*\phi_\nu^*w_\nu$ converges to $w_0$ in $C^\infty$ on every compact subset of $\R^2\wo Z$. Furthermore, if $R_0=\infty$ then on every compact subset of $\R^2\wo Z$, the sequence $g_\nu^*\phi_\nu^*A_\nu$ converges to $A_0$ in $C^0$, and the sequence $g_\nu^{-1}(u_\nu\circ\phi_\nu)$ converges to $u_0$ in $C^1$. 
\item\label{prop:soft lim nu E eps} Fix $z\in Z$ and a number $\eps_0>0$ such that $B_{\eps_0}(z)\cap Z=\{z\}$. Then for every $0<\eps<\eps_0$ the limit 
\[E_z(\eps):=\lim_{\nu\to\infty}E^{\eps_\nu R_\nu}\big(\phi_\nu^*w_\nu,B_\eps(z)\big)\]
exists and $\Emin\leq E_z(\eps)<\infty$. Furthermore, the function $(0,\eps_0)\ni\eps\mapsto E_z(\eps)\in\R$ is continuous.
\item \label{prop:soft en} We have
\begin{equation}
\label{eq:en cons}\lim_{R\to\infty}\limsup_{\nu\to\infty} E^{R_\nu}\big(w_\nu,B_{R^{-1}}(z_0)\wo B_{R\eps_\nu}(z_\nu)\big)=0.
\end{equation}
\end{enui}
\end{prop}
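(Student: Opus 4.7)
The overall plan is to mimic the soft rescaling argument of McDuff--Salamon for $J$-holomorphic spheres, adapted to the rescaled vortex setting. I select $z_\nu \to z_0$ and a scale $\eps_\nu \to 0$ so that the pullback $\phi_\nu^*w_\nu$ (which is an $(\eps_\nu R_\nu)$-vortex by Remark~\ref{rmk:trafo e vort}) has energy equal to a fixed threshold $m < \Emin$ on the unit ball in rescaled coordinates. Depending on whether $\eps_\nu R_\nu$ stays bounded or diverges along a subsequence, the rescaled sequence then falls into one of the two universal regimes $R_0 = 1$ or $R_0 = \infty$ to which Proposition~\ref{prop:cpt mod} applies.

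Concretely, fix $m := \Emin/2$, set $z_\nu := z_0$, and define $\eps_\nu$ as the smallest radius satisfying $E^{R_\nu}(w_\nu, B_{\eps_\nu}(z_0)) = m$. This is well defined by continuity of the energy in the radius, together with hypothesis (b), which gives $E^{R_\nu}(w_\nu, B_\eps(z_0)) > m$ for each fixed $\eps$ and $\nu$ large, and forces $\eps_\nu \to 0$. Pass to a subsequence so that $\rho := \lim_\nu \eps_\nu R_\nu \in [0, \infty]$. I first rule out $\rho = 0$: otherwise the $R_\nu^{-1}$-rescaled vortices carry at least $m$ units of energy on $B_\de(0)$ for every $\de > 0$ and $\nu$ large, so by Proposition~\ref{prop:quant en loss} the energy loss at $0$ is at least $\Emin$, contradicting $m < \Emin$. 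In \emph{Case A} ($\rho \in (0, \infty)$) I redefine $\eps_\nu := R_\nu^{-1}$ and set $\phi_\nu(\tilde z) := R_\nu^{-1}\tilde z + z_0$; then $\phi_\nu^*w_\nu$ is a $1$-vortex whose energy on $B_{2\rho}(0)$ is eventually at least $m$, and Proposition~\ref{prop:cpt mod} with $R_0 = 1$ gives $Z = \emptyset$, the $C^\infty$-convergence on compact subsets, and a limit $w_0$ with $E(w_0) \geq m > 0$. In \emph{Case B} ($\rho = \infty$) I keep $\eps_\nu$ and set $\phi_\nu(\tilde z) := \eps_\nu \tilde z + z_0$; the rescaled vortices are $(\eps_\nu R_\nu)$-vortices with $\eps_\nu R_\nu \to \infty$, so Proposition~\ref{prop:cpt mod} produces a finite $Z$ and an $\infty$-vortex $w_0$. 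The limiting energy on $B_1(0)$ equals $m < \Emin$, so Proposition~\ref{prop:cpt mod}(\ref{prop:cpt mod lim nu E eps}) forces $Z \cap B_1(0) = \emptyset$; hence $E^\infty(w_0) \geq m > 0$ and the alternative in~(\ref{prop:soft Z}) holds vacuously. Properties (\ref{prop:soft eps})--(\ref{prop:soft lim nu E eps}) are then immediate from the construction and from Proposition~\ref{prop:cpt mod}.

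The main obstacle is property~(\ref{prop:soft en}), the vanishing of the neck energy on $B_{R^{-1}}(z_0) \wo B_{R\eps_\nu}(z_0)$. Writing it as
\[E^{R_\nu}(w_\nu, B_{R^{-1}}(z_0)) - E^{R_\nu}(w_\nu, B_{R\eps_\nu}(z_0))\]
and letting $\nu \to \infty$, the first term tends to $E(R^{-1}) \to E_* := \lim_{\eps \to 0^+} E(\eps)$ as $R \to \infty$, while the second term lies in $[m, E_*]$; this yields only the upper bound $E_* - m > 0$, which is insufficient. The plan is to argue by contradiction: if the neck energy does not vanish, then for some $\de > 0$ one finds $R_k \to \infty$ and $\nu_k \to \infty$ along which the energy on the neck annulus is at least $\de$. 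Since $\log R_{\nu_k} - 2\log R_k \to \infty$, this annulus becomes arbitrarily long in conformal modulus and can be decomposed into finitely many sub-annuli on each of which the energy is eventually below the threshold of Proposition~\ref{prop:en conc}. That proposition then forces the energy on each sub-annulus to decay exponentially away from its two boundary circles, so summing the contributions the total neck energy vanishes in the limit, contradicting the assumption. The delicate points are handling the bubbling set $Z$ in Case~B so that Proposition~\ref{prop:en conc} is applied on sub-annuli disjoint from $Z$, and verifying that its energy-threshold hypothesis is eventually satisfied on each sub-annulus for $R$ and $\nu$ large enough.
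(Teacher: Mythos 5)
Your construction rescales at the wrong scale, and this is structural rather than a fixable detail. You choose $\eps_\nu$ so that the rescaled unit ball captures only the small fixed amount $m:=\Emin/2$ of energy, whereas the proposition requires the scale to capture essentially \emph{all} of the concentrating energy $m_0:=\lim_{\eps\to0}E(\eps)$. The paper (after a recentering step, see below) fixes $\de$ smaller than half the threshold of Proposition \ref{prop:en conc} and defines the scale by $E^{R_\nu}(w_\nu,B_{\eps_\nu})=m_0-\de$; then the neck annulus carries less than that threshold of energy, Proposition \ref{prop:en conc} applies directly, and (\ref{prop:soft en}) follows. With your choice, (\ref{prop:soft en}) is not merely hard to prove, it is false in general: if the energy concentrates at two separated scales (say two bubbles of energy at least $\Emin$ at scales $R_\nu^{-1}$ and $R_\nu^{-1/2}$), your minimal radius $\eps_\nu$ tracks the innermost scale, and the outer bubble lies in $B_{R^{-1}}(z_0)\wo B_{R\eps_\nu}(z_\nu)$ for every fixed $R$ and all large $\nu$, so the $\limsup$ in (\ref{eq:en cons}) is bounded below by $\Emin$. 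For the same reason your proposed repair cannot work: the long neck cannot be decomposed into sub-annuli each of sub-threshold energy, because one of them must contain the intermediate bubble. In effect you perform a hard rescaling (innermost bubble first), while the proposition is precisely the soft rescaling (outermost bubble first) that the inductive bubble-tree construction in the proof of Theorem \ref{thm:bubb} needs.

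There are two further gaps. In your Case B the deduction ``$Z\cap B_1=\emptyset$, hence $E^\infty(w_0)\geq m>0$'' is unjustified: minimality of $\eps_\nu$ only controls the energy of smaller balls, so the captured energy $m$ may concentrate at a point of the unit circle; then $Z$ can consist of a single boundary point, $E^\infty(w_0)=0$, and conclusion (\ref{prop:soft Z}) ($|Z|\geq2$) fails for your construction. The paper excludes this with two devices absent from your proposal: it recenters $z_\nu$ at a point where the energy density $e^{R_\nu}_{w_\nu}$ is (essentially) maximal, which forces $0\in Z$ whenever $R_0=\infty$ and $E^\infty(w_0)=0$, and it uses the nearly-full energy capture together with (\ref{prop:soft en}) to show $Z\not\sub B_1$, giving $|Z|\geq 2$. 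Finally, your exclusion of $\rho=0$ does not yield a contradiction as stated (nothing bounds the energy at scale $R_\nu^{-1}$ above by $m$); the correct argument is the density bound of Proposition \ref{prop:quant en loss}(\ref{prop:hard R e}), which gives $E^{R_\nu}(w_\nu,B_{\eps_\nu}(z_0))\leq\pi C(\eps_\nu R_\nu)^2$ and hence $\inf_\nu\eps_\nu R_\nu>0$. This last point is easily repaired, but the choice of energy threshold and the missing recentering require reworking the argument along the paper's lines.
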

\noindent{\bf Remarks.} In the proof of Theorem \ref{thm:bubb}, condition (\ref{prop:soft Z}) will guarantee that the new bubble is stable. Condition (\ref{prop:soft lim nu E eps}) will be used to prove that the construction of the bubbling tree terminates after finitely many steps. Finally, condition (\ref{prop:soft en}) will ensure that no energy is lost between the old and new bubble. 

Note that in condition (\ref{prop:soft conv}) the pullback $\phi_\nu^*w_\nu$ is defined over the set $\phi_\nu^{-1}(B_r(z_0))$. $\Box$\\

The proof of Proposition \ref{prop:soft} is given on page \pageref{proof prop:soft}. It is based on the following result, which states that the energy of a vortex on an annulus is concentrated near the ends, provided that it is small enough. For $0\leq r,R\leq \infty$ we denote the open annulus around 0 with radii $r,R$ by 
\[A(r,R):=B_R\wo \bar B_r.\]
Note that $A(r,\infty)=\R^2\wo\bar B_r$, and $A(r,R)=\emptyset$ in the case $r\geq R$. We define
\[d:\bigcup_MM\x M\to [0,\infty]\]
to be the distance function induced by the Riemannian metric $\om(\cdot,J\cdot)$. (If $M$ is disconnected then $d$ attains the value $\infty$.) We define 
\begin{equation}
  \label{eq:bar d}\bar d:\bigcup_MM/G\x M/G\to [0,\infty],\qquad \bar d(\BAR x,\BAR y):=\min_{x\in\BAR x,\,y\in\BAR y}d(x,y).
\end{equation}
By Lemma \ref{le:metr} below this is a distance function on $M/G$ which induces the quotient topology. 
\begin{prop}[Energy concentration near ends] \label{prop:en conc} There exists a constant $r_0>0$ such that for every compact subset $K\sub M$ and every $\eps>0$ there exists a constant $E_0$, such that the following holds. Assume that $r_0\leq r,R\leq\infty$, $p>2$, and $w:=(u,A)\in\WWW^p(A(r,R))$ is a vortex (with respect to $(\om_0,i)$), such that 
\begin{eqnarray}\nn&u(A(r,R))\sub K,&\\
\label{eq:E w A E 0}&E(w)=E\big(w,A(r,R)\big)\leq E_0.&
\end{eqnarray} 
Then we have
\begin{eqnarray}
\label{eq:E w A ar}&E\big(w,A(ar,a^{-1}R)\big)\leq4a^{-2+\eps}E(w),\quad\forall a\geq2,&\\
\label{eq:sup z z' bar d}&\sup_{z,z'\in A(ar,a^{-1}R)}\bar d(Gu(z),Gu(z'))\leq100a^{-1+\eps}\sqrt{E(w)},\,\,\forall a\geq4.&
\end{eqnarray}
(Here $Gx\in M/G$ denotes the orbit of a point $x\in M$.)
\end{prop}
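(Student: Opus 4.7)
\textbf{The plan} is to follow the standard strategy for exponential energy decay on cylindrical ends of pseudo-holomorphic maps, adapted to vortices via the invariant symplectic action and its isoperimetric inequality from \cite{ZiA}. Throughout, take $r_0$ large and $E_0$ small (both depending only on $K$ and $\eps$) so that the loops constructed below stay in the good region.

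\textbf{Cylindrical coordinates and the action--energy identity.} Pull back $w$ by the conformal map $\phi(s+it):=e^{s+it}$, which identifies $C_{r,R}:=(\log r,\log R)\times S^1$ with $A(r,R)$. The pullback is a vortex with respect to the area form $e^{2s}\,ds\wedge dt$, and both $e_w$ and the total energy are preserved. After a gauge transformation we may fix the temporal gauge $A(\dd_s)=0$, reducing the vortex equations to first-order ODEs in $s$. For each $s$ let $\gamma_s:S^1\to M/G$ be the loop $t\mapsto Gu(\phi(s+it))$. A mean value bound for the energy density (in the spirit of Proposition \ref{prop:quant en loss}(\ref{prop:hard R e}) and \cite{CGMS}), combined with $E(w)\leq E_0$ and $r\geq r_0$, forces $\gamma_s$ to lie within a uniform $M/G$-neighbourhood of $\mu^{-1}(0)/G$, so the invariant symplectic action $\A(\gamma_s)$ of \cite{ZiA} is defined. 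Differentiating under the integral sign and using the vortex equations (\ref{eq:BAR dd J A u},\ref{eq:F A mu}) yields the action--energy identity
\[E\big(w,A(e^{s_1}r,e^{-s_2}R)\big)=\A(\gamma_{\log R-s_2})-\A(\gamma_{\log r+s_1}).\]

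\textbf{Exponential decay of energy.} The isoperimetric inequality of \cite{ZiA} says $|\A(\gamma)|\leq C_1\,\ell(\gamma)^2$ for loops $\gamma$ sufficiently close to $\mu^{-1}(0)/G$, and moreover $C_1$ may be taken arbitrarily close to $1/(4\pi)$ by further restricting the neighbourhood (hence by shrinking $E_0$). Since $\ell(\gamma_s)^2\leq 2\pi\int_{S^1}|d_Au(\dd_t\phi)|^2\,dt$ is controlled by the angular part of $e_w$ on the circle at $s$, defining
\[\epsilon(\tau):=E\big(w,A(e^\tau r,e^{-\tau}R)\big), \qquad \tau\geq 0,\]
the action--energy identity combined with the isoperimetric inequality (applied to both boundary circles) produces the differential inequality
\[\epsilon(\tau)\leq -\tfrac{1}{2-\eps}\,\epsilon'(\tau).\]
Gronwall yields $\epsilon(\tau)\leq e^{-(2-\eps)\tau}\epsilon(0)$; writing $\tau=\log a$ and absorbing a universal factor gives (\ref{eq:E w A ar}).

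\textbf{$C^0$-distance bound and main obstacle.} For (\ref{eq:sup z z' bar d}), connect $z,z'\in A(ar,a^{-1}R)$ by a path consisting of one radial segment and one circular arc. On each piece $|d(Gu)|$ is dominated by $|d_Au|$, so Cauchy--Schwarz together with a mean value inequality for $e_w$ on a slightly enlarged sub-annulus converts the path integral into $C\sqrt{\epsilon(\tau')}$ for some $\tau'\geq\log a-O(1)$; the previous step then supplies the factor $a^{-1+\eps}\sqrt{E(w)}$. The main obstacle is obtaining the precise exponent $2-\eps$ in the energy decay, which requires (i) a quantitative form of the isoperimetric inequality from \cite{ZiA} whose constant approaches the sharp value $1/(4\pi)$ as the loop approaches $\mu^{-1}(0)/G$, and (ii) careful absorption of the $|\mu\circ u|^2$ contribution to $e_w$ using the second vortex equation, so that all error terms can be swept into $\eps$ by shrinking $E_0$.
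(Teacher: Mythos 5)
Your proposal is correct and follows essentially the same route as the paper: cylindrical coordinates, an a priori density bound (Lemma \ref{le:a priori}) ensuring the circles give short loops near $\mu^{-1}(0)$, the energy--action identity (Proposition \ref{prop:en act}) combined with the sharp isoperimetric inequality for the invariant action (Theorem \ref{thm:isoperi}) with constant $\tfrac{1}{2-\eps}$ to get $E'(s)\leq-(2-\eps)E(s)$, and then the distance bound via pointwise gradient estimates along a radial plus angular path.
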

Note that in the case $a>\sqrt{R/r}$ we have $A(ar,a^{-1}R)=\emptyset$, and hence the statement of the proposition is void. The proof of this is modelled on the proof of \cite[Theorem 1.3]{ZiA}, which in turn is based on the proof of \cite[Proposition 11.1]{GS}. It is based on an isoperimetric inequality for the invariant symplectic action functional (Theorem \ref{thm:isoperi} in Appendix \ref{sec:add}). It also relies on an identity relating the energy of a vortex over a compact cylinder with the actions of its end-loops (Proposition \ref{prop:en act} below). The proof of (\ref{eq:sup z z' bar d}) also uses the following remark.
\begin{rmk}\rm\label{rmk:BAR d BAR x 0 BAR x 1} Let $\big(M,\lan\cdot,\cdot\ran_M\big)$ be a Riemannian manifold, $G$ a compact Lie group that acts on $M$ by isometries, $P$ a principal $G$-bundle over $[0,1]$, $A\in\A(P)$ a connection, and $u\in C^\infty_G(P,M)$ a map. We define 
\[\ell(A,u):=\int_0^1|d_Au|dt,\]
where the norm is taken with respect to the standard metric on $[0,1]$ and $\lan\cdot,\cdot\ran_M$. Furthermore, we define $\bar u:[0,1]\to M/G$ by $\bar u(t):=Gu(p)$, where $p\in P$ is any point over $t$. We denote by $d$ the distance function induced by $\lan\cdot,\cdot\ran_M$, and define $\bar d$ as in (\ref{eq:bar d}). Then for every pair of points $\BAR x_0,\BAR x_1\in M/G$, we have
\[\bar d(\BAR x_0,\BAR x_1)\leq\inf\big\{\ell(A,u)\,\big|\,(P,A,u)\textrm{ as above: }\bar u(i)=\BAR x_i,\,i=0,1\big\}.\]
This follows from a straight-forward argument. $\Box$
\end{rmk}
\begin{proof}[Proof of Proposition \ref{prop:en conc}]\setcounter{claim}{0} For every subset $X\sub M$ we define 
\[m_X:=\inf\big\{|L_x\xi|\,\big|\,x\in X,\,\xi\in\g:\,|\xi|=1\big\},\]
where the norms are with respect to $\om(\cdot,J\cdot)$ and $\lan\cdot,\cdot\ran_\g$. We set
\begin{equation}\label{eq:R 0 2 pi m}r_0:=m_{\mu^{-1}(0)}^{-1}.
\end{equation}
Let $K\sub M$ be a compact subset and $\eps>0$. Replacing $K$ be $GK$, we may assume w.l.o.g.~that $K$ is $G$-invariant. An elementary argument using our standing hypothesis (H) shows that there exists a number $\de_0>0$ such that $G$ acts freely on $K':=\mu^{-1}(\bar B_{\de_0})$, and 
\begin{equation}\label{eq:m K 1 eps}m_{K'}\geq\sqrt{1-\eps/2}m_{\mu^{-1}(0)}.
\end{equation}
We choose a constant $\de$ as in Theorem \ref{thm:isoperi}, corresponding to $\lan\cdot,\cdot\ran_M:=\om(\cdot,J\cdot),K',c:=\frac1{2-\eps}$. Shrinking $\de$ we may assume that it satisfies the condition of Proposition \ref{prop:en act} (Energy action identity) for $K'$. We choose a constant $\wt E_0>0$ as in Lemma \ref{le:a priori} below (called $E_0$ there), corresponding to $K$. We define 
\begin{equation}\label{eq:E 0 de 0 de}E_0:=\min\Big\{\wt E_0,\frac\pi{32}r_0^2\de_0^2,\frac{\de^2}{128\pi}\Big\}.
\end{equation}
Assume that $r,R,p,w$ are as in the hypothesis. Without loss of generality, we may assume that $r<R$. 

Consider first the {\bf case $R<\infty$}, and assume that $w$ extends to a smooth vortex on the compact annulus of radii $r$ and $R$. We show that {\bf inequality (\ref{eq:E w A ar})} holds. We define the function 
\begin{equation}\label{eq:E s}E:[0,\infty),\quad E(s):=E\big(w,A(re^s,Re^{-s})\big).
\end{equation}
\begin{claim}\label{claim:d ds E}For every $s\in[\log2,\log(R/r)/2)$ we have
\begin{equation}\label{eq:d ds E}\frac d{ds}E(s)\leq-(2-\eps)E(s).
\end{equation}
\end{claim}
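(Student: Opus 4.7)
The plan is to switch to cylindrical coordinates $z = e^{\sigma + it}$, so that the annulus $A(re^s, Re^{-s})$ corresponds to the cylinder
\[
C_s := [\sigma_-(s), \sigma_+(s)] \times S^1, \quad \sigma_-(s) := \log r + s, \quad \sigma_+(s) := \log R - s,
\]
and then to combine the energy--action identity (Proposition \ref{prop:en act}) with the isoperimetric inequality for the invariant symplectic action (Theorem \ref{thm:isoperi}). The hypothesis $E(w) \leq E_0$ together with the a priori estimate (Lemma \ref{le:a priori}, applied via the choice $E_0 \leq \widetilde E_0$) and the bound $E_0 \leq \frac{\pi}{32} r_0^2 \de_0^2$ force the image of $u$ over the relevant shrunken annulus to lie in $K' = \mu^{-1}(\bar B_{\de_0})$, on which $G$ acts freely; this is what allows us to work with a well-defined invariant symplectic action $\mathcal{A}(\gamma_\sigma)$ on each loop $\gamma_\sigma$ obtained by restricting the vortex data to $\{\sigma\} \times S^1$.

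\noindent\textbf{Key computation.} Proposition \ref{prop:en act} should give an identity of the shape
\[
E(s) \;=\; \mathcal{A}(\gamma_{\sigma_-(s)}) - \mathcal{A}(\gamma_{\sigma_+(s)}),
\]
and differentiating (using $\sigma_-'(s) = 1$, $\sigma_+'(s) = -1$) then produces
\[
\frac{d}{ds} E(s) = -\int_{S^1} \tilde e_w(\sigma_-(s), t)\, dt \; - \; \int_{S^1} \tilde e_w(\sigma_+(s), t)\, dt,
\]
where $\tilde e_w$ is the energy density in cylindrical coordinates. Separately, Theorem \ref{thm:isoperi}, applied to each boundary loop with the constant $c := 1/(2 - \eps)$ already fixed at the outset, gives
\[
|\mathcal{A}(\gamma_\sigma)| \leq \frac{1}{2(2-\eps)} \int_{S^1} \tilde e_w(\sigma, t)\, dt,
\]
valid as long as the loop length stays below the threshold $\de$; the bound $E_0 \leq \de^2/(128\pi)$ together with the hypothesis $s \geq \log 2$ guarantees this at both $\sigma = \sigma_\pm(s)$.

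\noindent\textbf{Conclusion.} Adding the two isoperimetric bounds at $\sigma_\pm(s)$ and comparing with the expressions for $E(s)$ and $\frac{d}{ds}E(s)$ above yields
\[
E(s) \;\leq\; \frac{1}{2(2-\eps)}\Bigl(-\frac{d}{ds}E(s)\Bigr),
\]
which rearranges to the desired $\frac{d}{ds}E(s) \leq -(2-\eps)E(s)$.

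\noindent\textbf{Main obstacle.} The delicate point is keeping track of the conformal factor $e^{2\sigma}$ introduced by the change from the Euclidean annulus to the standard cylinder: the second vortex equation on $(A(r,R), \om_0)$ transforms into one with a nonstandard area form on $C_s$, so one has to confirm that the versions of Proposition \ref{prop:en act} and Theorem \ref{thm:isoperi} cited above are formulated in a conformally invariant enough way to be applied either directly on the annulus or on a suitably rescaled cylinder. Matching the isoperimetric constant $1/(2-\eps)$ exactly to the exponent $(2-\eps)$ in the differential inequality is precisely the reason for the specific choice $c = 1/(2-\eps)$ fixed when selecting $\de$ at the start of the proof.
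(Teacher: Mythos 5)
Your route is the paper's route: pass to cylindrical coordinates, use Lemma \ref{le:a priori} together with the choice of $E_0$ to confine the relevant loops to $K'=\mu^{-1}(\bar B_{\de_0})$ with length at most $\de$, apply the energy--action identity of Proposition \ref{prop:en act} and the isoperimetric inequality of Theorem \ref{thm:isoperi} with $c=1/(2-\eps)$, and differentiate in $s$. The sign bookkeeping and the derivative formula are correct. However, your key display is off by a factor of $2$, and that factor is not available: Theorem \ref{thm:isoperi} gives $|\A(\gamma_\sigma)|\leq\frac1{2-\eps}\Vert\iota_\sigma^*d_{\wt A}\wt u\Vert_2^2+\frac1{2m_{K'}^2}\Vert\mu\circ\wt u\circ\iota_\sigma\Vert_2^2$, and since the first vortex equation forces the $\sigma$- and $t$-derivative parts of $d_{\wt A}\wt u$ to have equal norms, $\Vert\iota_\sigma^*d_{\wt A}\wt u\Vert_2^2$ already equals the \emph{full} derivative part of $\int_{S^1}\tilde e_w(\sigma,t)\,dt$, not half of it. The bound one actually gets is $|\A(\gamma_\sigma)|\leq\frac1{2-\eps}\int_{S^1}\tilde e_w(\sigma,t)\,dt$; your version with $\frac1{2(2-\eps)}$ would amount to using $c<\frac12$ in Theorem \ref{thm:isoperi}, below the sharp threshold, and is false in general (test it on loops lying in $\mu^{-1}(0)$). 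This happens to be harmless for the claim: feeding the correct constant $\frac1{2-\eps}$ into your final chain gives exactly $\frac d{ds}E(s)\leq-(2-\eps)E(s)$, rather than the doubled decay rate your display would imply; but as written the intermediate inequality is unjustified.

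The second point is that what you set aside as the ``main obstacle'' is a genuine step that you never carry out, and it is precisely where the hypotheses $r\geq r_0$ and the choices of $r_0$ and $\de_0$ enter. On the cylinder the moment-map term of $\tilde e_w$ carries the conformal weight $e^{2\sigma}$, while in Theorem \ref{thm:isoperi} it carries the weight $\frac1{2m_{K'}^2}$; to subsume the latter into $\frac1{2-\eps}\int_{S^1}\tilde e_w\,dt$ one needs $\frac1{2m_{K'}^2}\leq\frac{e^{2\sigma}}{2-\eps}$, which the paper obtains from $e^{\sigma}\geq 2r\geq 2r_0=2m_{\mu^{-1}(0)}^{-1}$ together with $m_{K'}\geq\sqrt{1-\eps/2}\,m_{\mu^{-1}(0)}$ (see (\ref{eq:R 0 2 pi m}), (\ref{eq:m K 1 eps}) and inequality (\ref{eq:frac 1 2 2 eps})). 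Without this absorption your $\tilde e_w$-form of the isoperimetric inequality does not follow from the cited theorem; with it, and with the corrected constant, your argument coincides with the paper's proof.
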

\begin{proof}[Proof of Claim \ref{claim:d ds E}] Using the fact $r\geq r_0$ and (\ref{eq:E w A E 0},\ref{eq:E 0 de 0 de}), it follows from Lemma \ref{le:a priori} below (with ``$r$''$:=|z|/2$) that
\begin{equation}\label{eq:e w z 0 8}e_w(z)\leq\min\Big\{\de_0^2,\frac{\de^2}{4\pi^2|z|^2}\Big\},\quad\forall z\in A(2r,R/2).\end{equation}
We define
\begin{eqnarray*}&\Si_s:=\big(s+\log r,-s+\log R\big)\x S^1,\,\forall s\in\R,&\\
&\phi:\Si_0\to\R^2=\C,\,\phi(z):=e^z,\quad\wt w:=(\wt A,\wt u):=\phi^*w.&
\end{eqnarray*}
(Here we identify $\Si_0\iso\C/\sim$, where $z\sim z+2\pi in$, for every $n\in\Z$.) Let $s_0\in\big[\log(2r),\log(R/2)\big]$. Combining (\ref{eq:e w z 0 8}) with the fact $|\mu\circ u|\leq\sqrt{e_w}$ and Remark \ref{rmk:BAR d BAR x 0 BAR x 1}, it follows that
\begin{equation}\label{eq:wt u bar ell}\wt u(s_0,t)\in K'=\mu^{-1}(\bar B_{\de_0}),\,\forall t\in S^1,\quad\bar\ell(G\wt u(s_0,\cdot))\leq\de.
\end{equation}
Hence the hypotheses of Theorem \ref{thm:isoperi} are satisfied with $K$ replaced by $K'$ and $c:=1/(2-\eps)$. By the statement of that result the loop $\wt u(s_0,\cdot)$ is admissible, and defining $\iota_{s_0}:S^1\to\Si_0$ by $\iota_{s_0}(t):=(s_0,t)$, we have
\begin{equation}\label{eq:A iota s 0}\big|\A\big(\iota_{s_0}^*\wt w\big)\big|\leq\frac1{2-\eps}\Vert \iota_{s_0}^*d_{\wt A}\wt u\Vert_2^2+\frac1{2m_{K'}^2}\big\Vert\mu\circ\wt u\circ\iota_{s_0}\big\Vert_2^2.
\end{equation}
Here $\A$ denotes the invariant symplectic action, as defined in appendix \ref{sec:add}. Furthermore, the $L^2$-norms are with respect to the standard metric on $S^1\iso\R/(2\pi\Z)$, the metric $\om(\cdot,J\cdot)$ on $M$, and the operator norm $|\cdot|_\op:\g^*\to\R$, induced by $\lan\cdot,\cdot\ran_\g$. 

By (\ref{eq:R 0 2 pi m},\ref{eq:m K 1 eps}) and the fact $2r\leq e^{s_0}$, we have
\begin{equation}\label{eq:frac 1 2 2 eps}\frac1{2-\eps}|\iota_{s_0}^*d_{\wt A}\wt u|_0^2+\frac1{2m_{K'}^2}\big|\mu\circ\wt u\circ\iota_{s_0}\big|^2\leq \frac1{2-\eps}e^{2s_0}e_w(e^{s_0+i\cdot}),\,\textrm{on }S^1.\end{equation}
Here the norm $|\cdot|_0$ is with respect to the standard metric on $S^1\iso\R/(2\pi\Z)$, and we used the fact $|\phi|_\op\leq|\phi|$ for $\phi\in\g^*$, where $|\cdot|$ denotes the norm induced by $\lan\cdot,\cdot\ran_\g$. We fix $s\in\big[\log2,\log(R/r)/2\big)$. Recalling (\ref{eq:E s}), we have $E(s)=\int_{\Si_s}e^{2s_0}e_w(e^{s_0+it})dt\,ds_0$. Combining this with (\ref{eq:A iota s 0},\ref{eq:frac 1 2 2 eps}), it follows that
\begin{equation}\label{eq:A iota}-\A\big(\iota_{-s+\log R}^*\wt w\big)+\A\big(\iota_{s+\log r}^*\wt w\big)\leq-\frac1{2-\eps}\frac d{ds}E(s).\end{equation}
Using (\ref{eq:wt u bar ell}), the hypotheses of Proposition \ref{prop:en act} are satisfied with $K$ replaced by $K'$. Applying that result, we have $E(s)=-\A\big(\iota_{-s+\log R}^*\wt w\big)+\A\big(\iota_{s+\log r}^*\wt w\big)$. Combining this with (\ref{eq:A iota}), inequality (\ref{eq:d ds E}) follows. This proves Claim \ref{claim:d ds E}.
\end{proof}
By Claim \ref{claim:d ds E} the derivative of the function $\big[\log2,\log(R/r)/2\big)\ni s\mapsto E(s)e^{(2-\eps)s}$ is non-positive, and hence this function is non-increasing. Inequality (\ref{eq:E w A ar}) follows.

{\bf We prove (\ref{eq:sup z z' bar d})}. Let $z\in A(4r,\sqrt{rR})$. Using 
(\ref{eq:E w A E 0}) and the fact $E_0\leq\wt E_0$, it follows from Lemma \ref{le:a priori} (with ``$r$''$:=|z|/2$) that 
\begin{equation}\label{eq:e w z}e_w(z)\leq\frac{32}{\pi|z|^2}E\big(w,B_{|z|/2}(z)\big).
\end{equation} 
We define $a:=|z|/(2r)$. Then $a\geq2$ and $B_{|z|/2}(z)$ is contained in $A(ar,a^{-1}R)$. Therefore, by (\ref{eq:E w A ar}) we have 
\[E\big(w,B_{|z|/2}(z)\big)\leq16r^{2-\eps}|z|^{-2+\eps}E(w).\]
Combining this with (\ref{eq:e w z}), the fact $|d_Au|(z)\leq\sqrt{2e_w(z)}$, and the first vortex equation, it follows that 
\begin{equation}\label{eq:d A u C 2r}|d_Au(z)v|\leq Cr^{1-\eps/2}|z|^{-2+\eps/2}\sqrt{E(w)}|v|,\quad\forall z\in A(4r,\sqrt{rR}),\,v\in\R^2.
\end{equation}
where $C:=2^{9/2}\pi^{-1/2}$. A similar argument shows that 
\begin{equation}\label{eq:d A u C R}|d_Au(z)v|\leq CR^{-1+\eps/2}|z|^{-\eps/2}\sqrt{E(w)}|v|,\quad\forall z\in A(\sqrt{rR},R/4).
\end{equation}
Let now $a\geq4$ and $z,z'\in A(ar,a^{-1}R)$. Assume that $\eps\leq1$. (This is no real restriction.) We define $\ga:[0,1]\to\R^2$ to be the radial path of constant speed, such that $\ga(0)=z$ and $|\ga(1)|=|z'|$. Furthermore, we choose an angular path $\ga':[0,1]\to\R^2$ of constant speed, such that $\ga'(0)=\ga(1)$, $\ga'(1)=z'$, and $\ga'$ has minimal length among such paths. (See Figure \ref{fig:path}.)
\begin{figure}
\centering
\leavevmode\epsfbox{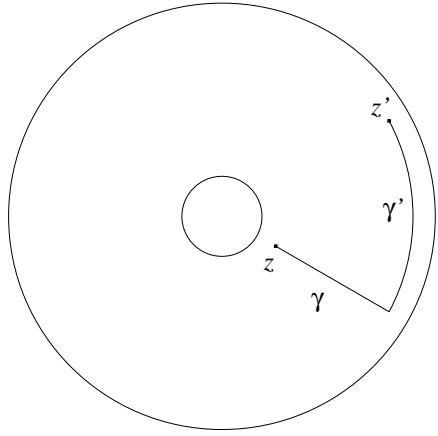}
\caption{The paths $\ga$ and $\ga'$ described in the text.}
\label{fig:path} % magnification factor = 50 %
\end{figure}

Consider the ``twisted length'' of $\ga^*(A,u)$, given by $\int_0^1\big|d_Au\,\dot\ga(t)\big|dt$. It follows from (\ref{eq:d A u C 2r},\ref{eq:d A u C R}) and the fact $\eps\leq1$, that this length is bounded above by $4C\sqrt{E(w)}a^{-1+\eps/2}$. Similarly, it follows that the ``twisted length'' of ${\ga'}^*(A,u)$ is bounded above by $C\pi\sqrt{E(w)}a^{-1+\eps/2}$. Therefore, using Remark \ref{rmk:BAR d BAR x 0 BAR x 1}, inequality (\ref{eq:sup z z' bar d}) with $\eps$ replaced by $\eps/2$ follows. 

Assume now that $w$ is not smooth. By Theorem \ref{thm:reg gauge bdd} below the restriction of $w$ to any compact cylinder contained in $A(r,R)$ is gauge equivalent to a smooth vortex. Hence the inequalities (\ref{eq:E w A ar},\ref{eq:sup z z' bar d}) follow from what we just proved, using the $G$-invariance of $K$. 

Similarly, the {\bf case $R=\infty$} can be reduced to the case $R<\infty$. This completes the proof of Proposition \ref{prop:en conc}.
\end{proof}
\begin{proof}[Proof of Proposition \ref{prop:soft}]\label{proof prop:soft}\setcounter{claim}{0} By hypothesis (\ref{prop:soft E}) the function $E$ as in (\ref{eq:eps E eps}) is well-defined. Since it is increasing and bounded below by $\Emin$, the limit 
\begin{equation}
  \label{eq:m 0 lim}m_0:=\lim_{\eps\to0}E(\eps)
\end{equation}
exists and is bounded below by $\Emin$. We fix a compact subset $K\sub M$ as in hypothesis (\ref{prop:soft K}). We choose a constant $E_0>0$ as in Lemma \ref{le:a priori}, depending on $K$. We may assume w.l.o.g.~that $z_0=0$.
\begin{claim}\label{claim:soft wlog} We may assume w.l.o.g.~that 
\begin{equation}
  \label{eq:e C 0}\Vert e_{w_\nu}^{R_\nu}\Vert_{C^0(\bar B_r)}=e_{w_\nu}^{R_\nu}(0).
\end{equation}
\end{claim}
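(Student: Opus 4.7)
The plan is to achieve the normalization (\ref{eq:e C 0}) by two reductions: first shrink $r$ so that the only bubbling point of $(w_\nu)$ inside $\bar B_r$ is $z_0=0$, and then translate by a vanishing vector so that a location of the maximum of $e^{R_\nu}_{w_\nu}$ is moved to the origin.

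For the first reduction, I would iterate Proposition \ref{prop:quant en loss}(\ref{prop:hard limsup}) (in the manner of Claim \ref{claim:Z ell} from the proof of Proposition \ref{prop:cpt mod}) to produce a finite bubbling set $Z\subset \bar B_r$ off which the densities $e^{R_\nu}_{w_\nu}$ are uniformly bounded on compacts; finiteness holds because the total energy $E(r)$ is finite by hypothesis (\ref{prop:soft E}) and each bubbling point absorbs at least $\Emin$. Next, I claim $0\in Z$: otherwise the densities are uniformly bounded on some $\bar B_\rho$, $\rho>0$, and a local application of Proposition \ref{prop:cpt bdd} (using $R_\nu\to\infty$) yields a subsequence converging modulo gauge to a smooth $\infty$-vortex $w_0$ near $0$. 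Lemma \ref{le:conv e} then gives $E(\eps)=\lim_\nu E^{R_\nu}(w_\nu,B_\eps)=E^\infty(w_0,B_\eps)\to 0$ as $\eps\to0$, contradicting $E(\eps)\geq\Emin$. Shrinking $r$, I can thus arrange $\bar B_r\cap Z=\{0\}$; hypotheses (\ref{prop:soft K}) and (\ref{prop:soft E}) persist on the smaller ball.

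For the second reduction, let $z_\nu\in\bar B_r$ be a maximizer of $e^{R_\nu}_{w_\nu}$ on $\bar B_r$, and pass to a subsequence with $z_\nu\to z_\infty\in\bar B_r$. The crucial step is $z_\infty=0$: if $\sup_\nu e^{R_\nu}_{w_\nu}(z_\nu)<\infty$ then the densities are uniformly bounded on all of $\bar B_r$, and the argument of the previous paragraph again produces a smooth $\infty$-vortex limit near $0$ and forces $E(\eps)\to 0$, a contradiction; hence $e^{R_\nu}_{w_\nu}(z_\nu)\to\infty$. By the bubbling construction of Claim \ref{claim:w 0} in the proof of Proposition \ref{prop:quant en loss} (which produces a bubble at every accumulation point of locations where the density blows up), $z_\infty$ must then be a bubbling point of $(w_\nu)$ in $\bar B_r$. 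By the first reduction, the only such point is $0$, so $z_\infty=0$.

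Finally, set $\phi_\nu(z):=z+z_\nu$ and $\wt w_\nu:=\phi_\nu^*w_\nu$; since $z_\nu\to0$, for $\nu$ large $\bar B_{r/2}\subset\phi_\nu^{-1}(B_r)$, so $\wt w_\nu\in\WWW^p(\bar B_{r/2})$. By the pullback formula (\ref{eq:e phi *}),
\[
e^{R_\nu}_{\wt w_\nu}(0)=e^{R_\nu}_{w_\nu}(z_\nu)=\Vert e^{R_\nu}_{w_\nu}\Vert_{C^0(\bar B_r)}\geq \Vert e^{R_\nu}_{\wt w_\nu}\Vert_{C^0(\bar B_{r/2})},
\]
so equality holds and (\ref{eq:e C 0}) is satisfied for $\wt w_\nu$ on $\bar B_{r/2}$. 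Replacing $w_\nu$ by $\wt w_\nu$ and $r$ by $r/2$: hypothesis (\ref{prop:soft K}) holds with the same compact $K$, while hypothesis (\ref{prop:soft E}) transfers via the identity $E^{R_\nu}(\wt w_\nu,B_\eps)=E^{R_\nu}(w_\nu,B_\eps(z_\nu))$, the convergence $z_\nu\to0$, and continuity of $E$, yielding a continuous limit $\wt E(\eps)=E(\eps)\geq\Emin$ on $(0,r/2]$. The main obstacle is the localization step $z_\infty=0$, which rests on combining the compactness result Proposition \ref{prop:cpt bdd} (to exclude the bounded-density case) with the bubble-production argument from Proposition \ref{prop:quant en loss} (to force the blow-up location to coincide with the unique bubbling point $0$).
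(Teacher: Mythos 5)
Your proposal is correct, but it reaches the key point of the reduction --- that a maximizer of $e^{R_\nu}_{w_\nu}$ near $z_0=0$ can be chosen converging to $0$ --- by a softer route than the paper. The paper's proof is purely quantitative: it fixes $\hhat r\leq r/4$ with $E(4\hhat r)<m_0+E_0$, where $m_0=\lim_{\eps\to0}E(\eps)$ and $E_0$ is the constant of Lemma \ref{le:a priori}; then for every $\eps$ the energy on the annulus between radii $\eps/2$ and $4\hhat r$ is eventually less than $E_0$, so Lemma \ref{le:a priori} bounds the density on the annulus between radii $\eps$ and $2\hhat r$ by $32E_0/(\pi\eps^2)$, while the concentration of energy at least $m_0/2$ in arbitrarily small balls about $0$ pushes the sup of the density near $0$ above that bound; hence the maximizer over $\bar B_{2\hhat r}$ lies in $B_\eps$, with no compactness theorem invoked. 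You instead build the finite bubbling set via Proposition \ref{prop:quant en loss}, exclude uniform boundedness of the densities near $0$ by extracting an $\infty$-vortex limit and playing Lemma \ref{le:conv e} off against $E(\eps)\geq\Emin$, and then force the maximizer to converge to the unique bubbling point $0$; your bookkeeping at the end (translation by $z_\nu\to0$, new radius $r/2$, transfer of hypotheses via continuity of $E$) matches the paper's. The one caveat is that Proposition \ref{prop:cpt bdd} cannot be cited verbatim, since there the domains $\Om_\nu$ are required to exhaust $\R^2\wo Z$, whereas your vortices live on a fixed ball; you need a local variant, obtained by rerunning its proof on an exhaustion of the ball minus $Z$ by compact sets (Theorem \ref{thm:Uhlenbeck compact}, Lemma \ref{le:mu u}, Proposition \ref{prop:compactness delbar} and the patching argument), which is routine but should be stated. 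What the paper's argument buys is precisely the avoidance of any such compactness statement at this stage; what yours buys is that no careful choice of radius tied to $m_0+E_0$ is needed, at the cost of heavier machinery.
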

\begin{proof} [Proof of Claim \ref{claim:soft wlog}] Suppose that we have already proved the proposition under this additional assumption, and let $r,z_0=0,R_\nu,w_\nu$ be as in the hypotheses of the proposition. We choose $0<\hhat r\leq r/4$ so small that 
 \begin{equation}
   \label{eq:E hhat r}E(4\hhat r)=\lim_{\nu\to\infty}E^{R_\nu}(w_\nu,B_{4\hhat r})<m_0+E_0.
 \end{equation}
For $\nu\in\N$ we choose $\wt z_\nu\in\bar B_{2\hhat r}$ such that 
\begin{equation}
    \label{eq:e R ze}e^{R_\nu}_{w_\nu}(\wt z_\nu)=\Vert e^{R_\nu}_{w_\nu}\Vert_{C^0(\bar B_{2\hhat r})}.
\end{equation}
\begin{claim}\label{claim:z nu} The sequence $\wt z_\nu$ converges to $0$.
\end{claim}
\begin{proof}[Proof of Claim \ref{claim:z nu}] Recall that $A(r,R)$ denotes the open annulus of radii $r$ and $R$. Let $0<\eps\leq 2\hhat r$. Inequality (\ref{eq:E hhat r}) implies that there exists $\nu(\eps)\in\N$ such that 
\[E^{R_\nu}\big(w_\nu,A(\eps/2,4\hhat r)\big)<E_0,\]
for every $\nu\geq\nu(\eps)$. Hence it follows from Lemma \ref{le:a priori} (Bound on energy density, using $\eps\leq2\hhat r$) that
\begin{equation}\label{eq:e 4 C}e^{R_\nu}_{w_\nu}(z)<\frac{32E_0}{\pi\eps^2},\quad\forall\nu\geq\nu(\eps),\,\forall z\in A(\eps,2\hhat r).
\end{equation}
We define $\de_0:=\min\big\{2\hhat r,\eps\sqrt{m_0/(64E_0)}\big\}$. Increasing $\nu(\eps)$, we may assume that for every $\nu\geq\nu(\eps)$, we have $E^{R_\nu}(w_\nu, B_{\de_0})>m_0/2$, and therefore
\[\Vert e^{R_\nu}_{w_\nu}\Vert_{C^0(\bar B_{\de_0})}>\frac{32E_0}{\pi\eps^2}.\]
Combining this with (\ref{eq:e R ze},\ref{eq:e 4 C}) and the fact $\de\leq2\hhat r$, it follows that $\wt z_\nu\in B_\eps$, for every $\nu\geq\nu(\eps)$. This proves Claim \ref{claim:z nu}. 
\end{proof}
By Claim \ref{claim:z nu} we may pass to some subsequence such that $|\wt z_\nu|<\hhat r$ for every $\nu$. We define 
\[\psi_\nu:B_{\hhat r}\to\R^2,\quad\psi_\nu(z):=z+z_\nu,\quad\wt w_\nu:=(\wt A,\wt u):=\psi_\nu^*w_\nu.\]
Then (\ref{eq:e C 0}) with $w_\nu,r$ replaced by $\wt w_\nu,\hhat r$ is satisfied. By elementary arguments the hypotheses of Proposition \ref{prop:soft} are satisfied with $(w_\nu, r, z_0)$ replaced by $(\wt w_\nu,\hhat r,0)$. Assuming that we have already proved the statement of the proposition for $\wt w_\nu$, a straight-forward argument using Claim \ref{claim:z nu} shows that it also holds for $w_\nu$. This proves Claim \ref{claim:soft wlog}.
\end{proof}
{\bf So we assume w.l.o.g.~that (\ref{eq:e C 0}) holds.}\\

{\bf Construction of $R_0,Z,$ and $w_0$:} Recall that we have chosen $E_0>0$ as in Lemma \ref{le:a priori}. We choose a constants $r_0$ and $E_1$ as in Proposition \ref{prop:en conc}, the latter (called $E_0$ there) corresponding to the compact set $K$ and $\eps:=1$. We fix a constant 
\begin{equation}\label{eq:de min}0<\de<\min\{m_0,E_0/2,E_1/2\}.
\end{equation} 
We pass to some subsequence such that 
\begin{equation}\label{eq:E R nu w nu >}E^{R_\nu}(w_\nu,B_r(z_0))> m_0-\de,\quad\forall\nu\in\N.
\end{equation}
For every $\nu\in\N$, there exists $0<\hhat\eps_\nu<r$, such that
\begin{equation}
  \label{eq:E R =}E^{R_\nu}(w_\nu,B_{\hhat\eps_\nu})=m_0-\de.  
\end{equation}
It follows from the definition of $m_0$ that 
\begin{equation}
  \label{eq:eps nu 0}\hhat\eps_\nu\to0.  
\end{equation}
\begin{claim}\label{claim:inf nu hhat eps nu} We have
\begin{equation}
  \label{eq:hhat eps nu R nu}\inf_\nu \hhat\eps_\nu R_\nu>0.
\end{equation}
\end{claim}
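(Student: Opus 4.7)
My plan is to bound $e_{w_\nu}^{R_\nu}(0)$ from above by a constant multiple of $R_\nu^2$, and then use the assumption (\ref{eq:e C 0}) together with the definition (\ref{eq:E R =}) of $\hhat\eps_\nu$ to convert this into a lower bound on $\hhat\eps_\nu R_\nu$.

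First, I would apply Proposition \ref{prop:quant en loss}(\ref{prop:hard R e}). Its hypotheses are satisfied: $(M,\om)$ is aspherical, the maps $u_\nu$ take values in the fixed compact set $K$, the energies $E^{R_\nu}(w_\nu, B_r)$ are bounded above (by $E(r)<\infty$, from hypothesis (\ref{prop:soft E})), and $\inf_\nu R_\nu > 0$ since $R_\nu \to \infty$. Choosing $Q := \bar B_{r/2}$ (a compact subset of $\Om = B_r$ containing $0$), that proposition produces a constant $C>0$ such that
\[
e_{w_\nu}^{R_\nu}(z) \leq C R_\nu^2, \qquad \forall z \in \bar B_{r/2},\ \nu \in \N.
\]

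Next, combined with the normalization (\ref{eq:e C 0}) ensuring that the density on $\bar B_r$ is maximized at the origin, we obtain, for all $\nu$ large enough that $\hhat\eps_\nu \leq r/2$ (which holds eventually by (\ref{eq:eps nu 0})),
\[
\|e_{w_\nu}^{R_\nu}\|_{C^0(\bar B_{\hhat\eps_\nu})} \leq e_{w_\nu}^{R_\nu}(0) \leq C R_\nu^2.
\]
Integrating this over $B_{\hhat\eps_\nu}$ and using (\ref{eq:E R =}) gives
\[
m_0 - \de = E^{R_\nu}(w_\nu, B_{\hhat\eps_\nu}) \leq \pi \hhat\eps_\nu^2 \cdot C R_\nu^2,
\]
so $\hhat\eps_\nu R_\nu \geq \sqrt{(m_0 - \de)/(\pi C)}$. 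Since by (\ref{eq:de min}) we have $\de < m_0$, the right-hand side is a strictly positive constant independent of $\nu$, giving (\ref{eq:hhat eps nu R nu}). Any finite set of leading indices (where $\hhat\eps_\nu > r/2$) only improves the bound, so this holds for all $\nu$ after adjusting the constant.

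There is no real obstacle here; the only subtlety is ensuring that Proposition \ref{prop:quant en loss}(\ref{prop:hard R e}) applies on a compact neighborhood of the origin, which is why I chose $Q = \bar B_{r/2}$ strictly inside the open domain $B_r$. Conceptually, the argument says: a uniform pointwise bound on the energy density of the form $e_{w_\nu}^{R_\nu} \lesssim R_\nu^2$ forces any region carrying at least the fixed energy $m_0 - \de$ to have area $\gtrsim R_\nu^{-2}$, i.e.\ radius $\gtrsim R_\nu^{-1}$.
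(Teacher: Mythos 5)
Your proposal is correct and follows essentially the same route as the paper: Proposition \ref{prop:quant en loss}(\ref{prop:hard R e}) (applied there with $Q:=\{0\}$, and with the hypotheses verified exactly as you do) gives $e_{w_\nu}^{R_\nu}(0)\leq CR_\nu^2$, which combined with (\ref{eq:e C 0}), (\ref{eq:E R =}) and $\de<m_0$ yields $\hhat\eps_\nu R_\nu\geq\sqrt{(m_0-\de)/(\pi C)}>0$. Your case distinction for $\hhat\eps_\nu>r/2$ is superfluous, since $\hhat\eps_\nu<r$ always and (\ref{eq:e C 0}) bounds the density on all of $\bar B_r$ by its value at the origin, but this does not affect correctness.
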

\begin{proof}[Proof of Claim \ref{claim:inf nu hhat eps nu}] 
Equality (\ref{eq:e C 0}) implies that
\begin{equation}
  \label{eq:E eps nu C}E^{R_\nu}(w_\nu,B_{\hhat\eps_\nu})\leq \pi\hhat\eps_\nu^2e_{w_\nu}^{R_\nu}(0).
\end{equation}
The hypotheses $R_\nu\to\infty$, (\ref{prop:soft K}), and (\ref{prop:soft E}) imply that the hypotheses of Proposition \ref{prop:quant en loss} (Quantization of energy loss) are satisfied with $\Om:=B_r$. Thus by assertion (\ref{prop:hard R e}) of that proposition with $Q:=\{0\}$, we have 
\begin{equation}\nn\inf_\nu \frac{R_\nu^2}{e_{w_\nu}^{R_\nu}(0)}>0.
\end{equation}
Combining this with (\ref{eq:E eps nu C},\ref{eq:E R =}) and the fact $\de<m_0$, inequality (\ref{eq:hhat eps nu R nu}) follows. This proves Claim \ref{claim:inf nu hhat eps nu}.
\end{proof}
Passing to some subsequence, we may assume that the limit 
\begin{equation}
  \label{eq:hhat eps R}\hhat R_0:=\lim_{\nu\to\infty}\hhat \eps_\nu R_\nu\in[0,\infty] 
\end{equation}
exists. By Claim \ref{claim:inf nu hhat eps nu} we have $\hhat R_0>0$. We define
\begin{equation}\label{eq:R 0 eps nu}(R_0,\eps_\nu):=\left\{
  \begin{array}{ll}(\infty,\hhat\eps_\nu),&\textrm{if }\hhat R_0=\infty,\\
(1,R_\nu^{-1}),&\textrm{otherwise,}
  \end{array}
\right.  
\end{equation}
\[\wt R_\nu:=\eps_\nu R_\nu,\quad\phi_\nu:B_{\eps_\nu^{-1}r}\to B_r,\,\phi_\nu(z):=\eps_\nu z,\quad\wt w_\nu:=(\wt A_\nu,\wt u_\nu):=\phi_\nu^*w_\nu.\]
By Proposition \ref{prop:cpt mod} with $R_\nu,$ $w_\nu$ replaced by $\wt R_\nu$, $\wt w_\nu$ and $r_\nu:=r/\eps_\nu$ there exist a finite subset $Z\sub\R^2$ and an $R_0$-vortex $w_0=(A_0,u_0)\in\WWW(\R^2\wo Z)$, and passing to some subsequence, there exist gauge transformations $g_\nu\in W^{2,p}_\loc(\R^2\wo Z,G)$, such that the conditions of that proposition are satisfied. 

We check the {\bf conditions of Proposition \ref{prop:soft}} with $z_\nu:=z_0:=0$: {\bf Condition \ref{prop:soft}(\ref{prop:soft eps})} holds by (\ref{eq:eps nu 0},\ref{eq:hhat eps R},\ref{eq:R 0 eps nu}). {\bf Condition \ref{prop:soft}(\ref{prop:soft conv})} follows from \ref{prop:cpt mod}(\ref{prop:cpt mod:<},\ref{prop:cpt mod:=}), and {\bf condition \ref{prop:soft}(\ref{prop:soft lim nu E eps})} follows from \ref{prop:cpt mod}(\ref{prop:cpt mod lim nu E eps}).

We prove {\bf condition \ref{prop:soft}(\ref{prop:soft en})}: We define
\[\psi_\nu:B_{\hhat\eps_\nu^{-1}r}\to B_r,\,\psi_\nu(z):=\hhat\eps_\nu z,\quad\hhat w_\nu:=\psi_\nu^*w_\nu.\]
We choose $0<\eps\leq r$ so small that $\lim_{\nu\to\infty}E^{R_\nu}(w_\nu,B_\eps)<m_0+E_1/2$. Furthermore, we choose an integer $\nu_0$ so large that for $\nu\geq\nu_0$, we have $E^{R_\nu}(w_\nu,B_\eps)<m_0+E_1/2$. We fix $\nu\geq\nu_0$. Using (\ref{eq:E R =},\ref{eq:de min}), it follows that $E\big(\hhat w_\nu,A(\hhat\eps_\nu R_\nu,\eps R_\nu)\big)<E_1$. It follows that the requirements of Proposition \ref{prop:en conc} are satisfied with $r$, $R$, $w_\nu$ replaced by $\max\{r_0,\hhat\eps_\nu R_\nu\}$, $\eps R_\nu$, $\hhat w_\nu$. Therefore, we may apply that result (with ``$\eps$'' equal to $1$), obtaining
\[E^{R_\nu}\Big(w_\nu,A\big(a\max\{R_\nu^{-1}r_0,\hhat\eps_\nu\},a^{-1}\eps\big)\Big)\leq4a^{-1}E_1,\quad\forall a\geq2.\]
Using (\ref{eq:R 0 eps nu}) and the fact $z_\nu=z_0=0$, the inequality (\ref{eq:en cons}) follows. This proves \ref{prop:soft}(\ref{prop:soft en}).

To see that {\bf condition \ref{prop:soft}(\ref{prop:soft Z})} holds, assume first that $R_0=1$. Then $Z=\emptyset$ by statement (\ref{prop:cpt mod:<}) of Proposition \ref{prop:cpt mod}. Condition \ref{prop:cpt mod}(\ref{prop:cpt mod:<}) and Lemma \ref{le:conv e} imply that $E(w_0,B_{2\hhat R_0})=\lim_{\nu\to\infty}E(\wt w_\nu,B_{2\hhat R_0})$. It follows from convergence $\hhat \eps_\nu R_\nu\to \hhat R_0<\infty$ and (\ref{eq:E R =},\ref{eq:de min}) that this limit is positive. This proves condition \ref{prop:soft}(\ref{prop:soft Z}) in the case $R_0=1$. 

{\bf Assume now that $R_0=\infty$ and $E^\infty(w_0)=0$.} Then condition \ref{prop:soft}(\ref{prop:soft Z}) is a consequence of the following two claims.
\begin{claim}\label{claim:Z B 1} The set $Z$ is not contained in the open ball $B_1$.
\end{claim}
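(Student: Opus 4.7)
My plan is a proof by contradiction. Suppose $Z\sub B_1$ (the open unit ball). The key identity I exploit is that, by the definition of $\eps_\nu=\hhat\eps_\nu$ in the case $R_0=\infty$ (see (\ref{eq:R 0 eps nu})) combined with (\ref{eq:E R =}),
\[ E^{\wt R_\nu}(\wt w_\nu,B_1)\;=\;E^{R_\nu}(w_\nu,B_{\eps_\nu})\;=\;m_0-\de. \]

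First I would promote this to the identity
\[ \lim_{\nu\to\infty}E^{\wt R_\nu}(\wt w_\nu,B_R)\;=\;m_0-\de,\qquad\forall R>1. \]
Under the assumption $Z\sub B_1$, the closure $\bar B_R\wo B_1$ is a compact subset of $\R^2\wo Z$. Hence by the gauge convergence in condition \ref{prop:cpt mod}(\ref{prop:cpt mod:=}) and Lemma \ref{le:conv e}, the rescaled energy densities converge uniformly on $\bar B_R\wo B_1$, so
\[ \lim_\nu E^{\wt R_\nu}(\wt w_\nu,\bar B_R\wo B_1)\;=\;E^\infty(w_0,\bar B_R\wo B_1)\;\leq\;E^\infty(w_0)\;=\;0 \]
by the standing hypothesis $E^\infty(w_0)=0$. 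Adding the constant $m_0-\de$ on $B_1$ yields the claim; pulling back through $\phi_\nu$, it reads $\lim_\nu E^{R_\nu}(w_\nu,B_{R\eps_\nu})=m_0-\de$ for every fixed $R>1$.

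Next I would extract a contradiction using condition \ref{prop:soft}(\ref{prop:soft en}), which has been established above this claim (with $z_\nu=z_0=0$). Writing
\[ E^{R_\nu}(w_\nu,B_{R^{-1}}\wo B_{R\eps_\nu})\;=\;E^{R_\nu}(w_\nu,B_{R^{-1}})-E^{R_\nu}(w_\nu,B_{R\eps_\nu}), \]
the right-hand side tends, as $\nu\to\infty$, to $E(R^{-1})-(m_0-\de)$ by hypothesis \ref{prop:soft}(\ref{prop:soft E}) together with the identity just proved. Since the function $E$ is non-decreasing on $(0,r]$ with $E(\eps)\to m_0$ as $\eps\to 0$ (see (\ref{eq:m 0 lim})), we have $E(R^{-1})\geq m_0$, so this limit is $\geq\de$ for every $R>1$. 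Taking $R\to\infty$,
\[ \limsup_{R\to\infty}\limsup_{\nu\to\infty}E^{R_\nu}(w_\nu,B_{R^{-1}}\wo B_{R\eps_\nu})\;\geq\;\de\;>\;0 \]
by (\ref{eq:de min}), directly contradicting (\ref{eq:en cons}). Hence $Z\not\sub B_1$.

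I do not foresee any substantial obstacle: the argument is a clean energy-accounting computation combining two tools already in hand, namely Lemma \ref{le:conv e} (to convert $E^\infty(w_0)=0$ into vanishing of the rescaled energy on any compact annulus $\bar B_R\wo B_1$) and (\ref{eq:en cons}) (to forbid a positive amount of energy from residing in the shrinking intermediate annulus $B_{R^{-1}}\wo B_{R\eps_\nu}$). The one subtle point is that $Z\sub B_1$ must be interpreted as containment in the open ball, which is precisely what makes $\bar B_R\wo B_1$ disjoint from $Z$ so that Lemma \ref{le:conv e} applies on it.
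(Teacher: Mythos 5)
Your proof is correct and uses essentially the same ingredients as the paper's: the normalization $E^{R_\nu}(w_\nu,B_{\hhat\eps_\nu})=m_0-\de$, the monotonicity of $E(\eps)$ toward $m_0$, Lemma \ref{le:conv e} on the annulus $\bar B_R\wo B_1$ under the contradiction hypothesis $Z\sub B_1$, and condition \ref{prop:soft}(\ref{prop:soft en}). The only difference is a rearrangement: the paper uses (\ref{eq:en cons}) together with the energy accounting to produce a positive amount of energy on $A(1,R)$ and contradicts $E^\infty(w_0)=0$, whereas you use $E^\infty(w_0)=0$ to pin the energy of $B_{R\eps_\nu}$ at $m_0-\de$ and contradict (\ref{eq:en cons}) — the same argument read in the opposite order.
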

\begin{proof}[Proof of Claim \ref{claim:Z B 1}] By \ref{prop:soft}(\ref{prop:soft en}) there exists $R>0$ so that
\begin{equation}
  \label{eq:limsup de}\limsup_{\nu\to\infty}E^{R_\nu}\big(w_\nu,A(R\eps_\nu,R^{-1})\big)<\de.
\end{equation}
(Here we used that $z_0=z_\nu=0$.) Since $R_0=\infty$, we have $\hhat \eps_\nu=\eps_\nu$. Hence it follows from (\ref{eq:E R =}) and the definition (\ref{eq:m 0 lim}) of $m_0$, that
\[\lim_{\nu\to\infty}E^{R_\nu}\big(w_\nu,A(\eps_\nu,R^{-1})\big)\geq\de.\]
Combining this with (\ref{eq:limsup de}), it follows that 
\begin{equation}\label{eq:liminf E A}\liminf_{\nu\to\infty}E^{R_\nu}(w_\nu,A(\eps_\nu,\eps_\nu R))>0.\end{equation}
Suppose by contradiction that $Z\sub B_1$. Then by \ref{prop:cpt mod}(\ref{prop:cpt mod:=}), the connection $g_\nu^*\wt A_\nu$ converges to $A_0$ in $C^0$ on $\bar A(1,R):=\BAR B_R\wo B_1$, and the map $g_\nu^{-1}\wt u_\nu$ converges to $u_0$ in $C^1$ on $\bar A(1,R)$. Hence Lemma \ref{le:conv e} implies that
\[E^\infty\big(w_0,A(1,R)\big)=\lim_{\nu\to\infty}E^{\wt R_\nu}\big(\wt w_\nu,A(1,R)\big).\]
Combining this with (\ref{eq:liminf E A}), we arrive at a contradiction to our assumption $E^\infty(w_0)=0$. This proves Claim \ref{claim:Z B 1}.
\end{proof}
\begin{claim}\label{claim:0 Z} The set $Z$ contains $0$.
\end{claim}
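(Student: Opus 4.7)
The plan is to derive a contradiction from the assumption $0\notin Z$, exploiting the normalization (\ref{eq:e C 0}) carried over from Claim \ref{claim:soft wlog} together with the energy floor built into the choice of $\hhat\eps_\nu$. Concretely, the rescaling identity $e^{\wt R_\nu}_{\wt w_\nu}(z)=\eps_\nu^2 e^{R_\nu}_{w_\nu}(\eps_\nu z)$ (which follows from (\ref{eq:e phi *}) applied to $\phi_\nu$) combined with gauge invariance yields $e^{\wt R_\nu}_{\wt w_\nu}(0)=\eps_\nu^2\,e^{R_\nu}_{w_\nu}(0)$. If $0\notin Z$, then by Proposition \ref{prop:cpt mod}(\ref{prop:cpt mod:=}) the connections $g_\nu^*\wt A_\nu$ and the maps $g_\nu^{-1}\wt u_\nu$ converge on some $\bar B_\rho$ disjoint from $Z$, and Lemma \ref{le:conv e} gives $e^{\wt R_\nu}_{\wt w_\nu}\to e^\infty_{w_0}$ in $C^0(\bar B_\rho)$; in particular $e^{\wt R_\nu}_{\wt w_\nu}(0)\to e^\infty_{w_0}(0)$.

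To turn this into a contradiction, I would produce a positive lower bound for $e^{\wt R_\nu}_{\wt w_\nu}(0)$ that persists in the limit. By the normalization (\ref{eq:e C 0}) the density $e^{R_\nu}_{w_\nu}$ attains its $C^0(\bar B_r)$-maximum at the origin, and since $\hhat\eps_\nu<r$ for all large $\nu$ (by (\ref{eq:eps nu 0})), integrating the pointwise bound $e^{R_\nu}_{w_\nu}\leq e^{R_\nu}_{w_\nu}(0)$ over $B_{\hhat\eps_\nu}$ gives, together with the defining equality (\ref{eq:E R =}), the inequality
\[
m_0-\de \;=\; E^{R_\nu}(w_\nu,B_{\hhat\eps_\nu}) \;\leq\; \pi\hhat\eps_\nu^2\,e^{R_\nu}_{w_\nu}(0).
\]
Since we are in the case $R_0=\infty$, the definition (\ref{eq:R 0 eps nu}) forces $\eps_\nu=\hhat\eps_\nu$, so combining with the rescaling identity yields
\[
e^{\wt R_\nu}_{\wt w_\nu}(0) \;=\; \eps_\nu^2\,e^{R_\nu}_{w_\nu}(0) \;\geq\; \frac{m_0-\de}{\pi}\;>\;0,
\]
where positivity uses $\de<m_0$ from (\ref{eq:de min}).

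Passing to the limit with Lemma \ref{le:conv e} then gives $e^\infty_{w_0}(0)\geq (m_0-\de)/\pi>0$, hence $E^\infty(w_0)>0$, contradicting the standing hypothesis $E^\infty(w_0)=0$. Therefore $0\in Z$, which is Claim \ref{claim:0 Z}. There is no real obstacle: the argument is essentially bookkeeping, but one subtle point to verify is that the normalization made in Claim \ref{claim:soft wlog} still applies here (it was preserved through the subsequent choice of $\hhat\eps_\nu$), and that $\eps_\nu=\hhat\eps_\nu$ in the relevant case so that the pointwise density bound at the origin for $\wt w_\nu$ is controlled by the energy $m_0-\de$ on $B_{\hhat\eps_\nu}$ rather than being diluted by the rescaling.
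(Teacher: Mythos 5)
Your argument is correct, but it takes a genuinely different route from the paper's. The paper's proof of this claim first establishes the stronger fact (\ref{eq:e wt R nu wt w nu 0}) that $e^{\wt R_\nu}_{\wt w_\nu}(0)\to\infty$: it uses Claim \ref{claim:Z B 1} to pick an unresolved point $z\in Z\wo B_1$, notes via the normalization (\ref{eq:e C 0}) and $\eps_\nu\to0$ that $e^{\wt R_\nu}_{\wt w_\nu}(0)$ dominates $\Vert e^{\wt R_\nu}_{\wt w_\nu}\Vert_{C^0(\bar B_\eps(z))}$, and then feeds in the quantization $E_z(\eps)\geq\Emin$ from condition (\ref{prop:soft lim nu E eps}) for arbitrarily small $\eps$; the contradiction with $0\notin Z$ is then with the finiteness of $e^\infty_{w_0}(0)$ coming from Proposition \ref{prop:cpt mod}(\ref{prop:cpt mod:=}) and Lemma \ref{le:conv e}. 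You instead extract the uniform positive lower bound $e^{\wt R_\nu}_{\wt w_\nu}(0)=\eps_\nu^2 e^{R_\nu}_{w_\nu}(0)\geq(m_0-\de)/\pi$ directly from the defining equality (\ref{eq:E R =}) together with the normalization (\ref{eq:e C 0}) — this is precisely inequality (\ref{eq:E eps nu C}) read in reverse, combined with $\eps_\nu=\hhat\eps_\nu$ from (\ref{eq:R 0 eps nu}) in the case $R_0=\infty$ — and then contradict the standing case hypothesis $E^\infty(w_0)=0$ (legitimate here, since the claim sits inside that case), using that $e^\infty_{w_0}$ is continuous so a positive value at $0$ forces $E^\infty(w_0)>0$. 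Your route is more economical: it needs neither Claim \ref{claim:Z B 1} nor the minimal energy $\Emin$ nor condition (\ref{prop:soft lim nu E eps}), only the bookkeeping you cite; what the paper's route buys is the blow-up statement at the origin, which is stronger than what the claim requires but is not used elsewhere. All the ingredients you invoke (the rescaling identity from (\ref{eq:e phi *}), gauge invariance of the density, and the fact that $\hhat\eps_\nu<r$ so the pointwise maximum at $0$ controls the energy on $B_{\hhat\eps_\nu}$) are available at this point of the proof, so there is no gap.
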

\begin{pf}[Proof of Claim \ref{claim:0 Z}] By Claim \ref{claim:Z B 1} the set $Z\wo B_1$ is nonempty. We choose a point $z\in Z\wo B_1$ and a number $\eps_0>0$ so small that $B_{\eps_0}(z)\cap Z=\{z\}$. We fix $0<\eps<\eps_0$. Since $\eps_\nu\to0$ (as $\nu\to\infty$), (\ref{eq:e C 0}) implies that $e^{\wt R_\nu}_{\wt w_\nu}(0)=\Vert e^{\wt R_\nu}_{\wt w_\nu}\Vert_{C^0(\bar B_\eps(z))}$, for $\nu$ large enough. Combining this with condition \ref{prop:soft}(\ref{prop:soft lim nu E eps}), it follows that $\liminf_{\nu\to\infty}e^{\wt R_\nu}_{\wt w_\nu}(0)\geq\Emin/(\pi\eps^2)$. Since $\eps\in(0,\eps_0)$ is arbitrary, it follows that 
\begin{equation}\label{eq:e wt R nu wt w nu 0}e^{\wt R_\nu}_{\wt w_\nu}(0)\to\infty,\quad\textrm{as }\nu\to\infty.
\end{equation}
If $0$ did not belong to $Z$, then by \ref{prop:cpt mod}(\ref{prop:cpt mod:=}) and Lemma \ref{le:conv e} we would have $e^{\wt R_\nu}_{\wt w_\nu}(0)\to e^\infty_{w_0}(0)$, a contradiction to (\ref{eq:e wt R nu wt w nu 0}). This proves Claim \ref{claim:0 Z}, and completes the proof of \ref{prop:soft}(\ref{prop:soft Z}) and therefore of Proposition \ref{prop:soft}.
\end{pf}
\end{proof}
\begin{rmk}\label{rmk:soft proof} \rm Assume that $R_0,Z,w_0$ are constructed as in the proof of condition (\ref{prop:soft Z}) of Proposition \ref{prop:soft}, and that $R_0=\infty$ and $E^\infty(w_0)=0$. Then $Z\sub\bar B_1$ (and hence $Z\cap S^1\neq\emptyset$ by Claim \ref{claim:Z B 1}). This follows from the inequalities
\[\lim_{\nu\to\infty}E^{\wt R_\nu}(\wt w_\nu,A(1,R))\leq\de<\Emin,\quad\forall R>1.\]
Here the first inequality is a consequence of condition (\ref{eq:E R =}). $\Box$
\end{rmk}
\section{Proof of Theorem \ref{thm:bubb} (Bubbling)}\label{sec:proof:thm:bubb}
Based on the results of the previous sections, we are now ready to prove the main result of this article. The proof is an adaption of the proof of \cite[Theorem 5.3.1]{MS} to the present setting. The strategy is the following: Consider first the case $k=0$, i.e., the only marked point is $z_0^\nu=\infty$. We rescale the sequence $W_\nu$ so rapidly that all the energy is concentrated at the origin in $\R^2$. Then we ``zoom back in'' in a soft way, to capture the bubbles (spheres in $\BAR M$ and vortices on $\R^2$) in an inductive way. (See Claim \ref{claim:tree} below.) 

Next we show that at each stage of this construction, the total energy of the components of the tree plus the energy loss at the unresolved bubbling points equals the limit of the energies $E(W^\nu)$. (See Claim \ref{claim:f E}.) Furthermore, we prove that the number of vertices of the tree is uniformly bounded above. (See inequality (\ref{eq:N E}).) This implies that the inductive construction terminates at some point. 

We also show that the components of the tree have the required properties. (See Claim \ref{claim:MMM bar u i}.) Finally, we prove that the data fits together to a stable map, which is the limit of a subsequence of $W^\nu$. (See Claim \ref{claim:st conv}.) 

For $k\geq1$ we then prove the statement of the theorem inductively, using the statement for $k=0$. At each induction step we need to handle one additional marked point in the sequence of vortices and marked points. In the limit there are three possibilities for the location of this point: (I) It may lie on a vertex where it does not coincide with any special point. (II) It may coincide with the marked point $z_i$ (lying on the $\al_i$-th vertex), for some $i$. (III) It may lie between two already constructed bubbles. 

In case (I) we can just include the new marked point into the bubble tree. In case (II) we introduce a ``ghost bubble'', which carries the two marked points and is connected to $\al_i$. In case (III) we introduce a ``ghost bubble'' between the two bubbles, which carries the new marked point. 
\begin{proof}[Proof of Theorem \ref{thm:bubb}]\label{thm:bubb proof}\setcounter{claim}{0}We consider first the {\bf case $k=0$.} Let $W_\nu$ be a sequence of vortices as in the hypothesis. For each $\nu\in\N$ we choose a representative $w_\nu:=(P_\nu,A_\nu,u_\nu)$ of $W_\nu$, such that $P_\nu=\R^2\x G$. Passing to some subsequence we may assume that $E(w_\nu)$ converges to some constant $E$. The hypothesis $E(W_\nu)>0$ (for every $\nu$) implies that $E\geq\Emin$. We choose a sequence $R_\nu\geq1$ such that 
\begin{equation}
  \label{eq:E B E}E(W_\nu,B_{R_\nu})\to E.  
\end{equation}
We define 
\begin{eqnarray*}&R_0^\nu:=\nu R_\nu,\quad \phi_\nu:\R^2\to\R^2,\quad w^\nu_0:=\phi_\nu^*w_\nu,&\\
&j_1:=0,\quad z_1:=0,\quad Z_0:=\{0\},\quad z_0^\nu:=0.&
\end{eqnarray*}
The next claim provides an inductive construction of the bubble tree. (Some explanations are given below. See also Figure \ref{fig:claim}.)
\begin{figure}
  \centering
\leavevmode\epsfbox{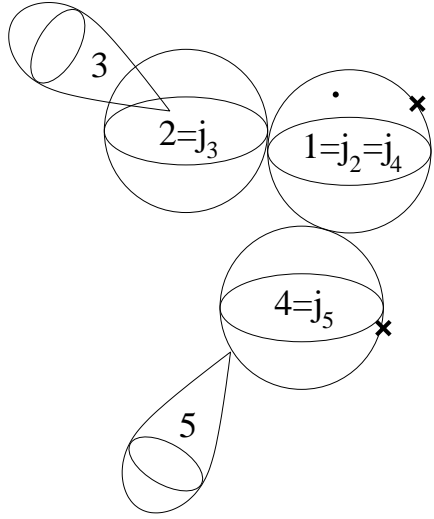}
\caption{This is a ``partial stable map'' as in Claim \ref{claim:tree}. It is a possible step in the construction of the stable map of Figure \ref{fig:stable map}. The crosses are bubbling points that have not yet been resolved. When adding marked points the components 4 and 5 will be separated by a ghost bubble which carries one marked point.}
  \label{fig:claim} % magnification factor = 57 %
\end{figure}
\begin{claim}\label{claim:tree} For every number $\ell\in\N$, passing to some subsequence, there exist an integer $N:=N(\ell)\in\N$ and tuples
\[(R_i,Z_i,w_i)_{i\in\{1,\ldots,N\}},\quad(R_i^\nu,z_i^\nu)_{i\in\{1,\ldots,N\},\,\nu\in\N},\quad(j_i,z_i)_{i\in\{2,\ldots,N\}},\]
where $R_i\in\{1,\infty\}$, $Z_i\sub \R^2$ is a finite subset, $w_i=(A_i,u_i)\in\WWW(\R^2\wo Z_i)$ is an $R_i$-vortex, $R_i^\nu>0$, $z_i^\nu\in\R^2$, $j_i\in \{1,\ldots,i-1\}$, and $z_i\in\R^2$, such that the following conditions hold.
\begin{enui}
\item\label{claim:tree Z dist} For every $i=2,\ldots,N$ we have $z_i\in Z_{j_i}$. Moreover, if $i,i'\in \{2,\ldots,N\}$ are such that $i\neq i'$ and $j_i=j_{i'}$ then $z_i\neq z_{i'}$.
\item\label{claim:tree stab} Let $i=1,\ldots,N$. If $R_i=1$ then $Z_i=\emptyset$ and $E(w_i)>0$. If $R_i=\infty$ and $E^\infty(w_i)=0$ then $|Z_i|\geq2$.
\item\label{claim:tree R} Fix $i=1,\ldots,N$. If $R_i=1$ then $R_i^\nu=1$ for every $\nu$, and if $R_i=\infty$ then $R_i^\nu\to\infty$. Furthermore, 
\begin{equation}
\label{eq:R i nu}\frac{R_i^\nu}{R_{j_i}^\nu}\to 0,\qquad \frac{z_i^\nu-z_{j_i}^\nu}{R_{j_i}^\nu}\to z_i.
\end{equation}

In the following we set $\phi_i^\nu(z):=R_i^\nu z +z_i^\nu$, for $i=0,\ldots,N$ and $\nu\in\N$. 
\item\label{claim:tree conv} For every $i=1,\ldots,N$ there exist gauge transformations $g_i^\nu\in W^{2,p}_\loc(\R^2\wo Z_i,G)$ such that the following holds. If $R_i=1$ then $(g_i^\nu)^*(\phi_i^\nu)^*w_\nu$ converges to $w_i$ in $C^\infty$ on every compact subset of $\R^2$. Furthermore, if $R_i=\infty$ then on every compact subset of $\R^2\wo Z_i$ the sequence $(g_i^\nu)^*(\phi_i^\nu)^*A_\nu$ converges to $A_i$ in $C^0$, and the sequence $(g_i^\nu)^*(\phi_i^\nu)^*u_\nu$ converges to $u_i$ in $C^1$. 
\item\label{claim:tree lim nu E eps} Let $i=1,\ldots,N$, $z\in Z_i$ and $\eps_0>0$ be such that $B_{\eps_0}(z)\cap Z_i=\{z\}$. Then for every $0<\eps<\eps_0$ the limit 
\[E_z(\eps):=\lim_{\nu\to\infty} E^{R_i^\nu}\big((\phi_i^\nu)^*w_\nu,B_\eps(z)\big)\] 
exists, and $\Emin\leq E_z(\eps)<\infty$. Furthermore, the function $(0,\eps_0)\ni\eps\mapsto E_z(\eps)\in[\Emin,\infty)$ is continuous.
\item\label{claim:tree B B}For every $i=1,\ldots,N$, we have
\[\lim_{R\to\infty}\limsup_{\nu\to\infty} E\big(w_\nu,B_{R_{j_i}^\nu/R}(z_{j_i}^\nu+R_{j_i}^\nu z_i)\wo B_{RR_i^\nu}(z_i^\nu)\big)=0.\]
\item\label{claim:tree compl} If $\ell>N$ then for every $j=1,\ldots,N$ we have 
\begin{equation}\label{eq:Z j}Z_j=\big\{z_i\,|\,j<i\leq N,j_i=j\big\}.
\end{equation}
\end{enui}
\end{claim}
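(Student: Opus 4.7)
The plan is to prove Claim \ref{claim:tree} by induction on $\ell$, adding at most one vertex to the partial bubble tree at each step by means of Proposition \ref{prop:soft} (Soft rescaling).

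For the \textbf{base case} $\ell=1$, I would apply Proposition \ref{prop:soft} to the sequence $w_0^\nu$ at the point $z_0=0$. The hypotheses translate directly: the image condition comes from the hypothesis on $W_\nu$ in Theorem \ref{thm:bubb}, while the function $E(\eps):=\lim_\nu E^{R_0^\nu}(w_0^\nu,B_\eps(0))$ is constantly equal to $E$ on $(0,r]$ for any fixed $r>0$, because the choice $R_0^\nu=\nu R_\nu$ combined with (\ref{eq:E B E}) guarantees $B_\eps(0)$ in the rescaled frame eventually contains $B_{R_\nu}(0)$ in the original frame, capturing the full energy $E\geq\Emin$. Reading off the output of Proposition \ref{prop:soft}, set $R_1,Z_1,w_1,g_1^\nu$, $R_1^\nu:=\eps_\nu R_0^\nu$, $z_1^\nu:=z_\nu$, with $\phi_1^\nu(z):=R_1^\nu z+z_1^\nu$; conditions (i)--(vi) of Claim \ref{claim:tree} for $i=1$ follow immediately from \ref{prop:soft}(\ref{prop:soft Z})--(\ref{prop:soft en}), using $j_1=0$, $z_1=0$, $R_0^\nu=\nu R_\nu\to\infty$. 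Condition (vii) is vacuous since $N(1)\geq1$.

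For the \textbf{induction step} from $\ell$ to $\ell+1$, if the completeness condition (\ref{eq:Z j}) already holds for every $j\in\{1,\ldots,N(\ell)\}$, set $N(\ell+1):=N(\ell)$ and keep all the data; otherwise pick any $j\in\{1,\ldots,N(\ell)\}$ and any $z\in Z_j\setminus\{z_i\mid j<i\leq N(\ell),\,j_i=j\}$ and apply Proposition \ref{prop:soft} to the pulled-back sequence $(\phi_j^\nu)^*w_\nu$ at the point $z$. Its hypotheses are supplied precisely by conditions \ref{claim:tree}(\ref{claim:tree conv}), (\ref{claim:tree lim nu E eps}) from the inductive step, together with the image assumption. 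This furnishes sequences $\eps_\nu,z_\nu,g_\nu$ and a new bubble $w_{N(\ell)+1}$; set $N(\ell+1):=N(\ell)+1$, $j_{N(\ell)+1}:=j$, $z_{N(\ell)+1}:=z$, $R_{N(\ell)+1}^\nu:=\eps_\nu R_j^\nu$, $z_{N(\ell)+1}^\nu:=R_j^\nu z_\nu+z_j^\nu$, so that $\phi_{N(\ell)+1}^\nu=\phi_j^\nu\circ\psi_\nu$ with $\psi_\nu(\tilde z):=\eps_\nu\tilde z+z_\nu$ as in Proposition \ref{prop:soft}. Conditions (i), (ii), (iv), (v), (vi) for the new vertex follow directly from \ref{prop:soft}(\ref{prop:soft Z})--(\ref{prop:soft en}); condition (iii), namely $R_{N(\ell)+1}^\nu/R_j^\nu=\eps_\nu\to0$ and $(z_{N(\ell)+1}^\nu-z_j^\nu)/R_j^\nu=z_\nu\to z=z_{N(\ell)+1}$, follows from \ref{prop:soft}(\ref{prop:soft eps}) in the case $R_{N(\ell)+1}=\infty$.

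The case $R_{N(\ell)+1}=1$ requires verifying $R_j^\nu\to\infty$, i.e., $R_j=\infty$. This is forced by the bookkeeping: we only apply soft rescaling at a bubbling point of $Z_j$, but condition \ref{claim:tree}(\ref{claim:tree stab}) of the inductive hypothesis asserts $Z_j=\emptyset$ whenever $R_j=1$, so the parent must have $R_j=\infty$ and hence $R_j^\nu\to\infty$. Finally, condition (vii) at stage $\ell+1$ holds because either we strictly reduced the set of unresolved bubbling points, or the tree was already complete at stage $\ell$.

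The \textbf{main obstacle} is the bookkeeping of the compositions $\phi_{N(\ell)+1}^\nu=\phi_j^\nu\circ\psi_\nu$ and the attendant rescaling identities: one must verify that \ref{prop:soft}(\ref{prop:soft en}) applied to the pulled-back sequence $(\phi_j^\nu)^*w_\nu$ translates (after unwinding by $\phi_j^\nu$) into \ref{claim:tree}(\ref{claim:tree B B}) for the annulus $B_{R_j^\nu/R}(z_j^\nu+R_j^\nu z)\setminus B_{RR_{N(\ell)+1}^\nu}(z_{N(\ell)+1}^\nu)$. This is a routine change of variables using that the vortex energy density transforms by (\ref{eq:e phi *}) and that $\phi_j^\nu$ preserves annuli centred at its image of $z_\nu$. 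A secondary delicate point is the base case verification $E(\eps)\equiv E$; the argument sketched above uses only that $B_\eps(0)$ in the $w_0^\nu$-frame eventually contains the full-energy ball $B_{R_\nu}(0)$ of the original frame.
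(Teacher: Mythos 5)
Your overall strategy coincides with the paper's: induction over $\ell$, with the base case and each induction step produced by one application of Proposition \ref{prop:soft}, the new rescaling being the composition of the parent's rescaling with the one furnished by soft rescaling. Three points need repair, though. First, the base-case bookkeeping is off: soft rescaling is applied to $w_0^\nu=(\phi_0^\nu)^*w_\nu$, so the rescaling relative to the original sequence is $\phi_0^\nu\circ(\wt z\mapsto\eps_\nu\wt z+z_\nu)$, whence $z_1^\nu=R_0^\nu z_\nu$ (as in the paper), not $z_1^\nu=z_\nu$; with your choice, conditions (\ref{claim:tree conv}), (\ref{claim:tree lim nu E eps}) and (\ref{claim:tree B B}) for $i=1$ are not what Proposition \ref{prop:soft} delivers. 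Since your induction-step formula $z_{N+1}^\nu=R_j^\nu z_\nu+z_j^\nu$ is the correct one, this is an inconsistency rather than a conceptual error, but it should be fixed. Second, hypothesis (\ref{prop:soft K}) of Proposition \ref{prop:soft} asks for a single compact set $K$ containing the images of \emph{all} the $u_\nu$; the hypothesis of Theorem \ref{thm:bubb} only gives compactness of each image closure separately, so you need Proposition \ref{prop:bounded} (this is exactly where equivariant convexity at $\infty$ enters), as the paper invokes at this point.

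Third, your verification of condition (\ref{claim:tree compl}) in the induction step does not work as stated: adding the vertex $N+1$ resolves one point of $Z_j$ but introduces the possibly nonempty set $Z_{N+1}$ of new unresolved points, so the set of unresolved bubbling points need not shrink. The correct (purely formal) argument is the paper's: in the branch where a vertex is added, incompleteness at stage $\ell$ together with condition (\ref{claim:tree compl}) at stage $\ell$ forces $N(\ell)\geq\ell$, hence $N(\ell+1)=N(\ell)+1\geq\ell+1$, so condition (\ref{claim:tree compl}) at stage $\ell+1$ holds vacuously; in the other branch completeness already holds and the data are unchanged. Apart from these points your argument matches the paper's proof, including the useful explicit observation that the parent vertex must satisfy $R_j=\infty$ because $Z_j\neq\emptyset$ (which is also needed merely to apply Proposition \ref{prop:soft} to $(\phi_j^\nu)^*w_\nu$), and the routine change of variables translating \ref{prop:soft}(\ref{prop:soft en}) into (\ref{claim:tree B B}).
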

To understand this claim, note that the collection $(j_i)_{i\in\{2,\ldots,N\}}$ describes a tree with vertices the numbers $1,\ldots,N$ and unordered edges $\big\{(i,j_i),(j_i,i)\big\}$. Attached to the vertices of this tree are vortices and $\infty$-vortices. (The latter will give rise to holomorphic spheres in $\BAR M$.) Each pair $(R_i^\nu,z_i^\nu)$ defines a rescaling $\phi_i^\nu$, which is used to obtain the $i$-th limit vortex or $\infty$-vortex. (See condition (\ref{claim:tree conv}).)

The point $z_i$ is the nodal point on the $j_i$-th vertex, at which the $i$-th vertex is attached. The corresponding nodal point on the $i$-th vertex is $\infty$. The set $Z_i$ consists of the nodal points except $\infty$ (if $i\geq2$) on the $i$-th vertex together with the bubbling points that have not yet been resolved. 

Condition (\ref{claim:tree Z dist}) implies that the nodal points at a given vertex are distinct. Condition (\ref{claim:tree stab}) guarantees that once all bubbling points have been resolved, the $i$-th component will be stable. (Note that in the case $i\geq2$ there is another nodal point at $\infty$, and for $i=1$ there will be a marked point at $\infty$, which comes from sequence $z_0^\nu$.) 

Condition (\ref{claim:tree R}) implies that the rescalings $\phi_i^\nu$ ``zoom out'' less than the rescalings $\phi_{j_i}^\nu$. A consequence of condition (\ref{claim:tree lim nu E eps}) is that at every nodal or unresolved bubbling point at least the energy $\Emin$ concentrates in the limit. Condition (\ref{claim:tree B B}) means that no energy is lost between each pair of adjacent bubbles. Finally, condition (\ref{claim:tree compl}) means that in the case $\ell>N$ all bubbling points have been resolved. 
\begin{proof}[Proof of Claim \ref{claim:tree}]We show that the statement holds for $\ell:=1$. We check the conditions of Proposition \ref{prop:soft} (Soft rescaling) with $z_0:=0$, $r:=1$ and $R_\nu$, $w_\nu$ replaced by $R_0^\nu$, $w_0^\nu$. Condition \ref{prop:soft}(\ref{prop:soft K}) follows from Proposition \ref{prop:bounded} below, using the hypothesis that $M$ is equivariantly convex at $\infty$. Condition \ref{prop:soft}(\ref{prop:soft E}) follows from the facts
\[\lim_{\nu\to\infty} E^{R_0^\nu}(w_0^\nu,B_\eps)=E,\,\forall\eps>0,\quad E\geq\Emin.\]
The first condition is a consequence of the facts $R_0^\nu=\nu R_\nu$, $E(w_\nu)\to E$, and (\ref{eq:E B E}).

Thus by Proposition \ref{prop:soft}, there exist $R_0\in \{1,\infty\}$, a finite subset $Z\sub\R^2$, and an $R_0$-vortex $w_0\in\WWW^p(\R^2\wo Z_1)$, and passing to some subsequence, there exist sequences $\eps_\nu>0$, $z_\nu$, and $g_\nu$, such that the conclusions of Proposition \ref{prop:soft} with $R_\nu,w_\nu$ replaced by $R_0^\nu,w_0^\nu$ hold. We define $N:=N(1):=1$, $R_1:=R_0$, $Z_1:=Z$, $w_1:=w_0$, $R_1^\nu:=\eps_\nu R_0^\nu$, and $z_1^\nu:=R_0^\nu z_\nu$. 

We check {\bf conditions (\ref{claim:tree Z dist})-(\ref{claim:tree compl})} of Claim \ref{claim:tree} with $\ell=1$: Conditions (\ref{claim:tree Z dist},\ref{claim:tree compl}) are void. Furthermore, conditions (\ref{claim:tree stab})-(\ref{claim:tree B B}) follow from \ref{prop:soft}(\ref{prop:soft Z})-(\ref{prop:soft en}). This proves the statement of the Claim for $\ell=1$. 

Let $\ell\in\N$ and assume, by induction, that we have already proved the statement of Claim \ref{claim:tree} for $\ell$. We show that it holds for $\ell+1$. By assumption there exists a number $N:=N(\ell)$ and there exist collections $(R_i,Z_i,w_i)_{i\in\{1,\ldots,N\}}$, $(R_i^\nu,z_i^\nu)_{i\in\{1,\ldots,N\},\,\nu\in\N}$, $(j_i,z_i)_{i\in\{2,\ldots,N\}}$, such that conditions (\ref{claim:tree Z dist})-(\ref{claim:tree compl}) hold. If for every $j=1,\ldots,N$ we have $Z_j=\big\{z_i\,|\,j< i\leq N,\,j_i=j\big\}$ then conditions (\ref{claim:tree Z dist})-(\ref{claim:tree compl}) hold with $N(\ell+1):=N$, and we are done. Hence assume that there exists a $j_0\in\{1,\ldots,N\}$ such that 
\begin{equation}
  \label{eq:Z j 0}Z_{j_0}\neq\big\{z_i\,|\,j_0<i\leq N,\,j_i=j_0\big\}.  
\end{equation}
We set $N(\ell+1):=N+1$ and choose an element 
\begin{equation}\label{eq:z N 1}z_{N+1}\in Z_{j_0}\wo\big\{z_i\,|\,j<i\leq N,\,j_i=j_0\big\}.
\end{equation}
We fix a number $r>0$ so small that $B_r(z_{N+1})\cap Z_{j_0}=\{z_{N+1}\}$. We apply Proposition \ref{prop:soft} with $z_0:=z_{N+1}$ and $R_\nu$, $w_\nu$ replaced by $R_{j_0}^\nu$, $(\phi_{j_0}^\nu)^*w_\nu.$ Condition \ref{prop:soft}(\ref{prop:soft K}) holds by hypothesis. Furthermore, by condition (\ref{claim:tree lim nu E eps}) for $\ell$, condition \ref{prop:soft}(\ref{prop:soft E}) is satisfied. Hence passing to some subsequence, there exist $R_0\in\{1,\infty\}$, a finite subset $Z\sub \R^2$, an $R_0$-vortex $w_0\in\WWW^p(\R^2\wo Z)$, and sequences $\eps_\nu>0$, $z_\nu$, such that the conclusion of Proposition \ref{prop:soft} holds. We define  $R_{N+1}:=R_0$, $Z_{N+1}:=Z$, $w_{N+1}:=w_0$, $R_{N+1}^\nu:=\eps_\nu R_{j_0}^\nu$, $z_{N+1}^\nu:=R_{j_0}^\nu z_\nu+z_{j_0}^\nu$ and $j_{N+1}:=j_0$. 

We check {\bf conditions (\ref{claim:tree Z dist})-(\ref{claim:tree compl})} of Claim \ref{claim:tree} with $\ell$ replaced by $\ell+1$, i.e., $N$ replaced by $N+1$. Condition (\ref{claim:tree Z dist}) follows from the induction hypothesis and (\ref{eq:z N 1}). Conditions (\ref{claim:tree stab})-(\ref{claim:tree B B}) follow from \ref{prop:soft}(\ref{prop:soft Z})-(\ref{prop:soft en}).

We show that (\ref{claim:tree compl}) holds with $N$ replaced by $N+1$: By the induction hypothesis, it holds for $N$. Hence (\ref{eq:Z j 0}) implies that $N\geq \ell$, i.e., $N+1\geq\ell+1$. So there is nothing to check. This completes the induction and the proof of Claim \ref{claim:tree}. \end{proof}

Let $\ell\in\N$ be an integer and $N:=N(\ell)$, $(R_i,Z_i,w_i)$, $(R_i^\nu,z_i^\nu)$, $(j_i,z_i)$ be as in Claim \ref{claim:tree}. Recall that $Z_0=\{0\}$ and $z_0^\nu:=0$. We fix $i=0,\ldots,N$. We define $\phi_i^\nu(z):=R_i^\nu z+z_i^\nu$, for every measurable subset $X\sub\R^2$ we denote
\[E_i(X):=E^{R_i}(w_i,X),\quad E_i:=E_i(\R^2\wo Z_i),\quad E_i^\nu(X):=E^{R_i^\nu}((\phi_i^\nu)^*w_\nu,X).\]
Furthermore, for $z\in Z_i$ we define
\begin{equation}
  \label{eq:m i z}m_i(z):=\lim_{\eps\to 0}\lim_{\nu\to\infty}E_i^\nu(B_\eps(z)).
\end{equation}
For $i=0$ it follows from (\ref{eq:E B E}) and $R_0^\nu=\nu R_\nu$ that the limit $m_0(0)$ exists and equals $E$. For $i=1,\ldots,N$ it follows from condition (\ref{claim:tree lim nu E eps}) that the limit (\ref{eq:m i z}) exists and that $m_i(z)\geq \Emin$. 
For $j,k=0,\ldots, N$ we define 
\[Z_{j,k}:=Z_j\wo\{z_i\,|\,j<i\leq k,\,j_i=j\}\]
(This is the set of points on the $j$-th sphere that have not been resolved after the construction of the $k$-th bubble.) We define the function $f:\{1,\ldots,N\}\to [0,\infty)$ by 
\begin{equation}
  \label{eq:f i}f(i):=E_i+\sum_{z\in Z_{i,N}}m_i(z).  
\end{equation}
\begin{claim}\label{claim:f E} 
\begin{equation}\nn\sum_{i=1}^Nf(i)=E.
\end{equation}
\end{claim}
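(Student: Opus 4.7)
\emph{Approach.} My plan is to establish the ``energy-balance'' identity
\begin{equation*}
m_{j_i}(z_i) = E_i + \sum_{z \in Z_i} m_i(z), \qquad i = 1, \ldots, N,
\end{equation*}
and then telescope over the tree structure $i \mapsto j_i$ to conclude $\sum_{i=1}^N f(i) = m_0(0) = E$, the last equality being the observation made just before the claim.

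\emph{Proof of the recursive identity.} Since $\phi_i^\nu(z) = R_i^\nu z + z_i^\nu$ satisfies $(\phi_i^\nu)^*\om_0 = (R_i^\nu)^2\om_0$, Remark~\ref{rmk:trafo e vort} gives $E_i^\nu(X) = E(w_\nu, \phi_i^\nu(X))$ for every measurable $X \sub \R^2$. By (\ref{eq:R i nu}), for fixed $R > 0$ and $\nu$ large enough one has $B_{RR_i^\nu}(z_i^\nu) \sub B_{R_{j_i}^\nu/R}(z_{j_i}^\nu + R_{j_i}^\nu z_i)$, so Claim~\ref{claim:tree}(\ref{claim:tree B B}) together with Claim~\ref{claim:tree}(\ref{claim:tree lim nu E eps}) yields
\begin{equation*}
\lim_{R \to \infty}\lim_{\nu \to \infty} E_i^\nu(B_R) \;=\; \lim_{R \to \infty}\lim_{\nu \to \infty} E_{j_i}^\nu(B_{1/R}(z_i)) \;=\; m_{j_i}(z_i).
\end{equation*}
To evaluate the leftmost expression, fix $R$ so large that $Z_i \sub B_R$ and $\eps > 0$ so small that the balls $B_\eps(z)$, $z \in Z_i$, are pairwise disjoint in $B_R$, and split
\begin{equation*}
E_i^\nu(B_R) = E_i^\nu\Bigl(B_R \setminus \bigcup_{z \in Z_i} B_\eps(z)\Bigr) + \sum_{z \in Z_i} E_i^\nu(B_\eps(z)).
\end{equation*}
By Claim~\ref{claim:tree}(\ref{claim:tree conv}), gauge-equivalent representatives of $(\phi_i^\nu)^*w_\nu$ converge to $w_i$ on $\overline{B_R} \setminus \bigcup_z B_\eps(z)$ (in $C^\infty$ if $R_i=1$, in $C^0 \times C^1$ if $R_i = \infty$); Lemma~\ref{le:conv e} then gives $C^0$-convergence of the $R_i^\nu$-energy densities, so the first summand converges as $\nu \to \infty$ to $E_i(B_R \setminus \bigcup_z B_\eps(z))$. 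Each term of the second summand converges to $E_z(\eps)$ by Claim~\ref{claim:tree}(\ref{claim:tree lim nu E eps}). Sending $\nu \to \infty$, then $\eps \to 0$ (using continuity of $E_z$ and monotone convergence on the decreasing complement), and finally $R \to \infty$ (using monotone convergence and the uniform bound $E_i^\nu(B_R) \le E(w_\nu) \to E < \infty$, which makes $E_i < \infty$), I obtain $E_i + \sum_{z \in Z_i} m_i(z)$, establishing the identity.

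\emph{Telescoping.} Claim~\ref{claim:tree}(\ref{claim:tree Z dist}) together with the definition of $Z_{i,N}$ furnishes the disjoint decomposition $Z_i = Z_{i,N} \sqcup \{z_{i'} : i < i' \le N,\ j_{i'} = i\}$. Substituting the recursive identity,
\begin{equation*}
\sum_{i=1}^N f(i) = \sum_{i=1}^N m_{j_i}(z_i) - \sum_{i=1}^N\sum_{\substack{i < i' \le N \\ j_{i'} = i}} m_i(z_{i'}).
\end{equation*}
Each $i' \in \{2, \ldots, N\}$ appears exactly once in the double sum (with $i = j_{i'}$), so it equals $\sum_{i'=2}^N m_{j_{i'}}(z_{i'})$, leaving only $m_{j_1}(z_1) = m_0(0) = E$ on the right.

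\emph{Main obstacle.} The delicate point is the recursive identity: one must carefully justify the three iterated limits ($R$, $\nu$, $\eps$) in the correct order, and check that no energy escapes to the ``boundary at infinity'' of the $i$-th rescaled plane as $R \to \infty$. Both points rest on the global bound $E(w_\nu) \to E < \infty$ combined with the bubbling-point information encoded in Claim~\ref{claim:tree}.
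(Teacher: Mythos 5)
Your proposal is correct and follows essentially the same route as the paper: the heart of the matter is the energy-balance identity $m_{j_i}(z_i)=E_i+\sum_{z\in Z_i}m_i(z)$, which the paper isolates as Claim \ref{claim:E Z} and proves exactly as you do (conformal reparametrization, the splitting into small balls around $Z_i$ plus their complement, Lemma \ref{le:conv e}, and conditions (\ref{claim:tree lim nu E eps}) and (\ref{claim:tree B B}) of Claim \ref{claim:tree}). The only difference is bookkeeping: the paper sums the identity over the tree by induction on $k$ using the intermediate sets $Z_{i,k}$, whereas you telescope directly via the decomposition $Z_i=Z_{i,N}\sqcup\{z_{i'}\,|\,j_{i'}=i\}$, which amounts to the same computation.
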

\begin{proof}[Proof of Claim \ref{claim:f E}]We show by induction that 
\begin{equation}
\label{eq:k f E}\sum_{i=1}^k\Big(E_i+\sum_{z\in Z_{i,k}}m_i(z)\Big)=E,
\end{equation}
for every $k=1,\ldots,N$. Claim \ref{claim:f E} is a consequence of this with $k=N$. For the proof of equality (\ref{eq:k f E}) we need the following.
\begin{claim}\label{claim:E Z} For every $i=1,\ldots,N$ we have
\begin{equation}
    \label{eq:m j i z i}m_{j_i}(z_i)=E_i+\sum_{z\in Z_i}m_i(z).
\end{equation}
\end{claim}
\begin{proof}[Proof of Claim \ref{claim:E Z}] Let $i=1,\ldots,N$. We choose a number $\eps>0$ so small that 
\[\bar B_{\eps}(z_i)\cap Z_{j_i}=\{z_i\},\qquad Z_i\sub B_{\eps^{-1}-\eps},\]
and if $z\neq z'$ are points in $Z_i$ then $|z-z'|>2\eps$. By condition (\ref{claim:tree lim nu E eps}) of Claim \ref{claim:tree}, for each $z\in Z_i$ the limit $\lim_{\nu\to\infty}E_i^\nu(B_\eps(z))$ exists. Lemma \ref{le:conv e} implies that
\[\lim_{\nu\to\infty}E_i^\nu(B_{\eps^{-1}})=E_i\Big(B_{\eps^{-1}}\wo \bigcup_{z\in Z_i}B_\eps(z)\Big)+\sum_{z\in Z_i}\lim_{\nu\to\infty}E_i^\nu(B_\eps(z)).\]
Combining this with condition (\ref{claim:tree B B}) of Claim \ref{claim:tree}, equality (\ref{eq:m j i z i}) follows from a straight-forward argument. This proves Claim \ref{claim:E Z}. 
\end{proof}
Since $Z_{1,1}=Z_1$, equality (\ref{eq:k f E}) for $k=1$ follows from Claim \ref{claim:E Z} and the fact $m_0(0)=E$. Let now $k=1,\ldots,N-1$ and assume that we have proved (\ref{eq:k f E}) for $k$. An elementary argument using Claim \ref{claim:E Z} with $i:=k+1$ shows (\ref{eq:k f E}) with $k$ replaced by $k+1$. By induction, Claim \ref{claim:f E} follows. 
\end{proof}
Consider the tree relation $E$ on $T:=\{1,\ldots,N\}$ defined by $i E i'$ iff $i=j_{i'}$ or $i'=j_i$. Lemma \ref{le:weight} below with this pair $(T,E)$, $f$ as in (\ref{eq:f i}), $k:=1$, $\al_1:=1\in T$, and $E_0:=\Emin$, implies that 
\begin{equation} \label{eq:N E} N\leq \frac{2E}\Emin+1.
\end{equation}
(Hypothesis (\ref{eq:f al E}) follows from conditions (\ref{claim:tree stab},\ref{claim:tree lim nu E eps}) of Claim \ref{claim:tree}.) Assume now that we have chosen $\ell>2E/\Emin+1$. By (\ref{eq:N E}) we have $\ell>N$, and therefore by condition (\ref{claim:tree compl}) of Claim \ref{claim:tree}, equality (\ref{eq:Z j}) holds, for every $j=1,\ldots,N$. We define
\[T:=\{1,\ldots,N\},\quad V:=\{i\in T\,|\,R_i=1\},\quad \BAR T:=T\wo V,\] 
and the tree relation $E$ on $T$ by 
\[i Ei'\iff i=j_{i'}\textrm{ or }i'=j_i.\] 
Furthermore, for $i,i'\in T$ such that $iEi'$ we define the nodal points 
\[z_{ii'}:=\left\{
\begin{array}{ll}\infty,&\textrm{if }i'=j_i,\\
z_{i'},&\textrm{if }i=j_{i'}.
  \end{array}
\right.\]
Moreover, we define the marked point 
\[(\al_0,z_0):=(1,\infty)\in T\x S^2.\]
\begin{claim}\label{claim:MMM bar u i} Let $i\in T$. If $i\in V$ then $E(w_i)<\infty$ and $u_i(\R^2\x G)$ has compact closure. Furthermore, if $i\in\BAR T$ then the map $G u_i:\R^2\wo Z_i\to \BAR M=\mu^{-1}(0)/G$ extends to a smooth $\bar J$-holomorphic map 
\[\bar u_i:S^2\iso\R^2\cup\{\infty\}\to \BAR M.\]
\end{claim}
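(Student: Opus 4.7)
The plan is to handle the two cases separately, with finite $R_i$-energy of $w_i$ serving as the common starting point. For this first observation, Claim \ref{claim:f E} gives $E = \sum_{j=1}^N f(j)$, and since each summand $f(j) = E_j + \sum_{z \in Z_{j,N}} m_j(z)$ is a sum of non-negative terms (the $m_j(z)$ being non-negative by their definition as limits of non-negative energies), in particular $E_i \leq f(i) \leq E < \infty$.

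For $i \in V$, condition (\ref{claim:tree stab}) of Claim \ref{claim:tree} forces $Z_i = \emptyset$, so the bound $E(w_i) = E_i < \infty$ already settles half of the statement. For the compact-image assertion I will invoke Proposition \ref{prop:bounded} (the result used to verify \ref{prop:soft}(\ref{prop:soft K}) in the proof of Claim \ref{claim:tree}): the equivariant convexity at $\infty$ hypothesis provides a compact subset $K \sub M$ with $u_\nu(P_\nu) \sub K$ for every $\nu$, and replacing $K$ by the still-compact $G$-saturation $GK$ I may assume $K$ is $G$-invariant. The $C^\infty$-convergence on compact subsets from condition (\ref{claim:tree conv}) of Claim \ref{claim:tree} then transfers this inclusion to the limit, giving $u_i(\R^2 \x G) \sub K$, which has compact closure.

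For $i \in \BAR T$, the second $R_i$-vortex equation with $R_i = \infty$ reads $\mu \circ u_i = 0$, so $u_i$ takes values in $\mu^{-1}(0)$ and descends to a well-defined map $\bar u_i := G u_i : \R^2 \wo Z_i \to \BAR M$. By Proposition \ref{prop:bar del J}, this descended map is $\bar J$-holomorphic on $\R^2 \wo Z_i$, with energy $E(\bar u_i) = E^\infty(w_i) = E_i$, which is finite by the common first step. Since $\BAR M$ is closed by Hypothesis (H) and $Z_i$ is finite, I will then apply the standard removal of singularities for pseudo-holomorphic maps (\cite[Theorem 4.1.2]{MS}) at each point of $Z_i$ and also at $\infty$ (viewing $S^2 = \R^2 \cup \{\infty\}$), yielding the desired smooth $\bar J$-holomorphic extension $\bar u_i : S^2 \to \BAR M$.

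The main technical step is the last one, where the finite-energy bound is used to justify removal of singularities simultaneously at every point of $Z_i \cup \{\infty\}$; everything else is direct bookkeeping against results already assembled in the paper (Claim \ref{claim:f E}, Proposition \ref{prop:bounded}, Proposition \ref{prop:bar del J}, and the convergence statements of Claim \ref{claim:tree}).
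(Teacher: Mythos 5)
Your proposal is correct and follows essentially the same route as the paper: for $i\in V$ the compactness of the image comes from Proposition \ref{prop:bounded} applied to the $W_\nu$ together with the convergence in Claim \ref{claim:tree}(\ref{claim:tree conv}), and for $i\in\BAR T$ the extension comes from Proposition \ref{prop:bar del J} plus removal of singularities, exactly as in the text. The only (harmless) difference is the finiteness of energy: the paper obtains $E(w_i)\leq E$ resp.\ $E^\infty(w_i,\R^2\wo Z_i)\leq E$ directly by Fatou's lemma applied to the energy densities of the converging rescaled vortices $w_i^\nu$, whereas you deduce $E_i\leq f(i)\leq E$ from the already established energy identity of Claim \ref{claim:f E}; both are valid and non-circular, since Claim \ref{claim:f E} is proved beforehand.
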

\begin{proof} We choose gauge transformations $g_i^\nu\in W^{2,p}_\loc(\R^2\wo Z_i,G)$ as in condition (\ref{claim:tree conv}) of Claim \ref{claim:tree}, and define $w_i^\nu:=(g_i^\nu)^*(\phi_i^\nu)^*w_\nu$. 

{\bf Assume that $i\in V$.} It follows from Fatou's lemma that $E(w_i)\leq\liminf_{\nu\to\infty}E(w_i^\nu)=E<\infty$. Furthermore, since by hypothesis $M$ is equivariantly convex at $\infty$, by Proposition \ref{prop:bounded} below there exists a $G$-invariant compact subset $K_0\sub M$ such that $u_i^\nu(\R^2)\sub K_0$, for every $\nu\in\N$. Since $u_i^\nu$ converges to $u_i$ pointwise, it follows that $u_i(\R^2)\sub K_0$. Hence $w_i$ has the required properties.

{\bf Assume now that $i\in\BAR T$.} By Proposition \ref{prop:bar del J} below the map
\[G u_i:\C\wo Z_i\to \BAR M=\mu^{-1}(0)/G\]
is $\bar J$-holomorphic, and $e_{G u_i}=e^\infty_{w_i}$. It follows from Fatou's lemma that $E^\infty(w_i,\R^2\wo Z_i)\leq\liminf_{\nu\to\infty}E^{R_i^\nu}(w_i^\nu)=E<\infty$. Therefore, by removal of singularities, it follows that $G u_i$ extends to a smooth $\bar J$-holomorphic map $\bar u_i:S^2\to\BAR M$. (See e.g.~\cite[Theorem 4.1.2]{MS}.) This proves Claim \ref{claim:MMM bar u i}. \end{proof}
\begin{claim}\label{claim:st conv} The tuple
\[(\W,\z):=\big(V,\BAR T,E,([w_i])_{i\in V},(\bar u_i)_{i\in \BAR T},(z_{ii'})_{iEi'},(\al_0:=1,z_0:=\infty)\big)\]
is a stable map in the sense of Definition \ref{defi:st}, and the sequence $([w_\nu],z_0^\nu:=\infty)$ converges to $(\W,\z)$ in the sense of Definition \ref{defi:conv}. (Here $[w_i]$ denotes the gauge equivalence class of $w_i$.)
\end{claim}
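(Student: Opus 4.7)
The plan is to verify the three axioms of Definition \ref{defi:st}, the four axioms of Definition \ref{defi:conv}, and the energy identity (\ref{eq:E V bar T}), using the M\"obius transformations $\phi_i^\nu(z) := R_i^\nu z + z_i^\nu$ as reparametrizations. The stable-map and convergence axioms are essentially bookkeeping from Claim \ref{claim:tree}; the main obstacle is the connectedness axiom (\ref{defi:st conn}), which forces evaluations at joined nodal points to agree, and which I expect to establish by a two-scale oscillation argument.

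For the special-points axiom (\ref{defi:st dist}), note that $\al_0 = 1$ and $z_0 = \infty$ hold by construction, distinctness of the finite nodal points on a common vertex is condition (\ref{claim:tree Z dist}) of Claim \ref{claim:tree}, and condition (\ref{claim:tree stab}) forces $Z_i = \emptyset$ whenever $R_i = 1$, so every vortex is a leaf and its unique edge lands at $\infty$. For the stability axiom (\ref{defi:st st}): the vortex case is immediate, since $E(w_i) > 0$ by (\ref{claim:tree stab}); for a sphere vertex $i \in \BAR T$ with $E(\bar u_i) = E^\infty(w_i) = 0$, condition (\ref{claim:tree stab}) provides $|Z_i| \geq 2$ nodal points in $\C$, to which one adds either the incoming edge at $\infty$ (if $i \neq 1$) or the marked point $z_0 = \infty$ (if $i = 1$), for a total of at least three special points.

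For the convergence axioms: axiom (\ref{defi:conv phi z}) holds because $R_i^\nu = 1$ when $i \in V$ by (\ref{claim:tree R}); for $i \in \BAR T$ the chain from $i$ toward $\al_0 = 1$ starts with the edge to the parent $j_i$, on which side the nodal point at $i$ is $\infty$, so $z_{i,0} = \infty$ and $\phi_i^\nu(\infty) = \infty$; the derivatives of $\phi_i^\nu \circ \psi_i$ blow up since $R_i^\nu \to \infty$. Axiom (\ref{defi:conv al be}) is a direct computation from $(\phi_i^\nu)^{-1}\phi_{i'}^\nu(z) = (R_{i'}^\nu/R_i^\nu)z + (z_{i'}^\nu - z_i^\nu)/R_i^\nu$ together with the limits (\ref{eq:R i nu}). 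Axiom (\ref{defi:conv w}) for $i \in V$ is precisely (\ref{claim:tree conv}); for $i \in \BAR T$ it follows by pushing the $C^0$- and $C^1$-convergence in (\ref{claim:tree conv}) through the quotient $M \to M/G$, and the eventual inclusion $\BAR{u_i^\nu}(Q) \sub M^*/G$ on compact $Q \sub S^2 \wo (Z_i \cup \{\infty\})$ is an openness argument from $\bar u_i(Q) \sub \BAR M \sub M^*/G$. Axiom (\ref{defi:conv z}) is vacuous since $k = 0$.

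The main obstacle is the connectedness axiom (\ref{defi:st conn}), which for each edge $iEi'$ with $i' = j_i$ requires $\barev_\infty(W_i) = \bar u_{i'}(z_i)$ (setting $W_i := \bar u_i$ if $i \in \BAR T$). The plan is a two-scale argument: fix a large $R$ and then for large $\nu$ pick $\zeta_\nu^R$ in the annulus $B_{R_{i'}^\nu/R}(z_{i'}^\nu + R_{i'}^\nu z_i) \wo B_{RR_i^\nu}(z_i^\nu)$, so that $(\phi_i^\nu)^{-1}(\zeta_\nu^R)$ sits at radius $\sim R$ in the $i$-chart while $(\phi_{i'}^\nu)^{-1}(\zeta_\nu^R)$ sits near $z_i$ in the $i'$-chart. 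Condition (\ref{claim:tree B B}) makes the energy of $w_\nu$ on this annulus arbitrarily small as $\nu \to \infty$ and then $R \to \infty$, so Proposition \ref{prop:en conc} (estimate (\ref{eq:sup z z' bar d})) bounds the $\bar d$-oscillation of $Gu_\nu$ on a slightly shrunken sub-annulus by a quantity tending to $0$; combining this with the $C^1$-convergence on each chart and Proposition \ref{prop:bar u} for the continuous extension of $\bar u_{W_i}$ to $\infty$ (when $i \in V$) yields the desired equality. Finally, for the energy identity (\ref{eq:E V bar T}), choose $\ell > 2E/\Emin + 1$: inequality (\ref{eq:N E}) forces $\ell > N$, so (\ref{claim:tree compl}) gives $Z_{i,N} = \emptyset$ for every $i$, Claim \ref{claim:f E} collapses to $\sum_i E_i = E$, and identifying $E_i = E(W_i)$ on $V$ and $E_i = E^\infty(w_i) = E(\bar u_i)$ on $\BAR T$ via Proposition \ref{prop:bar del J} gives $\sum_{\al \in T} E(W_\al) = E = \lim_\nu E(W_\nu)$.
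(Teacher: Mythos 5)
Your proposal is correct and follows essentially the same route as the paper: the stable-map and convergence axioms are read off from Claim \ref{claim:tree} (conditions (\ref{claim:tree Z dist}), (\ref{claim:tree stab}), (\ref{claim:tree R}), (\ref{claim:tree conv})), connectedness is obtained from the small-energy annuli of condition (\ref{claim:tree B B}) combined with Proposition \ref{prop:en conc}, and energy conservation comes from Claim \ref{claim:f E} together with condition (\ref{claim:tree compl}) after choosing $\ell>2E/\Emin+1$. The paper leaves these verifications as "elementary arguments"; your write-up simply fills in the same steps in more detail.
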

\begin{proof}[Proof of Claim \ref{claim:st conv}] We check the conditions of Definition \ref{defi:st}. {\bf Condition (\ref{defi:st dist})} follows from condition (\ref{claim:tree Z dist}) of Claim \ref{claim:tree} and the fact $Z_i=\emptyset$, for $i\in V$. (This follows from condition (\ref{claim:tree stab}) of Claim \ref{claim:tree}.)

{\bf Condition (\ref{defi:st conn})} follows from an elementary argument using Claim \ref{claim:tree}(\ref{claim:tree R},\ref{claim:tree conv},\ref{claim:tree B B}) and Proposition \ref{prop:en conc}. {\bf Condition (\ref{defi:st st})} follows from Claim \ref{claim:tree}(\ref{claim:tree stab}). Hence all conditions of Definition \ref{defi:st} are satisfied.

We check the {\bf conditions of Definition \ref{defi:conv}}. {\bf Condition (\ref{eq:E V bar T})} follows from Claim \ref{claim:f E}, using condition (\ref{claim:tree compl}) of Claim \ref{claim:tree}. {\bf Condition \ref{defi:conv}(\ref{defi:conv phi z})} follows from a straight-forward argument, using Claim \ref{claim:tree}(\ref{claim:tree R}).

{\bf Condition \ref{defi:conv}(\ref{defi:conv al be})} follows from Claim \ref{claim:tree}(\ref{claim:tree R}) by an elementary argument. {\bf Condition \ref{defi:conv}(\ref{defi:conv w})} follows from Claim \ref{claim:tree}(\ref{claim:tree conv}). Finally, {\bf condition \ref{defi:conv}(\ref{defi:conv z})} is void, since $k=0$. This proves Claim \ref{claim:st conv}.
\end{proof}
Thus we have proved Theorem \ref{thm:bubb} in the case $k=0$.\\

{\bf We prove now by induction that the Theorem holds for every $k\geq1$:} Let $k\geq\N_0$ be an integer, $\big(W_\nu,z_1^\nu,\ldots,z_{k-1}^\nu\big)$ as in the hypotheses of Theorem \ref{thm:bubb}, and assume that there exists a stable map $(\W,\z)$ (as in (\ref{eq:W z})) and a collection $(\phi_\al^\nu)$ of M\"obius transformations such that $\big(W_\nu,z_0^\infty:=\infty,z_1^\nu,\ldots,z_{k-1}^\nu\big)$ converges to $(\W,\z)$ via $(\phi_\al^\nu)$, 
\begin{eqnarray}\nn&\phi_\al^\nu(\infty)=\infty,&\\ 
\label{eq:lim de limsup nu}&\lim_{R\to\infty}\limsup_{\nu\to\infty}E\big(W_\nu,\R^2\wo\phi_{\al_0}^\nu(B_R)\big)=0,&
\end{eqnarray}
and for every edge $\al E\be$ such that $\be$ lies in the chain of vertices from $\al$ to $\al_0$, we have
\begin{equation}\label{eq:lim limsup be al}\lim_{R\to\infty}\limsup_{\nu\to\infty}E\big(W_\nu,\phi_\be^\nu(B_{R^{-1}}(z_{\be\al}))\wo \phi_\al^\nu(B_R)\big)=0. 
\end{equation}
(For $k=0$ we proved this above. In this case condition (\ref{eq:lim de limsup nu}) follows from (\ref{claim:tree B B}) of Claim \ref{claim:tree} with $i=1$, and the facts $E(W_\nu,B_{R_\nu})\to E$ and $R_0^\nu=\nu R_\nu$. Furthermore, condition (\ref{eq:lim limsup be al}) follows from condition (\ref{claim:tree B B}) of Claim \ref{claim:tree}.) By hypothesis (\ref{eq:limsup z i nu z j nu}), passing to some subsequence, we may assume that for every $i=1,\ldots,k-1$, the limit 
\begin{equation}\label{eq:z k i}z_{ki}:=\lim_{\nu\to\infty}(z_k^\nu-z_i^\nu)\in\R^2\cup\{\infty\}\end{equation}
exists, and $z_{ki}\neq0$. We set 
\[z_{k0}:=\infty.\] 
Passing to a further subsequence, we may assume that the limit 
\[z_{\al k}:=\lim_{\nu\to\infty}(\phi_\al^\nu)^{-1}(z_k^\nu)\in S^2\] 
exists, for every $\al\in T$. There are three cases.\\

\noi{\bf Case (I)} There exists a vertex $\al\in T$, such that $z_{\al k}$ is not a special point of $(\W,\z)$ at $\al$.\\

\noi{\bf Case (II)} There exists an index $i\in\{0,\ldots,k-1\}$ such that $z_{\al_ik}=z_i$.\\

\noi{\bf Case (III)} There exists an edge $\al E\be$ such that $z_{\al k}=z_{\al\be}$ and $z_{\be k}=z_{\be\al}$.\\

These three cases exclude each other. For the combination of the cases (II) and (III) this follows from condition (\ref{defi:st dist}) (distinctness of the special points) of Definition \ref{defi:st}. 

\begin{claim}\label{claim:I III} One of the three cases always applies. 
\end{claim}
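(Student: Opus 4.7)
The plan is to argue by contradiction: assuming that none of cases (I), (II), (III) holds, I shall construct an infinite non-backtracking walk on the finite tree $(T,E)$, which is absurd.

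First, since (I) fails, for every $\al \in T$ the point $z_{\al k}$ is a special point of $(\W,\z)$ at $\al$. Since (II) fails, $z_{\al k}$ is never a marked point $z_i$ carried by $\al$. By condition (\ref{defi:st dist}) of Definition \ref{defi:st}, the special points at $\al$ are pairwise distinct, so there is a \emph{unique} neighbor $\be(\al)$ of $\al$ with $z_{\al k} = z_{\al\be(\al)}$.

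Define a sequence of vertices by $\ga_0 := \al_0$ and $\ga_{n+1} := \be(\ga_n)$ for $n \geq 0$. I claim that $\ga_{n+1} \neq \ga_{n-1}$ for every $n \geq 1$. Indeed, if $\ga_{n+1} = \ga_{n-1}$, then by definition of $\be(\ga_n)$ we would have $z_{\ga_n k} = z_{\ga_n \ga_{n-1}}$; combined with $z_{\ga_{n-1} k} = z_{\ga_{n-1} \ga_n}$ (the defining property of $\ga_n = \be(\ga_{n-1})$), this would instantiate case (III) at the edge $\ga_{n-1} E \ga_n$, contrary to our assumption. Thus $(\ga_n)_{n \in \N_0}$ is a non-backtracking walk in the tree $(T,E)$.

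On a tree, any non-backtracking walk visits pairwise distinct vertices, since a repeated vertex would close a cycle. Hence $n \mapsto \ga_n$ is an injection of $\N_0$ into the finite set $T$, which is impossible. The only genuinely delicate point is leveraging the failure of case (III) to rule out backtracking; once this is in hand, the tree structure forces the contradiction.
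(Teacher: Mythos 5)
Your proof is correct and is exactly the elementary argument the paper has in mind (the paper only remarks that the claim follows from finiteness and acyclicity of $T$): failure of (I) and (II) forces $z_{\al k}$ to be a nodal point, failure of (III) makes the resulting walk non-backtracking, and a finite tree admits no infinite non-backtracking walk.
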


\begin{proof}[Proof of Claim \ref{claim:I III}] This follows from an elementary argument, using that $T$ is finite and does not contain cycles. 
\end{proof}

{\bf Assume that Case (I) holds.} We fix a vertex $\al\in T$ such that $z_{\al k}$ is not a special point. (This vertex is unique, but we do not need this.) We define $\al_k:=\al$ and introduce a new marked point 
\[z_k^\new:=z_{\al_kk}\]
on the $\al_k$-sphere. Then $(\W,\z)$ augmented by $z_k^\new$ is again a stable map and the sequence $(W_\nu,z_0^\nu,\ldots,z_k^\nu)$ converges to this new stable map via $(\phi_\al^\nu)_{\al\in T}$.

{\bf Assume that Case (II) holds.} We fix an index $0\leq i\leq k-1$ such that $z_{\al_ik}=z_i$. (It is unique.) The hypothesis (\ref{eq:limsup z i nu z j nu}) implies that $\al_i\in\BAR T$. We extend the tree $T$ by introducing an additional vertex $\ga$ which is adjacent to $\al_i$. If $z_{ki}=\infty$ (defined as in (\ref{eq:z k i})) then the new vertex corresponds to a bubble in $\BAR M$, otherwise it corresponds to a vortex. We move the $i$-th marked point from the vertex $\al_i$ to the vertex $\ga$ and introduce an additional marked point on $\ga$. More precisely, we define
\[\BAR T^\new:=\left\{
  \begin{array}{ll}\BAR T\disj\{\ga\},&\textrm{if }z_{ki}=\infty,\\
\BAR T,&\textrm{otherwise},
  \end{array}
\right.\]
\begin{eqnarray}\nn &T^\new:=T\disj\{\ga\},\quad V^\new:=T^\new\wo\BAR T^\new,&\\
\nn &\al_i^\new:= \al_k:=\ga,\quad z_{\ga\al_i}^\new:=\infty,\quad z_{\al_i\ga}^\new:=z_i&\\
\nn&z_i^\new:=0,\quad z_k^\new:=\left\{
\begin{array}{ll}z_{ki},&\textrm{if }z_{ki}\neq\infty,\\
1,&\textrm{otherwise.}
\end{array}\right.&
\end{eqnarray}
Assume first that $\ga\in V^\new$. We choose a point $x_0$ in the orbit $\bar u_{\al_i}(z_i)\sub\mu^{-1}(0)$, and define $A_\ga:=0\in\Om^1(\R^2,\g)$ and $u_\ga:\R^2\to M$ to be the map which is constantly equal to $x_0$. We identify $A_\ga$ with a connection on $\R^2\x G$ and $u_\ga$ with a $G$-equivariant map $\R^2\x G\to M$, and set 
\begin{equation}\label{eq:W ga}W_\ga:=\big[\R^2\x G,A_\ga,u_\ga\big].
\end{equation}
If $\ga\in \BAR T^\new$ then we define $\bar u_{\ga}:S^2\to\BAR M$ by
\[\bar u_{\ga}\const \bar u_{\al_i}(z_i).\] 
Note that the new component $\ga$ is a ``ghost'', i.e., the map $W_{\ga}$ (or $\bar u_{\ga}$) has 0 energy.  The tuple $(\W^\new,\z^\new)$ obtained from $(\W,\z)$ in this way is again a stable map.

We define the sequence of M\"obius transformations $\phi_{\ga}^\nu:S^2\to S^2$ by 
\begin{equation}
  \label{eq:phi ga nu}\phi_\ga^\nu(z):=\left\{
    \begin{array}{ll}z+z_i^\nu,&\textrm{if }\ga\in V^\new,\\
(z_k^\nu-z_i^\nu)z+z_i^\nu,&\textrm{if }\ga\in \BAR T^\new,\,i\geq1,\\
\frac{z_k^\nu-\phi_{\al_0}^\nu(w)}z+\phi_{\al_0}^\nu(w),&\textrm{if }\ga\in \BAR T^\new,\,i=0,
    \end{array}
\right.
\end{equation}
where $w\in S^2\wo \{z_0=\infty\}$ is chosen such that $\phi_{\al_0}^\nu(w)\neq z_k^\nu$ for all $\nu$. Note that the last line makes sense, since $\phi_{\al_0}^\nu(w)\neq\phi_{\al_0}^\nu(z_0)=\infty$. Here we use the convention that $1/\infty:=0$. 
\begin{claim}\label{claim:w new II} There exists a subsequence of $\big(W_\nu,z_0^\nu,\ldots,z_k^\nu\big)$ that converges to $(\W^\new,\z^\new)$ via the M\"obius transformations $(\phi_\al^\nu)_{\al\in T^\new,\,\nu\in\N}$. 
\end{claim}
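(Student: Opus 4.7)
The plan is to verify each clause of Definition \ref{defi:conv} for the tuple $(\W^\new,\z^\new)$ and the M\"obius transformations $(\phi_\al^\nu)_{\al\in T^\new}$. Because $\phi_\al^\nu$ is unchanged for $\al\in T\sub T^\new$, every clause not involving the new vertex $\ga$ is inherited directly from the induction hypothesis. Energy conservation \ver{eq:E V bar T} is automatic, since $W_\ga$ (resp.~$\bar u_\ga$) carries zero energy, so the new total equals the old total equals $E$. What remains is (a) the structural conditions on $\phi_\ga^\nu$, (b) the adjacency condition for the edge $\ga E\al_i$, (c) the marked-point limits for the moved index $i$ and the new index $k$, and (d) the $C^\infty$- or $C^1$-convergence of the pulled-back data at $\ga$.

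Items (a)--(c) reduce to direct computation. Writing $\phi_{\al_i}^\nu(z)=\lam_{\al_i}^\nu z+z_{\al_i}^\nu$, the induction hypothesis $(\phi_{\al_i}^\nu)^{-1}(z_i^\nu)\to z_i$ together with the Case~(II) assumption $(\phi_{\al_i}^\nu)^{-1}(z_k^\nu)\to z_{\al_i k}=z_i$ yields $(z_i^\nu-z_{\al_i}^\nu)/\lam_{\al_i}^\nu\to z_i$ and $(z_k^\nu-z_i^\nu)/\lam_{\al_i}^\nu\to 0$. Direct substitution into \ver{eq:phi ga nu} gives $(\phi_\ga^\nu)^{-1}(z_i^\nu)=0=z_i^\new$ and $(\phi_\ga^\nu)^{-1}(z_k^\nu)\to z_k^\new$. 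A short calculation of $(\phi_\ga^\nu)^{-1}\circ\phi_{\al_i}^\nu$ and $(\phi_{\al_i}^\nu)^{-1}\circ\phi_\ga^\nu$ yields the adjacency condition \ref{defi:conv}\ver{defi:conv al be}, and the normalization and derivative conditions of \ref{defi:conv}\ver{defi:conv phi z} follow analogously for each of the three explicit forms of $\phi_\ga^\nu$.

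The main content is clause \ref{defi:conv}\ver{defi:conv w} at $\ga$. The unified strategy is to factor $\phi_\ga^\nu=\phi_{\al_i}^\nu\circ\chi^\nu$ with $\chi^\nu:=(\phi_{\al_i}^\nu)^{-1}\circ\phi_\ga^\nu$ and exploit the inherited $C^1$-convergence of $\bar u_{W_\nu}\circ\phi_{\al_i}^\nu$ to $\bar u_{\al_i}$ on a neighborhood of $z_i$ (which lies in $S^2\wo(Z_{\al_i}\cup\{z_{\al_i,0}\})$, since $z_i$ is neither a node nor the zeroth marked point at $\al_i$, provided $i\geq 1$). In the ghost-sphere subcase ($\ga\in\BAR T^\new$, $i\geq 1$), explicit calculation gives $\chi^\nu\to z_i$ in $C^\infty$ on compact subsets of $\R^2$, with derivative $(z_k^\nu-z_i^\nu)/\lam_{\al_i}^\nu\to 0$; the chain rule applied to $\bar u_{W_\nu}\circ\phi_\ga^\nu=(\bar u_{W_\nu}\circ\phi_{\al_i}^\nu)\circ\chi^\nu$ then yields $C^1$-convergence to the constant $\bar u_\ga\const\bar u_{\al_i}(z_i)$, and openness of $M^*/G$ in $M/G$ (from $\mu^{-1}(0)\sub M^*$) forces the image into $M^*/G$ for large $\nu$. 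The degenerate subcase $i=0$ (where $z_0=\infty=z_{\al_0,0}$ is excluded from the inherited convergence domain) instead relies on the hypothesis \ver{eq:lim de limsup nu}: small energy on the outer ends combined with the $\bar d$-oscillation estimate \ver{eq:sup z z' bar d} of Proposition \ref{prop:en conc} shows that $\bar u_{W_\nu}(\phi_\ga^\nu(z))$ is close to the common value $\bar u_{W_\nu}(\infty)$, which Proposition \ref{prop:bar u} and the matching at the $\al_0\ga$-node identify as $\bar u_{\al_0}(\infty)=\bar u_\ga$.

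The ghost-vortex subcase ($\ga\in V^\new$, so $z_{ki}\in\R^2\wo\{0\}$ and $i\geq 1$) requires $C^\infty$-convergence of the translates $(\phi_\ga^\nu)^*W_\nu$ to the constant vortex $W_\ga=[\R^2\x G,0,u\const x_0]$. The key is to show $E(W_\nu,B_R(z_i^\nu))\to 0$ for every fixed $R>0$. Since $\bar u_{\al_i}$ is smooth at the non-special point $z_i$, $E(\bar u_{\al_i},B_\eps(z_i))\to 0$ as $\eps\to 0$; Lemma \ref{le:conv e} applied to the pullback $(\phi_{\al_i}^\nu)^*W_\nu$ gives $\lim_\nu E(W_\nu,\phi_{\al_i}^\nu(B_\eps(z_i)))=E(\bar u_{\al_i},B_\eps(z_i))$; and the asymptotic $|z_i^\nu-\phi_{\al_i}^\nu(z_i)|=o(\lam_{\al_i}^\nu)$ combined with $R/\lam_{\al_i}^\nu\to 0$ yields $B_R(z_i^\nu)\sub\phi_{\al_i}^\nu(B_\eps(z_i))$ for $\nu$ large, from which the energy-vanishing follows. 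Lemma \ref{le:a priori} then forces the energy density of $(\phi_\ga^\nu)^*W_\nu$ to vanish in $C^0_\loc$, and Proposition \ref{prop:cpt bdd} supplies a subsequence and gauge transformations converging in $C^\infty_\loc$ to a zero-energy $1$-vortex on $\R^2$, which is gauge-equivalent to $(0,u\const x_0')$ with $Gx_0'=\lim_\nu\bar u_{W_\nu}(z_i^\nu)=\bar u_{\al_i}(z_i)$, matching $W_\ga$. The main obstacle of the entire proof is precisely this multi-scale energy-vanishing estimate, since it requires carefully relating the pure-translation scale at $\ga$ to the affine rescaling at $\al_i$ through the energy-density convergence of Lemma \ref{le:conv e} and the smoothness of $\bar u_{\al_i}$ at $z_i$.
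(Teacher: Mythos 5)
Your overall architecture (checking the clauses of Definition \ref{defi:conv}, treating the three subcases for $\ga$ separately, and using $\phi_{\al_i\ga}^\nu=(\phi_{\al_i}^\nu)^{-1}\circ\phi_\ga^\nu\to z_i$ together with the inherited convergence at $\al_i$) matches the paper, and your ghost-sphere argument for $i\geq1$ is essentially the paper's. But there are two genuine gaps. The main one is exactly the step you single out as the crux of the ghost-vortex subcase: you derive $E(W_\nu,B_R(z_i^\nu))\to0$ by "applying Lemma \ref{le:conv e} to $(\phi_{\al_i}^\nu)^*W_\nu$" to get $\lim_\nu E(W_\nu,\phi_{\al_i}^\nu(B_\eps(z_i)))=E(\bar u_{\al_i},B_\eps(z_i))$. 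Lemma \ref{le:conv e} requires $C^0$-convergence of the connections and $C^1$-convergence of the maps in some gauge; the induction hypothesis is only convergence in the sense of Definition \ref{defi:conv}, which for the sphere component $\al_i$ gives $C^1$-convergence of the quotient maps $\bar u_{W_\nu}\circ\phi_{\al_i}^\nu$ and nothing at the level of the pair $(A_\nu,u_\nu)$. To fill this you would have to re-run Proposition \ref{prop:cpt mod} (or \ref{prop:cpt bdd}) at the $\al_i$-scale and, in particular, rule out energy concentration at $z_i$ there, which is close to circular with what you are proving and in any case needs a separate energy-accounting argument ($C^1$-convergence of $\bar u$ only yields Fatou-type lower bounds on the vortex energy, not the upper bound you need). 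The paper avoids the issue entirely: it applies Proposition \ref{prop:cpt mod} directly to the translated sequence $(\phi_\ga^\nu)^*W_\nu$ with $R_\nu\equiv1$ (so $R_0=1<\infty$, hence $Z=\emptyset$ by asphericity and one gets $C^\infty_\loc$-convergence modulo gauge to some finite-energy vortex $\wt W_\ga$), and then identifies $\wt W_\ga=W_\ga$ because $\bar u_{\wt W_\ga}\const\bar u_{\al_i}(z_i)$; after regauging to $\wt u_\ga\const x_0\in\mu^{-1}(0)$, the vortex equations and freeness of the action force $\wt A_\ga=0$. No a priori energy-vanishing is needed on that route.

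The second gap is in the ghost-sphere subcase $i=0$: your argument via (\ref{eq:lim de limsup nu}) and the oscillation estimate (\ref{eq:sup z z' bar d}) of Proposition \ref{prop:en conc} only yields uniform ($C^0$) convergence of $\bar u_\ga^\nu$ to the constant map, whereas Definition \ref{defi:conv}(\ref{defi:conv w}) demands $C^1$-convergence on compact subsets of $S^2\wo(Z_\ga\cup\{z_{\ga,0}\})$. The paper needs an extra compactness step here: it introduces the auxiliary rescalings $\wt\phi_\ga^\nu$, applies Proposition \ref{prop:cpt mod} with $R_\nu\to\infty$, and uses (\ref{eq:lim de limsup nu}) together with \ref{prop:cpt mod}(\ref{prop:cpt mod lim nu E eps}) to show the bubbling set is contained in $\{0\}$, which is what upgrades the convergence to $C^1$ away from $0$. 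As written, your proof establishes neither this upgrade nor a substitute for it.
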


\begin{proof}[Proof of Claim \ref{claim:w new II}] Condition (\ref{eq:E V bar T}) (energy conservation) holds for every subsequence, since the new component $\ga$ carries no energy. 

We denote now by $i\in\{0,\ldots,k-1\}$ the unique index such that $z_{\al_ik}=z_i$. 

{\bf Condition (\ref{defi:conv phi z}) of Definition \ref{defi:conv}} holds (for the new collection of M\"obius transformations), by an elementary argument. (To show the third part of this condition in the case $\ga\in\BAR T^\new$, we set $\psi_\ga:=\id$ if $i\geq1$, and $\psi_\ga(z):=1/z$ if $i=0$.)

{\bf We check condition \ref{defi:conv}(\ref{defi:conv al be})}. Let $\al E^\new\be$ be an edge. Consider the case $(\al,\be):=(\al_i,\ga)$. (It suffices to look at this case, the case $(\al,\be):=(\ga,\al_i)$ can be treated analogously.) We define
\[x:=z_{\ga\al_i}^\new=\infty,\quad y:=z_{\al_i\ga}^\new=z_i,\quad x_1^\nu:=z_i^\new=0,\]
\[x_2^\nu:=\left\{
  \begin{array}{ll}z_k^\nu-z_i^\nu,&\textrm{if }\ga\in V^\new,\\
z_k^\new=1,&\textrm{if }\ga\in\BAR T^\new,
  \end{array}
\right.\]
\[y_\nu:=\left\{
  \begin{array}{ll}z_{\al_i,0},&\textrm{if }\ga\in V^\new\textrm{ or }(\ga\in \BAR T^\new \textrm{ and }i\geq1),\\
w,&\textrm{if }\ga\in\BAR T^\new,\,i=0,
  \end{array}
\right.\]
\[\phi_\nu:=\phi_{\al_i\ga}^\nu:=(\phi_{\al_i}^\nu)^{-1}\circ\phi_\ga^\nu.\]
Then the hypotheses of Lemma \ref{le:phi nu} below are satisfied, and therefore by that lemma $\phi_{\al_i\ga}^\nu$ converges to $y=z_{\al_i\ga}^\new$, uniformly with all derivatives on every compact subset of $S^2\wo\{x\}=S^2\wo\{z_{\ga\al_i}^\new\}$. By Remark \ref{rmk:phi nu -1} below it follows that $\phi_{\ga\al_i}^\nu=(\phi_{\al_i\ga}^\nu)^{-1}$ converges to $z_{\ga\al_i}^\new$, uniformly on every compact subset of $S^2\wo\{z_{\al_i\ga}^\new\}$. This proves condition \ref{defi:conv}(\ref{defi:conv al be}).

{\bf We check condition \ref{defi:conv}(\ref{defi:conv w})} up to some subsequence. For every $\al\in T$ we write  
\[W_\al^\nu:=(\phi_\al^\nu)^*W_\nu,\quad\bar u_\al^\nu:=\bar u_{W_\al^\nu}:\R^2\to M/G,\]  
where $\bar u_{W_\al^\nu}$ is defined as in (\ref{eq:BAR u W}). 

{\bf Assume that $\ga\in V^\new$.} This means that $z_{ki}\neq\infty$. Since by definition $z_{k0}=\infty$, it follows that $i\neq0$. It follows from Proposition \ref{prop:cpt mod} (Compactness modulo bubbling) with $R_\nu:=1$, $r_\nu:=\nu$ and $W_\nu$ replaced by $W_\ga^\nu$, that there exists a vortex $\wt W_\ga$ on $\R^2$ such that passing to some subsequence, the sequence $W_\ga^\nu$ converges to $\wt W_\ga$, with respect to $\tau_{\R^2}$ (defined as in (\ref{eq:tau Si})), and the sequence $\bar u_\ga^\nu$ converges to $\bar u_{\wt W_\ga}$, uniformly on every compact subset of $\R^2$. 
\begin{claim}\label{claim:wt W ga W ga} We have $\wt W_\ga=W_\ga$ (defined as in (\ref{eq:W ga})).
\end{claim} 
\begin{proof}[Proof of Claim \ref{claim:wt W ga W ga}] To see this, we use condition \ref{defi:conv}(\ref{defi:conv w}) for the sequence $\big(W_\nu,z_0^\nu,\ldots,z_{k-1}^\nu\big)$, recalling that $\al_i\in \BAR T^\new$ and $i\neq0$. It follows that the maps $\bar u_{\al_i}^\nu$ converge to $\bar u_{\al_i}$, in $C^1$ on every compact subset of $S^2\wo Z_{\al_i}$, and hence on every small enough neighbourhood of $z_i$. (To make sense of this convergence, we implicitely mean that the image of a given compact subset of $S^2\wo Z_{\al_i}$ under $\bar u_{W_{\al_i}^\nu}$ is contained in $M^*/G$, for $\nu$ large enough.) 

Since $\phi_{\al_i\ga}^\nu:=(\phi_{\al_i}^\nu)^{-1}\circ\phi_\ga^\nu$ converges to $z_{\al_i\ga}=z_i$, uniformly on every compact subset of $S^2\wo\{z_{\ga\al_i}^\new\}=\R^2$, it follows that 
\[\bar u_\ga^\nu=\bar u_{\al_i}^\nu\circ\phi_{\al_i\ga}^\nu\to\BAR x_0:=\bar u_{\al_i}(z_i),\]
uniformly on every compact subset of $\R^2$. It follows that $\bar u_{\wt W_\ga}\const\BAR x_0$. We choose representatives $(\wt A_\ga,\wt u_\ga)\in\Om^1(\R^2,G)\x C^\infty(\R^2,G)$ of $\wt W_\ga$ and $x_0\in\mu^{-1}(0)$ of $\BAR x_0$. By hypothesis (H) the action of $G$ on $\mu^{-1}(0)$ is free. Hence, after regauging, we may assume that $\wt u_\ga\const x_0$. (Here we use Lemma \ref{le:g smooth} below, which ensures that the gauge transformation is smooth.) It follows from the first vortex equation that $\wt A_\ga=0$. This shows that $\wt W_\ga=W_\ga$ and hence proves Claim \ref{claim:wt W ga W ga}. 
\end{proof}
This proves condition \ref{defi:conv}(\ref{defi:conv w}) in the case $\ga\in V^\new$. 

{\bf Assume now that $\ga\in \BAR T^\new$. Suppose also that $i\geq1$.} By condition \ref{defi:conv}(\ref{defi:conv w}) for the sequence $\big(W_\nu,z_0^\nu,\ldots,z_{k-1}^\nu\big)$, the map $\bar u_{\al_i}^\nu$ converges to $\bar u_{\al_i}$, in $C^1$ on every compact subset of $S^2\wo Z_{\al_i}$ and hence on every small enough neighbourhood of $z_i$. Let $Q\sub\R^2=S^2\wo Z_\ga$ be a compact subset. Since $\phi_{\al_i\ga}^\nu$ converges to $z_i$, in $C^\infty$ on $Q$, it follows that $\bar u_\ga^\nu=\bar u_{\al_i}^\nu\circ\phi_{\al_i\ga}^\nu$ converges to $\bar u_\ga\const\bar u_{\al_i}(z_i)$ in $C^1$ on $Q$, as required.

{\bf Suppose now that $i=0$.} An elementary argument using condition (\ref{defi:conv w}) (with $\al:=\al_0$) in the definition of convergence, our assumption (\ref{eq:lim de limsup nu}), and Proposition \ref{prop:en conc}, shows that for every $\eps>0$ there exist numbers $R\geq r_0$ and $\nu_0\in\N$ such that
\begin{equation}\nn\bar d(\bar u_{\al_0}(\infty),\bar u_{\al_0}^\nu(z))<\eps,\quad\forall\nu\geq\nu_0,\,z\in\R^2\wo B_R.
\end{equation}
Since $\phi_{\al_0\ga}^\nu$ converges to $z_{\al_0\ga}=z_0=\infty$, uniformly on every compact subset of $\R^2=S^2\wo\{z_{\ga\al_0}\}$, it follows that $\bar u_\ga^\nu$ converges to the constant map $\bar u_\ga\const \bar u_{\al_0}(\infty)$, uniformly on every compact subset of $\R^2\wo\{0\}=S^2\wo(Z_\ga\disj\{z_0^\new\})$. We show that passing to some subsequence the convergence is in $C^1$ on every compact subset of $\R^2\wo\{0\}$. To see this, we define $R_\nu>0$, $\phi_\nu\in[0,2\pi),\wt\phi_\ga^\nu,\wt w_\ga^\nu$ by
\begin{eqnarray*}&R_\nu e^{i\phi_\nu}:=z_k^\nu-\phi_{\al_0}^\nu(w),\quad \wt \phi_\ga^\nu(\wt z):=\phi_\ga^\nu(e^{i\phi_\nu}/\wt z)=R_\nu z+\phi_{\al_0}^\nu(w),&\\
&\wt w_\ga^\nu:=(\wt A_\ga^\nu,\wt u_\ga^\nu):=(\wt\phi_\ga^\nu)^*w_\nu.&
\end{eqnarray*}
(Recall here that we have chosen $w\in S^2\wo \{z_0=\infty\}$ such that $\phi_{\al_0}^\nu(w)\neq z_k^\nu$ for all $\nu$.) An elementary argument shows that $R_\nu$ converges to $R_0:=\infty$. Hence by Proposition \ref{prop:cpt mod} with $r_\nu:=\nu$ and $w_\nu$ replaced by the $R_\nu$-vortex $\wt w_\ga^\nu$ there exist a finite subset $Z\sub\R^2$ and an $\infty$-vortex $\wt w_\ga:=(\wt A_\ga,\wt u_\ga)$, and passing to some subsequence, there exist gauge transformations $g_\ga^\nu\in W^{2,p}_\loc(\R^2\wo Z,G)$, such that the assertions \ref{prop:cpt mod}(\ref{prop:cpt mod:=},\ref{prop:cpt mod lim nu E eps}) hold. By \ref{prop:cpt mod}(\ref{prop:cpt mod:=}) on every compact subset of $\R^2\wo Z$ the sequence $(g_\ga^\nu)^*\wt A_\ga^\nu$ converges to $\wt A_\ga$ in $C^0$, and the sequence $(g_\ga^\nu)^{-1}\wt u_\ga^\nu$ converges to $\wt u_\ga$ in $C^1$. 
\begin{claim}\label{claim:Z sub 0}We have $Z\sub\{0\}$. 
\end{claim}
\begin{proof}[Proof of Claim \ref{claim:Z sub 0}] It follows from (\ref{eq:lim de limsup nu}) that there exists $R>0$ such that 
\[\limsup_{\nu\to\infty}E\big(w_\nu,\R^2\wo\phi_{\al_0}^\nu(B_R)\big)<\Emin.\]
Hence by an elementary argument, we have $E^{R_\nu}(\wt w_\ga^\nu,B_\eps(z))<\Emin$, for every $z\in\R^2\wo\{0\}$, for $\nu$ large enough. Combining this with condition \ref{prop:cpt mod}(\ref{prop:cpt mod lim nu E eps}), the statement of Claim \ref{claim:Z sub 0} follows. 
\end{proof}
Using Claim \ref{claim:Z sub 0}, it follows that $G\wt u_\ga^\nu$ converges to $G\wt u_\ga$ in $C^1$ on every compact subset of $\R^2\wo\{0\}$. We pass to some subsequence, such that $\phi_\nu$ converges to some number $\phi_0\in[0,2\pi]$. It follows that $\bar u_\ga^\nu=G u_\ga^\nu$ converges to the map $\C\wo\{0\}\ni z\mapsto G \wt u_\ga(e^{i\phi_0}/z)\in\BAR M$, in $C^1$ on every compact subset of $\C\wo\{0\}$. Since $\bar u_\ga^\nu$ also converges to $\bar u_\ga$, it follows that condition \ref{defi:conv}(\ref{defi:conv w}) holds in the case $\ga\in \BAR T^\new$, $i=0$. Hence this condition is satisfied in all cases. 

Condition \ref{defi:conv}(\ref{defi:conv z}) follows from the definition (\ref{eq:phi ga nu}) of $\phi_\ga^\nu$. This proves Claim \ref{claim:w new II}. 
\end{proof}

{\bf Assume now that Case (III) holds.} In this case we introduce a new vertex $\ga$ between $\al$ and $\be$, corresponding to a bubble in $\BAR M.$ Hence $\al$ and $\be$ are no longer adjacent, but are separated by $\ga$. We define
\begin{eqnarray*}&\BAR T^\new:=\BAR T\disj\{\ga\},\quad V^\new:=V,\quad T^\new:=T\disj\{\ga\},&\\
&z_{\al\ga}^\new:=z_{\al\be},\,z_{\be\ga}^\new:=z_{\be\al},\,z_{\ga\al}^\new:=0,\,z_{\ga\be}^\new:=\infty,\,\al_k^\new:=\ga,\,z_k^\new:=1.&
\end{eqnarray*}
We define $\bar u_\ga:S^2\to \BAR M$ to be the constant map equal to $\barev_{z_{\al\be}}(W_\al)$, where $\barev$ is defined as in (\ref{eq:barev},\ref{eq:barev w}). (In the case $\al\in\BAR T$ we denote $W_\al:=\bar u_\al$.) (The new component is a ``ghost'', i.e., carries no energy.) The tuple $(\W^\new,\z^\new)$ obtained from $(\W,\z)$ in this way is again a stable map. For every $\al\in T^\new$ we define  $z_{\al,0}^\new$ as in (\ref{eq:z al i}) and (\ref{eq:z al i }), with $i:=0$ and \wrt to the new tree $T^\new$. By interchanging $\al$ and $\be$ if necessary, we may assume w.l.o.g.~that $\be$ is contained in the chain of edges from $\al$ to $\al_0$. It follows that for every $\al\neq\ga$, $z_{\al,0}^\new=z_{\al,0}$, where $z_{\al,0}$ is defined as in (\ref{eq:z al i}) and (\ref{eq:z al i }), with $i:=0$ and \wrt to the old tree $T$, and $z_{\ga,0}^\new=z_{\ga,\be}^\new=\infty$. Furthermore, the hypotheses of Lemma \ref{le:middle} (Middle rescaling) are satisfied with 
\begin{eqnarray}\nn&x:=z_{\be\ga}^\new,\quad x':=z_{\be,0},\quad x_\nu:=z_{\be k}^\nu:=(\phi_\be^\nu)^{-1}(z_k^\nu),\quad y:=z_{\al\ga}^\new,\\
\nn& \phi_\nu:=\phi_{\al\be}^\nu=(\phi_\al^\nu)^{-1}\circ\phi_\be^\nu.&
\end{eqnarray}
We choose a sequence $\psi_\nu$ as in this lemma (satisfying $\psi_\nu(\infty)=z_{\be,0}$). We define 
\[\phi_\ga^\nu:=\phi_\be^\nu\circ\psi_\nu.\]
The sequence $(W_\nu,z_0^\nu,\ldots,z_k^\nu)$ converges to $(\W^\new,\z^\new)$ along the sequence of collections of M\"obius transformations $(\phi_\al^\nu)_{\al\in T^\new,\nu\in\N}$. This follows from elementary arguments, except for the proof of condition \ref{defi:conv}(\ref{defi:conv w}), which uses (\ref{eq:lim limsup be al}) and an argument as in Case (II).

This proves the induction step and hence terminates the proof of Theorem \ref{thm:bubb} in the general case.
\end{proof}
\noindent{\bf Remark.} In the above proof the stable map $(\W,\z)$ is constructed by ``terminating induction''. Intuitively, this is induction over the integer $N$ occuring in Claim \ref{claim:tree}. The ``auxiliary index'' $\ell$ in Claim \ref{claim:tree} is needed to make this idea precise. Condition (\ref{claim:tree compl}) and the inequality (\ref{eq:N E}) ensure that the ``induction stops''. $\Box$
\appendix
\section{Vortices}\label{sec:vort} 

Let $M,\om,G,\g,\lan\cdot,\cdot\ran_\g,\mu,J$ be as in Section \ref{sec:main}. (As always, we assume that hypothesis (H) is satisfied.) The next result is used in the proofs of Propositions \ref{prop:soft} and \ref{prop:en conc}. For $r>0$ and $z_0\in \R^2$ we denote by $B_r(z_0)$ the open ball in $\R^2$, and we abbreviate $B_r:=B_r(z_0)$. 
\begin{lemma}[Bound on energy density]\label{le:a priori} Let $K\sub M$ be a compact subset. Then there exists a constant $E_0>0$ such that the following holds. Let $z_0\in\R^2$, $r>0$, $P$ be a smooth principal over $B_r(z_0)$, $p>2$, and $(A,u)$ a vortex on $P$ of class $W^{1,p}_\loc$, such that
\begin{eqnarray}
\nn &u(P)\sub K,&\\
\nn&E(w,B_r(z_0))\leq E_0.&
\end{eqnarray}
Then we have 
\begin{equation}
\nn e_w(z_0)\leq \frac8{\pi r^2}E(w,B_r(z_0)).
\end{equation}
\end{lemma}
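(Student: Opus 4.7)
The plan is to combine a Bochner-type subharmonicity estimate for the energy density $e_w$ with the standard mean value inequality for non-negative subsolutions of a semilinear Poisson inequality $\Delta f \geq -C f^{2}$.

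First, I would establish a pointwise bound of the form
\[
\Delta e_w \;\geq\; -C\,e_w^{2}
\]
on $B_r(z_0)$, where $C = C(K) > 0$ depends only on the compact set $K$ (and on the fixed structures $\om$, $J$, $\mu$, $\lan\cdot,\cdot\ran_\g$). The derivation proceeds term by term on $e_w = \tfrac12(|d_Au|^2 + |F_A|^2 + |\mu\circ u|^2)$: one computes the Laplacian of each squared norm via a Weitzen\"ock identity (using the first vortex equation for the $|d_Au|^2$-part, the second Bianchi identity and the second vortex equation for $|F_A|^2$, and $\nabla\mu = -\om(L_{\cdot}, \cdot)$ for the moment-map term), and then uses the vortex equations to cancel leading cross terms. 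The remaining ``bad'' contributions involve the Riemann tensor of $(M,\om(\cdot,J\cdot))$, the Hessian of $\mu$, and the structure constants of $\g$, all of which are uniformly bounded on $K$, so each bad term can be dominated by a constant multiple of $e_w^{2}$. This is essentially the computation performed in \cite{CGS,CGMS}.

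Once the Bochner inequality is in hand, I would invoke Heinz's mean value inequality in the following standard form: if $f\colon B_r(z_0)\to[0,\infty)$ is continuous with $\Delta f \geq -Cf^{2}$ and $C\int_{B_r(z_0)} f \leq \tfrac12$, then
\[
f(z_0) \;\leq\; \frac{8}{\pi r^{2}}\int_{B_r(z_0)} f.
\]
Setting $E_0 := 1/(2C)$, the hypothesis $E(w,B_r(z_0))\leq E_0$ is precisely the smallness prerequisite for Heinz's inequality applied to $f := e_w$, and this gives the claimed bound $e_w(z_0)\leq \tfrac{8}{\pi r^{2}}\,E(w,B_r(z_0))$.

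The main obstacle is the Bochner computation: one needs the cancellations arising from the vortex equations to be set up carefully so that \emph{every} unbounded-looking term is genuinely controlled by $K$-uniform data times $e_w^{2}$, without accumulating a linear term $-Ae_w$ that would spoil the clean form of the conclusion. A minor technical point is the low $W^{1,p}_\loc$-regularity of $(A,u)$: before doing the pointwise calculation, one should first pass to a smooth representative on interior balls via the local regularity/gauge-fixing result (Theorem \ref{thm:reg gauge bdd}), which is harmless because $e_w$ is gauge invariant.
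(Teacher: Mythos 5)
Your proposal is correct and follows essentially the same route as the paper: regularize modulo gauge via Theorem \ref{thm:reg gauge bdd}, establish the differential inequality $\La e_w\geq -Ce_w^2$ with $C=C(K)$ (the paper cites Step 1 of the proof of \cite[Proposition 11.1]{GS} for exactly this computation), and conclude with Heinz's mean value inequality (Lemma \ref{le:Heinz}). The only discrepancy is the smallness threshold you quote for Heinz's inequality ($C\int f\leq\tfrac12$ rather than $\int f<\tfrac{\pi}{8C}$ as in Lemma \ref{le:Heinz}), which is immaterial since $E_0$ may simply be taken to be $\tfrac{\pi}{8C}$.
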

For the proof of Lemma \ref{le:a priori} we need the following lemma.
\begin{lemma}[Heinz]\label{le:Heinz} Let $r>0$ and $c\geq0$. Then for every function $f\in C^2(B_r,\R)$ satisfying the inequalities
\begin{equation}
\nn f\geq0,\qquad \La f\geq-cf^2,\qquad \int_{B_r}f<\frac\pi{8c}
\end{equation}
we have
\begin{equation}
\nn f(0)\leq\frac8{\pi r^2}\int_{B_r}f.
\end{equation}
\end{lemma}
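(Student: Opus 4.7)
The lemma asserts that a nonnegative $C^2$ function satisfying the ``almost subharmonic'' inequality $\Delta f\geq -cf^2$ still obeys a mean value type bound, provided its total integral is small enough that the obstruction term $cf^2$ is controlled. The strategy is to reduce to the genuinely subharmonic case by localizing to a disc on which $f$ is bounded in terms of its own supremum, and then correcting $f$ by a multiple of $|z-z^{*}|^{2}$ to make it honestly subharmonic.

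\textbf{Step 1 (rescaling).} The rescaling $g(w):=r^{2}f(rw)$ on $B_{1}$ satisfies $\Delta g\geq -cg^{2}$ and $\int_{B_{1}}g=\int_{B_{r}}f$, and the conclusion for $g$ is equivalent to the conclusion for $f$. So assume $r=1$ and write $E:=\int_{B_{1}}f$.

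\textbf{Step 2 (choice of good ball).} Define the continuous function $\phi\colon[0,1]\to\R$ by $\phi(\rho):=(1-\rho)^{2}\sup_{\overline{B_{\rho}}}f$, so that $\phi(0)=f(0)$ and $\phi(1)=0$. Pick $\rho^{*}\in[0,1)$ maximizing $\phi$, pick $z^{*}\in\overline{B_{\rho^{*}}}$ with $f(z^{*})=S:=\sup_{\overline{B_{\rho^{*}}}}f$, and set $s:=(1-\rho^{*})/2>0$. Since $B_{s}(z^{*})\sub B_{\rho^{*}+s}$, maximality of $\phi(\rho^{*})$ gives
\[\sup_{B_{s}(z^{*})}f\;\leq\;\frac{\phi(\rho^{*}+s)}{s^{2}}\;\leq\;\frac{\phi(\rho^{*})}{s^{2}}\;=\;4S.\]

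\textbf{Step 3 (subharmonic correction and mean value).} On $B_{s}(z^{*})$ we have $\Delta f\geq -cf^{2}\geq -16cS^{2}$, so $w(z):=f(z)+4cS^{2}|z-z^{*}|^{2}$ is subharmonic. The classical mean value inequality for subharmonic functions, together with $\int_{B_{s}(z^{*})}|z-z^{*}|^{2}dz=\pi s^{4}/2$ and $\int_{B_{s}(z^{*})}f\leq E$, yields
\[S\;=\;w(z^{*})\;\leq\;\frac{1}{\pi s^{2}}\int_{B_{s}(z^{*})}w\;\leq\;\frac{E}{\pi s^{2}}+2cS^{2}s^{2}.\]
Setting $a:=s^{2}S$, this reads $a\leq E/\pi+2ca^{2}$, i.e.\ $2ca^{2}-a+E/\pi\geq 0$. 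Finally, from $\phi(\rho^{*})\geq\phi(0)$ one reads off $f(0)\leq 4s^{2}S=4a$, so it suffices to prove $a\leq 2E/\pi$.

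\textbf{Step 4 (quadratic resolution) — the main obstacle.} The hypothesis $E<\pi/(8c)$ is exactly the condition that the discriminant of $2cx^{2}-x+E/\pi$ is positive, so this polynomial has two roots $0<a_{-}<a_{+}$ with
\[a_{-}\;=\;\frac{2E/\pi}{1+\sqrt{1-8cE/\pi}}\;\leq\;\frac{2E}{\pi},\]
and the inequality $a\leq E/\pi+2ca^{2}$ forces $a\leq a_{-}$ or $a\geq a_{+}$. The delicate point is to exclude the second alternative. I will do this by a continuity/homotopy argument: apply the whole construction to the family $f_{t}:=tf$ for $t\in[0,1]$ (noting that $\Delta f_{t}\geq -(c/t)f_{t}^{2}$ and $\int f_{t}=tE<\pi/(8(c/t))$, so the hypotheses persist with $c$ replaced by $c/t$). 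For $t$ small the corresponding $a(t)$ is of order $t^{2}$, hence lies in the lower branch $[0,a_{-}(t)]$; since $a(t)$ cannot jump across the forbidden gap $(a_{-}(t),a_{+}(t))$ — on which $2(c/t)x^{2}-x+tE/\pi<0$ — a connectedness argument in $t$ places $a(1)=a$ in $[0,a_{-}]$. This gives $a\leq a_{-}\leq 2E/\pi$, hence $f(0)\leq 4a\leq 8E/\pi$, as required. The homotopy step is the only nontrivial point; everything else is a direct subharmonic mean value computation.
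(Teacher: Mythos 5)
The paper itself does not prove this lemma; it simply cites \cite[Lemma 4.3.2]{MS}, so your attempt has to be judged against the standard (Heinz/McDuff--Salamon) argument. Your Steps 1--3 reproduce that argument correctly: the rescaling, the choice of $\rho^*$ maximizing $(1-\rho)^2\sup_{\overline{B_\rho}}f$, the bound $\sup_{B_s(z^*)}f\le 4S$, the subharmonic correction, and the resulting inequality $a\le E/\pi+2ca^2$ with $f(0)\le 4a$ are all right.

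The gap is in Step 4, and it is genuine: the homotopy argument does not exclude the upper branch. The whole configuration is homogeneous under the substitution $f\mapsto tf$, $c\mapsto c/t$: since $\phi_t=t\phi$, you may take the same $\rho^*$ and $z^*$ for every $t$, so $a(t)=ta$ (not $O(t^2)$ as you claim), while the roots of $2(c/t)x^2-x+tE/\pi$ are $a_\pm(t)=ta_\pm$. Hence $a(t)\le a_-(t)$ if and only if $a\le a_-$, for \emph{every} $t$; making $t$ small gives no information, and the connectedness argument is vacuous. The correct way to exclude the upper branch (and this is what \cite{MS} does) is to choose the radius of the small disc adaptively: apply the mean value inequality on $B_\eps(z^*)$ with $\eps:=\min\bigl\{s,(4cS)^{-1/2}\bigr\}$ (assuming $c,S>0$; the cases $c=0$ or $S=0$ are trivial). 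Then $2cS^2\eps^2\le S/2$, so the inequality $S\le 2cS^2\eps^2+\frac1{\pi\eps^2}\int_{B_\eps(z^*)}f$ yields $\frac{\pi\eps^2S}{2}\le E$. If $\eps^2=(4cS)^{-1}$ this reads $E\ge\pi/(8c)$, contradicting the hypothesis; hence $\eps=s$ and $a=s^2S\le 2E/\pi$ directly, with no dichotomy to resolve. With this replacement for Step 4 your proof is complete.
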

\begin{proof}[Proof of Lemma \ref{le:Heinz}] This is \cite[Lemma 4.3.2]{MS}.
\end{proof}
\begin{proof}[Proof of Lemma \ref{le:a priori}] Since $G$ is compact, we may assume w.l.o.g.~that $K$ is $G$-invariant. The result then follows from Theorem \ref{thm:reg gauge bdd} below, the calculation in Step 1 of the proof of \cite[Proposition 11.1.]{GS}, and Lemma \ref{le:Heinz}.
\end{proof}
Lemma \ref{le:a priori} has the following consequence.
\begin{cor}[Quantization of energy]\label{cor:quant} If $M$ is equivariantly convex at $\infty$, then we have
\[\inf_wE(w)>0,\] 
where $w=(P,A,u)$ ranges over all vortices on $\R^2$ with $P$ smooth and $(A,u)$ of class $W^{1,p}_\loc$ for some $p>2$, such that $E(w)>0$ and $\bar u(P)$ is compact.
\end{cor}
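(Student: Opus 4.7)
The plan is to argue by contradiction, combining Lemma \ref{le:a priori} (bound on energy density) with an a priori $C^0$-bound on the image of a vortex supplied by equivariant convexity at $\infty$. Suppose, to the contrary, that there is a sequence $w_\nu = (P_\nu, A_\nu, u_\nu)$ of vortices on $\R^2$ satisfying the hypotheses of the corollary, with $E(w_\nu) > 0$ for every $\nu$ and $E(w_\nu) \to 0$.

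First, using the assumption that $(M,\om,\mu,J)$ is equivariantly convex at $\infty$, I would invoke Proposition \ref{prop:bounded} (from the appendix) applied to the sequence $w_\nu$. Since $\sup_\nu E(w_\nu) < \infty$ (trivially, as $E(w_\nu) \to 0$), that result yields a $G$-invariant compact subset $K \sub M$ such that for every $\nu$ sufficiently large, $u_\nu(P_\nu) \sub K$. This is the crucial step that leverages convexity to turn the individual compact-image hypothesis into a \emph{uniform} compact-image statement across the sequence.

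Next, let $E_0 = E_0(K) > 0$ be the constant provided by Lemma \ref{le:a priori} for this fixed compact set $K$. For all $\nu$ large enough, we have both $u_\nu(P_\nu) \sub K$ and $E(w_\nu) < E_0$. Then for any $z_0 \in \R^2$ and any $r > 0$, the hypotheses of Lemma \ref{le:a priori} are satisfied on $B_r(z_0)$, and we obtain
\[
e_{w_\nu}(z_0) \leq \frac{8}{\pi r^2} E\big(w_\nu, B_r(z_0)\big) \leq \frac{8}{\pi r^2} E(w_\nu).
\]
Letting $r \to \infty$ with $\nu$ fixed gives $e_{w_\nu}(z_0) = 0$, and since $z_0 \in \R^2$ is arbitrary we conclude $E(w_\nu) = 0$, contradicting $E(w_\nu) > 0$. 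Hence the infimum must be positive.

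The principal obstacle is the invocation of Proposition \ref{prop:bounded}: without the uniform compact set $K$, one would only get a threshold $E_0$ depending on each individual $w_\nu$, and the mean-value argument would collapse. Equivariant convexity is used precisely to pass from the vortex-by-vortex compactness of images to a sequence-uniform compactness, after which the letting-$r \to \infty$ step is immediate.
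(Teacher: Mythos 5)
Your proof is correct and uses exactly the two ingredients of the paper's own (one-line) proof: Proposition \ref{prop:bounded} to get a uniform $G$-invariant compact set $K_0$ containing the image of every finite-energy vortex, and Lemma \ref{le:a priori} with the constant $E_0(K_0)$ together with the $r\to\infty$ trick to force $e_w\equiv 0$ whenever $E(w)\leq E_0$. The only cosmetic difference is your contradiction/sequence framing; note that Proposition \ref{prop:bounded} already gives one $K_0$ valid for all finite-energy vortices with compact image closure (no uniform energy bound over the sequence is needed), so the argument can be stated directly as: $E(w)\leq E_0(K_0)$ implies $E(w)=0$, hence $\inf_w E(w)\geq E_0(K_0)>0$.
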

\begin{proof}[Proof of Corollary \ref{cor:quant}] This is an immediate consequence of Proposition \ref{prop:bounded} below and Lemma \ref{le:a priori}.
\end{proof}
This corollary implies that the minimal energy $E_V$ of a vortex on $\R^2$ (defined as in (\ref{eq:E V})) is positive, and therefore $\Emin>0$ (defined as in (\ref{eq:Emin})). 

The next lemma is used in the proofs of Proposition \ref{prop:cpt bdd} and Lemma \ref{le:conv e}. It is a consequence of \cite[Lemma 9.1]{GS}. Let $(\Si,\om_\Si,j)$ be a surface with an area form and a compatible complex structure. For $\xi\in\g$ and $x\in M$ we denote by $L_x\xi=X_\xi(x)\in T_xM$ the (infinitesimal) action of $\xi$ at $x$. 
\begin{lemma}[Bounds on the moment map component]\label{le:mu u} Let $c>0$, $Q\sub\Si\wo\dd\Si$ and $K\sub M$ be compact subsets, and $p>2$. Then there exist positive constants $R_0$ and $C_p$ such that the following holds. Let $R\geq R_0$, $P$ a smooth principal bundle over $\Si$, and $(A,u)$ an $R$-vortex on $P$ of class $W^{1,p}_\loc$, such that 
\begin{eqnarray}\nn &u(P)\sub K,&\\
\nn &\Vert d_Au\Vert_{L^\infty(\Si)}\leq c,&\\
\nn &|\xi|\leq c|L_{u(p)}\xi|,\forall p\in P,\forall \xi\in\g.&
\end{eqnarray}
Then 
\[\int_Q|\mu\circ u|^p\om_\Si\leq C_pR^{-2p},\qquad \sup_Q|\mu\circ u|\leq C_pR^{2/p-2},\]
where we view $|\mu\circ u|$ as a function from $\Si$ to $\R$. 
\end{lemma}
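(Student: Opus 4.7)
Since the conclusion is local on $\Si$ and invariant under gauge transformations, I would begin by covering $Q$ by finitely many open sets $V_\al$ with compact closure contained in coordinate disks $U_\al\subset\Si\setminus\del\Si$ on which $P$ is trivial. On each chart, write $\om_\Si=\rho\,\om_0$ for a smooth positive function $\rho$ bounded above and below on $\overline{V_\al}$, so it suffices to prove the estimates for an $R$-vortex $(A,u)$ on a disk $U\subset\R^2$ (trivial bundle, standard area form) with $V$ playing the role of $Q$. The regularity Theorem \ref{thm:reg gauge bdd} replaces $(A,u)$ by a smooth gauge-equivalent representative on $\overline V$; this is harmless since $|\mu\circ u|$ is gauge invariant.

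The heart of the matter is a Bochner-type inequality for $\phi:=\mu\circ u$, viewed as a $\g$-valued equivariant function. Combining the second $R$-vortex equation $F_A=-R^2\phi\,\om_0$ with the infinitesimal moment-map identity $\lan d\mu(x)L_x\xi,\eta\ran_\g=-\om(L_x\xi,L_x\eta)$, and decomposing $d_Au$ into its complex-linear and anti-linear parts (using $\bar\dd_{J,A}u=0$), one derives, exactly as in \cite[Lemma 9.1]{GS}, a pointwise inequality of the schematic form
\[
-\tfrac12\Delta|\phi|^2+R^2\lan L_u^*L_u\phi,\phi\ran\le C_K\big(|d_Au|^2\,|\phi|+|d_Au|^4\big),
\]
where $C_K$ depends only on the compact set $K$ (and on the Riemannian data). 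The freeness hypothesis gives $\lan L_u^*L_u\phi,\phi\ran\ge c^{-2}|\phi|^2$, and $|d_Au|\le c$ makes the right-hand side uniformly bounded.

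To extract the $L^p$-bound I would multiply this inequality by a fixed cut-off $\chi\in C_c^\infty(U)$ with $\chi\equiv1$ on $V$, pair it with $|\phi|^{p-2}\phi$, and integrate; equivalently, compare $\phi$ with the Green's function of the operator $-\Delta+R^2c^{-2}\id$ on $\R^2$. Either route yields $\|\phi\|_{L^p(V)}\le C_pR^{-2}$, i.e.\ $\int_V|\mu\circ u|^p\om_\Si\le C_pR^{-2p}$. For the pointwise bound the standard device is to rescale: under $z=\tilde z/R$ an $R$-vortex on a small ball $B_r(z_0)\subset U$ becomes a $1$-vortex $(\tilde A,\tilde u)$ on $B_{rR}(Rz_0)$ whose derivative satisfies $\|d_{\tilde A}\tilde u\|_{L^\infty}\le c/R$. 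The Bochner inequality at scale $1$, combined with Sobolev embedding $W^{1,p}\hookrightarrow C^0$ (valid in two dimensions for $p>2$) applied to the rescaled $L^p$-control of $\tilde\phi$, produces $|\tilde\phi|\le C_p$ on an interior unit ball; undoing the rescaling gives the claimed pointwise bound $\sup_V|\mu\circ u|\le C_pR^{2/p-2}$.

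The main obstacle is the Bochner identity itself: it demands careful handling of the covariant derivatives of $\phi$ as a section of the adjoint bundle $\g_P$, together with both vortex equations and the moment-map identity. This computation is essentially contained in \cite[Lemma 9.1]{GS}, so the real additional work is verifying that their estimates — stated originally for closed base surfaces — localize cleanly to a compact subset $Q\subset\Si\setminus\del\Si$, which is immediate via the cut-off argument indicated above.
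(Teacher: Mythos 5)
Your proposal is correct and follows essentially the same route as the paper, whose proof consists precisely of invoking the argument of \cite[Lemma 9.1]{GS} together with Theorem \ref{thm:reg gauge bdd} to reduce to a smooth gauge-equivalent representative. Your elaboration of the Bochner-type inequality for $\mu\circ u$, the localization by cut-offs, and the $L^p$ and pointwise bounds is exactly the content of the cited Gaio--Salamon argument, so no further comment is needed.
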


\begin{proof}[Proof of Lemma \ref{le:mu u}] This follows from the proof of \cite[Lemma 9.1]{GS}, using Theorem \ref{thm:reg gauge bdd}. 
\end{proof}
The next result is used in the proofs of Propositions \ref{prop:en conc} and \ref{prop:reg gauge}, and Lemma \ref{le:a priori}.
\begin{thm}[Regularity modulo gauge over compact surface]\label{thm:reg gauge bdd} Let $k\in\N\cup\{\infty\}$, $P$ a smooth principal $G$-bundle over $\Si$, $p>2$, and $(A,u)$ a vortex on $P$ of class $W^{1,p}$. Then there exists a gauge transformation $g\in W^{2,p}(\Si,G)$ such that $g^*w$ is smooth over $\Si\wo\dd\Si$. 
\end{thm}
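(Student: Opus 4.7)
The proof is the standard combination of local Uhlenbeck gauge fixing, elliptic bootstrapping in Coulomb gauge, and a patching construction to produce a single global gauge transformation. First I would reduce to a statement in the interior: cover $\Si\wo\dd\Si$ by a countable, locally finite collection of coordinate disks $(U_\al)$ on each of which $P$ admits a smooth local trivialization, so that $A|_{U_\al}$ is identified with a $W^{1,p}$ one-form valued in $\g$ and $u|_{U_\al}$ with a $W^{1,p}$ map into $M$.

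\textbf{Local Coulomb gauge and bootstrapping.} On each sufficiently small disk $U_\al$, Uhlenbeck's theorem (the local version of Theorem~\ref{thm:Uhlenbeck compact} below) produces a $W^{2,p}$ gauge transformation $h_\al\colon U_\al\to G$ such that $\wt A_\al:=h_\al^*A$ satisfies the Coulomb condition $d^*\wt A_\al=0$. Coupled with the second vortex equation $F_{\wt A_\al}+(\mu\circ\wt u_\al)\om_\Si=0$, this yields an elliptic first-order equation for $\wt A_\al$ whose right-hand side depends on $\wt u_\al$; meanwhile the first vortex equation $\bar\dd_{J,\wt A_\al}(\wt u_\al)=0$ is an inhomogeneous Cauchy--Riemann equation for $\wt u_\al$ with coefficients depending smoothly on $\wt A_\al$ and $\wt u_\al$. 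Because $p>2$, Sobolev embedding places $(\wt A_\al,\wt u_\al)$ in $C^0$, and one iterates: Cauchy--Riemann regularity upgrades $\wt u_\al$ to $W^{2,p}$, the Coulomb equation then upgrades $\wt A_\al$ to $W^{2,p}$, and the process continues, yielding $(\wt A_\al,\wt u_\al)\in C^\infty$ on a slightly smaller disk $V_\al\compact U_\al$.

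\textbf{Patching the local gauges.} Having locally smoothed $(A,u)$ by $W^{2,p}$ gauge transformations $h_\al$ on an open cover $(V_\al)$, I would assemble a single global gauge transformation $g\in W^{2,p}_{\loc}(\Si\wo\dd\Si,G)$ by the same iterative patching used in the proof of Proposition~\ref{prop:cpt bdd}: choose an exhaustion $X^1\sub X^2\sub\cdots$ of $\Si\wo\dd\Si$ by compact subsets, and inductively define $k^i:=h^i(k^{i-1}\circ\rho^{i-1})$ where $h^i$ is the transition between the Coulomb gauges on $X^{i-1}$ and $X^i$. Each such $h^i$ conjugates one already-smoothed connection to another, hence is itself smooth by Lemma~\ref{le:g smooth}. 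The resulting $g$ is of class $W^{2,p}_{\loc}$, and standard cut-off/partition-of-unity arguments on the (relatively compact in $\Si$) cover promote this to $W^{2,p}$ on the whole surface.

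\textbf{Main obstacle.} The delicate step is the passage from countably many local Coulomb gauges to a single $W^{2,p}$ global gauge, because Uhlenbeck's theorem is only an existence statement and Coulomb gauges on two overlapping disks need not agree. The key observation that makes the patching work is that, once both disks have been individually smoothed, the transition gauge transformation between them satisfies an equation relating two smooth connections and is therefore smooth (Lemma~\ref{le:g smooth}); this forces the successive approximations $k^i$ to stabilize on each fixed compact set. The whole scheme is parallel to the proof of \cite[Theorem~3.1]{CGMS}, which handles precisely this situation for the vortex equations on a compact surface, and one can appeal to that result for the remaining technicalities.
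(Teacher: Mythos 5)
Your overall route — local gauge fixing, elliptic bootstrap using the two vortex equations in Coulomb gauge, then patching the local gauges with transition maps that are smooth by Lemma \ref{le:g smooth} — is essentially the argument the paper has in mind: the paper simply invokes the proof of \cite[Theorem 3.1]{CGMS}, modified by a local slice theorem that allows boundary (\cite[Theorem 8.1]{Wehrheim}). The interior part of your argument is fine.

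The genuine gap is the boundary. The theorem asserts $g\in W^{2,p}(\Si,G)$, i.e.\ Sobolev regularity of the gauge transformation up to $\dd\Si$ on the compact surface, even though $g^*w$ is only claimed to be smooth in the interior; this global regularity is what gets used later (e.g.\ in Lemma \ref{le:a priori}, Lemma \ref{le:mu u}, Proposition \ref{prop:en conc}, Theorem \ref{thm:cpt cpt}, where the vortex lives on a closed ball or annulus). Your construction covers only $\Si\wo\dd\Si$ by interior disks and produces $g$ by an exhaustion argument, which yields at best $g\in W^{2,p}_\loc(\Si\wo\dd\Si,G)$ with no control near $\dd\Si$; the closing remark that ``standard cut-off/partition-of-unity arguments'' promote this to $W^{2,p}(\Si,G)$ does not work, because gauge transformations are $G$-valued and cannot be cut off or averaged by a partition of unity, and nothing in the interior Coulomb gauges constrains the behaviour of the patched $g$ as one approaches the boundary. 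The missing ingredient is precisely gauge fixing on coordinate half-disks meeting $\dd\Si$: one needs the local slice/Coulomb gauge theorem in the version with boundary (Wehrheim, Theorem 8.1) — this is the point the paper flags — so that the cover of $\Si$ includes boundary charts, every local gauge transformation is $W^{2,p}$ up to the boundary portion of its chart, and the patched $g$ is $W^{2,p}$ on all of the compact surface, while the bootstrap to smoothness of $g^*w$ is carried out only in the interior.
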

\begin{proof}[Proof of Theorem \ref{thm:reg gauge bdd}] This follows from the proof of \cite[Theorem 3.1]{CGMS}, using a version of the local slice theorem allowing for boundary (see \cite[Theorem 8.1]{Wehrheim}).
\end{proof}
The next result is used in the proof of Proposition \ref{prop:bounded} below.
\begin{prop}[Regularity modulo gauge over $\R^2$]\label{prop:reg gauge} Let $R\geq0$ be a number, $P$ a smooth principal $G$-bundle over $\R^2$, $p>2$, and $w:=(A,u)$ an $R$-vortex on $P$ of class $W^{1,p}_\loc$. Then there exists a gauge transformation $g$ on $P$ of class $W^{2,p}_\loc$ such that $g^*w$ is smooth.
\end{prop}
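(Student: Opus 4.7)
The plan is to reduce to the compact-base regularity result, Theorem~\ref{thm:reg gauge bdd}, by exhausting $\R^2$ by closed disks and patching together the local gauge transformations, following the same template used in the proof of Proposition~\ref{prop:cpt bdd}. Since $\R^2$ is contractible, every smooth principal $G$-bundle over it is trivializable, so I would first reduce to the case $P = \R^2 \x G$, after which a gauge transformation is a map $\R^2 \to G$.

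Set $X^i := \bar B_i$ for $i \in \N$. Theorem~\ref{thm:reg gauge bdd}, applied on each compact disk $X^i$, yields a gauge transformation $g^i \in W^{2,p}(X^i,G)$ such that $w^i := (g^i)^* w$ is smooth on $\Int X^i$. For each $i$, the transition
\[
 h^i := (g^{i+1}|_{X^i})^{-1} g^i \in W^{2,p}(X^i,G)
\]
satisfies $(h^i)^* w^{i+1}|_{X^i} = w^i$, i.e.~it is a $W^{2,p}$-gauge transformation between two smooth connections. Invoking Lemma~\ref{le:g smooth} (smoothness of gauge transformations between smooth vortices), $h^i$ is smooth on $\Int X^i$.

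Next I would patch via the inductive scheme of the proof of Proposition~\ref{prop:cpt bdd}: choose smooth retractions $\rho^i \colon X^{i+1} \to X^i$ with $\rho^i = \id$ on $X^{i-1}$ (radial smoothings of the nearest-point retraction), set $k^1 := \id$, and recursively
\[
 k^i := h^i \cdot (k^{i-1} \circ \rho^{i-1}) \in C^\infty(X^i,G), \quad i \geq 2.
\]
A direct check using $\rho^{i-1} = \id$ on $X^{i-2}$ shows $(g^{i+1} k^i)|_{X^{i-2}} = (g^i k^{i-1})|_{X^{i-2}}$, so the sequence $(g^{i+1} k^i)$ stabilizes on every compact subset of $\R^2$. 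The resulting limit $g \in W^{2,p}_{\loc}(\R^2,G)$ satisfies $g^* w$ smooth on all of $\R^2$, since on each $X^{i-1}$ it agrees with $g^{i+1} k^i$ and $(g^{i+1} k^i)^* w = (k^i)^* w^{i+1}$ is a smooth conjugate of a smooth vortex.

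The main obstacle is the smoothness upgrade for $h^i$: a priori we only know $h^i \in W^{2,p}$, and it is only after we know that both $w^i$ and $w^{i+1}|_{X^i}$ are smooth that one can bootstrap the relation $(h^i)^{-1} d h^i = A^i - \ad_{(h^i)^{-1}} A^{i+1}$ to improve $h^i$ to $C^\infty$; this is precisely the content of Lemma~\ref{le:g smooth}. Once this regularity is in hand, the rest of the argument is the formal patching procedure already used earlier in the paper and does not introduce new difficulties.
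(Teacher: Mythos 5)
Your overall strategy is the paper's: exhaust $\R^2$ by closed disks, apply Theorem \ref{thm:reg gauge bdd} on each, and patch the resulting gauge transformations using transition functions (smooth on the interiors by Lemma \ref{le:g smooth}) precomposed with maps that are the identity on smaller disks; the paper's bookkeeping differs only in that it modifies the gauge transformations directly so that $g_{\ell+1}=g_\ell$ on $B_\ell$, rather than accumulating corrections $k^i$ as in the proof of Proposition \ref{prop:cpt bdd}. There is, however, one concrete gap: the assertion $k^i\in C^\infty(X^i,G)$. Theorem \ref{thm:reg gauge bdd} only makes $w^i=(g^i)^*w$ smooth on the open disk $\Int X^i$, so Lemma \ref{le:g smooth} only gives smoothness of $h^i$ on $\Int X^i$, not up to $\partial X^i$; more seriously, your $\rho^{i-1}$ is a (smoothed) nearest-point retraction with values in the \emph{closed} disk $X^{i-1}$, so $k^{i-1}\circ\rho^{i-1}$ evaluates $k^{i-1}$ on $\partial B_{i-1}$, where $k^{i-1}$ (through $h^{i-1}$) is only known to be of class $W^{2,p}$. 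This defect does not disappear in the limit: near a point of $\partial B_{i-1}$ all later retractions are the identity, so every $k^j$ with $j\geq i$ inherits it, and $g^*w=(k^j)^*w^{j+1}$ is then only known to be $W^{1,p}$ there. As written, your argument establishes smoothness of $g^*w$ only away from the circles $\partial B_i$.

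The fix is exactly the detail the paper is careful about: choose the interpolating maps with image in the \emph{open} smaller disk, e.g.\ smooth $\rho^{i-1}\colon X^i\to B_{i-1}$ with $\rho^{i-1}=\id$ on $X^{i-2}$ (this is the paper's $\rho\colon\bar B_{\ell+2}\to B_{\ell+1}$ with $\rho=\id$ on $B_\ell$). Then by induction each $k^i$ is smooth on $\Int X^i$, which suffices: on $X^{i-1}\subset\Int X^i$ the limit $g$ equals $g^{i+1}k^i$, so $g^*w=(k^i)^*w^{i+1}$ is smooth there, and these sets exhaust $\R^2$. With this modification your argument is correct and essentially coincides with the paper's proof.
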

The proof of Proposition \ref{prop:reg gauge} follows the lines of the proofs of \cite[Theorems 3.6 and Theorem A.3]{FrPhD}. 
\begin{proof}[Proof of Proposition \ref{prop:reg gauge}]\setcounter{claim}{0} 
\begin{claim}\label{claim:g j} There exists a collection $(g_j)_{j\in\N}$, where $g_j$ is a gauge transformation over $B_{j+1}$ of class $W^{2,p}$, such that for every $j\in\N$, we have
\begin{eqnarray}\label{eq:g j w}&g_j^*w\textrm{ smooth over }B_{j+1},\\
\label{eq:g j g j-1}&g_{j+1}=g_j\textrm{ over }B_j.&
\end{eqnarray}
\end{claim}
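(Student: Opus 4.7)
The plan is to construct $(g_j)_{j\in\N}$ inductively. For the \textbf{base case} $j=1$, apply Theorem \ref{thm:reg gauge bdd} to the vortex $w$ restricted to the compact disc $\bar B_2$: this yields $g_1\in W^{2,p}(\bar B_2,G)$ such that $g_1^*w$ is smooth on $B_2$, which is (\ref{eq:g j w}) for $j=1$, and (\ref{eq:g j g j-1}) is vacuous at this stage.

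For the \textbf{inductive step}, assume $g_j\in W^{2,p}(\bar B_{j+1},G)$ has been constructed with $g_j^*w$ smooth on $B_{j+1}$. Apply Theorem \ref{thm:reg gauge bdd} to $w|_{\bar B_{j+2}}$ to obtain an \emph{auxiliary} gauge transformation $h\in W^{2,p}(\bar B_{j+2},G)$ such that $h^*w$ is smooth on $B_{j+2}$. On $B_{j+1}$ consider the transition
\[
q:=h^{-1}g_j\in W^{2,p}(B_{j+1},G),\qquad q^*(h^*w)=(hq)^*w=g_j^*w.
\]
Since both $h^*w$ and $g_j^*w$ are smooth on $B_{j+1}$, the gauge transformation $q$ intertwines two smooth vortices and is therefore itself smooth on $B_{j+1}$ by the standard bootstrap for gauge transformations (cf.\ Lemma \ref{le:g smooth}, which is invoked elsewhere in the paper in exactly this fashion).

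It remains to glue $q$ and the identity. The \textbf{key technical step} is to produce a smooth extension $\tilde q:B_{j+2}\to G$ of $q|_{\bar B_j}$; since $B_{j+2}$ is contractible and $\bar B_j$ is a compactly contained disc on which $q$ is already smooth, this is possible by a standard argument (embed $G$ as a closed submanifold of $\R^N$ using compactness of $G$, extend the component functions via a smooth cutoff, and project back to $G$ through the tubular-neighbourhood retraction, choosing the cutoff so the extension stays in the tube near $\bar B_j$ where $q$ already takes values in $G$; the homotopy-theoretic obstruction vanishes because $B_{j+2}$ is contractible). Define
\[
g_{j+1}:=h\,\tilde q\in W^{2,p}(\bar B_{j+2},G).
\]
Then on $B_j$ one has $g_{j+1}=h q=g_j$, yielding (\ref{eq:g j g j-1}), while $g_{j+1}^*w=\tilde q^*(h^*w)$ is smooth on $B_{j+2}$ because $\tilde q$ and $h^*w$ are both smooth there, yielding (\ref{eq:g j w}) at level $j+1$. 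This closes the induction. The only delicate point is the extension of $q$ across the overlap region $B_{j+2}\setminus B_j$ preserving $G$-values; everything else is a direct application of Theorem \ref{thm:reg gauge bdd} and the regularity statement for gauge-intertwiners.
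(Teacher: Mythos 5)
Your induction is structurally the same as the paper's: base case from Theorem \ref{thm:reg gauge bdd}, an auxiliary transformation $h$ on the next ball, smoothness of the transition $q=h^{-1}g_j$ on $B_{j+1}$ via Lemma \ref{le:g smooth} (the paper invokes exactly this lemma), and then $g_{j+1}:=h\tilde q$ for a suitable smooth $G$-valued modification $\tilde q$ of $q$ agreeing with $q$ on $B_j$. The one place where you deviate is the production of $\tilde q$, and there your mechanism, as written, does not work. Multiplying the components of $q$ (viewed in $\R^N$ via an embedding of $G$) by a cutoff and extending by zero produces, on the transition annulus where the cutoff drops from $1$ to $0$, values of the form $\chi(z)q(z)$ with $\chi(z)\in(0,1)$; since $G$ is compact and embedded, these scaled points are in general a definite distance from $G$ (already for $G=S^1\subset\C$ they lie on circles of radius $\chi(z)<1$), so they leave any tubular neighbourhood of $G$ and the nearest-point retraction you want to apply is simply not defined there. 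Saying that the extension ``stays in the tube near $\bar B_j$'' does not help, because it is precisely away from $\bar B_j$ that the projection is needed. So the ``key technical step'' you isolate is exactly where the argument has a hole.

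The repair is easy, and it is what the paper does: there is no need to solve an extension problem at all, because $q$ is already smooth on the larger set $B_{j+1}\supset\bar B_j$. Choose a smooth map $\rho:\bar B_{j+2}\to B_{j+1}$ with $\rho=\mathrm{id}$ on $B_j$ (radial compression of the disc) and set $\tilde q:=q\circ\rho$. This is automatically smooth, $G$-valued, defined on all of $\bar B_{j+2}$, and equals $q$ on $B_j$; then $g_{j+1}:=h\tilde q$ satisfies $g_{j+1}=g_j$ on $B_j$ and $g_{j+1}^*w=\tilde q^*(h^*w)$ is smooth, exactly as in your final display. (Alternatively one can extend $q|_{\bar B_j}$ using a smooth null-homotopy, which exists since $\bar B_j$ is contractible, but the precomposition trick is shorter and avoids any obstruction-theoretic language.) With that replacement your proof coincides with the paper's.
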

\begin{proof}[Proof of Claim \ref{claim:g j}] By Theorem \ref{thm:reg gauge bdd} there exists a gauge transformation $g_1\in W^{2,p}(B_2,G)$ such that $g_1^*w$ is smooth. Let $\ell\in\N$ be an integer and assume by induction that there exist gauge transformations $g_j\in W^{2,p}(B_{j+1},G)$, for $j=1,\ldots,\ell$, such that (\ref{eq:g j w}) holds for $j=1,\ldots,\ell$, and (\ref{eq:g j g j-1}) holds for $j=1,\ldots,\ell-1$. We show that there exists a gauge transformation $g_{\ell+1}\in W^{2,p}(B_{\ell+2},G)$ such that 
\begin{eqnarray}\label{eq:g ell+1}&g_{\ell+1}^*w\textrm{ smooth over }B_{\ell+2},&\\
\label{eq:g ell+1 g ell}&g_{\ell+1}=g_\ell\textrm{ over }B_\ell.& 
\end{eqnarray}
We choose a smooth function $\rho:\bar B_{\ell+2}\to B_{\ell+1}$ such that $\rho(z)=z$ for $z\in B_\ell$. By Theorem \ref{thm:reg gauge bdd} there exists a gauge transformation $h\in W^{2,p}(B_{\ell+2},G)$ such that 
\begin{equation}\nn h^*w\textrm{ smooth over }\bar B_{\ell+2}.
\end{equation}
We define
\[g_{\ell+1}:=h\big((h^{-1}g_\ell)\circ\rho\big).\]
Then $g_{\ell+1}$ is of class $W^{2,p}$ over $B_{\ell+2}$, and (\ref{eq:g ell+1 g ell}) is satisfied. Furthermore, $h^*w$ is of class $W^{k+1,p}$ over $B_{\ell+2}$, and 
\[g_\ell^*w=(h^{-1}g_\ell)^*h^*w\textrm{ smooth over }B_{\ell+1}.\] 
Therefore, Lemma \ref{le:g smooth}(\ref{le:g k+1 p}) below implies that $h^{-1}g_\ell$ is of class $W^{k+2,p}$ over $B_{\ell+1}$. It follows that $g_{\ell+1}^*w=\big((h^{-1}g_\ell)\circ\rho\big)^*h^*w$ is smooth over $B_{\ell+2}$. Hence (\ref{eq:g ell+1}) is satisfied. This terminates the induction and concludes the proof of Claim \ref{claim:g j}.  
\end{proof}
We choose a collection $(g_j)$ as in Claim \ref{claim:g j}, and define $g$ to be the unique gauge transformation on $P$ that restricts to $g_j$ over $B_j$. This makes sense by (\ref{eq:g j g j-1}). Furthermore, (\ref{eq:g j w}) implies that $g^*w$ is smooth. This proves Proposition \ref{prop:reg gauge}. 
\end{proof}
The next result is used in the proof of Proposition \ref{prop:cpt bdd}.
\begin{thm}[Compactness for vortices over compact surface]\label{thm:cpt cpt} Let $\Si$ be a compact surface (possibly with boundary), $\om_\Si$ an area form, $j$ a compatible complex structure on $\Si$, $P$ a principal $G$-bundle over $\Si$, $K\sub M$ a compact subset, $R_\nu\in[0,\infty)$, $p>2$, and $(A_\nu,u_\nu)$ an $R_\nu$-vortex on $P$ of class $W^{1,p}$, for every $\nu\in\N$. Assume that $R_\nu$ converges to some $R_0\in[0,\infty)$, and
\[u_\nu(P)\sub K,\quad\sup_\nu\Vert d_{A_\nu}u_\nu\Vert_{L^p(\Si)}<\infty.\]
Then there exist a smooth $R_0$-vortex $(A_0,u_0)$ on $P|\Si\wo\dd\Si$ and gauge transformations $g_\nu$ on $P$ of class $W^{2,p}$, such that $g_\nu^*(A_\nu,u_\nu)$ converges to $(A_0,u_0)$, in $C^\infty$ on every compact subset of $\Si\wo\dd\Si$. 
\end{thm}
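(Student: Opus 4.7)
The plan is to adapt the argument of \cite[Theorem 3.2]{CGMS}, combining Uhlenbeck compactness with compactness for the Cauchy--Riemann operator, and then bootstrapping smoothness via the two vortex equations in a suitable Coulomb gauge. Since $\Si$ is compact and $R_\nu$ converges to a \emph{finite} $R_0$, the second vortex equation
\[F_{A_\nu}=-R_\nu^2(\mu\circ u_\nu)\om_\Si\]
together with $u_\nu(P)\sub K$ compact immediately yields $\sup_\nu\Vert F_{A_\nu}\Vert_{L^\infty(\Si)}<\infty$. By Theorem \ref{thm:Uhlenbeck compact} (Uhlenbeck compactness), there exist $W^{2,p}$-gauge transformations $h_\nu^1$ on $P$ such that $(h_\nu^1)^*A_\nu$ is bounded in $W^{1,p}(\Si)$. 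Passing to a subsequence, it converges weakly in $W^{1,p}$ and strongly in $C^0$ to some $W^{1,p}$-connection $A_0$.

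Next, the assumption $\sup_\nu\Vert d_{A_\nu}u_\nu\Vert_{L^p}<\infty$ combined with the boundedness of $(h_\nu^1)^*A_\nu$ in $W^{1,p}$ and of $u_\nu$ in $L^\infty$ (via $K$) gives a uniform $W^{1,p}$-bound on $(h_\nu^1)^{-1}u_\nu$. The first vortex equation $\bar\dd_{J,A_\nu}u_\nu=0$ can then be read as a perturbed $\bar\dd_J$-equation with inhomogeneity controlled in $W^{1,p}$, so Proposition \ref{prop:compactness delbar} (compactness for $\bar\dd_J$) upgrades, along a further subsequence, to $W^{2,p}$-weak and $C^1$-strong convergence of $(h_\nu^1)^{-1}u_\nu$ on every compact subset of $\Si\wo\dd\Si$ to some limit $u_0\in W^{2,p}_\loc$. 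Passing to the limit in the $R_\nu$-vortex equations shows that $(A_0,u_0)$ satisfies the $R_0$-vortex equations on $\Si\wo\dd\Si$ in the weak sense.

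To improve $(A_0,u_0)$ to a smooth vortex and to upgrade the convergence to $C^\infty$ on compact subsets of $\Si\wo\dd\Si$, I would run the standard elliptic bootstrapping of \cite[Step 3, Thm.~3.2]{CGMS}. On any relatively compact open $\Om\compact\Si\wo\dd\Si$, put the limit connection $A_0$ in local Coulomb gauge relative to a smooth reference connection; the same Coulomb condition can be imposed for $(h_\nu^1)^*A_\nu$ by means of additional $W^{2,p}$-gauge transformations $k_\nu$ converging strongly in $W^{2,p}(\Om)$ (via the local slice theorem for the gauge action). In this gauge the second vortex equation becomes an elliptic equation $d^*A+*F_A=R^2*(\mu\circ u)$ controlling $A$ in terms of $u$, while the first vortex equation is a Cauchy--Riemann-type equation controlling $u$ in terms of $A$. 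Alternating elliptic regularity for these two equations increases the Sobolev order by one at each step, and yields, on $\Om$, the smoothness of $(A_0,u_0)$ together with convergence of $(k_\nu h_\nu^1)^*(A_\nu,u_\nu)$ to $(A_0,u_0)$ in $C^\infty(\Om)$. Exhausting $\Si\wo\dd\Si$ by such $\Om$ and diagonalizing produces gauge transformations $g_\nu$ with the required properties; a patching argument for gauge transformations analogous to the one in the proof of Proposition \ref{prop:cpt bdd} ensures that the chosen $g_\nu$ can be made globally defined on $P$.

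The main obstacle I expect is the bootstrapping step: the two vortex equations individually are not elliptic for the pair $(A,u)$, and only become an elliptic system after imposing a Coulomb-type gauge condition. One has to verify carefully that the gauge fixing can be done uniformly in $\nu$ (so that the iterated regularity estimates produce \emph{uniform} bounds, not merely pointwise-in-$\nu$ bounds), and that the gauge transformations $k_\nu$ remain $W^{2,p}$-bounded on $\Om$; Lemma \ref{le:g smooth} and the local slice theorem with $W^{2,p}$-regularity (cf.~\cite[Thm.~8.1]{Wehrheim}) are the essential ingredients. The finiteness of $R_0$ is used crucially here: for $R_0=\infty$ the factor $R_\nu^2$ in the second vortex equation would destroy this alternating regularity gain, as noted in the remark following the proof of Proposition \ref{prop:cpt bdd}.
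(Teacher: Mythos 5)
Your proposal is correct and follows essentially the same route as the paper, which simply invokes the proof of \cite[Theorem 3.2]{CGMS} adapted to a compact base with boundary, using Theorem \ref{thm:Uhlenbeck compact} (Uhlenbeck compactness with boundary) and the local slice theorem \cite[Theorem 8.1]{Wehrheim}. Your identification of the uniform curvature bound from $R_\nu\to R_0<\infty$, the alternating Coulomb-gauge bootstrap, and the failure of this iteration when $R_\nu\to\infty$ matches the paper's reasoning (including its remark after Proposition \ref{prop:cpt bdd}).
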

\begin{proof}[Proof of Theorem \ref{thm:cpt cpt}] This follows from a modified version of the proof of \cite[Theorem 3.2]{CGMS}: We use a version of Uhlenbeck compactness for a compact base with boundary, see Theorem \ref{thm:Uhlenbeck compact} below, and a version of the local slice theorem allowing for boundary, see \cite[Theorem 8.1]{Wehrheim}. Note that the proof carries over to case in which $R_\nu=0$ for some $\nu\in\N$, or $R_0=0$. 
\end{proof}
The following result was used in the proofs of Theorem \ref{thm:bubb} and Corollary \ref{cor:quant}. 
\begin{prop}[Boundedness of image]\label{prop:bounded} Assume that $M$ is equivariantly convex at $\infty$. Then there exists a $G$-invariant compact subset $K_0\sub M$ such that the following holds. Let $p>2$, $P$ a principal $G$-bundle over $\R^2$, and $(A,u)$ a vortex on $P$ of class $W^{1,p}_\loc$, such that $E(w)<\infty$ and $\BAR{u(P)}$ is compact. Then we have $u(P)\sub K_0$.
\end{prop}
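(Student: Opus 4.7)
The plan is a maximum principle argument for the composition $f \circ u$, where $f \in C^\infty(M, [0,\infty))$ is the proper $G$-invariant convexity function guaranteed by the hypothesis that $(M,\om,\mu,J)$ is equivariantly convex at $\infty$, with associated constant $C \geq 0$. After choosing a representative and applying Proposition \ref{prop:reg gauge}, we may assume the vortex $w = (A,u)$ is smooth; since $f$ is $G$-invariant, $\phi := f \circ u$ descends to a smooth function $\bar\phi : \R^2 \to [0,\infty)$, which is the central object.

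First, I would establish the subharmonicity inequality
\[\La \bar\phi(z) \geq 0 \qquad \text{whenever } f(u(p)) \geq C \text{ for some (equivalently, every) } p \in P \text{ over } z.\]
Working in a local trivialization of $P$ and writing the vortex equations in the form $(\del_s u + L_u\al) + J(\del_t u + L_u\be) = 0$, one differentiates $f \circ u$ twice. The first convexity inequality controls the non-negative contribution of $\Hess f$ on the horizontal derivative $d_A u$, while the second convexity inequality $df(x) JL_x\mu(x) \geq 0$ handles the cross term arising from the connection. This computation is essentially \cite[Lemma 3.3]{CGMS} (see also the analogous computation in \cite{GS}); the output is precisely the inequality displayed above.

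Second, I would control $\bar\phi$ near $\infty$. By hypothesis $E(w) < \infty$ and $\BAR{u(P)}$ is compact, so Proposition \ref{prop:bar u} applies: the map $\bar u_w : \R^2 \to M/G$ extends continuously to a map $S^2 \to M/G$ with $\bar u_w(\infty) \in \mu^{-1}(0)/G$. Since $f$ is $G$-invariant and continuous, $\bar\phi$ extends continuously to $S^2$, and
\[\bar\phi(\infty) \leq C_0 := \sup_{x \in \mu^{-1}(0)} f(x),\]
which is finite because $\mu^{-1}(0)$ is compact (by properness of $\mu$ and hypothesis (H)). Set $C_1 := \max(C, C_0)$. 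Then the open set $\Om := \{z \in \R^2 : \bar\phi(z) > C_1\}$ is bounded (its complement contains a neighbourhood of $\infty$ in $S^2$) and satisfies $\La \bar\phi \geq 0$ on $\Om$, with $\bar\phi = C_1$ on $\partial \Om$. The weak maximum principle forces $\Om = \emptyset$, i.e.\ $\bar\phi \leq C_1$ on all of $\R^2$. Defining
\[K_0 := f^{-1}([0, C_1]) \sub M,\]
we obtain a $G$-invariant (since $f$ is $G$-invariant) and compact (since $f$ is proper) subset of $M$ with $u(P) \sub K_0$. Crucially, $K_0$ depends only on the data $(f, C, C_0)$ associated to $(M,\om,\mu,J)$, not on the particular vortex $w$.

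The main obstacle is the subharmonicity step, since it is the only place where the specific structure of the vortex equations and both convexity inequalities enter simultaneously. However, this computation is by now well established in the vortex literature \cite{CGS,CGMS,GS} and transfers verbatim, so the remaining work is essentially bookkeeping: the passage to a smooth representative via Proposition \ref{prop:reg gauge}, the application of Proposition \ref{prop:bar u} to pin down the limiting behaviour at $\infty$, and the standard maximum principle on the bounded set $\Om$.
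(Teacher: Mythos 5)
Your argument is correct in substance, but it takes a different route from the paper. The paper's proof is essentially a reduction-plus-citation: after noting (via Proposition \ref{prop:reg gauge} and an elementary radial-gauge argument) that every $W^{1,p}_\loc$-vortex on $P$ is gauge equivalent to a smooth vortex in radial gauge outside $B_1$, it invokes \cite[Proposition 11.1]{GS}, which already contains the uniform compact set $K_0$. What you do instead is essentially reprove that cited result: the CGMS/GS convexity computation giving subharmonicity of $\bar\phi=f\circ u$ on $\{f\circ u\geq C\}$, combined with Proposition \ref{prop:bar u} to pin $\bar\phi(\infty)$ below $C_0=\sup_{\mu^{-1}(0)}f$, and a maximum principle. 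This is legitimate and non-circular within the paper's logic (Proposition \ref{prop:bar u} rests on Proposition \ref{prop:en conc}, whose constants depend only on the compact set $\BAR{u(P)}$ assumed in the hypotheses, not on Proposition \ref{prop:bounded}); what it buys is a self-contained proof at the cost of redoing the convexity computation, whereas the paper's version is shorter but leans entirely on the external reference. One small imprecision: from $\bar\phi(\infty)\leq C_0\leq C_1$ and continuity you cannot conclude that $\Om=\{\bar\phi>C_1\}$ is bounded when $\bar\phi(\infty)=C_1$; fix this by running the maximum principle on $\Om_\eps=\{\bar\phi>C_1+\eps\}$ (which is bounded since $\bar\phi<C_1+\eps$ near $\infty$) for every $\eps>0$, or by appealing to the extended maximum principle for bounded subharmonic functions on planar domains, and then let $\eps\to0$ to get $\bar\phi\leq C_1$ and hence $u(P)\sub K_0:=f^{-1}([0,C_1])$.
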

\begin{proof}[Proof of Proposition \ref{prop:bounded}] Let $P$ be a principal $G$-bundle over $\R^2$. By an elementary argument every smooth vortex on $P$ is smoothly gauge equivalent to a smooth vortex that is in radial gauge outside $B_1$. Using Proposition \ref{prop:reg gauge}, it follows that every vortex on $P$ of class $W^{1,p}_\loc$ is gauge equivalent to a smooth vortex that is in radial gauge outside $B_1$. Hence the statement of Proposition \ref{prop:bounded} follows from \cite[Proposition 11.1]{GS}.
\end{proof}
The following lemma was used in the proof of Proposition \ref{prop:simple}. Consider the action of the group of translations of $\R^2$ on the set of equivalence classes of smooth vortices over $\R^2$ given by (\ref{eq:phi []}).
\begin{lemma}\label{le:trans} The restriction of this action to the set of vortices of finite positive energy is free.
\end{lemma}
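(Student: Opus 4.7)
The plan is to exploit the fact that the energy density is gauge invariant and transforms covariantly under translations, so that a non-trivial stabilizer would force periodicity of a non-negative integrable density on $\R^2$.

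Fix a smooth vortex $W \in \ME$ with $0 < E(W) < \infty$, and suppose a translation $\phi(z) := z + z_0$ with $z_0 \neq 0$ fixes $W$, i.e.\ $\phi^*W = W$ in $\WWP(\R^2)$. Choose a representative $w = (P,A,u)$ of $W$. Then $\phi^*w$ is another representative of $W$, so there is an isomorphism of principal bundles $\Phi:\phi^*P \to P$ covering $\id_{\R^2}$ with $\Phi^*(A,u) = \phi^*(A,u)$. By the transformation property of the energy density (Remark \ref{rmk:trafo e vort}, applied to the immersion $\phi$), together with the gauge-invariance of $e_W$, I get
\[ e_W(z + z_0) \;=\; (e_W \circ \phi)(z) \;=\; e^{\phi^*(\om_0,i)}_{\phi^*W}(z) \;=\; e^{\om_0,i}_W(z) \;=\; e_W(z), \qquad \forall z \in \R^2, \]
since $\phi^*(\om_0,i) = (\om_0,i)$. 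Thus $e_W:\R^2 \to [0,\infty)$ is $z_0$-periodic.

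Now the key (and essentially only) step: a non-negative continuous function on $\R^2$ that is invariant under a non-trivial translation and has finite integral must be identically zero. Indeed, let $D \subset \R^2$ be a fundamental domain for the lattice $\Z z_0$ (a strip of width $|z_0|$ perpendicular to $z_0$). Then
\[ E(W) \;=\; \int_{\R^2} e_W \, \om_0 \;=\; \sum_{n \in \Z}\int_{D + n z_0} e_W \, \om_0 \;=\; \sum_{n \in \Z}\int_D e_W \, \om_0, \]
and finiteness of the left-hand side forces $\int_D e_W \, \om_0 = 0$. By continuity of $e_W$ (which holds since $W$ is smooth, cf.\ Theorem \ref{thm:reg gauge bdd}) this gives $e_W \equiv 0$, hence $E(W) = 0$, contradicting our hypothesis $E(W) > 0$. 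Therefore no such $z_0 \neq 0$ exists and the $\TR$-action is free on vortices of finite positive energy.

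The only step requiring any care is the transformation identity for $e_W$ under the pullback action on equivalence classes; once this is in hand the rest is the standard ``periodic + integrable $\Rightarrow$ zero'' argument.
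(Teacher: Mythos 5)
Your proof is correct and follows essentially the same route as the paper: the paper's own (very terse) argument also observes that a stabilizing translation $T z = z+v$ forces $e_W(z+nv)=e_W(z)$ for all $n\in\Z$, so that finite energy and positive energy together rule out $v\neq0$. You have merely spelled out the details (the transformation property of the density and the fundamental-domain summation) that the paper leaves implicit.
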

\begin{proof}[Proof of Lemma \ref{le:trans}] \setcounter{claim}{0} Assume that $W$ is a smooth vortex over $\R^2$ and $v\in\R^2$ is a vector, such that defining $T:\R^2\to\R^2$ by $Tz:=z+v$, we have $T^*W=W$. Then $e_W(z+nv)=e_W(z)$ for every $z\in\R^2$ and $n\in\Z$. It follows that $E(W)=\infty$, $e_W\const0$, or $v=0$. Lemma \ref{le:trans} follows from this.
\end{proof}
\section{Further auxiliary results}\label{sec:add}
The proof of Proposition \ref{prop:en conc} (Energy concentration at ends) is based on an isoperimetric inequality for the invariant action functional (Theorem \ref{thm:isoperi} below). Building on work by D.~A.~Salamon and R.~Gaio \cite{GS}, we define this functional as follows. (This is the definition from \cite{ZiA}, written in a more intrinsic way.)

We first review the usual symplectic action functional: Let $(M,\om)$ be a symplectic manifold without boundary. We fix a Riemannian metric $\lan\cdot,\cdot\ran_M$ on $M$, and denote by $d,\exp,|v|,\iota_x>0$, and $\iota_X:=\inf_{x\in X}\iota_x\geq0$ the distance function, the exponential map, the norm of a vector $v\in TM$, and the injectivity radii of a point $x\in M$ and a subset $X\sub M$, respectively. We define the symplectic action of a loop $x:S^1\to M$ of length $\ell(x)<2\iota_{x(S^1)}$ to be
\begin{equation}
\nn \A(x):=-\int_\D u^*\om.
\end{equation}
Here $\D\sub\R^2$ denotes the (closed) unit disk, and $u:\D\to M$ is any smooth map such that 
\begin{equation}\nn u(e^{it})=x(t),\,\forall t\in \R/(2\pi\Z)\iso S^1,\quad d\big(u(z),u(z')\big)<\iota_{x(S^1)},\,\forall z,z'\in\D.
\end{equation}
\begin{lemma}\label{le:A} The action $\A(x)$ is well-defined, i.e., a map $u$ as above exists, and $\A(x)$ does not depend on the choice of $u$. 
\end{lemma}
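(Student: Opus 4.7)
The plan is to verify existence of a filling $u$ and independence of $\A(x)$ on the choice of $u$ separately.

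\textbf{Existence.} I would pick the basepoint $y_0 := x(0)$. The length hypothesis $\ell(x) < 2\iota_{x(S^1)}$ forces $d(y_0, x(t)) \le \ell(x)/2 < \iota_{x(S^1)} \le \iota_{y_0}$ for every $t$, so $\xi(t) := \exp_{y_0}^{-1}(x(t))$ is well-defined in the open ball of radius $\iota_{y_0}$ in $T_{y_0}M$ and depends smoothly on $t$. I would then take the radial geodesic extension
$$u(re^{it}) := \exp_{y_0}\!\bigl(r\,\xi(t)\bigr), \qquad r\in[0,1], \; t\in\R/(2\pi\Z).$$
This $u$ is smooth on $\D$, agrees with $x$ on $\dd\D$, and has image in the geodesic ball $B_{\ell(x)/2}(y_0)$; the strict diameter bound $d(u(z), u(z')) < \iota_{x(S^1)}$ then follows after, if necessary, a minor adjustment (e.g.\ a rescaling of the radial parameter, or replacement of $y_0$ by a better-centered point of the loop).

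\textbf{Independence.} Given two fillings $u, u': \D \to M$ satisfying the defining conditions, the diameter bound together with $y_0 := x(0) \in u(\D) \cap u'(\D)$ places both images inside the normal coordinate ball $B := B_{\iota_{y_0}}(y_0)$, on which $\exp_{y_0}^{-1}$ is a diffeomorphism onto a star-shaped domain in $T_{y_0}M$. This allows me to define the geodesic interpolation
$$\Phi \colon [0,1] \times \D \to B, \qquad \Phi(s,z) := \exp_{y_0}\!\bigl((1-s)\exp_{y_0}^{-1}u(z) + s\,\exp_{y_0}^{-1}u'(z)\bigr),$$
which is a smooth homotopy satisfying $\Phi(0,\cdot) = u$, $\Phi(1,\cdot) = u'$, and $\Phi(s,e^{it}) = x(t)$ for every $s$ because $u$ and $u'$ share the boundary values. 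Since $\om$ is closed, Stokes' theorem applied on the cylinder $[0,1]\times\D$ gives
$$0 \;=\; \int_{[0,1]\times\D} \Phi^*\,d\om \;=\; \int_\D (u')^*\om \;-\; \int_\D u^*\om \;-\; \int_{[0,1]\times S^1} \Phi^*\om,$$
and the last integral vanishes because $\dd_s\Phi \equiv 0$ on $[0,1]\times S^1$, so $\Phi^*\om$ has no $ds\wedge dt$ component there. Hence $\int_\D u^*\om = \int_\D (u')^*\om$.

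\textbf{Main obstacle.} The delicate point is checking that the diameter condition really does force both $u(\D)$ and $u'(\D)$ into a single normal coordinate chart, so that the geodesic interpolation $\Phi$ is well-defined and smooth; this is precisely the role of the bound $d(u(z),u(z')) < \iota_{x(S^1)}$ in the definition of $\A$. Once this observation is in place, the well-definedness of $\A(x)$ reduces to the Stokes' theorem computation above, with the only other ingredient being the construction of at least one admissible filling via the radial geodesic formula.
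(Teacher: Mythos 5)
Your route is the one the paper intends: its proof is a one-line remark that the lemma follows from an elementary argument using $\exp_{x(0+\Z)}$, i.e.\ exactly your basepoint construction, and your independence half (geodesic interpolation inside the normal ball around $y_0$ plus Stokes, with the $ds$-component of $\Phi^*\om$ vanishing on $[0,1]\x S^1$) is complete and correct: both fillings contain $y_0=u(1)$ in their image, so the diameter condition places each inside $B_{\iota_{y_0}}(y_0)$, whose preimage under $\exp_{y_0}$ is a convex ball in $T_{y_0}M$.

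The weak point is the existence half. Your image bound only gives $d(u(z),u(z'))\leq d(u(z),y_0)+d(y_0,u(z'))\leq\ell(x)<2\iota_{x(S^1)}$, a factor $2$ short of the required strict bound, and neither of your proposed ``minor adjustments'' (rescaling the radial parameter, recentering $y_0$) removes that factor. The radial filling does satisfy the bound, but one must use two estimates and average them: for $P=\exp_{y_0}(r\xi(t))$ and $Q=\exp_{y_0}(r'\xi(t'))$ one has both $d(P,Q)\leq r|\xi(t)|+r'|\xi(t')|$ (through $y_0$) and $d(P,Q)\leq(1-r)|\xi(t)|+(1-r')|\xi(t')|+d\big(x(t),x(t')\big)$ (through the boundary loop); the minimum is at most half the sum, which is $\tfrac12\big(d(y_0,x(t))+d(y_0,x(t'))+d(x(t),x(t'))\big)\leq\tfrac12\ell(x)<\iota_{x(S^1)}$, since the three pairwise distances are bounded by the lengths of the three arcs into which $0,t,t'$ cut the loop. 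Separately, $u(re^{it}):=\exp_{y_0}(r\xi(t))$ need not be smooth at $z=0$ (already $\xi\const\xi_0$ gives $|z|\xi_0$); replace $r$ by a smooth function $\beta(r)$ with $\beta\equiv0$ near $0$ and $\beta(1)=1$, which only shrinks the radial coordinate and so does not affect the estimates above.
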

\begin{proof} The lemma follows from an elementary argument, using the exponential map $\exp_{x(0+\Z)}:T_{x(0+\Z)}M\to M$. \end{proof}
Let now $G$ be a compact connected Lie group with Lie algebra $\g$. Suppose that $G$ acts on $M$ in a Hamiltonian way, with (equivariant) moment map $\mu:M\to\g^*$, and that $\lan\cdot,\cdot\ran_M$ is $G$-invariant. We denote by $\lan\cdot,\cdot\ran:\g^*\x\g\to\R$ the natural contraction. Let $P$ be a smooth principal $G$-bundle over $S^1$ and $x\in C^\infty_G(P,M)$. We call $(P,x)$ \emph{admissible} iff there exists a section $s:S^1\to P$ such that $\ell(x\circ s)<2\iota_{x(P)}$, and 
\begin{equation}\nn\A(g(x\circ s))-\A(x\circ s)=\int_{S^1}\big\lan\mu\circ x\circ s,g^{-1}dg\big\ran,
\end{equation}
for every $g\in C^\infty(S^1,G)$ satisfying $\ell(g(x\circ s))\leq \ell(x\circ s)$. 
\begin{defi}\label{defi:action} Let $(P,x)$ be an admissible pair, and $a$ be a connection on $P$. We define the \emph{invariant symplectic action of $(P,a,x)$} to be 
\begin{equation}\nn\A(P,a,x):=\A(x\circ s)+\int_{S^1}\big\lan\mu\circ x\circ s,a\,ds\big\ran,
\end{equation}
where $s:S^1\to P$ is a section as above.
\end{defi}
(This is a modified version of the ``local equivariant symplectic action functional'' introduced by A.~R.~Gaio and D.~A.~Salamon in \cite{GS}.) To formulate the isoperimetric inequality, we need the following. If $X$ is a manifold, $P$ a principal $G$-bundle over $X$ and $u\in C^\infty_G(P,M)$, then we define $\bar u:X\to M$ by $\bar u(y):=Gu(p)$, where $p\in P$ is any point in the fiber over $y$. We define $M^*$ as in (\ref{eq:M *}). For a loop $\bar x:S^1\to M^*/G$ we denote by $\bar\ell(\bar x)$ its length \wrt the Riemannian metric on $M^*/G$ induced by $\lan\cdot,\cdot\ran_M$. Furthermore, for each subset $X\sub M$ we define 
\[m_X:=\inf\big\{|L_x\xi|\,\big|\,x\in X,\,\xi\in\g:\,|\xi|=1\big\}.\]
The next result is Theorem 1.2 in \cite{ZiA}. 
\begin{thm}[Sharp isoperimetric inequality]\label{thm:isoperi} Assume that there exists a $G$-invariant $\om$-compatible almost complex structure $J$ such that $\lan\cdot,\cdot\ran_M=\om(\cdot,J\cdot)$. Then for every compact subset $K\sub M^*$ and every constant $c>\frac12$ there exists a constant $\de>0$ with the following property. Let $P$ be a principal $G$-bundle over $S^1$ and $x\in C^\infty_G(P,M)$, such that $x(P)\sub K$ and $\bar\ell(\bar x)\leq\de$. Then $(P,x)$ is admissible, and for every connection $a$ on $P$ we have
\begin{equation}\nn|\A(P,a,x)|\leq c\Vert d_ax\Vert_2^2+\frac1{2m_K^2}\Vert\mu\circ x\Vert_2^2.
\end{equation}
\end{thm}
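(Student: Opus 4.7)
The strategy is to exploit the gauge freedom to choose an optimal section, reducing the problem to a quantitative non-equivariant isoperimetric inequality on $M$. First, using the smallness of $\bar\ell(\bar x)\leq\de$ and the freeness of the $G$-action on a neighbourhood of $x(P)$, one shows that there exists a section $s:S^1\to P$ realizing the infimum $\inf_g\ell(g(x\circ s))$ over gauge transformations $g\in C^\infty(S^1,G)$. After trivializing $P\iso S^1\x G$ via $s$, I regard $x$ as a loop $S^1\to M$ with image in $K$ and write $a=\xi\,dt$ with $\xi\in C^\infty(S^1,\g)$. The first-order optimality of $s$ under time-dependent gauge variations, combined with the $G$-invariance of $\lan\cdot,\cdot\ran_M$, yields the pointwise horizontality $\dot x(t)\perp\im L_{x(t)}$, hence the Pythagorean identity
\[|d_ax|^2=|\dot x|^2+|L_x\xi|^2.\]
A parallel variational computation for $\A$ verifies the cocycle condition defining admissibility, giving $\A(P,a,x)=\A(x\circ s)+\int_{S^1}\lan\mu\circ x,\xi\ran\,dt$.

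Next, I would estimate the two summands separately. For the loop action, fill $x\circ s$ with the geodesic cone $u:\D\to M$ over a basepoint in $x(S^1)$; exploiting $\om(\cdot,J\cdot)=\lan\cdot,\cdot\ran_M$ and the pointwise bound $|\om(v,w)|\leq\tfrac12(|v|^2+|w|^2)$, a normal-coordinates computation yields
\[|\A(x\circ s)|=\Big|\int_\D u^*\om\Big|\leq c'(\de)\|\dot x\|_2^2,\]
with $c'(\de)\to\tfrac12$ as $\de\to0$; the excess $c'(\de)-\tfrac12$ is generated by the curvature of the metric $\om(\cdot,J\cdot)$ along $K$. For the connection term, Young's inequality together with $|L_x\xi|\geq m_K|\xi|$ on $K$ gives
\[\Big|\int_{S^1}\lan\mu\circ x,\xi\ran\,dt\Big|\leq\tfrac1{2m_K^2}\|\mu\circ x\|_2^2+\tfrac12\|L_x\xi\|_2^2.\]
Combining these with the Pythagorean identity, for $\de$ chosen small enough that $c'(\de)\leq c$ we obtain
\[|\A(P,a,x)|\leq c'(\de)\|\dot x\|_2^2+\tfrac12\|L_x\xi\|_2^2+\tfrac1{2m_K^2}\|\mu\circ x\|_2^2\leq c\,\|d_ax\|_2^2+\tfrac1{2m_K^2}\|\mu\circ x\|_2^2.\]

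The main technical obstacle is obtaining the \emph{sharp} constant $c>\tfrac12$ in the disk estimate, and in particular tracking precisely how the excess $c'(\de)-\tfrac12$ depends on $\de$ and on the geometric data of $K$ (curvature tensor of $\lan\cdot,\cdot\ran_M$ and injectivity radius). A secondary obstacle is to establish existence and regularity of the length-minimizing section $s$, and to show that its optimality implies both the horizontality $\dot x\perp\im L_x$ and the cocycle identity defining admissibility; these are two consequences of a single variational problem that must be checked in parallel, and care is needed because the loop lives upstairs in $M$ while the ``small'' hypothesis controls only its projection $\bar x$ in the quotient $M^\ast/G$.
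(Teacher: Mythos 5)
You should first note that the paper does not actually prove this statement: it is quoted verbatim from \cite{ZiA} (Theorem 1.2 there), and the text here only records the conventions and cites that proof. So your argument has to stand entirely on its own, and as written it has a genuine gap at its central step. You claim that first-order optimality of the length-minimizing section forces pointwise horizontality $\dot x(t)\perp\im L_{x(t)}$. That is not true in general: you are minimizing length over \emph{closed} lifts of $\bar x$, the admissible variations are the vertical fields $L_x\zeta$ with $\zeta\in C^\infty(S^1,\g)$, and the resulting Euler--Lagrange condition only says that the curvature vector of the loop is horizontal --- it does not make $\dot x$ horizontal. Indeed a closed horizontal lift of $\bar x$ exists only when the holonomy of the orthogonal connection on $M^*\to M^*/G$ around $\bar x$ is trivial; already for a linear $S^1$-action (e.g.\ the diagonal action on $\C^2$) and a short loop $\bar x$ enclosing nonzero curvature, every closed lift has a nonvanishing vertical component, and so does the minimizer. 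Without horizontality the Pythagorean identity $\Vert d_ax\Vert_2^2=\Vert\dot x\Vert_2^2+\Vert L_x\xi\Vert_2^2$ fails: the cross term $2\int\lan\dot x,L_x\xi\ran\,dt$ appears with either sign, and your final chain $c'(\de)\Vert\dot x\Vert_2^2+\tfrac12\Vert L_x\xi\Vert_2^2\leq c\Vert d_ax\Vert_2^2$ breaks down; since Young's inequality has already consumed the entire allowance $\tfrac1{2m_K^2}\Vert\mu\circ x\Vert_2^2+\tfrac12\Vert L_x\xi\Vert_2^2$, there is no slack left to absorb it. This vertical winding forced by holonomy is precisely the difficulty the theorem addresses (it is why the estimate cannot be reduced to the classical sharp isoperimetric inequality for the lifted loop plus Young's inequality), and it is where the proof in \cite{ZiA} does real work, using a normal form along the orbit and the behaviour of $\mu$ transverse to it rather than a length-minimizing gauge.

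There are secondary gaps as well. Admissibility is asserted, not proved: the identity $\A(g(x\circ s))-\A(x\circ s)=\int_{S^1}\lan\mu\circ x\circ s,g^{-1}dg\ran$ for all competitors $g$ with $\ell(g(x\circ s))\leq\ell(x\circ s)$ requires an actual argument (a homotopy through short loops together with $d\lan\mu,\xi\ran=\iota_{X_\xi}\om$, with some care about gauge loops not homotopic to the constant one), and "a parallel variational computation" does not supply it. Likewise, the existence and smoothness of the minimizing section, and the bound $\ell(x\circ s)\leq C\de<2\iota_{x(P)}$ needed even to define $\A(x\circ s)$ (which itself rests on a holonomy estimate of order $\de^2$ for contractible short loops), are not argued. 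The disk estimate with constant $c'(\de)\to\tfrac12$ is plausible and standard, but you defer exactly the curvature-correction computation that produces it. In short, the skeleton (optimal gauge, sharp local isoperimetric inequality, Young's inequality with $m_K$) is appealing, but the horizontality step on which the whole combination rests is false, so the proof does not go through as written.
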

Here we view $d_ax$ as a one-form on $S^1$ with values in the bundle $(x^*TM)/G\to S^1$, and $\mu\circ x$ as a section of the co-adjoint bundle $(P\x\g^*)/G\to S^1$. Furthermore, $S^1$ is identified with $\R/(2\pi\Z)$, and the norms are taken with respect to the standard metric on $\R/(2\pi\Z)$, the metric $\lan\cdot,\cdot\ran_M$ on $M$, and the operator norm on $\g^*$ induced by $\lan\cdot,\cdot\ran_\g$. (Note that in \cite[Theorem 1.2]{ZiA} $S^1$ was identified with $\R/\Z$ instead. Note also that hypothesis (H) is not needed for Theorem \ref{thm:isoperi}.) 

In the proof of Proposition \ref{prop:en conc} we also used the following result. For $s\in\R$ we denote by $\iota_s:S^1\to\R\x S^1$ the map given by $\iota_s(t):=(s,t)$. Let $X,X'$ be manifolds, $f\in C^\infty(X',X)$, $P$ a principal $G$-bundle over $X$, $A$ a connection on $P$, and $u\in C^\infty_G(P,M)$. Then the pullback triple $f^*(P,A,u)$ consists of a principal $G$-bundle $P'$ over $X'$, a connection on $P'$, and an equivariant map from $P'$ to $M$. 
\begin{prop}[Energy action identity]\label{prop:en act} For every compact subset $K\sub M^*$ there exists a constant $\de>0$ with the following property. Let $s_-\leq s_+$ be numbers, $\Si:=[s_-,s_+]\x S^1$, $\om_\Si$ an area form on $\Si$, $j$ a compatible complex structure, and $w:=(A,u)$ a smooth vortex over $\Si$ (with respect to $(\om_\Si,j)$, such that $u(P)\sub K$ and $\bar\ell(\bar u\circ\iota_s)<\de$, for every $s\in [s_-,s_+]$. Then the pairs $\iota_{s_\pm}^*(P,u)$ are admissible, and 
\begin{equation}
\nn E(w,\Si)=-\A\big(\iota_{s_+}^*(P,A,u)\big)+\A\big(\iota_{s_-}^*(P,A,u)\big).
\end{equation}
\end{prop}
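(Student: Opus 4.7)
The plan is to combine the standard pointwise energy identity for symplectic vortices with Stokes' theorem on the cylinder $\Si=[s_-,s_+]\x S^1$. Since $G$ is connected, every principal $G$-bundle over $\Si$ is trivial, so I first fix a smooth trivialization and identify $(P,A,u)$ with a pair $(a,u)$ where $a\in\Om^1(\Si,\g)$ and $u\in C^\infty(\Si,M)$; using $\g^*\iso\g$ via $\lan\cdot,\cdot\ran_\g$, the 1-form $\eta:=\lan\mu\circ u,a\ran\in\Om^1(\Si)$ is then defined globally.

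The key pointwise identity for a symplectic vortex (\cf \cite[Prop.~2.2]{CGMS} and \cite{GS}), derived from algebraic manipulation of $|d_au|^2+|F_a|^2+|\mu\circ u|^2$ using the two vortex equations, takes the form
\[
e_w\,\om_\Si=u^*\om-d\eta.
\]
Integrating over $\Si$ and applying Stokes' theorem with $\dd\Si=\iota_{s_+}(S^1)-\iota_{s_-}(S^1)$ gives
\[
E(w,\Si)=\int_\Si u^*\om-\int_{S^1}\iota_{s_+}^*\eta+\int_{S^1}\iota_{s_-}^*\eta.
\]
Since $\int_{S^1}\iota_{s_\pm}^*\eta$ is precisely the second summand in $\A(\iota_{s_\pm}^*(P,A,u))$ from Definition \ref{defi:action}, it suffices to prove the gauge-free identity
\[
\int_\Si u^*\om=\A(u\circ\iota_{s_-})-\A(u\circ\iota_{s_+}).
\]
Under the hypothesis $\bar\ell(\bar u\circ\iota_s)<\de$ together with compactness and $G$-invariance of $K\sub M^*$, one obtains $\ell(u\circ\iota_s)\leq C\de$ uniformly in $s$ after lifting each orbit-loop to a short loop in $M$. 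Using Lemma \ref{le:a priori} to get a uniform $L^\infty$-bound on $d_au$, and hence a uniform modulus of continuity of $u$ in the $s$-variable, I subdivide $[s_-,s_+]$ into finitely many subintervals $s_-=s_0<\cdots<s_N=s_+$ so fine that each sub-cylinder $u([s_{i-1},s_i]\x S^1)$ has image of diameter less than $\iota_K$. Then the canonical exponential-map disk filling of $u\circ\iota_{s_{i-1}}$ extends across the sub-cylinder to a valid filling of $u\circ\iota_{s_i}$, both of diameter less than $\iota_K$, so Lemma \ref{le:A} yields $\A(u\circ\iota_{s_{i-1}})-\A(u\circ\iota_{s_i})=\int_{[s_{i-1},s_i]\x S^1}u^*\om$; telescoping over $i$ completes the identity.

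Finally, admissibility of $\iota_{s_\pm}^*(P,u)$ holds for $\de$ small: any smooth section $s:S^1\to\iota_{s_\pm}^*P$ exists by triviality of the bundle, and after a constant regauging the loop $u\circ s$ has length $\leq C\de<2\iota_K$. The gauge-equivariance identity
\[
\A(g(x\circ s))-\A(x\circ s)=\int_{S^1}\lan\mu\circ x\circ s,g^{-1}dg\ran
\]
for $g\in C^\infty(S^1,G)$ with $\ell(g(x\circ s))\leq\ell(x\circ s)$ reduces to the moment-map relation $\iota_{L\xi}\om=d\lan\mu,\xi\ran$ applied to the ``gauge cylinder'' $[0,1]\x S^1\ni(t,\cdot)\mapsto g^t(x\circ s)$ with trivial vortex data, and is the standard symplectic action calculation inside a single geodesic ball. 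I expect the main technical obstacle to be the quantitative slicing in the previous paragraph: $\de$ must be chosen small relative to $\iota_K$, $m_K$, and the $C^1$-bound on $u$ inherited from the vortex equations so that each sub-cylinder genuinely fits into an $\iota_K$-ball. Once that smallness is fixed, the rest of the argument is a formal assembly of the pointwise identity, Stokes' theorem, and Lemma \ref{le:A}.
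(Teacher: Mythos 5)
Your overall skeleton (the pointwise vortex identity $e_w\,\om_\Si=u^*\om-d\lan\mu\circ u,a\ran$ plus Stokes) is the right starting point, and indeed it is how one begins in \cite{GS} and \cite{ZiA}; note that the paper itself does not reprove this statement but simply quotes \cite[Proposition 3.1]{ZiA}. However, your reduction to the ``gauge-free identity'' has a genuine gap. The hypothesis only bounds $\bar\ell(\bar u\circ\iota_s)$, the length of the loops \emph{in the quotient} $M/G$. In the trivialization you fixed in order to apply Stokes' theorem, the loops $u\circ\iota_s$ may be long (they can wind around a $G$-orbit: e.g.\ $M=\C$, $G=S^1$, $u(s,t)=e^{ikt}$ has constant projection but length $2\pi k$), so the plain action $\A(u\circ\iota_s)$ via an exponential disk filling need not even be defined, and your slicing/telescoping argument does not get off the ground. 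Relatedly, the expression $\A(u\circ\iota_{s_\pm})+\int_{S^1}\iota_{s_\pm}^*\eta$ computed in your fixed trivialization is \emph{not} the invariant action of Definition \ref{defi:action} unless that trivialization happens to restrict to a ``good'' section (one with $\ell(x\circ s)<2\iota_{x(P)}$); reconciling the Stokes boundary terms with the good sections required by the definition is precisely the gauge-(in)dependence statement encoded in admissibility, and it is the nontrivial content of \cite[Proposition 3.1]{ZiA}, not a formality you can skip.

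The same issue undermines your admissibility argument: a \emph{constant} regauging acts by an isometry of $M$ and cannot shorten $u\circ s$ at all. To produce a section with $\ell(x\circ s)\le C\de$ one must lift the short quotient loop nearly horizontally (with respect to the mechanical connection on $M^*\to M^*/G$) and then close the lift up along the orbit, controlling the holonomy defect by the curvature of that connection on a compact neighbourhood of $K$ — this is where the smallness of $\de$ really enters, and it also has to be done consistently in $s$ (or else the gauge-equivariance identity must be invoked to compare different sections). Finally, a small point: you cannot invoke Lemma \ref{le:a priori} here, since Proposition \ref{prop:en act} assumes no smallness of $E(w,\Si)$; for a single smooth vortex on a compact cylinder the needed $C^0$-bound on $d_Au$ is automatic, so the subdivision may simply depend on $w$. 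In short, the Stokes computation is fine, but the passage from the trivialization used for Stokes to the short-loop sections defining $\A(\iota_{s_\pm}^*(P,A,u))$ — equivalently, the admissibility and gauge-independence of the invariant action — is missing, and that is the heart of the proposition.
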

\begin{proof}[Proof of Proposition \ref{prop:en act}] This follows from \cite[Proposition 3.1]{ZiA}.
\end{proof}
The next result is used in the proof of Proposition \ref{prop:cpt bdd}. It is \cite[Theorem A]{Wehrheim}. See also \cite[Theorem 1.5]{Uhlenbeck}. 
\begin{thm}[Uhlenbeck compactness]\label{thm:Uhlenbeck compact} Let $n\in\N$, $G$ be a compact Lie group, $X$ a compact smooth Riemannian $n$-manifold (possibly with boundary), $P$ a principal $G$-bundle over $X$, $p>n/2$ a number, and $A_\nu$ a sequence of connections on $P$ of class $W^{1,p}$. Assume that 
\begin{equation}
\nn\sup_{\nu\in\N}\Vert F_{A_\nu}\Vert_{L^p(X)}<\infty.
\end{equation}
Then passing to some subsequence there exist gauge transformations $g_\nu$ of class $W^{2,p}$, such that $g_\nu^*A_\nu$ converges weakly in $W^{1,p}$.
\end{thm}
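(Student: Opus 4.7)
The plan is to reduce to Uhlenbeck's local gauge-fixing theorem on small balls and then patch the local gauges together into a global one. The key local ingredient is: there exist $\varepsilon=\varepsilon(n,G,p)>0$ and $C=C(n,G,p)>0$ such that for any $W^{1,p}$-connection $A$ on the trivial bundle over the unit ball $B\subset\R^n$ with $\Vert F_A\Vert_{L^p(B)}\leq\varepsilon$, there is a gauge transformation $h\in W^{2,p}(B,G)$ putting $h^*A$ in Coulomb gauge ($d^*(h^*A)=0$ with $*(h^*A)|_{\partial B}=0$) and satisfying $\Vert h^*A\Vert_{W^{1,p}(B)}\leq C\Vert F_A\Vert_{L^p(B)}$. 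I would prove this by a continuity method: at $A=0$ the linearization of $h\mapsto d^*(h^*A)$ is $d^*d$, which is an isomorphism modulo constants on the relevant Sobolev spaces, so the implicit function theorem yields openness of the set of $A$ admitting a Coulomb gauge, while the a priori estimate combined with the Sobolev embedding $W^{2,p}\hookrightarrow C^0$ (valid since $p>n/2$) gives closedness.

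Given the local result, the global statement would proceed as follows. Using the uniform bound $\sup_\nu\Vert F_{A_\nu}\Vert_{L^p(X)}<\infty$ and absolute continuity of the $L^p$-norm, I would refine a fixed finite cover of $X$ by coordinate balls to obtain a (still finite) cover $\{B_i\}$ on each of which $\Vert F_{A_\nu}\Vert_{L^p(B_i)}<\varepsilon$ uniformly in $\nu$. Applying the local theorem on each $B_i$ produces gauge transformations $h_\nu^i\in W^{2,p}(B_i,G)$ with $(h_\nu^i)^*A_\nu$ bounded in $W^{1,p}(B_i)$; by Banach--Alaoglu and a diagonal subsequence, $(h_\nu^i)^*A_\nu$ converges weakly in $W^{1,p}(B_i)$ for each $i$. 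On each overlap $B_i\cap B_j$ the transition $t_\nu^{ij}:=h_\nu^i(h_\nu^j)^{-1}$ satisfies $(t_\nu^{ij})^{-1}dt_\nu^{ij}=(h_\nu^j)^*A_\nu-\mathrm{Ad}_{(t_\nu^{ij})^{-1}}(h_\nu^i)^*A_\nu$, which combined with $p>n/2$ and compactness of $G$ bootstraps $t_\nu^{ij}$ to a uniform $W^{2,p}$-bound; thus a subsequence converges weakly in $W^{2,p}$ and strongly in $C^0$.

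The main obstacle is patching these local gauges into a single global gauge $g_\nu$ on $P$, since a priori they disagree on overlaps by the $t_\nu^{ij}$. I would do this inductively on the number of balls in the cover: having built $g_\nu^{(k)}$ on $U_k:=B_1\cup\cdots\cup B_k$, modify $h_\nu^{k+1}$ on $B_{k+1}$ by multiplying by a smooth interpolation between $\id$ and the transition relating it to $g_\nu^{(k)}$, supported in a collar of $U_k\cap B_{k+1}$ and constructed via the Lie-group exponential applied to a cut-off logarithm of the transition. The uniform $C^0$-convergence of the transitions keeps the logarithms well-defined and uniformly bounded, while the $W^{1,p}$-control on the local gauged connections controls higher derivatives of the interpolants through the Coulomb-gauge equation, yielding a uniform $W^{2,p}$-bound for the patched gauge on $U_{k+1}$. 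After finitely many steps one obtains global $g_\nu\in W^{2,p}$ with $g_\nu^*A_\nu$ bounded in $W^{1,p}(X)$, and the conclusion follows from Banach--Alaoglu. The delicate point throughout is that the transitions only converge weakly in $W^{2,p}$, so the interpolation must be carried out in a way that does not lose control of $W^{2,p}$-norms; this is the technical heart of Wehrheim's argument.
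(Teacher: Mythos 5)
The paper itself offers no proof of this theorem: it is quoted as \cite[Theorem A]{Wehrheim} (see also \cite[Theorem 1.5]{Uhlenbeck}), and your outline is essentially the strategy of those sources (local Coulomb gauge with estimates, weak compactness on a finite cover of small balls, $W^{2,p}$-bootstrap of the transition functions, inductive patching of the local gauges). However, as written there is one step that fails. You claim that, using the uniform bound $\sup_\nu\Vert F_{A_\nu}\Vert_{L^p(X)}<\infty$ and ``absolute continuity of the $L^p$-norm'', one can refine a fixed finite cover so that $\Vert F_{A_\nu}\Vert_{L^p(B_i)}<\eps$ \emph{uniformly in $\nu$}. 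Absolute continuity of the integral is a statement about a single function; for a sequence that is merely bounded in $L^p$ it gives nothing uniform, and indeed the curvatures may concentrate at a point (this is exactly the bubbling phenomenon), so no fixed cover makes the $L^p$-norms of $F_{A_\nu}$ on the covering balls uniformly small. Since your local Coulomb-gauge lemma takes smallness of $\Vert F_A\Vert_{L^p}$ as its hypothesis, the reduction to the local result breaks down at this point.

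The repair is standard and is where the hypothesis $p>n/2$ enters: the smallness hypothesis of Uhlenbeck's local theorem should be formulated in the scale-invariant norm $L^{n/2}$ (with the $L^p$-bound on $F_A$ used only for the estimate $\Vert h^*A\Vert_{W^{1,p}}\leq C\Vert F_A\Vert_{L^p}$). By H\"older, on a ball of radius $r$ one has $\Vert F_{A_\nu}\Vert_{L^{n/2}(B_r)}\leq |B_r|^{\frac2n-\frac1p}\Vert F_{A_\nu}\Vert_{L^p(B_r)}$, and since $\frac2n-\frac1p>0$ this is small uniformly in $\nu$ once $r$ is small; equivalently, after rescaling $B_r$ to the unit ball the $L^p$-norm of the curvature picks up the factor $r^{2-n/p}\to0$. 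With this modification a fixed finite cover by sufficiently small balls (half-balls near $\dd X$) does satisfy the hypotheses of the local theorem uniformly in $\nu$, and the remainder of your argument --- weak $W^{1,p}$ limits of the locally gauged connections, $C^0$-convergent and $W^{2,p}$-bounded transitions, and the inductive patching, whose delicacy you correctly identify --- is exactly the content of Wehrheim's proof of \cite[Theorem A]{Wehrheim}.
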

The next result was used in the proof of Proposition \ref{prop:cpt bdd}. Its proof goes along the lines of the proof of \cite[Proposition B.4.2]{MS}. 
\begin{prop}[Compactness for $\bar \del_J$]\label{prop:compactness delbar}\label{prop:comp delbar} Let $M$ be a manifold without boundary, $k\in\N$, $p>2$, $J$ an almost complex structure on $M$ of class $C^k$, $\Om_1\sub\Om_2\sub\ldots\sub\C$ open subsets, and $u_\nu:\Om_\nu\to M$ a sequence of functions of class $W^{1,p}_\loc$. Assume that $\bar\del_Ju_\nu$ is of class $W^{k,p}_\loc$, for every $\nu$, and that for every open subset $\Om\sub\bigcup_\nu\Om_\nu$ with compact closure the following holds. If $\nu_0\in\N$ is so large that $\Om\sub \Om_{\nu_0}$ then 
  \begin{eqnarray}
    \label{eq:u nu K}&\exists K\sub M\textrm{ compact: }u_\nu(\Om)\sub K,\quad\forall \nu\geq\nu_0,&\\
  \label{eq:du nu}&\sup_{\nu\geq\nu_0}\Vert du_\nu\Vert_{L^p(\Om)}<\infty,&  \\
\label{eq:sup k p}&\sup_{\nu\geq\nu_0}\Vert\bar\del_Ju_\nu\Vert_{W^{k,p}(\Om)}<\infty.&
\end{eqnarray}
Then there exists a subsequence of $u_\nu$ that converges weakly in $W^{k+1,p}$ and in $C^k$ on every compact subset of $\bigcup_\nu\Om_\nu$. 
\end{prop}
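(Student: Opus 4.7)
My strategy follows the lines of \cite[Proposition B.4.2]{MS}. First, I exhaust $\bigcup_\nu\Om_\nu$ by a sequence $U_1\compact U_2\compact\cdots$ of open sets with compact closures. Fix $j$ and choose $\nu_j$ so that $\BAR U_j\sub\Om_\nu$ for every $\nu\geq\nu_j$; by (\ref{eq:u nu K}) there is a compact set $K_j\sub M$ containing $u_\nu(U_j)$ for all such $\nu$. Cover $K_j$ by finitely many coordinate charts $(V_\ell,\phi_\ell)$ on which $J$ is of class $C^k$ with uniformly bounded derivatives. Since $p>2$, the Morrey embedding $W^{1,p}\hookrightarrow C^0$ combined with (\ref{eq:du nu}) yields a uniform modulus of continuity for the $u_\nu$ on $\BAR U_j$; hence, after subdividing $U_j$ into finitely many small open balls $B\sub U_j$ and passing to a subsequence, I may assume that for each such $B$ there is one chart $V_\ell$ with $u_\nu(B)\sub V_\ell$ for all $\nu$ large. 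In these coordinates the equation $\bar\del_Ju_\nu=\eta_\nu$ takes the quasi-linear form
\[\bar\del\tilde u_\nu+Q(\tilde u_\nu)\,\del\tilde u_\nu=\tilde\eta_\nu,\qquad \tilde u_\nu:=\phi_\ell\circ u_\nu|_B,\]
where $Q$ is a matrix-valued function of class $C^k$ coming from writing $J$ in the chart, and $\tilde\eta_\nu$ is the push-forward of $\eta_\nu$, bounded in $W^{k,p}$ by (\ref{eq:sup k p}).

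The core of the proof is an elliptic bootstrap on $B$. I use the Calder\'on--Zygmund estimate
\[\Vert f\Vert_{W^{j+1,p}(B')}\leq C\big(\Vert\bar\del f\Vert_{W^{j,p}(B)}+\Vert f\Vert_{L^p(B)}\big)\]
applied iteratively for $j=0,1,\ldots,k$ on a nested sequence of concentric balls $B\cont B^{(1)}\cont\cdots\cont B^{(k+1)}$. At the $j$-th step I need $Q(\tilde u_\nu)\del\tilde u_\nu$ bounded in $W^{j,p}(B^{(j)})$. Given a uniform bound on $\tilde u_\nu$ in $W^{j,p}(B^{(j)})$, the composition $Q(\tilde u_\nu)$ is uniformly bounded in $W^{j,p}$ because $Q\in C^k$ and $W^{j,p}\hookrightarrow C^{j-1}$ in dimension two under $p>2$; combined with the product/algebra estimates in two dimensions (which again exploit $p>2$), this gives the required control of the right-hand side $\tilde\eta_\nu-Q(\tilde u_\nu)\del\tilde u_\nu$ in $W^{j,p}$. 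The base case $j=0$ uses (\ref{eq:du nu}) and the $L^\infty$ bound from Morrey. Iterating, I obtain a uniform $W^{k+1,p}(B^{(k+1)})$-bound on $\tilde u_\nu$.

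Reflexivity of $W^{k+1,p}$ and the compact embedding $W^{k+1,p}\hookrightarrow C^k$ (valid on precompact domains because $p>2$) now allow me, after passing to a further subsequence, to extract a limit of $\tilde u_\nu$, weakly in $W^{k+1,p}(B^{(k+1)})$ and strongly in $C^k(B^{(k+1)})$. Transporting back through $\phi_\ell^{-1}$ and patching over the finite cover of $U_j$ yields a subsequence of $u_\nu$ converging weakly in $W^{k+1,p}$ and strongly in $C^k$ on $U_j$; a standard diagonal argument over $j$ produces a single subsequence with the required convergence on every compact subset of $\bigcup_\nu\Om_\nu$. The main technical obstacle is the bookkeeping in the bootstrap, specifically controlling the product $Q(\tilde u_\nu)\del\tilde u_\nu$ in $W^{j,p}$ at each order — this is precisely the place where the hypothesis $p>2$ (so that Sobolev multiplication and composition work cleanly in two dimensions) and the assumed $C^k$-regularity of $J$ enter.
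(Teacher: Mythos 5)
Your overall scheme — exhaust by precompact sets, use Morrey plus the uniform $W^{1,p}$ bound to localize into charts, rewrite $\bar\partial_Ju_\nu=\eta_\nu$ in the quasi-linear form, bootstrap, extract a weak $W^{k+1,p}$ / strong $C^k$ limit and diagonalize — is exactly the route the paper takes, since the paper simply defers to the proof of \cite[Proposition B.4.2]{MS}. However, the core of your bootstrap, as written, is circular and does not gain any derivatives. You claim that a uniform bound on $\tilde u_\nu$ in $W^{j,p}$ gives a uniform bound on $Q(\tilde u_\nu)\,\partial\tilde u_\nu$ in $W^{j,p}$. It does not: a $W^{j,p}$ bound on $\tilde u_\nu$ only bounds $\partial\tilde u_\nu$ in $W^{j-1,p}$, and the Leibniz term $Q(\tilde u_\nu)\,\partial^{j}(\partial\tilde u_\nu)$ requires exactly the $W^{j+1,p}$ bound you are trying to prove. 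Consequently the Calder\'on--Zygmund estimate only returns the $W^{j,p}$ bound you started from; in particular the iteration stalls already at the first step ($W^{1,p}\to W^{2,p}$), where the right-hand side is controlled only in $L^{p}$, not in $W^{1,p}$.

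The missing idea is the mechanism by which the McDuff--Salamon argument breaks this circularity. One differentiates the equation and keeps the top-order term inside the operator: $v:=\partial\tilde u_\nu$ satisfies $\partial_s v+J(\tilde u_\nu)\partial_t v=\partial\tilde\eta_\nu-\big(DJ(\tilde u_\nu)v\big)\partial_t\tilde u_\nu$, so the second derivatives of $\tilde u_\nu$ are absorbed into the first-order operator $\partial_s+J(\tilde u_\nu)\partial_t$, whose coefficient is merely continuous (indeed $W^{1,p}$); one then needs the elliptic estimates for such Cauchy--Riemann-type operators with non-constant coefficients (proved in \cite{MS} by freezing coefficients on small balls, where continuity makes $J(\tilde u_\nu)-J(\tilde u_\nu(z_0))$ small). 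Moreover the quadratic term $\big(DJ(\tilde u_\nu)v\big)\partial_t\tilde u_\nu$ lies a priori only in $L^{p/2}$, so the first gain of regularity goes through $W^{1,p/2}$ and an exponent-raising iteration using the two-dimensional Sobolev embedding (here $p>2$ enters, but differently from how you use it), before one reaches $W^{2,p}$ and the higher steps become routine. Without this ingredient — or some substitute for it — your induction does not establish the uniform $W^{k+1,p}$ bounds, and the rest of the argument (weak compactness, compact embedding into $C^k$, patching, diagonalization), which is fine as stated, has nothing to feed on.
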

The next lemma is used in the proofs of Propositions \ref{prop:cpt bdd} and \ref{prop:reg gauge}, and Theorem \ref{thm:bubb}. 
\begin{lemma}[Regularity of the gauge transformation]\label{le:g smooth} Let $X$ be a smooth manifold, $G$ a compact Lie group, $P$ a principal $G$-bundle over $X$, $k\in\N\cup\{0\}$, and $p>\dim X$. Then the following assertions hold.
\begin{enui}
\item \label{le:g C k+1}Let $g$ be a gauge transformation of class $W^{1,p}_\loc$ and $A$ a connection on $P$ of class $C^k$, such that $g^*A$ is of class $C^k$. Then $g$ is of class $C^{k+1}$. 
\item\label{le:g k+1 p} Assume that $X$ is compact (possibly with boundary). Let $\U$ be a subset of the space of $W^{k,p}$-connections on $P$ that is bounded in $W^{k,p}$. Then there exists a $W^{k+1,p}$-bounded subset $\V$ of the set of $W^{k+1,p}$-gauge transformations on $P$, such that the following holds. Let $A\in\U$ and $g$ be a $W^{1,p}$-gauge transformation, such that $g^*A\in\U$. Then $g\in\V$. 
\end{enui}
\end{lemma}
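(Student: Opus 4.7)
The argument rests on the gauge-transformation identity
\[
dg = g\cdot(g^*A) - A\cdot g,
\]
which follows by rearranging $g^*A = \operatorname{Ad}_{g^{-1}}A + g^{-1}dg$ (derived locally in any trivialization of $P$, with $g$ embedded in an ambient matrix group via a faithful representation of $G$). This expresses $dg$ as a fiberwise polynomial in $g$, $A$, and $g^*A$; combined with compactness of $G$ (so $g$ is automatically $L^\infty$ with bound depending only on $G$), every regularity and boundedness statement about $g$ will come by reading off the regularity/boundedness of the right-hand side and invoking the standard fact that if $dg$ lies in $C^j$ (resp.\ $W^{j,p}$) then $g$ lies in $C^{j+1}$ (resp.\ $W^{j+1,p}$). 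The argument is local, so I may work in a coordinate chart over which $P$ is trivial and then patch.

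\noindent For statement (\ref{le:g C k+1}), since $p>\dim X$, the Sobolev embedding gives $W^{1,p}_{\mathrm{loc}}\hookrightarrow C^0_{\mathrm{loc}}$, so $g$ is continuous. I will then bootstrap: assuming inductively that $g\in C^j_{\mathrm{loc}}$ for some $0\le j\le k$, the identity above exhibits $dg$ as the sum of products of $C^j$-functions (using $A,g^*A\in C^k\subset C^j$), hence $dg\in C^j_{\mathrm{loc}}$, so $g\in C^{j+1}_{\mathrm{loc}}$. After $k$ steps, $g\in C^{k+1}_{\mathrm{loc}}$, as claimed.

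\noindent For statement (\ref{le:g k+1 p}), I proceed analogously in Sobolev spaces, invoking the standard fact that for $p>\dim X$ and $j\ge 1$ the space $W^{j,p}(X)$ is a Banach algebra, with a product estimate $\|fh\|_{W^{j,p}}\le C\|f\|_{W^{j,p}}\|h\|_{W^{j,p}}$ (and, for the base case, $W^{1,p}\hookrightarrow L^\infty$). The initial step uses only that $G$ (embedded as matrices) is compact, whence $\|g\|_{L^\infty}\le C_0$ depending only on $G$; applying the identity yields
\[
\|dg\|_{L^p}\le C_0\bigl(\|g^*A\|_{L^p}+\|A\|_{L^p}\bigr)\le C_1,
\]
so the set of admissible $g$ is $W^{1,p}$-bounded. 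Assuming inductively that the admissible $g$'s form a bounded subset of $W^{j,p}$ for some $1\le j\le k$, the algebra property and $A,g^*A\in W^{j,p}$ give $\|dg\|_{W^{j,p}}\le C\|g\|_{W^{j,p}}(\|A\|_{W^{j,p}}+\|g^*A\|_{W^{j,p}})$, hence a uniform $W^{j+1,p}$-bound. Iterating $k$ times produces the desired $W^{k+1,p}$-bounded set $\V$.

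\noindent The main technical point is the inductive step for (\ref{le:g k+1 p}): one must use that the algebra estimate on $W^{j,p}(X)$ for $p>\dim X$ holds at every level $1\le j\le k$, and check that the products $g\cdot(g^*A)$ and $A\cdot g$ (including compositions with smooth maps such as the matrix embedding and its inverse where needed to keep $g$ valued in $G$) inherit the required Sobolev regularity. On a noncompact $X$ the same argument is purely local, which is why (\ref{le:g C k+1}) does not require compactness. Patching the pointwise $C^{k+1}$ information in (\ref{le:g C k+1}) and the local $W^{k+1,p}$-bounds in (\ref{le:g k+1 p}) over a finite cover of $X$ (possible since $X$ is compact in (\ref{le:g k+1 p})) yields the global conclusions.
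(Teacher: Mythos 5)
Your argument is correct and is essentially the paper's own proof: the paper proves this lemma by induction on $k$ using exactly the identity $dg=g(g^*A)-Ag$ together with Morrey's inequality/Sobolev embedding for part (\ref{le:g k+1 p}) (citing Wehrheim's Lemma A.8), which is what you carry out in detail.
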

\begin{proof}[Proof of Lemma \ref{le:g smooth}] This follows from induction in $k$, using the equality $dg=g\big(g^*A)-Ag$ and Morrey's inequality (for (\ref{le:g k+1 p})). (See \cite[Lemma A.8]{Wehrheim}.)
\end{proof}
The next proposition is used in the proof of Proposition \ref{prop:quant en loss} (Quantization of energy loss). 
\begin{prop}\label{prop:ka 0} Let $n\in\N$, $G$ a compact Lie group, $P$ be a principal $G$-bundle over $\R^n$, and $A,A'$ smooth flat connections on $P$. Then there exists a smooth gauge transformation $g$ such that $A'=g^*A$. 
\end{prop}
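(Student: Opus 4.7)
\begin{pf}[Proof plan for Proposition \ref{prop:ka 0}]
Since $\R^n$ is contractible, the principal bundle $P$ is trivializable; fixing a smooth global section identifies $P$ with $\R^n\x G$. Under this identification connections become $\g$-valued $1$-forms on $\R^n$, gauge transformations become smooth maps $\R^n\to G$, and the gauge action reads $g^*A=\ad_{g^{-1}}A+g^{-1}dg$. Gauge equivalence is an equivalence relation, so it suffices to prove that every smooth flat connection $A$ on $\R^n\x G$ is gauge equivalent to the trivial connection $0$: if $A=g_1^*0=g_1^{-1}dg_1$ and $A'=g_2^*0=g_2^{-1}dg_2$, then a direct calculation shows that $h:=g_1^{-1}g_2$ satisfies $A'=h^*A$, giving the conclusion in general.

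The reduced problem is to find $g\in C^\infty(\R^n,G)$ solving the Pfaffian system
\[dg=-Ag,\qquad g(0)=\one,\]
because any such $g$ satisfies $g^{-1}dg=-\ad_{g^{-1}}A$ and hence $g^*A=0$. Taking the exterior derivative of the right-hand side and requiring $d(dg)=0$ yields the integrability condition
\[0=-(dA)g+A\wedge(Ag)=-(dA+A\wedge A)g,\]
which is precisely the flatness condition $F_A=0$. Granted flatness, Frobenius' theorem produces a unique local solution near $0$; equivalently, parallel transport along any smooth path $\ga:[0,1]\to\R^n$ starting at $0$ defines an element of $G$ via the ODE $\dot h=-A_{\ga(t)}(\dot\ga(t))h$, $h(0)=\one$.

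The main (and only nontrivial) step is to promote the local solution to a global one. Because $\R^n$ is simply connected, Stokes' theorem combined with the flatness identity $F_A=0$ forces the parallel transport above to depend only on the homotopy class of $\ga$ (rel.\ endpoints), and hence only on $\ga(1)$. One may therefore define $g(x)$ unambiguously by parallel transport along the straight-line path $t\mapsto tx$; smoothness in $x$ follows from smooth dependence of ODE solutions on parameters, and the equation $dg=-Ag$ is verified pointwise by differentiating the defining ODE in the endpoint direction and invoking flatness to interchange the mixed derivatives. This produces the desired smooth gauge transformation trivializing $A$, and the proof is complete.
\end{pf}
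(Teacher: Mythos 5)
Your argument is correct, but it takes a genuinely different route from the paper. You trivialize $P$, use transitivity of gauge equivalence to reduce to showing that a single flat connection is gauge equivalent to the zero connection, and then run the classical monodromy argument: flatness makes parallel transport homotopy invariant, simple connectedness of $\R^n$ makes it path independent, and transport along rays from the origin yields a global solution of $dg=-Ag$. The paper instead argues by induction on the dimension $n$: the case $n=1$ is an ODE, and in the inductive step one first gauges $A$ into $A'$ on the hyperplane slice $\R^n\x\{0\}$, then solves an ODE in the last coordinate for each $x\in\R^n$ with smooth dependence on $x$, and finally uses flatness of \emph{both} $A$ and $A'$ to propagate the identity $g^*A=A'$ off the slice; this stays entirely within elementary ODE theory and exploits the product structure of $\R^n$ rather than its simple connectedness. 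Your version buys the standard general fact (any flat connection over a simply connected base is gauge-trivial) and the tidy reduction to the zero connection; the paper's version avoids invoking homotopy invariance of non-abelian parallel transport. Two small points: your appeal to ``Stokes' theorem'' for path independence is loose for non-abelian $G$ (Stokes gives this directly only when holonomy is an integral of $A$, i.e.\ in the abelian case; in general one pulls the connection back over the homotopy square and uses flatness in an ODE/Frobenius argument, or integrates the horizontal distribution on $P$), and there is a sign slip in your displayed integrability computation ($A\wedge dg=-A\wedge Ag$, so the middle term should read $-(dA)g-A\wedge(Ag)$), though your final identification of the obstruction with $F_A=dA+A\wedge A$ is correct. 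Neither affects the validity of the proof.
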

\begin{proof}[Proof of Proposition \ref{prop:ka 0}] (In the case $n=2$, see also \cite[Corollary 3.7]{FrPhD}.) In the case $n=1$ such a $g$ exists, since then the condition $A'=g^*A$ can be viewed as an ordinary differential equation for $g$. Let $n\in\N$ and assume by induction that we have already proved the statement for $n$. Let $P$ be a principal $G$-bundle over $\R^{n+1}$, and $A,A'$ smooth flat connections on $P$. We define $\iota:\R^n\to\R^{n+1}$ by $\iota(x):=(x,0)$. By the induction hypothesis there exists a smooth gauge transformation $g_0$ on $\iota^*P\to\R^n$, such that 
\begin{equation}\label{eq:g 0 iota A}g_0^*\iota^*A=\iota^*A'.
\end{equation} 
Since $P$ is trivializable, there exists a smooth gauge transformation $\wt g_0$ on $P$ such that $\iota^*\wt g_0=g_0$. 

Let $x\in\R^n$. We define $\iota_x:\R\to\R^{n+1}$ by $\iota_x(t):=(x,t)$. There exists a unique smooth gauge transformation $h_x$ on $\iota_x^*P\to\R$, such that
\begin{equation}\label{eq:iota x wt g 0}h_x^*\iota_x^*\wt g_0^*A=\iota_x^*A',\quad h_x(p)=\one,\,\forall p\in\textrm{ fiber of }\iota_x^*P\textrm{ over }0\in\R.
\end{equation}
To see this, note that these conditions can be viewed as an ordinary differential equation for $h_x$ with prescribed initial value. Since this solution depends smoothly on $x$, there exists a unique smooth gauge transformation $h$ on $P$ such that $\iota_x^*h=h_x$, for every $x\in\R^n$. The gauge transformation $g:=\wt g_0h$ on $P$ satisfies the equation $A'=g^*A$. This follows from (\ref{eq:g 0 iota A},\ref{eq:iota x wt g 0}) and flatness of $A$ and $A'$.
\end{proof}
The next result was used in the proofs of Proposition \ref{prop:cpt bdd}, Remark \ref{rmk:bar J}, and Theorem \ref{thm:bubb}. Let $M,\om,G,\g,\lan\cdot,\cdot\ran_\g,\mu,J,\Si,\om_\Si,j$ be as in Section \ref{sec:main}. We define the almost complex structure $\bar J$ on $\BAR M$ as in (\ref{eq:bar J}). For every open subset $\Om\sub S^2\iso\C\cup\{\infty\}$ the energy density of a map $f\in W^{1,p}(\Om,\BAR M)$ is given by
\[e_f(z):=\frac12|df|^2,\]
where the norm is with respect to the metrics $\om_\Si(\cdot,j\cdot)$ on $\Si$ and $\BAR\om(\cdot,\bar J\cdot)$ on $\BAR M$. Let $P$ be a smooth principal bundle over $\Si$, $A$ a connection on $P$, and $u:P\to M$ an equivariant map. We define
\[e^\infty_{A,u}=\frac12|d_Au|^2,\]
where the norm is taken with respect to the metrics $\om_\Si(\cdot,j\cdot)$ on $\Si$ and $\om(\cdot,J\cdot)$ on $M$. Furthermore, we define 
\[\bar u:\Si\to\BAR M,\quad\bar u(z):=Gu(p),\]
where $p\in P$ is an arbitrary point in the fiber over $z$. 
\begin{prop}[Pseudo-holomorphic curves in symplectic quotient]\label{prop:bar del J} Let $P$ be a smooth principal $G$-bundle over $\Si$, $p>2$, $A$ a $W^{1,p}_\loc$-connection on $P$, and $u:P\to M$ a $G$-equivariant map of class $W^{1,p}_\loc$, such that $\mu\circ u=0$. Then we have
\[e_{\bar u}=e^\infty_{A,u}.\]
If $(A,u)$ also solves the equation $\bar\del_{J,A}(u)=0$ then
\begin{eqnarray}\nn\bar\del_{\bar J}\bar u=0.
\end{eqnarray}
\end{prop}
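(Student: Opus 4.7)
The plan is to reduce both conclusions to the single pointwise assertion that $d_Au$ takes values in the horizontal distribution $H$ along $u$; the density identity and the $\bar J$-holomorphicity then both fall out immediately from the defining properties of $\bar J$, namely that $d\pi|_H\colon H\to T\BAR M$ is a fibrewise isometry intertwining $J|_H$ with $\bar J$.

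Setup: $\mu\circ u=0$ gives $u(P)\sub\mu^{-1}(0)$, so $\bar u\colon\Si\to\BAR M$ is well defined. Fix $z\in\Si$, $p\in P$ above $z$, $v\in T_z\Si$, and let $\tilde v\in T_pP$ denote the $A$-horizontal lift. Since $A(\tilde v)=0$, we have $d_Au(v)=du(\tilde v)\in T_{u(p)}\mu^{-1}(0)$, and we decompose orthogonally as $d_Au(v)=\alpha(v)+L_{u(p)}\eta(v)$ with $\alpha(v)\in H_{u(p)}$ and $\eta(v)\in\g$. Because $d\pi$ annihilates $\im L_{u(p)}$ and restricts to an isometry on $H_{u(p)}$, one has $d\bar u(v)=d\pi\cdot\alpha(v)$ with $|d\bar u(v)|=|\alpha(v)|$, whereas $|d_Au(v)|^2=|\alpha(v)|^2+|L_{u(p)}\eta(v)|^2$.

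The crux is to prove $\eta\equiv 0$, for which I would invoke the Cauchy--Riemann equation $d_Au\circ j=J\cdot d_Au$. Use the orthogonal splitting $T_{u(p)}M=H\oplus\im L\oplus J\im L$, in which $J\im L=(\ker d\mu)^\perp$ meets $T\mu^{-1}(0)$ trivially while $J$ preserves $H$. Since $d_Au(jv)\in T\mu^{-1}(0)$ but $Jd_Au(v)=J\alpha(v)+JL_{u(p)}\eta(v)$ has the component $JL_{u(p)}\eta(v)\in(\ker d\mu)^\perp$, equating the two sides forces $JL_{u(p)}\eta(v)=0$, hence $\eta(v)=0$ by freeness of $G$ on $\mu^{-1}(0)$. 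This yields $|d_Au(v)|=|d\bar u(v)|$ (hence the density identity), and then the computation $d\bar u(jv)=d\pi\cdot d_Au(jv)=d\pi\cdot J\alpha(v)=\bar J\cdot d\pi\cdot\alpha(v)=\bar J\cdot d\bar u(v)$ delivers $\bar\del_{\bar J}\bar u=0$. The main technical obstacle is the $W^{1,p}_\loc$-regularity of $(A,u)$: since $p>2$, Morrey's embedding places $u\in C^0_\loc$ and $d_Au\in L^p_\loc$, so the horizontal/vertical decomposition depends measurably on $(z,v)$ and the pointwise identities descend to the $L^p$-statement for the energy density and to the weak differential identity for $\bar u$.
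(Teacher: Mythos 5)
Your argument is correct, and it is precisely the ``elementary argument'' that the paper leaves implicit (the paper's proof of this proposition is a one-line assertion, with Gaio's thesis cited for the second part): decompose $d_Au$ along $T\mu^{-1}(0)=H\oplus\im L_u$, use $\ker d\mu=(J\im L)^\perp$ together with freeness of the action on $\mu^{-1}(0)$ to kill the vertical component by means of the Cauchy--Riemann equation, and read off both conclusions from the fact that $d\pi|_H$ is an isometry intertwining $J|_H$ with $\bar J$. Your regularity remarks are also adequate: for $p>2$ the pointwise identities hold almost everywhere, and $\bar u$ is a $W^{1,p}_\loc$-solution of $\bar\del_{\bar J}\bar u=0$, which is all that is needed where the proposition is invoked (smoothness is obtained afterwards by elliptic regularity).

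One point deserves attention: you establish the energy-density identity only after invoking $\bar\del_{J,A}(u)=0$, whereas the proposition asserts $e_{\bar u}=e^\infty_{A,u}$ under $\mu\circ u=0$ alone. This is not a gap in your reasoning but a defect in the statement's phrasing: without the Cauchy--Riemann equation your decomposition only yields $e_{\bar u}\leq e^\infty_{A,u}$, and equality genuinely fails in general. For instance, take $G=S^1$ acting on $M=\C$ with $\mu(z)=\frac i2(1-|z|^2)$, $P=\R^2\x G$ the trivial bundle, $A$ the product connection, and $u$ the equivariant map induced by a nonconstant map $\R^2\to S^1=\mu^{-1}(0)$; then $\mu\circ u=0$ and $\bar u$ is constant, yet $d_Au\neq0$. (Equality also cannot hold for all connections $A$ at once, since the left-hand side does not depend on $A$.) In every application in the paper (Proposition \ref{prop:cpt bdd}, Remark \ref{rmk:bar J}, the proof of Theorem \ref{thm:bubb}) the result is applied to $\infty$-vortices, for which both equations hold, so the version you prove is exactly the one needed; the first conclusion should either be stated under both equations, as in your argument, or weakened to the inequality $e_{\bar u}\leq e^\infty_{A,u}$.
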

\begin{proof}[Proof of Proposition \ref{prop:bar del J}] This follows from an elementary argument. For the second part see also \cite[Section 1.5]{Gaio}. 
\end{proof}
In the proof of Theorem \ref{thm:bubb} we used the following lemma.
\begin{lemma}[Bound for tree]\label{le:weight} Let $k\in\N\cup\{0\}$ be a number, $(T,E)$ a finite tree, $\al_1,\ldots,\al_k\in T$ vertices, $f:T\to[0,\infty)$ a function, and $E_0>0$ a number. Assume that for every vertex $\al\in T$ we have 
\begin{equation}
\label{eq:f al E}f(\al)\geq E_0\quad\textrm{or}\quad\#\big\{\be\in T\,|\,\al E\be\big\}+\#\big\{i\in\{1,\ldots,k\}\,|\,\al_i=\al\big\}\geq3.
\end{equation}
Then 
\begin{equation}
\nn\#T\leq\frac{2\sum_{\al\in T}f(\al)}{E_0}+k.
\end{equation}
\end{lemma}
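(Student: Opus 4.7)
The plan is to partition $T$ into vertices that are \emph{heavy} (satisfying $f(\al)\geq E_0$) and vertices that are \emph{stable} (satisfying the degree-plus-marking condition $d(\al)+m(\al)\geq 3$, where $d(\al):=\#\{\be\in T\,|\,\al E\be\}$ and $m(\al):=\#\{i\in\{1,\ldots,k\}\,|\,\al_i=\al\}$), and then combine a bound on the number of heavy vertices (coming directly from $f$) with a bound on the number of stable vertices (coming from the handshake lemma for trees).

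Concretely, let $S:=\{\al\in T\,|\,d(\al)+m(\al)\geq 3\}$ and $U:=T\wo S$. By hypothesis every $\al\in U$ satisfies $f(\al)\geq E_0$, so
\[
\#U\leq \frac{1}{E_0}\sum_{\al\in U}f(\al)\leq \frac{1}{E_0}\sum_{\al\in T}f(\al).
\]
The case $\#T=1$ can be checked by hand, so I will assume $\#T\geq 2$. Since $T$ is a tree, $\sum_{\al\in T}d(\al)=2(\#T-1)$, and of course $\sum_{\al\in T}m(\al)=k$. Moreover, because $T$ is connected and has at least two vertices, every vertex has $d(\al)\geq 1$, so in particular $d(\al)+m(\al)\geq 1$ for every $\al\in U$. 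Splitting the sum according to $S$ and $U$,
\[
3\,\#S+\#U\;\leq\;\sum_{\al\in T}\big(d(\al)+m(\al)\big)\;=\;2(\#T-1)+k\;=\;2(\#S+\#U)-2+k,
\]
which rearranges to $\#S\leq \#U+k-2$. Adding $\#U$ to both sides gives $\#T=\#S+\#U\leq 2\#U+k-2$. Combining with the bound on $\#U$ above yields
\[
\#T\;\leq\;\frac{2}{E_0}\sum_{\al\in T}f(\al)+k-2\;\leq\;\frac{2\sum_{\al\in T}f(\al)}{E_0}+k,
\]
as desired.

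There is no real obstacle: the only subtle point is obtaining the factor $2$ (rather than the weaker factor $3$ one gets from $3\#S\leq 2(\#T-1)+k$ alone). That tighter factor comes precisely from noting that unstable vertices still contribute at least $1$ to the sum $\sum(d+m)$ by connectedness, which lets us bound $\#S$ in terms of $\#U$ instead of in terms of $\#T$. The $N=1$ case needs a separate one-line check, where either $m(\al)\geq 3$ (so $k\geq 3$) or $f(\al)\geq E_0$ (so $2\sum f/E_0\geq 2$); both give $\#T=1$ trivially.
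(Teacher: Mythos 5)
Your argument is correct: the partition into heavy and stable vertices, the identity $\sum_\al d(\al)=2(\#T-1)$ for a tree, and the observation that every vertex of a tree with at least two vertices has degree at least $1$ (which is what upgrades the crude factor $3$ to the required factor $2$) together give exactly the stated bound, and your separate check of the one-vertex case is fine. The paper itself gives no argument here -- it simply cites \cite[Exercise 5.1.2]{MS} as an elementary exercise -- and your counting proof is precisely the standard solution of that exercise, so you have in effect supplied the details the paper omits.
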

\begin{proof}[Proof of Lemma \ref{le:weight}] This follows from an elementary argument. (It is Exercise 5.1.2.~in the book \cite{MS}.)
\end{proof}
We used the following facts about sequences of M\"obius transformations in the proof of the Bubbling Theorem in the case $k\geq1$.
\begin{rmk}\label{rmk:phi nu -1}\rm Let $x,y\in S^2$ be points and $\phi_\nu$ a sequence of M\"obius transformations that converges to $y$, uniformly on every compact subset of $S^2\wo \{x\}$. Then $\phi_\nu^{-1}$ converges to $x$, uniformly on every compact subset of $S^2\wo \{y\}$. This follows from an elementary argument. (It is Exercise D.1.3 in the book \cite{MS}.) $\Box$
\end{rmk}
\begin{lemma}[Convergence for M\"obius transformations]\label{le:phi nu} Let $\phi_\nu$ be a sequence of M\"obius transformations and $x,y\in S^2$ be points. Suppose there exist convergent sequences $x_1^\nu,x_2^\nu,y^\nu\in S^2$ such that
\begin{eqnarray}
\nn &x\neq\lim_{\nu\to\infty}x_1^\nu\neq\lim_{\nu\to\infty}x_2^\nu\neq x,\qquad y\neq\lim_{\nu\to\infty}y^\nu,&\\
\nn&\lim_{\nu\to\infty}\phi_\nu(x_1^\nu)=\lim_{\nu\to\infty}\phi_\nu(x_2^\nu)=y,\qquad\lim_{\nu\to\infty}\phi_\nu^{-1}(y^\nu)=x.
\end{eqnarray}
Then $\phi_\nu$ converges to $y$, uniformly with all derivatives on every compact subset of $S^2\wo \{x\}$.
\end{lemma}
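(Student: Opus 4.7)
The plan is to use the standard dichotomy for sequences in $\PSL$: any subsequence of $\phi_\nu$ admits a further subsequence that either (a) converges uniformly on $S^2$ to some M\"obius transformation $\phi$, or (b) converges uniformly on compact subsets of $S^2\wo\{x_0\}$ to some constant $y_0\in S^2$, for a suitable pair $x_0,y_0\in S^2$. This can be extracted by writing each $\phi_\nu$ as $z\mapsto(a_\nu z+b_\nu)/(c_\nu z+d_\nu)$ with $a_\nu d_\nu-b_\nu c_\nu=1$, applying Bolzano--Weierstrass to the coefficient matrix (after normalizing), and distinguishing the cases where the limit matrix is invertible or singular.

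Case (a) is ruled out immediately. A uniform limit $\phi\in\PSL$ would be injective on $S^2$, but by hypothesis $\phi(\lim x_1^\nu)=y=\phi(\lim x_2^\nu)$ while $\lim x_1^\nu\neq\lim x_2^\nu$, a contradiction.

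In case (b) I would identify $y_0$ and $x_0$ from the three-point data. Since $\lim x_1^\nu\neq\lim x_2^\nu$, at most one of them equals $x_0$; choose $i\in\{1,2\}$ with $\lim x_i^\nu\neq x_0$, so that $\phi_\nu(x_i^\nu)\to y_0$ along the chosen subsequence. But $\phi_\nu(x_i^\nu)\to y$ by assumption, so $y_0=y$. Next, by Remark~\ref{rmk:phi nu -1} applied to the convergence $\phi_\nu\to y_0$, the inverses $\phi_\nu^{-1}$ converge to $x_0$ uniformly on compact subsets of $S^2\wo\{y\}$. Since $\lim y^\nu\neq y$, we obtain $\phi_\nu^{-1}(y^\nu)\to x_0$, and comparing with the hypothesis $\phi_\nu^{-1}(y^\nu)\to x$ gives $x_0=x$. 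Thus every convergent subsequence produces the same limit behaviour, and by the usual subsequence-of-subsequences argument the full sequence $\phi_\nu$ converges to $y$, $C^0$-uniformly on every compact subset of $S^2\wo\{x\}$.

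The upgrade to $C^\infty$-convergence is soft. Fix a holomorphic chart near $y\in S^2$ identifying a neighbourhood with a disk in $\C$. For any compact $Q\sub S^2\wo\{x\}$, the $C^0$ convergence implies that for $\nu$ large $\phi_\nu(Q)$ lies in this chart, so the restrictions $\phi_\nu|_Q$ become a sequence of holomorphic $\C$-valued functions converging uniformly on $Q$ to the constant $y$. The Weierstrass convergence theorem (Cauchy's integral formula applied to $\phi_\nu-y$) then yields uniform convergence of all complex derivatives on compact subsets of the interior of $Q$, hence convergence in $C^\infty$. The only genuine input is the dichotomy in the first paragraph; everything after that is bookkeeping using the distinctness hypotheses and Remark~\ref{rmk:phi nu -1}.
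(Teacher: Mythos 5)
Your proof is correct, but it takes a different route from the paper: the paper disposes of this lemma with a bare citation to \cite[Lemmata D.1.4 and 4.6.6]{MS}, whereas you essentially reprove the content of those lemmata from scratch. Your key input is the dichotomy obtained by normalizing the $\SL(2,\C)$ lifts: either the lifts stay bounded, giving a subsequential limit in $\PSL$ and uniform convergence on $S^2$ (ruled out because the limit would identify $\lim x_1^\nu\neq\lim x_2^\nu$), or the normalized matrices converge to a rank-one matrix $B$, in which case $\phi_\nu\to y_0$ uniformly on compact subsets of $S^2\wo\{x_0\}$ with $x_0$ the kernel point and $y_0$ the image point of $B$; the three marked-point hypotheses together with Remark \ref{rmk:phi nu -1} then pin down $y_0=y$ and $x_0=x$, and the subsequence-of-subsequences principle upgrades this to convergence of the full sequence. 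The final $C^\infty$ statement via Cauchy estimates for holomorphic maps into a chart around $y$ is also fine; the only cosmetic point is that for a compact $Q\sub S^2\wo\{x\}$ you should first enlarge it to a compact $Q'\sub S^2\wo\{x\}$ with $Q\sub\operatorname{int}Q'$ before invoking the Weierstrass/Cauchy argument, so that derivative convergence is obtained on $Q$ itself rather than only on the (possibly empty) interior of $Q$. What the citation buys the paper is brevity; what your argument buys is self-containedness and transparency about exactly which hypotheses are used where (in particular, the distinctness $\lim x_1^\nu\neq\lim x_2^\nu$ kills the compact case, and $\lim y^\nu\neq y$ is what locates the singular point at $x$).
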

\begin{proof} This follows from \cite[Lemmata D.1.4 and 4.6.6]{MS}.
\end{proof}
\begin{lemma}[Middle rescaling]\label{le:middle} Let $x,x_\nu,y\in S^2$ be points and $\phi_\nu$ be a sequence of M\"obius transformations that converges to $y$, uniformly on compact subsets of $S^2\wo \{x\}$, such that $x_\nu$ converges to $x$ and $\phi_\nu(x_\nu)$ converges to $y$. Then there exists a sequence of M\"obius transformations $\psi_\nu$ such that $\psi_\nu(1)=x_\nu$, $\psi_\nu$ converges to $x$, uniformly with all derivatives on compact subsets of $S^2\wo\{\infty\}$, and $\phi_\nu\circ\psi_\nu$ converges to $y$, uniformly with all derivatives on compact subsets of $S^2\wo\{0\}$. Moreover, if $x'\neq x$ is any point in $S^2$ then we may choose $\psi_\nu$ such that $\psi_\nu(\infty)=x'$. 
\end{lemma}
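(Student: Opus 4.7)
The plan is to specify $\psi_\nu$ by prescribing its values at the three points $0$, $1$, $\infty$ of $S^2$, and then to derive both convergence statements by twice invoking Lemma \ref{le:phi nu}.

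Fix once and for all an auxiliary point $y_0 \in S^2 \setminus\{y\}$. Since $\phi_\nu$ converges to $y$ uniformly on compact subsets of $S^2\setminus\{x\}$, Remark \ref{rmk:phi nu -1} gives $\phi_\nu^{-1}(y_0) \to x$. For $\nu$ large enough, the three prospective values
\[
\psi_\nu(\infty) := x',\qquad \psi_\nu(1) := x_\nu,\qquad \psi_\nu(0) := \phi_\nu^{-1}(y_0)
\]
are pairwise distinct: the first stays $\neq x$, the latter two tend to $x$, and $\psi_\nu(0)\neq\psi_\nu(1)$ for large $\nu$ because $\phi_\nu(x_\nu)\to y\neq y_0=\phi_\nu(\psi_\nu(0))$. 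On this tail there is thus a unique M\"obius transformation $\psi_\nu$ taking these prescribed values, and finitely many remaining terms may be filled in arbitrarily.

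To obtain $\psi_\nu\to x$ uniformly with all derivatives on compact subsets of $S^2\setminus\{\infty\}$, I would apply Lemma \ref{le:phi nu} with excluded point $\infty$ and target $x$, using the constant sequences $x_1^\nu := 0$, $x_2^\nu := 1$ (whose distinct limits $0$ and $1$ both differ from $\infty$, and for which $\psi_\nu(x_i^\nu)\to x$ by construction) together with the constant sequence $y^\nu := x'$ (whose limit is $\neq x$ and which satisfies $\psi_\nu^{-1}(x') = \infty$ identically).

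For the second convergence I would apply Lemma \ref{le:phi nu} to $\phi_\nu\circ\psi_\nu$ with excluded point $0$ and target $y$, taking $x_1^\nu := 1$, $x_2^\nu := \infty$, and $y^\nu := y_0$. The evaluations $(\phi_\nu\circ\psi_\nu)(1) = \phi_\nu(x_\nu)\to y$ (given) and $(\phi_\nu\circ\psi_\nu)(\infty) = \phi_\nu(x')\to y$ (from $x'\neq x$) are immediate, while the crucial identity
\[
(\phi_\nu\circ\psi_\nu)^{-1}(y_0) \;=\; \psi_\nu^{-1}\bigl(\phi_\nu^{-1}(y_0)\bigr) \;=\; \psi_\nu^{-1}\bigl(\psi_\nu(0)\bigr) \;=\; 0
\]
holds identically in $\nu$ by our choice $\psi_\nu(0) := \phi_\nu^{-1}(y_0)$. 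This choice is really the only idea in the proof: it forces the exceptional point of $\phi_\nu\circ\psi_\nu$ to sit exactly at $0$, which is precisely what the second convergence statement demands. No serious obstacle remains beyond making this choice; everything else is a direct application of Lemma \ref{le:phi nu} and Remark \ref{rmk:phi nu -1}.
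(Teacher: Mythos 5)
Your proposal is correct and follows essentially the same route as the paper: the paper also defines $\psi_\nu$ as the unique M\"obius transformation with $\psi_\nu(0)=\phi_\nu^{-1}(y'')$ for a fixed auxiliary point $y''\neq y$, $\psi_\nu(1)=x_\nu$, $\psi_\nu(\infty)=x'$, and then applies Lemma \ref{le:phi nu} twice with exactly your choices of auxiliary sequences. Your explicit check that the three prescribed values are eventually distinct is a slightly more detailed version of the paper's appeal to Remark \ref{rmk:phi nu -1}.
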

\begin{proof}[Proof of Lemma \ref{le:middle}] Let $x'\neq x$ and $y''\neq y$ be any two points in $S^2$. It follow from Remark \ref{rmk:phi nu -1} that for $\nu$ large enough the three points $x''_\nu:=\phi_\nu^{-1}(y'')$, $x_\nu$, $x'$ are all distinct. W.l.o.g.~we may assume that this holds for every $\nu$. For $\nu\in\N$ we define $\psi_\nu$ to be the unique M\"obius transformation such that
\[\psi_\nu(0)=x''_\nu,\quad \psi_\nu(1)=x_\nu,\quad \psi_\nu(\infty)=x'.\]
Then the hypotheses of Lemma \ref{le:phi nu} with $\phi_\nu,x,y$ replaced by $\psi_\nu,\infty,x$ and $x_1^\nu:=0$, $x_2^\nu:=1$ and $y_\nu:=x'$ are satisfied. Hence by that Lemma the maps $\psi_\nu$ converge to $x$, uniformly with all derivatives on compact subsets of $S^2\wo\{\infty\}$. Moreover, the hypotheses of the same lemma with $\phi_\nu,x$ replaced by $\phi_\nu\circ\psi_\nu,0$ and $x_1^\nu:=1$, $x_2^\nu:=\infty$ and $y_\nu:=y''$ are satisfied. It follows that $\phi_\nu\circ\psi_\nu$ converges to $y$, uniformly with all derivatives on compact subsets of $S^2\wo\{0\}$. This proves Lemma \ref{le:middle}.
\end{proof}
The next result was used in the proof of Proposition \ref{prop:soft}. Let $(X,d)$ be a metric space. ($d$ is allowed to attain the value $\infty$.) Let $G$ be a topological group and $\rho:G\x X\to X$ a continuous action by isometries. By $\pi:X\to X/G$ we denote the canonical projection. The topology on $X$, determined by $d$, induces a topology on the quotient $X/G$. 

\begin{lemma}[Induced metric on the quotient]\label{le:metr} Assume that $G$ is compact. Then the map $\bar d:X/G\x X/G\to [0,\infty]$ defined by
\[\bar d(\bar x,\bar y):=\min_{x\in\bar x,\,y\in\bar y}d(x,y)\]
is a metric on $X/G$ that induces the quotient topology on $X/G$.
\end{lemma}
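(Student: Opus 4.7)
\noindent\textit{Proof plan.} The plan is to verify, in order, that $\bar d$ is well-defined, that it satisfies the metric axioms, and that the induced topology coincides with the quotient topology, exploiting compactness of $G$ at each step.

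First I would check that the minimum defining $\bar d$ is actually attained. Since $G$ is compact and $\rho$ is continuous, each orbit $Gx\sub X$ is compact; moreover, because $G$ acts by isometries, $d(gx,hy)=d(x,g^{-1}hy)$, so for any $x\in\bar x,y\in\bar y$,
\[\min_{x'\in\bar x,\,y'\in\bar y}d(x',y')=\min_{g\in G}d(x,gy),\]
and this minimum exists and depends only on the orbits $\bar x,\bar y$. In particular $\bar d$ is well-defined, symmetric, and non-negative.

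For the remaining metric axioms: if $\bar d(\bar x,\bar y)=0$, then by the first step there are representatives $x\in\bar x,y\in\bar y$ with $d(x,y)=0$, hence $x=y$ and $\bar x=\bar y$. The triangle inequality is the main step. Given $\bar x,\bar y,\bar z\in X/G$, I would pick $x\in\bar x$ and $y_1\in\bar y$ realizing $d(x,y_1)=\bar d(\bar x,\bar y)$, and then $y_2\in\bar y,z\in\bar z$ realizing $d(y_2,z)=\bar d(\bar y,\bar z)$. Writing $y_2=gy_1$ for some $g\in G$, the isometry property gives $d(y_1,g^{-1}z)=d(gy_1,z)=\bar d(\bar y,\bar z)$. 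The usual triangle inequality for $d$ applied to $x,y_1,g^{-1}z$ then yields
\[\bar d(\bar x,\bar z)\leq d(x,g^{-1}z)\leq d(x,y_1)+d(y_1,g^{-1}z)=\bar d(\bar x,\bar y)+\bar d(\bar y,\bar z).\]

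For the topological statement, I would use two facts. First, the inequality $\bar d(\pi(x),\pi(y))\leq d(x,y)$ shows that $\pi$ is continuous for the $\bar d$-topology, so the metric topology is contained in the quotient topology. Second, $\pi$ is an open map: for any open $U\sub X$ one has $\pi^{-1}(\pi(U))=\bigcup_{g\in G}\rho(g,U)$, which is open as a union of open sets. For the reverse inclusion, let $U\sub X/G$ be such that $\pi^{-1}(U)$ is open and $\bar x\in U$. Choose $x\in\bar x$ and $r>0$ with $B_d(x,r)\sub\pi^{-1}(U)$. If $\bar d(\bar x,\bar y)<r$, then by the first step there exists $g\in G$ with $d(x,gy)<r$, so $gy\in\pi^{-1}(U)$ and hence $\bar y=\pi(gy)\in U$. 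Thus $B_{\bar d}(\bar x,r)\sub U$, proving $U$ is open in the metric topology.

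The main obstacle is the triangle inequality: this is the only place where one cannot simply use any representatives, and it is exactly here that the compactness of $G$ (to guarantee attainment) and the isometry property (to realign the middle representative) are used together.
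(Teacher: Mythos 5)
Your proposal is correct and supplies exactly the elementary argument that the paper leaves to the reader (its proof of Lemma \ref{le:metr} is only the remark ``this follows from an elementary argument''): attainment of the minimum via compactness of the orbits, realignment of the middle representative by an isometry for the triangle inequality, and the two inclusions of topologies via $\bar d(\pi(x),\pi(y))\leq d(x,y)$ and the ball argument. The only superfluous item is the observation that $\pi$ is an open map, which your reverse inclusion never actually uses.
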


\begin{proof}[Proof of Lemma \ref{le:metr}] This follows from an elementary argument. 
\end{proof}


\begin{thebibliography}{99}

\bibitem[CGS]{CGS} K.~Cieliebak, A.R.~Gaio and D.A.~Salamon, \emph{$J$-holomorphic curves, moment maps, and invariants of Hamiltonian group actions}, Internat.~Math.~Res.~Notices  2000, {\bf no. 16}, 831-882.

\bibitem[CGMS]{CGMS} K.~Cieliebak, A.~R.~Gaio and I.~Mundet i Riera, D.~A.~Salamon, \emph{The symplectic vortex equations and invariants of Hamiltonian group actions}, Journal of Symplectic Geometry {\bf 1} (2002), 543-645.

\bibitem[Fr1]{FrPhD} U.~Frauenfelder, \emph{Floer Homology of Symplectic Quotients and the Arnold-Givental Conjecture}, dissertation, ETH Z\"urich, January 2003.

\bibitem[Fr2]{FrArnold} U. Frauenfelder, \emph{The Arnold-Givental Conjecture and Moment Floer Homology}, Int.~Math.~Res.~Not.~{\bf 42} (2004), 2179--2269. 

\bibitem[Ga]{Gaio} A.~R.~Gaio, \emph{J-Holomorphic Curves and Moment Maps}, Ph.D.-thesis, University of Warwick, April 2000.

\bibitem[GS]{GS} A.R.~Gaio and D.A.~Salamon, \emph{Gromov-Witten invariants of symplectic quotients and adiabatic limits}, J.~Symplectic Geom.~{\bf 3}  (2005), {\bf no. 1}, 55--159. 

\bibitem[IS]{IS} S.~Ivashkovich, V.~Shevchishin, \emph{Gromov compactness theorem for $J$-complex curves with boundary}, Internat.~Math.~Res.~Notices 2000, no.~{\bf 22}, 1167--1206.

\bibitem[Ko]{Kontsevich} M.~Kontsevich, \emph{Enumeration of rational curves via torus actions}, 335-368 in R.~Dijkgraaf, C.~Faber, G.~van der Geer, eds., \emph{The moduli space of curves}, Progress in Mathematics {\bf 129}, Birkh\"auser, 1995. 

\bibitem[MS]{MS} D.~McDuff and D.~A.~Salamon, \emph{J-Holomorphic Curves and Symplectic Topology}, AMS Colloquium Publications, {Vol.~\bf 52}, 2004.

\bibitem[Mu1]{MuPhD} I.~Mundet i Riera, \emph{Yang-Mills-Higgs theory for symplectic fibrations}, Ph.D.-thesis, Madrid, April 1999.

\bibitem[Mu2]{MuHam} I.~Mundet i Riera, \emph{Hamiltonian Gromov-Witten invariants}, Topology {\bf 42} (2003), 525-553.

\bibitem[MT]{MT} I.~Mundet i Riera and G.~Tian, \emph{A Compactification of the Moduli Space of Twisted Holomorphic Maps}, Adv.~Math.~{\bf 222} (2009), no.~{\bf 4}, 1117--1196. 

\bibitem[Ott]{Ott} A.~Ott, \emph{Removal of singularities and Gromov compactness for symplectic vortices}, arXiv:0912.2500

\bibitem[Uh]{Uhlenbeck} K. Uhlenbeck, \emph{Connections with $L^p$ Bounds on Curvature}, Comm. Math. Phys. {\bf 83} (1982), 31-42.

\bibitem[Weh]{Wehrheim} K. Wehrheim, \emph{Uhlenbeck Compactness}, EMS, 2004.

\bibitem[WZ]{WZ} C.~Woodward and F.~Ziltener, \emph{Functoriality for Gromov-Witten invariants under symplectic quotients}, preprint, http://www.math.rutgers.edu/~ctw/papers.html 

\bibitem[Zi1]{ZiPhD} F.~Ziltener, \emph{Symplectic Vortices on the Complex Plane and Quantum Cohomology}, Ph.D.-thesis, ETH Z\"urich, May 2006.

\bibitem[Zi2]{ZiA} F.~Ziltener, \emph{The Invariant Symplectic Action and Decay for Vortices}, J.~Symplectic Geom.~{\bf 7} (2009), no.~{\bf 3}, 357--376.

\bibitem[Zi3]{ZiFredholm} F. Ziltener, \emph{A Quantum Kirwan Map, I: Fredholm Theory}, arXiv:0905.4047

\bibitem[Zi3]{ZiConsEv} F. Ziltener, \emph{A Quantum Kirwan Map, III: Conservation of Homology and an Evaluation Map}, in preparation.

\bibitem[Zi4]{ZiTrans} F.~Ziltener, \emph{A Quantum Kirwan Map, IV: Transversality}, in preparation.

\bibitem[Zi5]{ZiExStable}F.~Ziltener, \emph{Stable Maps of Vortices on the Plane and Pseudo-holomorphic Spheres, an Example}, in preparation.

\end{thebibliography}
\end{document}